\documentclass{amsart}

\def\titlepaper{Semisimple Field Theories Detect Stable Diffeomorphism}
\def\authorpaper{David Reutter \& Christopher Schommer-Pries}

\usepackage{amsmath}       
\usepackage{amsthm}        
\usepackage{amssymb}       
\usepackage{amsfonts}
\usepackage[all]{xy}
\usepackage{xspace}
\usepackage{calc}
\usepackage{pgfplots}
\usepgflibrary{fpu}
\usepackage{mathtools}
\usepackage{tabularx}
\usepackage{ifthen}		
\usepackage{graphicx}
\usepackage{docmute}
\usepackage{array}
\usepackage{subfloat} 
\usepackage{bbm}            
\usepackage{etoolbox}  
\usepackage{verbatim}
\usepackage{stmaryrd}
\usepackage{thm-restate}
\usepackage{multicol}
\usepackage{nccmath}

\usepackage{bbm} 

\usepackage[margin=1.5in]{geometry}
\setlength{\footskip}{30pt}

\pagestyle{plain}



\usepackage{tikz}
\usetikzlibrary{matrix}
\usetikzlibrary{decorations.pathreplacing}
\usetikzlibrary{decorations.markings}
\usetikzlibrary{arrows}
\usetikzlibrary{calc}
\usetikzlibrary{cd}
\usetikzlibrary{knots}
\usetikzlibrary{shapes.misc}
\usetikzlibrary{fit}
\usepgflibrary{decorations.pathmorphing}
\usepgflibrary{shapes.geometric}
\newenvironment{tz}[1][]{%
			\begin{tikzpicture}[baseline={([yshift=-.8ex]current bounding 					box.center)},#1] %
				}{%
			\end{tikzpicture} %
			}
			
\tikzset{dot/.style={circle, scale=0.3, fill=black, thick, draw}}
\tikzset{knot/.style={draw=white, double distance=1, line width=0.5, double=black}}

\renewcommand{\to}[1][]{\ensuremath{\xrightarrow{#1}}}
\newcommand{\ot}[1][]{\ensuremath{\xleftarrow{#1}}}
\newcommand{\To}[1][]{\ensuremath{\xRightarrow{#1}}}

\allowdisplaybreaks[1]

\theoremstyle{plain} 
\newtheorem{maintheorem}{Theorem}

\newtheorem{theorem}{Theorem}[section]
\newtheorem{lemma}[theorem]{Lemma}
\newtheorem{corollary}[theorem]{Corollary}          
\newtheorem{proposition}[theorem]{Proposition}              
\newtheorem{cor}[theorem]{Corollary}          
\newtheorem{prop}[theorem]{Proposition}

\theoremstyle{definition} 
\newtheorem{definition}[theorem]{Definition}
\newtheorem{assumption}[theorem]{Assumption}

\theoremstyle{remark}  

\newtheorem{remark}[theorem]{Remark}
\newtheorem{example}[theorem]{Example}

\newtheorem{question}[theorem]{{Question}}
\newtheorem*{guide*}{Guide}
\newtheorem*{outline*}{Outline}
\newtheorem*{remarkohc*}{Remark on higher categories and the cobordism hypothesis}
\newtheorem*{assumpfield*}{Assumptions on the base field}


\DeclareMathOperator{\Aut}{Aut}
\DeclareMathOperator{\Hom}{Hom}
\DeclareMathOperator{\End}{End}

\DeclareMathOperator{\Ext}{Ext}

\DeclareMathOperator*{\colim}{colim}

\DeclareMathOperator{\Fun}{Fun}
\newcommand{\id}{\mathrm{id}}
\newcommand{\iso}{\cong}

\newcommand{\Cob}{\mathrm{Cob}}

\newcommand{\Bord}{\mathrm{Bord}}

\newcommand{\Map}{\mathrm{Map}}

\renewcommand{\Vec}{\mathrm{Vec}}
\newcommand{\Vect}{\mathrm{Vect}}
\newcommand{\Line}{\mathrm{Line}}

\newcommand{\op}{\mathrm{op}}

\newcommand{\sVect}{\mathrm{sVect}}
\newcommand{\sVec}{\mathrm{sVec}}
\newcommand{\sLine}{\mathrm{sLine}}

\newcommand{\FP}{\mathrm{FP}}

\newcommand{\Set}{\mathrm{Set}}

\newcommand{\Fin}{\mathrm{Fin}}
\newcommand{\Surj}{\mathrm{Surj}}
\newcommand{\Inj}{\mathrm{Inj}}

\newcommand{\red}{\mathrm{red}}

\DeclareMathOperator{\Pic}{Pic}


\def\cA{\mathcal A}\def\cB{\mathcal B}\def\cC{\mathcal C}
\def\cF{\mathcal F}
\def\cL{\mathcal L}
\def\cN{\mathcal N}\def\cP{\mathcal P}
\def\cR{\mathcal R}\def\cS{\mathcal S}\def\cT{\mathcal T}
\def\cV{\mathcal V}\def\cW{\mathcal W}
\def\cZ{\mathcal Z}

\def\CC{\mathbb C}
\def\GG{\mathbb G}

\def\PP{\mathbb P}
\def\QQ{\mathbb Q}\def\RR{\mathbb R}

\def\ZZ{\mathbb Z}

\def\sA{\mathsf A}\def\sB{\mathsf B}

\newcommand\rB{\mathrm B}

\newcommand\rH{\mathrm H}

\usepackage[pdftex, colorlinks=true, linkcolor=black, citecolor=black, urlcolor=black, hypertexnames=false,
pdfauthor={\authorpaper},
pdftitle={\titlepaper},
bookmarksdepth=3
	]{hyperref}

\setlength{\marginparwidth}{2.8cm}
\definecolor{DRcolor}{rgb}{0.0,0.5,0.75}	
\definecolor{CSPcolor}{rgb}{0.8,0.0,0.2}

\newcommand{\DRs}[1]{}
\newcommand{\CSPs}[1]{}

\newcommand\ignore[1]{}

\newcommand{\Sp}{\cS}

\title{\titlepaper}

\author{David Reutter}
\address{Universit\"at Hamburg}
\email{david.reutter@uni-hamburg.de}
\urladdr{https://www.davidreutter.com}

\author{Christopher Schommer-Pries}
\address{University of Notre Dame}
\email{cschomme@nd.edu}
\urladdr{https://sites.nd.edu/chris-schommer-pries/}

\begin{document}

\begin{abstract}Extending the work of the first author, we introduce a notion of semisimple topological field theory in arbitrary even dimension and show that such field theories necessarily lead to stable diffeomorphism invariants. The main result of this paper is a proof that this 'upper bound' is optimal: To this end, we introduce and study a class of `finite path integral' topological field theories which are semisimple and which generalize well known theories constructed by Dijkgraaf-Witten, Freed and Quinn. We show that manifolds satisfying a certain finiteness condition --- including $4$-manifolds with finite fundamental group --- are indistinguishable to these field theories if and only if they are stably diffeomorphic. Subject to these finiteness conditions, such finite path integral theories therefore provide the strongest semisimple TFT invariants possible.  These results hold for a large class of ambient tangential structures. 

We discuss a number of applications, including the constructions of unoriented $4$-dimensional semisimple field theories which can distinguish unoriented  smooth structure and oriented higher-dimensional semisimple field theories which can distinguish certain exotic spheres.

Along the way, we show that dimensional reductions of finite path integral theories are again finite path integral theories, we utilize ambidexterity in the rational setting, and we develop techniques related to the $\infty$-categorical M\"obius inversion principle of G\'alvez-Carrillo--Kock--Tonks. 
\end{abstract}

\maketitle

\section{Introduction}

Topological field theories (TFT) provide invariants of smooth manifolds. However what precisely these invariants measure and which manifolds can be distinguished remains largely unknown. In this paper, we focus on \emph{semisimple field theories} in even dimensions (see Definition~\ref{def:semisimpleTFT}), a class of field theories which contains all currently known functorial topological field theories in more than two dimensions (see Example~\ref{exm:TFTs}).

 Extending a result of the first author~\cite{Reutter:2020aa} to arbitrary even dimension, we show that such field theories only depend on the stable diffeomorphism class of a manifold\footnote{Two $2q$-dimensional manifolds $M$ and $N$ are stably diffeomorphic if there exists a natural number $n\geq 0$ and a diffeomorphism $M\#^n(S^q \times S^q) \cong N\#^n(S^q \times S^q)$.}, providing an `upper bound' on the sensitivity of the induced manifold invariant. 
 
\begin{maintheorem} Stably diffeomorphic even-dimensional manifolds are indistinguishable by semisimple topological field theories. \end{maintheorem}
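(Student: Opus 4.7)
The plan is to exploit the definition of stable diffeomorphism --- existence of $n\geq 0$ with $M\#^n(S^q\times S^q)\iso N\#^n(S^q\times S^q)$ --- to reduce the theorem to the single identity $Z(M)=Z(M\#(S^q\times S^q))$ for every closed $2q$-manifold $M$ and every semisimple TFT $Z$. Iterating this identity and applying it to both $M$ and $N$ then gives $Z(M)=Z(M\#^n(S^q\times S^q))=Z(N\#^n(S^q\times S^q))=Z(N)$, yielding the theorem.

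To prove this key identity, I would encode the connect sum with $S^q\times S^q$ as a statement about states. Pick an embedded $2q$-disk in $M$, let $M^\circ=M\setminus\mathring D^{2q}$ be viewed as a cobordism $\emptyset\to S^{2q-1}$, and observe that $Z(M)=\langle Z(D^{2q}),\, Z(M^\circ)\rangle$ while $Z(M\#(S^q\times S^q))=\langle Z((S^q\times S^q)\setminus\mathring D^{2q}),\, Z(M^\circ)\rangle$, where the pairing takes place in $Z(S^{2q-1})$. It therefore suffices to show that the two ``cap'' states $Z(D^{2q})$ and $Z((S^q\times S^q)\setminus\mathring D^{2q})$ pair identically against every state of the form $Z(M^\circ)$ arising from a closed $2q$-manifold with a disk removed.

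The crux is to derive this pairing identity from the semisimplicity hypothesis. A natural approach is to choose a handle decomposition of $(S^q\times S^q)\setminus\mathring D^{2q}$ (built from $D^{2q}$ by attaching a $q$-handle and a $(q+1)$-handle) and realize $Z$ of this cap as the result of applying two elementary handle operators to $Z(D^{2q})$. Semisimplicity should force the object attached by $Z$ to $S^{2q-2}$ to admit a decomposition into simples, with respect to which these operators become diagonal on $Z(S^{2q-1})$; an ambidexterity/trace argument --- announced as a key technical input in the introduction --- should then force the eigenvalue on every simple summand reached by a closed manifold to agree between the two caps. Intuitively this is a manifestation of $q$-handle/$(q+1)$-handle cancellation taking place at the level of the TFT rather than at the level of manifolds, and in the case $q=2$ it recovers the identity at the core of~\cite{Reutter:2020aa}.

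The hard part will be carrying this semisimplicity/handle-cancellation computation uniformly in $q$. In Reutter's $4$-dimensional argument one works inside a semisimple $2$-category with an explicit trace identity on $Z(S^3)$; for general $q$ one must set up the analogous handle operators on $Z(S^{2q-1})$ at arbitrary categorical depth, check that the paper's semisimplicity hypothesis is actually strong enough to diagonalize them and produce the requisite trace identity, and track the whole story through the tangential-structure bookkeeping that the introduction flags. Doing this in one argument, rather than dimension by dimension, is where I expect the substantive work of the paper to lie.
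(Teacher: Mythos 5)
Your plan hinges on a reduction that is false. You propose to reduce the theorem to the identity $Z(M)=Z(M\#(S^q\times S^q))$ for every closed $2q$-manifold $M$ and every semisimple TFT $Z$. But the definition of stable diffeomorphism carefully takes connected sum with the \emph{same} number $n$ of copies of $S^q\times S^q$ on both sides precisely because $M$ and $M\#(S^q\times S^q)$ are generally not stably diffeomorphic, and semisimple theories can tell them apart. For example $S^4$ and $S^2\times S^2$ have the same tangential $1$-type $\rB Spin(4)$ but different Euler characteristic, hence distinct classes in $\Omega_4^{T\rB Spin(4)}\cong\ZZ\oplus\Omega^{Spin}_4$; by Theorem~\ref{thm:MainThm} together with Theorem~\ref{thm:DWareSemisimple} there is a semisimple (indeed generalized Dijkgraaf--Witten) theory with $Z(S^4)\neq Z(S^2\times S^2)=Z(S^4\#(S^2\times S^2))$. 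So the two ``cap'' states $Z(D^{2q})$ and $Z((S^q\times S^q)\setminus\mathring D^{2q})$ need not pair identically against all $Z(M^\circ)$, and no handle-cancellation or ambidexterity argument can make them; the step would be trying to prove something false.

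The paper's argument is related in spirit but structured differently, and the difference is exactly what avoids this trap. First, decompose a semisimple $Z$ into a finite direct sum of \emph{simple} theories, those with $Z(S^{2q-1})\cong k$ (Corollary~\ref{cor:decomposesemisimple}) --- a step your outline omits entirely. A simple theory is multiplicative under connected sums, $Z(M\#W)=Z(S^{2q})^{-1}Z(M)Z(W)$ (Proposition~\ref{prop:multconnectsum}), which uses only that $Z(S^{2q-1})$ is one-dimensional. Iterating gives $Z(M\#^{N}(S^q\times S^q))=c^{N}Z(M)$ with $c=Z(S^{2q})^{-1}Z(S^q\times S^q)$, a scalar that is \emph{not} generally $1$. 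Comparing both sides of $M\#^{N}(S^q\times S^q)\cong N\#^{N}(S^q\times S^q)$ then reduces the theorem not to $c=1$ but to $c\neq 0$, i.e.\ $Z(S^q\times S^q)\neq 0$ (Lemma~\ref{lem:nonzerotori}). This is where something like your sketched idea actually appears: the explicit diffeomorphism of Proposition~\ref{pro:importantdiffeo} exhibits $Z(S^q\times S^q)$, up to an invertible factor, as an eigenvalue of the window endomorphism of the commutative Frobenius algebra $Z(S^q\times S^{q-1},\overline{\theta})$; semisimplicity of that algebra forces the window map to be invertible, so the eigenvalue is non-zero. Finally, note that ambidexterity, which you flag as the key tool for this theorem, is in fact used elsewhere in the paper --- in the linearization of spans and the non-degeneracy of the Pontryagin pairing behind Theorem~B --- and plays no role in the proof of Theorem~A.
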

The precise version of this theorem, Theorem~\ref{thm:semisimpletostablediffoinv}, allows for more general tangential structures; in this introduction we focus on the oriented and unoriented case. The main contribution of this paper is a proof that this `upper bound' is optimal for manifolds fulfilling a certain finiteness condition.

\begin{maintheorem} Connected closed $2q$-manifolds with $\pi$-finite tangential $(q-1)$-type\footnote{A $2q$-manifold has $\pi$-finite tangential $(q-1)$-type if $\pi_iF$ is finite for all $i \leq q-1$ and all basepoints, where $F$ is the homotopy fiber of the classifying map $M \to \rB O(2q)$ of the tangent bundle.} are stably diffeomorphic if and only if they are indistinguishable by semisimple field theories.
\end{maintheorem}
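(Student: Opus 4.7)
The forward direction is Theorem~A. For the converse, I would prove the contrapositive: given connected closed $2q$-manifolds $M,N$ with $\pi$-finite tangential $(q-1)$-type that are not stably diffeomorphic, I would produce a semisimple topological field theory whose partition function takes different values on $M$ and $N$.

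The first step is to reduce stable diffeomorphism to a bordism condition via Kreck's modified surgery theory. Kreck's theorem asserts that closed connected $2q$-manifolds are stably diffeomorphic if and only if they admit lifts of their tangent classifying maps to a common tangential $(q-1)$-type $B \to \rB O(2q)$ and, equipped with such lifts, are bordant in $\Omega_{2q}^{B}$. Under the $\pi$-finite hypothesis I can arrange a common $\pi$-finite space $B$ through which both tangent classifiers lift, so the obstruction to stable diffeomorphism becomes a nonzero class $[M,\nu_M]-[N,\nu_N]\in\Omega_{2q}^{B}$. It therefore suffices to produce, for each $\pi$-finite $B$, a family of semisimple TFTs on $B$-bordism whose partition functions jointly separate points of $\Omega_{2q}^{B}$.

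The second step is the construction of such TFTs as generalized Dijkgraaf--Witten theories associated to $\pi$-finite $B$ together with twist data (cocycles, or more generally invertible field theories on $B$-bordism). The partition function on $(M,\nu)$ takes the schematic form of a groupoid-weighted sum over lifts of $\nu$ along the $\pi$-finite fibration, pairing against the twist; by construction this is a $B$-bordism invariant and hence descends to a character of $\Omega_{2q}^{B}$. The dimensional-reduction and $\infty$-categorical M\"obius-inversion tools advertised in the abstract would be used to identify these partition functions in closed form and to verify that the construction lands in the class satisfying Definition~\ref{def:semisimpleTFT}.

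The main obstacle, and where I would spend most of the work, is the separation claim: that twists on $\pi$-finite spaces exhaust enough of $\Hom(\Omega_{2q}^{B},\Lambda)$ to detect every nonzero class. Atiyah--Hirzebruch for $\pi$-finite $B$ makes $\Omega_{2q}^{B}$ finitely generated, with torsion separated by $U(1)$-valued cocycles in $H^{2q}(B;U(1))$ via standard twisted DW partition functions; the difficulty is the free rank, which is invisible to unitary cocycles. To reach it I would invoke the rational ambidexterity machinery advertised in the abstract, enlarging the class of admissible twists to produce non-unitary $\CC$-valued invariants that compute (after suitable normalization) rational pairings with the fundamental class of $(M,\nu)$ in $B$. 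Combined with the torsion-separating unitary DW theories this would give a separating family of semisimple TFTs on $B$-bordism, completing the contrapositive.
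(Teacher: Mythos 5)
Your broad outline — Theorem A gives the forward direction, Kreck's theorem converts stable diffeomorphism into a statement about tangential $(q-1)$-types and orbits of bordism classes, and generalized Dijkgraaf--Witten theories (shown semisimple via dimensional reduction) furnish the candidate separating invariants — matches the paper. But the central technical step is misidentified, and one of your intermediate claims is false in a way that would sink the argument.

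You assert that the partition function of $\DW_{X,\xi,\omega}$ on a closed $(B,\beta)$-manifold ``is a $B$-bordism invariant and hence descends to a character of $\Omega_{2q}^B$.'' It is not: the invariant is the homotopy-cardinality-weighted sum $\sum_{[f]} \#(\Map_B(M,X),f)\,\omega([M,f])$ over \emph{all} lifts $f\colon M\to X$, and the weights $\#(\Map_B(M,X),f)$ are not bordism invariants of $M$. (If it were a bordism invariant it could never distinguish bordant manifolds, which is precisely what it must do.) Consequently the ``separation claim'' you isolate — whether twists exhaust enough of $\Hom(\Omega_{2q}^B,\Lambda)$ — is a red herring. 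Characters into $\CC^\times$ already separate points of any finitely generated abelian group, since $\CC^\times$ is divisible, so there is no torsion-versus-free dichotomy to resolve and no need for ambidexterity to reach the free rank. The actual difficulty is that the DW invariant is an \emph{average} over lifts, and one must show this averaging is information-preserving: if two vectors $(\tau_{\leq q-1}^B M,[M])$ and $(\tau_{\leq q-1}^B N,[N])$ pair the same against every $(\xi,\omega)$, they coincide. This is the non-degeneracy of the Pontryagin pairing $\langle-,-\rangle_{\tau_{\leq n}\cS_{/B}^\pi,\Omega_d}$ (Lemma~\ref{lem:DWispontryagin}, Corollary~\ref{cor:pontryaginnondeg}), and \emph{that} is where the $\infty$-categorical M\"obius inversion and nested Postnikov factorization systems (Theorem~\ref{thm:Moebius}, Theorem~\ref{thm:linearpairing}) do their work — not, as you suggest, in computing partition functions in closed form or verifying semisimplicity. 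Your proposal also tacitly assumes a common tangential type $B$ for $M$ and $N$; the case where $\tau_{\leq q-1}^B M\not\simeq \tau_{\leq q-1}^B N$ must be handled too, and it is handled uniformly by the same non-degeneracy statement. Without the pairing-non-degeneracy step the argument does not close.
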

This theorem appears as Theorem~\ref{thm:MainThm}. We will discuss a number of applications below, such as the existence of semisimple field theories which can distinguish smooth structure.

To prove Theorem B, we introduce and study a class of \emph{super topological field theories} generalizing well known constructions of Dijkgraaf-Witten, Freed and Quinn~\cite{MR1048699,MR1240583,MR1256993,MR1338394}. Associated to each such \emph{finite path integral theory} is a certain numerical dimension, called its \emph{type} and defined below at the end of Section~\ref{sec:introDW}. We show that finite path integral theories are semisimple (Theorem~\ref{thm:DWareSemisimple}) and that type-$(q-1)$ finite path integral theories distinguish \emph{non} stably diffeomorphic $2q$-manifolds, provided they have $\pi$-finite $(q-1)$-types (Theorem~\ref{thm:DWareSemisimple}), proving Theorem B.

\subsection{Manifold results} The four-dimensional oriented version of Theorem A appears as \cite[Theorem A]{Reutter:2020aa}, and generalizes earlier results on the sensitivity of positive TFTs \cite{MR2209373}. In this dimension, Theorem B reduces to the following converse of Theorem A: 

\begin{theorem}\label{thm:FirstIntroThm}
	Let $M$ and $N$ be connected, closed 4-manifolds with finite fundamental groups. Then $M$ and $N$ are stably diffeomorphic if and only if they are indistinguishable by semisimple topological field theories if and only if they are indistinguishable by type-1 finite path integral theories. 
\end{theorem}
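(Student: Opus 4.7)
The plan is to derive this three-way equivalence as a direct specialization of the general Theorems A and B to the case $q=2$. The only thing really requiring verification is that the hypothesis of Theorem B --- that the manifolds have $\pi$-finite tangential $(q-1)$-type --- reduces in dimension $4$ to the condition of finite fundamental group.

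First I would check the tangential finiteness condition. For a connected closed $4$-manifold $M$, let $F$ denote the homotopy fiber of the classifying map $M \to \rB O(4)$. The long exact sequence of the fibration $F\to M\to \rB O(4)$ reads
\[
\pi_2(M) \to \pi_2(\rB O(4)) \to \pi_1(F) \to \pi_1(M) \to \pi_1(\rB O(4)) \to \pi_0(F) \to 0.
\]
Since $\pi_1(\rB O(4)) \iso \ZZ/2$ and $\pi_2(\rB O(4)) \iso \ZZ/2$ are finite, and $\pi_1(M)$ is finite by hypothesis, both $\pi_0(F)$ and $\pi_1(F)$ are finite. Thus $M$ has $\pi$-finite tangential $1$-type, in the sense required by Theorems B and~\ref{thm:DWareSemisimple}, and the same holds for $N$.

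The three implications now follow in a direct chain. That stably diffeomorphic $M$ and $N$ are indistinguishable by semisimple TFTs is the four-dimensional specialization of Theorem A (in both the oriented and unoriented cases). That indistinguishability by semisimple TFTs implies indistinguishability by type-$1$ generalized Dijkgraaf--Witten theories is immediate from Theorem~\ref{thm:DWareSemisimple}, which asserts that these theories form a subclass of semisimple TFTs. Finally, that indistinguishability by type-$1$ generalized Dijkgraaf--Witten theories implies stable diffeomorphism is the substantive optimality content of Theorem~\ref{thm:DWareSemisimple}, applied to $2q$-manifolds with $\pi$-finite tangential $(q-1)$-type at $q=2$, which by the previous paragraph is exactly our hypothesis.

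The main obstacle, of course, is not present in this particular corollary: it is deferred to and absorbed into the proofs of Theorems A and~\ref{thm:DWareSemisimple}, which respectively require one to show that semisimple TFTs factor through the stable diffeomorphism category and to construct enough generalized Dijkgraaf--Witten theories to separate non stably diffeomorphic classes. Here the only genuine step is the homotopy-group computation above, which reduces the $4$-dimensional, finite-$\pi_1$ hypothesis to the general finiteness hypothesis on tangential types.
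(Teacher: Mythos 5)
Your proof is correct and follows essentially the same route the paper takes: this theorem is presented there as the $q=2$ specialization of Theorems~A and~B, and the only genuine verification needed is the reduction of ``$\pi$-finite tangential $1$-type'' to ``finite $\pi_1$'' via the long exact sequence of the fibration, exactly as you do. One small correction: the final implication (type-$1$ Dijkgraaf--Witten indistinguishability implies stable diffeomorphism) is the content of Corollary~\ref{thm:StableDiffeo} (built on Theorem~\ref{thm:MainThm} and Kreck's theorem), not of Theorem~\ref{thm:DWareSemisimple}, which only establishes that these theories are semisimple --- the misattribution likely traces to a repeated reference in the paper's introduction, but it is worth getting the dependency right.
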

As explained in~\cite{Reutter:2020aa}, Theorem A implies that \emph{oriented} semisimple 4-dimensional TFTs cannot detect smooth structure: It follows from a result of Gompf~\cite{MR769285} that smooth oriented $4$-manifolds which are orientation preserving homeomorphic are stably diffeomorphic. However, Theorem~\ref{thm:FirstIntroThm} shows that semisimple oriented 4-dimensional TFTs can see more than the homotopy type of a manifold --- in~\cite[Ex.~5.2.4]{TeichnerPHD}, Teichner constructed families of homotopy equivalent oriented $4$-manifolds with finite fundamental group which are not stably diffeomorphic and hence can be distinguished by oriented semisimple field theories, answering Question 1.1 of~\cite{Reutter:2020aa}.

In contrast, Theorem~\ref{thm:FirstIntroThm} also shows that semisimple $4$-dimensional TFTs can sometimes distinguish  \emph{unoriented} smooth structure: There are examples of unoriented smooth $4$-manifolds which are homeomorphic but not stably diffeomorphic, such as $\RR\PP^4$ and Cappell-Shaneson's fake $\RR\PP^4$. Provided they have finite fundamental groups, such manifolds can therefore be distinguished by semisimple $4-$dimensional field theories, recovering and extending an example of A. Debray~\cite{362517} (a version of which also appears in~\cite{2104.14567}).

In higher dimensions, the shortcomings of semisimple field theories remain significant: For example, as observed by Kreck and Schafer \cite{MR743942}, arbitrarily large families of $(2k-1)$-connected $4k$-manifolds ($k\geq 2$) which are pairwise stably diffeomorphic, but pairwise not homotopy equivalent have been implicit in the literature since Wall's classification of these manifolds up to the action of homotopy spheres \cite{MR145540}. Many more examples have appeared more recently \cite{CCPS21-1, CCPS21-2}, including infinite families of pairwise stably diffeomorphic manifolds which are pairwise not homotopy equivalent. By Theorem~A, all such manifolds are indistinguishable by semisimple TFT. 

On the other hand, we may use Theorem~B to construct semisimple field theories which can distinguish certain exotic smooth spheres (see Corollary~\ref{cor:hitchinspheredetection}).

\begin{proposition}
	In every dimension $d = 8k +1$ or $8k+2$ there exists pairs of distinct exotic spheres which are distinguished by certain oriented type-1 finite path integral theories.
\end{proposition}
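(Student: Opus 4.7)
The plan combines Hitchin's classical exotic spheres with the explicit generalized Dijkgraaf--Witten theories constructed elsewhere in the paper. By a theorem of Hitchin, for every $k \geq 1$ and every $d \in \{8k+1, 8k+2\}$ there exists an exotic smooth sphere $\Sigma^d$ whose $\alpha$-invariant (the $KO$-valued refinement of the $\widehat{A}$-genus) is non-trivial in $KO_d \cong \ZZ/2$, whereas $\alpha(S^d) = 0$ for the standard sphere. Since $S^q \times S^q$ bounds the spin manifold $S^q \times D^{q+1}$ with its canonical spin structure (and this is its unique spin structure once $q \geq 2$), the $\alpha$-invariant is unchanged by iterated connected sum with $S^q \times S^q$, and so descends to a stable diffeomorphism invariant. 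Therefore $\Sigma^d$ and $S^d$ are not stably diffeomorphic, establishing the manifold-level input.

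The substantive step is to exhibit a concrete oriented type-1 generalized Dijkgraaf--Witten theory whose partition function distinguishes $\Sigma^d$ from $S^d$. The strategy is to realise (a $\ZZ/2$-valued reduction of) the $\alpha$-invariant as the partition function of a super generalized Dijkgraaf--Witten theory that sums, with an appropriate super cocycle weighting, over spin lifts of the oriented tangential structure of the input manifold. Because the homotopy fibre of $B\mathrm{Spin} \to B\mathrm{SO}$ is $K(\ZZ/2,1)$, a $1$-type, this places the resulting field theory in the type-1 class of Section~\ref{sec:introDW}. The odd-dimensional case $d = 8k+1$ can be produced either by a parallel direct construction, or by invoking the closure of generalized Dijkgraaf--Witten theories under dimensional reduction on $S^1$, which is established earlier in the paper.

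The main obstacle is precisely this explicit construction. One cannot merely invoke Theorem~B to extract the detecting theory abstractly: $S^d$ fails the $\pi$-finite tangential $(q-1)$-type hypothesis once $q \geq 4$, because the homotopy fibre of $S^d \to BO(d)$ contains the infinite group $\pi_3 O = \ZZ$ in the relevant range. Instead one must produce the detecting type-1 generalized Dijkgraaf--Witten theory by hand, carefully matching a super cocycle on $B\mathrm{Spin}$ (encoding the composite $M\mathrm{Spin} \to KO \to \mathrm{H}\ZZ/2$ in the relevant degree) to the framework of generalized Dijkgraaf--Witten theories developed in the body of the paper, and then verifying that the resulting partition functions on $\Sigma^d$ and $S^d$ differ by a non-zero multiple of $\alpha(\Sigma^d) - \alpha(S^d)$.
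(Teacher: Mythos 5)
Your plan locates the right ingredients --- the $\alpha$-invariant, gauging spin structures to produce an oriented theory, the type-1 constraint --- and your proposed detecting theory is in fact the one the paper records in Remark~\ref{rmk:explicitTFTexoticsphere}. But there are two genuine gaps in the reasoning.

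First, you infer from the failure of Theorem~B that the abstract characterization is unavailable and a hand-built argument is forced. That is not correct. Theorem~B (about $2q$-manifolds and type-$(q-1)$ theories) indeed fails for spheres once $q\geq 4$, but Theorem~\ref{thm:MainThm} applies for any $n$ for which the manifold's tangential $n$-type is $\pi$-finite, and for $n=1$ the tangential $1$-type of \emph{any} homotopy sphere over $\rB SO(d)$ is $\rB Spin(d)\to\rB SO(d)$, with homotopy fiber $K(\ZZ/2,1)$ --- $\pi$-finite in every dimension. (The $\pi_3 O=\ZZ$ issue you flag only enters the tangential $n$-type for $n\geq 3$.) Hence Corollary~\ref{cor:firstsphere} is available: two homotopy spheres are indistinguishable by type-$1$ generalized Dijkgraaf--Witten theories if and only if they coincide in $\Omega^{Spin}_d$. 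Combined with Anderson--Brown--Peterson's result that $\eta(\Sigma)\neq 0$ if and only if $\alpha\eta(\Sigma)\neq 0$ in dimensions $8k+1,\,8k+2$, this is the whole proof. (Attribution: the existence of homotopy spheres with non-trivial $\alpha$-invariant in these dimensions is due to Anderson--Brown--Peterson; Hitchin's contribution is the positive-scalar-curvature obstruction.)

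Second, even on your direct route the argument is not closed, and the ``super cocycle on $\rB\mathrm{Spin}$'' framing misrepresents the input. By Theorem~\ref{thm:invertarechars} an invertible spin theory is a character $\omega:\Omega^{T\rB Spin(d)}_d\to k^\times$, and the natural choice is $\omega$ factoring through $\alpha:\Omega^{Spin}_d\to\ZZ/2\hookrightarrow\CC^\times$. The corresponding oriented generalized Dijkgraaf--Witten partition function on a simply connected spin $d$-manifold $M$ is $\tfrac{1}{2}\,\alpha(M)$, since the moduli space of spin refinements of the orientation is $\mathrm{B}\ZZ/2$ with homotopy cardinality $\tfrac{1}{2}$; this gives $-\tfrac{1}{2}$ on a Hitchin sphere and $+\tfrac{1}{2}$ on the standard sphere. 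The stable-diffeomorphism interlude is extraneous and, for $d=8k+1$ odd, stable diffeomorphism in the sense of Definition~\ref{def:stablediffeo} is not even defined; the dimensional-reduction workaround is unnecessary, as the argument above is uniform in $d$.
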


In fact, in dimensions $> 4$, (exotic) spheres are diffeomorphic if and only if they are stably diffeomorphic \cite{385015}. Unfortunately, in the relevant dimensions $2q > 6$ homotopy spheres do not have $\pi$-finite tangential $(q-1)$-type, so that Theorem B does not apply. This raises the following question:
\begin{question} Can the finiteness assumption in Theorem B be weakened? Do finite path integral theories, or more generally semisimple TFTs, distinguish all exotic spheres in dimensions $>4$? 
\end{question}
The finiteness assumption in Theorem~B cannot be completely omitted: In Proposition~\ref{prop:Thompson} we provide an example of a $4$-manifold with infinite fundamental group, but which nevertheless cannot be distinguished from $S^4$ by type-1 finite path integral theories.

In practice, we will use the following alternative characterization of $2q$-manifolds having $\pi$-finite $(q-1)$-type: For $4k \leq q$ let $p_{k,M}^\QQ: \pi_{4k}M \to \QQ$ denote the composite 
\begin{equation*}
	\pi_{4k}M \stackrel{\pi_{4k}(T_M)}{\to} \pi_{4k} \rB O(2q) \to H_{4k}(BO(2q); \QQ) \stackrel{p_{4k}^\QQ}{\to} \QQ
\end{equation*}
induced by the classifying map $T_M:M \to \rB O(2q)$ of the tangent bundle, the rational Hurewicz homomorphism, and the rational $k^\text{th}$ Pontryagin class. Then, a connected $2q$-manifold $M$ has $\pi$-finite tangential $(q-1)$-type if and only if for each $i \leq q-1$ with $i \neq 4k$ the group $\pi_{i}M$ is finite, for each $4k \leq q$ the homomorphism $p_{k,M}^\QQ$ is non-zero, and for each $4k \leq q-1$ the kernel of $p_{k,M}^\QQ$ is finite. The $6$ and $8$-dimensional analogues of Theorem~\ref{thm:FirstIntroThm} therefore become:
\begin{corollary} Connected closed $6$-manifolds with finite $\pi_1$ and $\pi_2$ are stably diffeomorphic if and only if they are indistinguishable by semisimple topological field theories, if and only if they are indistinguishable by type-$2$ finite path integral theories. 

Connected closed $8$-manifolds with finite $\pi_1$, $\pi_2$, and $\pi_3$, and non-zero signature are
	stably diffeomorphic if and only if they are indistinguishable by semisimple topological field theories if and only if they are indistinguishable by type-$3$ finite path integral theories.
\end{corollary}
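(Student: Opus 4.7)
Both cases of this corollary are direct applications of Theorem~B together with the alternative characterization of $\pi$-finite tangential $(q-1)$-type recorded just before. My strategy is therefore, in each of the two dimensions, to check that the listed hypotheses on $M$ imply $M$ has $\pi$-finite tangential $(q-1)$-type, after which Theorem~B delivers both equivalences.

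For $2q = 6$, that is $q = 3$, no positive integer $k$ satisfies $4k \le q = 3$, so both Pontryagin conditions in the characterization are vacuous. The characterization reduces exactly to finiteness of $\pi_1 M$ and $\pi_2 M$, which is the stated hypothesis, and Theorem~B applies immediately.

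For $2q = 8$, that is $q = 4$, the characterization requires finiteness of $\pi_1 M, \pi_2 M, \pi_3 M$ together with non-triviality of $p_{1,M}^\QQ \colon \pi_4 M \to \QQ$ (the unique Pontryagin condition, from $k = 1$, since $4 \le q$); the finite-kernel condition is again vacuous because no positive $k$ satisfies $4k \le q - 1 = 3$. Under the finiteness, Serre class theory forces $H_i(M;\QQ) = 0$ for $1 \le i \le 3$, so by Poincar\'e duality the rational cohomology $H^*(M;\QQ)$ is concentrated in degrees $0, 4, 8$, and the rational Hurewicz homomorphism gives $\pi_4 M \otimes \QQ \cong H_4(M;\QQ)$. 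Consequently $p_{1,M}^\QQ \ne 0$ is equivalent to $p_1(TM) \ne 0$ in $H^4(M;\QQ)$.

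The main non-formal step, and what I expect to be the principal obstacle, is then to extract $p_1(TM) \ne 0$ from the hypothesis $\sigma(M) \ne 0$. The plan is to combine Hirzebruch's signature formula $\sigma(M) = \tfrac{1}{45}\bigl(7 p_2 - p_1^2\bigr)[M]$ with the structural input that the intersection pairing on $H^4(M;\QQ)$ is non-degenerate of signature $\sigma \neq 0$: assuming $p_1 = 0$ in $H^4(M;\QQ)$ would force $p_1^2[M] = 0$ and hence $\sigma(M) = \tfrac{7}{45} p_2(TM)[M]$, and one then hopes to rule this out (or to upgrade the hypothesis to $p_1(TM) \ne 0$) using integrality of the Pontryagin numbers, and in the spin case integrality of the $\hat A$-genus together with the relation $5760\,\hat A[M] = 7p_1^2[M] - 4p_2[M]$. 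Once this implication is in hand, Theorem~B applied to pairs of $8$-manifolds satisfying the full hypothesis yields the chain of equivalences, completing the corollary.
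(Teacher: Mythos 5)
Your reduction of both cases to Theorem~B via the stated characterization of $\pi$-finite tangential $(q-1)$-type is the right strategy, and the $6$-dimensional case is complete as you give it: with $q=3$ no $k\ge 1$ satisfies $4k\le q$, so the characterization reduces exactly to finiteness of $\pi_1$ and $\pi_2$.

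For the $8$-dimensional case you correctly locate the one non-formal step, and indeed it is a genuine gap: the implication you need, namely that finiteness of $\pi_1,\pi_2,\pi_3$ together with $\sigma(M)\neq 0$ forces $p_1(TM)\neq 0$ in $H^4(M;\QQ)$, is simply false, so the Hirzebruch-plus-integrality route cannot close it. Concretely, take $W$ to be the boundary connected sum of $28$ copies of the $E_8$-plumbing of disc tangent bundles over $S^4$. This is a parallelizable, $3$-connected $8$-manifold with $\sigma(W)=224$ whose boundary represents $28\cdot[\partial W_{E_8}]=0$ in $bP_8\cong \ZZ/28$, hence is the standard $S^7$; capping off with $D^8$ produces a closed, $3$-connected, spin $8$-manifold $M$ with $\sigma(M)=224\neq 0$ and with $TM$ stably trivial away from a point, so $p_1(TM)=0$ in $H^4(M;\QQ)$. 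Your proposed integrality check does not detect this either: from $45\sigma=7p_2[M]$ with $\sigma=224$ one gets $p_2[M]=1440$, and $\hat A(M)=\frac{1}{5760}\bigl(7p_1^2-4p_2\bigr)[M]=-1$, an integer. Equivalently, the homomorphism $p_{1,M}^\QQ\colon \pi_4M\to\QQ$ vanishes identically for this $M$, so $M$ does not have $\pi$-finite tangential $3$-type, and Theorem~B does not apply to it even though it satisfies all the stated hypotheses of the corollary. (One can check directly that $\pi_3$ of the homotopy fiber of $\tau^{\rB O(8)}_{\le 3}M\to \rB O(8)$ is $\ZZ$.) What the characterization actually requires in dimension~$8$ is the condition $p_1(TM)\neq 0 \in H^4(M;\QQ)$, exactly parallel to the ``non-zero rational $1$st Pontryagin class'' hypothesis in the paper's $10$-dimensional corollary; ``non-zero signature'' is not a sufficient replacement. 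So your instinct to ``upgrade the hypothesis to $p_1(TM)\neq 0$'' is the correct fix, and with that hypothesis in place the argument goes through exactly as in the $6$-dimensional case.
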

In dimension $2q$ with $q$ even, stable diffeomorphism can fail to distinguish homotopically distinct manifolds. In contrast, when $q$ is odd, simply connected manifolds which are stably diffeomorphic are themselves diffeomorphic \cite[Thm~D]{MR1709301}. This yields the following corollaries, the first of which partially answers a question raised in \cite{MR2417448}.
\begin{corollary}
	 Simply connected $6$-manifolds with finite $\pi_2$ are distinguished up to diffeomorphism by 6-dimensional semisimple TFTs; specifically by type-2 finite path integral theories.
\end{corollary}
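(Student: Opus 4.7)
The plan is to deduce this statement immediately from two inputs: the preceding $6$-dimensional corollary (which, under the hypothesis that $\pi_1$ and $\pi_2$ are finite, reduces indistinguishability by semisimple TFTs --- or by type-$2$ generalized Dijkgraaf-Witten theories --- to stable diffeomorphism), and the classical fact recalled just above the corollary that in dimension $2q$ with $q$ odd, simply connected stably diffeomorphic manifolds are already diffeomorphic \cite[Thm~D]{MR1709301}.

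Concretely, let $M$ and $N$ be connected closed simply connected $6$-manifolds with $\pi_2$ finite, and suppose that they take the same value under every $6$-dimensional semisimple topological field theory (respectively, under every type-$2$ generalized Dijkgraaf-Witten theory). Simple connectedness makes $\pi_1$ trivial, hence finite, so the hypotheses of the preceding corollary are met; applying it yields that $M$ and $N$ are stably diffeomorphic. Because $2q = 6$ with $q = 3$ odd, Kreck's result then promotes stable diffeomorphism to diffeomorphism, so $M \cong N$. The reverse direction is immediate, since any diffeomorphism invariant of closed manifolds --- in particular the partition functions of semisimple TFTs --- is preserved by diffeomorphisms.

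No step in this argument presents a genuine obstacle: all of the substance is contained in Theorem~B and its $6$-dimensional specialization, with the role of the present statement being simply to package that specialization against Kreck's odd-$q$ rigidity in order to upgrade stable diffeomorphism to honest diffeomorphism. The only thing to notice is that the hypothesis ``simply connected with finite $\pi_2$'' is exactly the intersection of ``finite $\pi_1$ and $\pi_2$'' (needed for the preceding corollary) with ``simply connected'' (needed for \cite[Thm~D]{MR1709301}), so no further $\pi$-finiteness checks are required.
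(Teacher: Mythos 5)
Your argument is exactly the paper's own: combine the preceding corollary (reducing indistinguishability by semisimple TFTs / type-$2$ Dijkgraaf-Witten theories to stable diffeomorphism for connected closed $6$-manifolds with finite $\pi_1$ and $\pi_2$) with Kreck's result that simply connected stably diffeomorphic $2q$-manifolds are diffeomorphic when $q$ is odd. Correct, and the same approach as the paper.
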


\begin{corollary}
	Simply connected 10-manifolds with finite $\pi_2$ and $\pi_3$, $\pi_4 \otimes \QQ$ rank one, and non-zero rational 1st Pontryagin class  are distinguished up to diffeomorphism by 10-dimensional semisimple TFTs; specifically by type-4 finite path integral theories.
\end{corollary}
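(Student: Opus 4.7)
The plan is a short combination of Theorem~B (via the alternative characterization of $\pi$-finite tangential $(q-1)$-type recalled just above the statement) and the theorem of Kreck \cite{MR1709301} that identifies stable diffeomorphism with diffeomorphism for simply connected $2q$-manifolds when $q$ is odd. Here $2q = 10$, so $q = 5$ is odd.

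First I verify that the hypotheses translate into $\pi$-finite tangential $(q-1) = 4$-type. Using the alternative characterization, I check:
\begin{itemize}
\item For $i \in \{1,2,3\}$ (the integers $\leq q-1 = 4$ not of the form $4k$), the group $\pi_i M$ is required to be finite. Since $M$ is simply connected, $\pi_1 M = 0$, and the hypotheses give finite $\pi_2 M$ and $\pi_3 M$.
\item For $4k \leq q = 5$, only $k = 1$ contributes, and we need $p_{1,M}^{\QQ} \neq 0$. This is exactly the assumption that the rational first Pontryagin class is non-zero.
\item For $4k \leq q-1 = 4$, again only $k = 1$, and we need the kernel of $p_{1,M}^{\QQ} : \pi_4 M \to \QQ$ to be finite. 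Since $M$ is a closed simply connected manifold, $\pi_4 M$ is a finitely generated abelian group; the assumption that $\pi_4 M \otimes \QQ$ has rank one combined with $p_{1,M}^{\QQ} \neq 0$ forces the kernel to have rational rank zero, hence to be finite.
\end{itemize}
So Theorem~B applies and yields that two such $10$-manifolds are stably diffeomorphic if and only if they are indistinguishable by type-$4$ generalized Dijkgraaf-Witten theories.

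Next, since $q = 5$ is odd and the manifolds are simply connected, Kreck's theorem \cite[Thm~D]{MR1709301} tells us that two such $10$-manifolds are stably diffeomorphic if and only if they are diffeomorphic. Combining these two equivalences produces the claim: distinct diffeomorphism classes among simply connected $10$-manifolds satisfying the hypotheses correspond to distinct stable diffeomorphism classes, and hence are separated by some type-$4$ generalized Dijkgraaf-Witten theory.

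There is essentially no hard step here beyond invoking Theorem~B and Kreck's theorem; the only mild subtlety is bookkeeping the alternative characterization of $\pi$-finite tangential $(q-1)$-type, specifically the argument that rank-one $\pi_4 M \otimes \QQ$ together with non-vanishing $p_{1,M}^{\QQ}$ yields a finite kernel. One should also make sure to note that these hypotheses are preserved under diffeomorphism, so that the resulting statement about indistinguishability is symmetric between $M$ and $N$.
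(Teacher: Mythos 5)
Your proposal is correct and takes essentially the same approach as the paper: the paper derives this corollary directly from Theorem~B via the stated alternative characterization of $\pi$-finite tangential $(q-1)$-type, combined with Kreck's result \cite[Thm~D]{MR1709301} that for $q$ odd simply connected $2q$-manifolds which are stably diffeomorphic are diffeomorphic. The only thing you spell out beyond the paper's sketch is the (correct) observation that $\pi_4 M$ is finitely generated for a simply connected closed manifold, so that rank-one $\pi_4 M \otimes \QQ$ together with $p^{\QQ}_{1,M} \neq 0$ forces $\ker p^{\QQ}_{1,M}$ to be finite.
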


Further examples and applications are discussed in Section~\ref{sec:exmappl}.

\subsection{Finite path integral theories}\label{sec:introDW}
In~\cite{MR1048699}, R. Dijkgraaf and E. Witten construct a gauge theory for a finite gauge group $G$, a construction which was later improved, streamlined and expressed as an oriented functorial TFT  by D. Freed and F. Quinn in~\cite{MR1256993, MR1240583}.  Such Dijkgraaf-Witten TFT were thereafter considered by many authors~\cite{MR2648901,Trova:2016aa,MR3825014,MR3862670,MR4021927,  MR4133704}.

Based on ideas of Kontsevich~\cite{Kontsevich}, F. Quinn defined in~\cite{MR1338394} oriented \emph{twisted finite homotopy TQFTs}, a generalization of Dijkgraaf-Witten theories in which the classifying space $\rB G$ of the group $G$ is replaced by any $\pi$-finite\footnote{A space $Y$ is $\pi$-finite if $\pi_0 Y$ is finite and if for each base point $\pi_n Y$ is finite for all $n$ and non-zero for only finitely many $n$.} space $Y$ and a cohomology class $\omega \in H^d(Y; \CC^\times)$. In physical terms, Quinn's version of Dijkgraaf-Witten theory may be understood as a `finite $\sigma$-model'. The `space of fields' on a closed $d$-manifold $M$ is the mapping space $\Map(M, Y)$, and the partition function on $M$ is computed via a `finite path integral' involving $\omega\in H^d(Y; \CC^\times)$ as a Lagrangian of the theory:
\begin{equation}\label{eq:QuinnDW}
	Z_{Y, \omega}(M) = \sum_{[f] \in \pi_0 \Map(M,Y)} \#(\Map(M,Y), f) \cdot \langle [M], f^*\omega \rangle .
\end{equation}
Here, $\langle [M], -\rangle: H^d(M, \mathbb{C}^{\times}) \to \mathbb{C}^{\times}$ denotes the evaluation of a top-dimensional cohomology class against the fundamental class of $M$ and $\#(Z,z)$ denotes the \emph{homotopy cardinality of $Z$ at $z$} (cf.~\cite[Lecture 4]{MR1338394}) defined for $\pi$-finite spaces as
\begin{equation}\#(Z,z):= \prod_{i \geq 1} | \pi_i(Z, z)|^{(-1)^i} = |\pi_1(Z,z)|^{-1} |\pi_2(Z,z)| |\pi_3(Z,z)|^{-1} \cdots.
\end{equation}
If $Y = \rB G$ is the classifying space of a finite group $G$, the space $\Map(M, Y)$ is the space of principal $G$-bundles on $M$ and~\eqref{eq:QuinnDW} recovers the classical Dijkgraaf-Witten partition function. 

Rather than summing over spaces of maps $\Map(M, Y)$, we generalize this theory to allow for summations over more general tangential structures on $M$ --- for example, starting from a spin theory one may sum over spin structures to produce an oriented field theory.
Such \emph{finite path integral theories} can be used to produce topological field theories of arbitrary dimension $d$ and for arbitrary tangential structure, however for expository purposes we will focus on the oriented case in this introduction.
In this case, one starts with a space $X$ with a map $\xi: X \to \rB SO(d)$ whose homotopy fibers are $\pi$-finite. The space of fields will be the space of $(X, \xi)$-structures on an oriented manifold $M$, i.e. the space $\Map_{BSO(d)}(M, X)$ of maps to $X$ over $BSO(d)$ (in the appropriate homotopical sense), where $M$ is viewed as a space over $BSO(d)$ via its tangent classifying map $T_M: M \to BSO(d)$. For example, if $X= Y \times \rB SO(d)$ the space of fields is $\Map(M, Y)$ as in Quinn's field theory, whereas for $X= \rB Spin(d)$ the space of fields is the space of spin structures on $M$ which are compatible with the given orientation. Since $X$ is $\pi$-finite, the space $\Map_{BSO(d)}(M, X)$ will itself be $\pi$-finite.

The second input to our construction is a $d$-dimensional $(X, \xi)$-structured field theory $\cW$. In Section~\ref{sec:DWsTFTs}, we produce from this data $(X,\xi, \cW)$ an oriented $d$-dimensional field theory with  partition function  on a closed oriented $d$-manifold $M$ given by 
\begin{equation*}
	FP_{\xi, \cW}(M) =  \sum_{[f] \in \pi_0 \Map_{\rB SO(d)}(M,X)} \#(\Map_{\rB SO(d)}(M,X), f) \cdot \cW(M, f).
\end{equation*}
This field theory may be thought of as arising from \emph{gauging} (or `integrating') the $(X,\xi)$-structured field theory $\cW$ along $X\to \rB SO(d)$ to an oriented field theory.

For our purposes, we will mainly restrict attention to \emph{invertible} super vector space valued $(X,\xi)$-theories $\cW$. Such theories are classified by their partition function: For each tangential structure $(X, \xi)$, there is an associated finitely generated abelian group $\Omega_d^{T\xi}$ which may be thought of as an `unstable tangential' version of more familiar bordism groups. In special cases, this group  $\Omega_d^{T\xi}$ has been called the SKK-group \cite{MR0362360} or the Reinhart vector field cobordism group \cite{MR153021}. Closed $d$-manifolds with $(X, \xi)$-structures give elements in $\Omega_d^{T\xi}$, and
 invertible super field theories are uniquely determined by the induced group homomorphism (see Theorem~\ref{thm:invertarechars})
\begin{equation*}
	\omega: \Omega_d^{T\xi} \to \CC^\times. 
\end{equation*}
The finite path integral theory constructed from $(X, \xi)$ and an invertible theory determined by $\omega: \Omega_d^{T\xi} \to \CC^\times$ is an oriented super topological field theory $\FP_{\xi, \omega}$ with partition function 
\begin{equation}\label{eq:generalizedDW}
	\FP_{\xi, \omega}(M) =  \sum_{[f] \in \pi_0 \Map_{\rB SO(d)}(M,X)} \#(\Map_{\rB SO(d)}(M,X), f) \cdot \omega(M, f).
\end{equation}
We say that a finite path integral theory $\FP_{\xi, \omega}$ is of \emph{type-n} if the homotopy fiber $F$ of $\xi: X \to \rB SO(d)$ is an $n$-type, i.e. if $\pi_{\geq n+1} F =0$. 

A cohomology class $\omega\in H^d(Y; \CC^\times)$ gives rise to an invertible theory for $X= Y \times \rB SO(d)$ and the resulting oriented TFT recovers Quinn's $Z_{Y, \omega}$ (see Example~\ref{ex:DWQuinnasGenDW}).

\subsection{What is visible to finite path integral theories}

The manifold invariants~\eqref{eq:generalizedDW} arise from averaging $(X,\xi)$-structured bordism invariants over all $(X,\xi)$-structures on a given manifold $M$, weighted by homotopy cardinality. To study the `lossiness' of this averaging procedure more systematically, we consider the following categorical generalization.

Let $\cC$ be an $\infty$-category enriched in $\pi$-finite spaces and let $\Omega:\cC \to \mathrm{Ab}$ be a functor into the category of abelian groups. From this data, we obtain a linear pairing

\begin{equation} \langle -,- \rangle_{\cC, \Omega}: \bigoplus_{[c] \in \pi_0 \cC} \mathbb{C}[\widehat{\Omega(c)}/{\pi_0 \Aut_{\cC}(c)}] \otimes\bigoplus_{[d] \in \pi_0 \cC} \mathbb{C}[\Omega(d)/\pi_0 \Aut_{\cC}(d)] \to \mathbb{C}
\end{equation}
where the direct sum is taken over the isomorphism classes of objects of $\cC$, and where for an abelian group $A$ with an action by a group $G$, $\widehat{A}:= \Hom(A, \mathbb{C}^{\times})$ denotes the group of characters and $A/G$ denotes the set of orbits of the $G$-action.
The pairing is characterized by the following property: For each $c,d \in \cC$, the induced pairing
\[ \widehat{\Omega(c)} \times \Omega(d) \to \mathbb{C}
\]
 is given by 
\[(\phi, v) \mapsto \sum_{f\in \pi_0 \cC(d,c)} \#(\cC(d,c), f)~\phi(\Omega(f)(v)).
\]
Note that this formula indeed only depends on the orbits of $\phi$ and $v$ under the $\Aut_{\cC}(c)$ and $\Aut_{\cC}(d)$ action, respectively, and the isomorphism classes of $c$ and $d$.

\begin{example} Let $\cC$ be an ordinary $1$-category with finite hom sets and let $\Omega$ be the constant functor with value the trivial group. Then, the pairing becomes a matrix indexed by the isomorphism classes of objects of $\cC$ with $(x,y)$-coefficient given by the cardinality $|\cC(y,x)|$.\end{example}

\begin{example}\label{exm:introbordismpairing}Let $n,d\geq 0$ and consider the $\infty$-category $\cC$ of spaces $X$ equipped with maps $\xi: X\to \rB SO(d)$ whose homotopy fibers $F$ are $\pi$-finite $n$-types (i.e. $F$ is $\pi$-finite and $\pi_{\geq n+1} F =0$). Let $\Omega$ be the functor $\Omega(X,\xi) = \Omega_d^{T\xi}$. If $M$ is a closed $d$-manifold with $\pi$-finite tangential $n$-type, then the (oriented) finite path integral invariant associated to $(X,\xi)$ and $\omega: \Omega_d^{X,\xi} \to \mathbb{C}^\times$ can be expressed in terms of the pairing $\langle-,-\rangle_{\cC, \Omega}$:
\[\FP\FP_{\xi, \omega} (M) = \langle ((X,\xi), \omega), (\tau_{\leq n} M, [M]) \rangle_{\cC, \Omega}.
\] 
Here, $\tau_{\leq n} M \to \rB SO(d) $ is the tangential $n$-type obtained from the oriented manifold $M$ by factoring the classifying map of the tangent bundle $$ M \to \tau_{\leq n}M \to\rB SO(d)$$ 
into an $n$-connected map followed by an $n$-truncated map\footnote{\label{ftn:intro}A map $f:X \to Y$ of spaces is \emph{$n$-connected} if for all points $x\in X$, $\pi_{i}(f;x)$ is an isomorphism for $i < n+1$ and surjective for $i=n+1$, or equivalently if all homotopy fibers $F$ of $f$ are $n$-connected, i.e. $\pi_{\leq n}F =0$. A map $f:X\to Y$ is \emph{$n$-truncated} if $\pi_i (f;x)$ is injective for $i=n+1$ and an isomorphism for $i >n+1$, or equivalently if all homotopy fibers $F$ are $n$-types, i.e. $\pi_{\geq (n+1)}F = 0$.} and \[[M] \in \Omega_d^{\tau_{\leq n}(M)}/\pi_0\Aut\left(\vphantom{\frac{a}{b}}\tau_{\leq n} M \to\rB SO(d)\right)\] is the induced orbit in the bordism group.
\end{example}
Our proof of Theorem B relies on a general criterion for when pairings $\langle-, -\rangle_{\cC, \Omega}$ are  non-degenerate, applied to Example~\ref{exm:introbordismpairing}. This question of non-degeneracy of $\langle-, -\rangle_{\cC, \Omega}$ is closely related to the  M\"obius inversion principle\footnote{Roughly speaking, the M\"obius inversion principle states that the zeta function of any incidence algebra of a M\"obius $\infty$-category is invertible for the convolution product. Our sufficient condition in Theorem~\ref{thm:introtechnical} does not require $\cC$ to be a M\"obius category, and hence does neither imply nor is implied by the results in~\cite{MR3818099}.} for decomposition spaces (also known as $2$-Segal spaces~\cite{MR3970975}) developed in~\cite{MR3804694,MR3818099,MR3828744}. 

In the case where $\cC$ is a $1$-category and $\Omega$ is the trivial functor, non-degeneracy of $\langle-,-\rangle_{\cC, \Omega}$ is equivalent to the question of whether each object $x$ is determined up to isomorphism by the sizes of the hom sets $\cC(x,y)$ for all $y$, and vice versa. This question was studied by Lovasz \cite{MR214529, MR284391} and answered positively for many categories of finite algebraic structure including various categories of finite graphs and the category of finite groups. An important consequence is for example that for any three finite groups (or finite graphs) $G$, $H$, and $K$ if $G \times K \cong H \times K$, then $G \cong H$.

Lovasz's technique is, roughly, to use a factorization system on $\cC$ to write the pairing in simpler terms. For example, if $\cC$ is the category of finite sets,  the set of isomorphism classes of objects of $\cC$ can be identified with $\mathbb{N}$ and the pairing of finite sets  $n$ and $m$ of cardinality $n$ and $m$, respectively,  is given by  $\langle n, m \rangle = |\Hom(m, n)| =  n^m$. Factoring a function $m \to n$ as a surjection followed by an injection, it follows that \[|\Hom(m, n)| = \sum_{a\in \mathbb{N}} \frac{|\mathrm{Surj}(m, a)| ~|\mathrm{Inj}(a, n)|}{|\Aut(a)|}.\] As both $|\mathrm{Surj}(-,-)|$ and $|\mathrm{Inj}(-,-)|$ are given by (infinite) triangular matrices with non-zero diagonal entries, and hence lead to non-degenerate pairings, it follows that the pairing $|\Hom(-,-)|$ itself is non-degenerate.

We generalize this method to categories $\cC$ enriched in $\pi$-finite spaces which are equipped with a nested sequence of factorizations systems, and prove the following theorem (see Theorem~\ref{thm:linearpairing} for details).

\begin{theorem}\label{thm:introtechnical}
	Let $\cC$ be a $\infty$-category enriched in $\pi$-finite spaces. For $k=0, \dots, n$ let $(\cL^{(k)}, \cR^{(k)})$ be a collection of orthogonal factorization systems on $\cC$, which are nested in the sense that $\cR^{(k-1)} \subseteq \cR^{(k)}$. Suppose that each of the subcategories 
	\begin{equation*}
		\cR^{(0)}, \quad \cR^{(1)} \cap \cL^{(0)}, \quad \cR^{(2)} \cap \cL^{(1)}, \quad\dots \quad\cR^{(n)} \cap \cL^{(n-1)}, \quad\cL^{(n)}
	\end{equation*}
satisfies the condition that all endomorphisms are equivalences. Then for every functor $\Omega: \cC \to \mathrm{Ab}$ the corresponding 
pairing $\langle-,-\rangle_{\cC, \Omega}$ is non-degenerate.\end{theorem}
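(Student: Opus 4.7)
The strategy is to generalize the Lov\'asz-style triangular-matrix argument outlined in the introduction using the nested factorization systems. First, I would exhibit the pairing $\langle-,-\rangle_{\cC,\Omega}$ as a convolutional composition of ``intermediate'' pairings associated to the subcategories $\cD_0 := \cR^{(0)}$, $\cD_k := \cR^{(k)} \cap \cL^{(k-1)}$ for $1 \leq k \leq n$, and $\cD_{n+1} := \cL^{(n)}$. Iteratively applying the $(\cL^{(k)}, \cR^{(k)})$-factorizations and using the nesting $\cR^{(k-1)} \subseteq \cR^{(k)}$ (equivalently $\cL^{(k)} \subseteq \cL^{(k-1)}$) produces, for each morphism $d \to c$, an essentially unique factorization $d \to x_n \to x_{n-1} \to \cdots \to x_0 \to c$ with $x_k \to x_{k-1} \in \cD_k$, $x_0 \to c \in \cD_0$, and $d \to x_n \in \cD_{n+1}$. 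This yields a homotopy equivalence
\[\cC(d,c) \simeq \colim_{x_0, \ldots, x_n \in \cC^{\simeq}} \cD_0(x_0, c) \times \prod_{k=1}^{n} \cD_k(x_k, x_{k-1}) \times \cD_{n+1}(d, x_n),\]
and the multiplicativity of homotopy cardinality over such colimits, combined with the defining formula of the pairing, translates this into a convolutional identity expressing $\langle-,-\rangle_{\cC,\Omega}$ as the composite of the pairings $\langle-,-\rangle_{\cD_k,\Omega}$, with the automorphism-groupoid cardinalities of the intermediate objects providing the appropriate weights.

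Next, I would show that every such intermediate pairing $\langle -,-\rangle_{\cD,\Omega}$ is non-degenerate whenever $\cD$ has the endomorphism-equivalence property. Define a relation on $\pi_0 \cD$ by $[x] \preceq [y] \iff \cD(y,x) \neq \emptyset$. Reflexivity and transitivity are immediate; for antisymmetry, if $f : y \to x$ and $g : x \to y$ lie in $\cD$, then $gf \in \cD(y,y)$ and $fg \in \cD(x,x)$ are equivalences by hypothesis, forcing $f$ and $g$ themselves to be equivalences. Relative to this partial order, the matrix of $\langle -,-\rangle_{\cD,\Omega}$ indexed by $\pi_0\cD \times \pi_0\cD$ has its $([c],[d])$-block vanishing unless $[c] \preceq [d]$, while the diagonal block at $([c],[c])$ reduces --- up to a nonzero homotopy-cardinality factor coming from the identity component of $\Aut_\cD(c)$ --- to the pairing $\mathbb{C}[\widehat{\Omega(c)}]^{\pi_0 \Aut_\cD(c)} \otimes \mathbb{C}[\Omega(c)]_{\pi_0 \Aut_\cD(c)} \to \mathbb{C}$ obtained from Pontryagin duality averaged over the finite group action, which is non-degenerate in characteristic zero by linear independence of characters. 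Non-degeneracy of $\langle -,-\rangle_{\cD,\Omega}$ then follows from a standard triangular-matrix argument on the finitely supported vectors: for $v = \sum v_{[c]}$ in the left kernel, picking $[c_0]$ minimal in its support and testing against $V_{[c_0]}$ isolates $\langle v_{[c_0]},-\rangle$ and forces $v_{[c_0]} = 0$, contradicting $[c_0] \in \mathrm{supp}(v)$; an analogous argument using maximal support elements handles the right kernel.

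Combining the two steps, $\langle-,-\rangle_{\cC,\Omega}$ is a convolution of non-degenerate pairings, and the same triangular-support argument propagated through the chain decomposition shows the convolution is still non-degenerate. The main technical obstacle will be Step~1: iterating the orthogonal factorization in the $\pi$-finite-enriched $\infty$-categorical setting and tracking homotopy cardinalities --- including the precise weighting contributed by the automorphism groupoids of the intermediate objects --- so that the composition of matrices of the intermediate pairings genuinely reproduces the matrix of $\langle-,-\rangle_{\cC,\Omega}$. This coherence is precisely the sort of identity that the $\infty$-categorical M\"obius-inversion / decomposition-space framework of G\'alvez-Carrillo--Kock--Tonks, cited in the paper, is designed to handle.
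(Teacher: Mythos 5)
Your outline captures two of the three moving parts of the paper's proof, but the gap lies exactly where you wave your hands, and you misdiagnose where the real difficulty is. Your Step~2 is sound and is essentially the paper's combination of Proposition~\ref{prop:nondeggroupoid} and Theorem~\ref{thm:Moebius}: the antisymmetry-from-invertible-endomorphisms observation is correct, the triangular-support argument for a single $\cD_k$ works, and the diagonal blocks are non-degenerate by linear independence of characters (the paper packages this as Corollary~\ref{cor:pontryaginnondeg}). The paper also derives the convolutional identity of your Step~1 cleanly — it is Lemma~\ref{lem:pifinitefactpairingformula} combined with Theorem~\ref{thm:composingspans}, which is a formal consequence of the functoriality and monoidality of the linearization functor on decorated spans; there are no coherence headaches there, so your identified ``main technical obstacle'' is not actually where the difficulty lies.

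The genuine gap is in Step~3, and more precisely in the tension between Steps~1 and~3 once $\cC$ has infinitely many equivalence classes of objects (which ``enriched in $\pi$-finite spaces'' allows). There are two linked problems. First, the intermediate ``matrices'' $\Phi_{\cD_k}$ are linearizations of the spans $\cC_0 \leftarrow (\cD_k)_1 \to \cC_0$, and for these to be well-defined maps $\colim \cL_F \to \colim \cL_F$ the source map must have $\pi$-finite homotopy fibers. Local $\pi$-finiteness does not guarantee this: the fiber over an object $a$ is essentially $\coprod_{[b]} \cD_k(a,b)\,/\hspace{-3pt}/\,\Aut(b)$, which is $\pi$-finite only if there are finitely many $[b]$ with $\cD_k(a,b) \neq \emptyset$. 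Thus your ``convolution'' of infinite matrices is not a priori a legal composition of linear maps, and even when each entry of the product is a finite sum, the assertion ``a convolution of non-degenerate pairings is non-degenerate'' fails in general for infinitely supported situations. Second, even granting the formula, your triangular argument cannot be propagated directly: the $\cD_k$ for $\cR$-type subcategories and $\cL$-type subcategories carry \emph{opposite} partial orders (in $\Fin\Set$, injections order by $\leq$ and surjections by $\geq$), so the composite is not triangular with respect to any single poset. What you actually need — and what the paper supplies — is Lemma~\ref{lem:factorizablesubcategory}: any $\pi$-finite full subcategory of a locally $\pi$-finite $\cC$ can be enlarged to a \emph{factorizable} $\pi$-finite full subcategory $\cB$, on which the M\"obius inversion Theorem~\ref{thm:Moebius} genuinely applies so that $\Phi_{\cB \cap \cL,F}$ is honestly invertible. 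The inductive argument of Theorem~\ref{thm:linearpairing} then passes non-degeneracy one factorization at a time via Lemma~\ref{lem:nondegfacttonondegenwhole}, each time choosing such a $\cB$ containing the relevant finite supports and solving $\Phi_{\cB\cap\cL,F}(v)=w$ there. Without that restriction, your back-substitution need not terminate in a finitely supported $v$, and the proposal as written is incomplete.
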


The Postnikov factorization system into ($k$-connected/$k$-truncated) maps for $-1\leq k \leq n$ (see footnote~\ref{ftn:intro}) endows the $\infty$-category $\cC$ from Example~\ref{exm:introbordismpairing} with such a nested sequence of factorization systems (see Definition~\ref{def:nestedfactsystem}).
Hence, applied to Example~\ref{exm:introbordismpairing}, Theorem~\ref{thm:introtechnical} leads to the following precise characterization of the manifold topology visible to finite path integral theories.

\begin{theorem}\label{thm:Intro_DWdistinguish}
	Two $d$-manifolds $M$ and $N$ with $\pi$-finite tangential $n$-types $\tau_{\leq n}M$ and $\tau_{\leq n}N$ are indistinguishable by type-$n$ oriented finite path integral theories if and only if they have equivalent tangential $n$-types $\tau_{\leq n}M \simeq (X, \xi) \simeq \tau_{\leq n}N$, and the bordism classes $[M]$ and $[N]$ lie in the same orbit in $\Omega^{T\xi}_d$ under the action of $\pi_0 \Aut(X, \xi)$.
\end{theorem}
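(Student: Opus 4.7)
The plan is to obtain this as a direct consequence of the non-degeneracy theorem (Theorem~\ref{thm:introtechnical}) applied to the $\infty$-category $\cC$ and functor $\Omega(X,\xi) = \Omega_d^{T\xi}$ of Example~\ref{exm:introbordismpairing}. First, I would reformulate the indistinguishability condition. By Theorem~\ref{thm:invertarechars}, the type-$n$ generalized Dijkgraaf-Witten theories are parametrized by pairs $((X,\xi),\omega)$ with $(X,\xi)\in\cC$ and $\omega\in\widehat{\Omega_d^{T\xi}}$, and by Example~\ref{exm:introbordismpairing} their evaluation on $M$ is $\DW_{\xi,\omega}(M)=\langle((X,\xi),\omega),(\tau_{\leq n}M,[M])\rangle_{\cC,\Omega}$. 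Thus $M$ and $N$ are indistinguishable by such theories exactly when the basis elements $(\tau_{\leq n}M,[M])$ and $(\tau_{\leq n}N,[N])$ of $\bigoplus_{[c]}\CC[\Omega(c)/\pi_0\Aut(c)]$ pair identically against every element on the other side. Since two basis vectors pairing identically must coincide under a non-degenerate pairing, this is precisely the sought characterization; the converse direction is immediate from the fact that equal basis vectors have equal pairings.

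It then suffices to verify non-degeneracy of $\langle-,-\rangle_{\cC,\Omega}$ by checking the hypotheses of Theorem~\ref{thm:introtechnical} on $\cC$. I would equip $\cC$ with the Postnikov factorization systems $(\cL^{(k)},\cR^{(k)})$ for $0\le k\le n$, where $\cR^{(k)}$ (resp.\ $\cL^{(k)}$) consists of the $k$-truncated (resp.\ $k$-connected) morphisms of $\cC$. Nestedness $\cR^{(k-1)}\subseteq\cR^{(k)}$ is immediate from the fiber characterization, since a $(k-1)$-type is an $k$-type. The main verification is then that each intermediate class, namely $\cR^{(0)}$, the intersections $\cR^{(k)}\cap\cL^{(k-1)}$ for $1\le k\le n$, and $\cL^{(n)}$, has the property that all its endomorphisms in $\cC$ are equivalences.

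To check this, an endomorphism $f\colon(X,\xi)\to(X,\xi)$ in $\cC$ restricts to a self-map $g\colon F\to F$ of the homotopy fiber $F$ over $\rB SO(d)$, and $f$ is an equivalence in $\cC$ if and only if $g$ is an equivalence. Within $\cL^{(n)}$, $g$ is $n$-connected and $F$ is an $n$-type, so $g$ is already an equivalence without invoking finiteness. Within $\cR^{(k)}\cap\cL^{(k-1)}$ for $1\le k\le n$, $g$ is an isomorphism on $\pi_i(F)$ for $i\notin\{k,k+1\}$, surjective on $\pi_k(F)$, and injective on $\pi_{k+1}(F)$; since $F$ is $\pi$-finite, each $\pi_i(F)$ is a finite group, so these surjections and injections upgrade to isomorphisms and hence $g$ is an equivalence. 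The case $\cR^{(0)}$ is similar, additionally using that a self-injection of the finite set $\pi_0 F$ is a bijection.

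The principal obstacle I anticipate is the careful execution of this third step: verifying that the fiber characterization of connected and truncated maps in the slice $\infty$-category $\cC$ over $\rB SO(d)$ interacts correctly with basepoints and multiple connected components of $F$, and checking that the Postnikov tower assembles into a nested sequence of orthogonal factorization systems on the $\pi$-finite-enriched $\infty$-category $\cC$ rather than merely on the underlying $\infty$-category of spaces. Once this bookkeeping is in place, the pigeonhole reduction of surjections and injections between finite groups to isomorphisms is routine, and the conclusion follows by applying Theorem~\ref{thm:introtechnical}.
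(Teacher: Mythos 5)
Your proposal follows essentially the same route as the paper: reformulate the DW invariant via the Pontryagin pairing on the category of $\pi$-finite tangential $n$-types (Lemma~\ref{lem:DWispontryagin}), then deduce the equivalence from non-degeneracy of that pairing via Corollary~\ref{cor:pontryaginnondeg}, whose hypotheses are checked using the nested Postnikov factorization system exactly as you describe. The obstacle you flag is indeed what the paper addresses, though the real point is not so much factorization-system bookkeeping in the slice category as ensuring the category is \emph{locally} $\pi$-finite; this is handled by the $n$-finitely-dominated machinery of Appendix~\ref{sec:nfindom} (in particular Corollary~\ref{cor:pifinislocpifin}, using that $\rB SO(d)$ is $n$-finitely dominated).
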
 
A precise version of this theorem for general ambient tangential structure appears as Theorem~\ref{thm:MainThm}.
Theorem~\ref{thm:Intro_DWdistinguish} should be compared to Kreck's theorem~\cite{MR1709301} which implies (see Section~\ref{sec:stablediffeoandKrecksthm}) that the stable diffeomorphism class of a $2q$-manifold $M$ is precisely determined by the data appearing in Theorem~\ref{thm:Intro_DWdistinguish}: namely, its tangential $(q-1)$-type $(X, \xi)$ and the orbit of its bordism class $[M]$ in $\Omega_d^{T\xi}$ under the action of $\pi_0 \Aut(X,\xi)$. Hence, combined with Kreck's result, Theorem~\ref{thm:Intro_DWdistinguish} implies Theorem~B. In other words, finite path integral theories are in a sense `Pontryagin dual' to stable diffeomorphism classes.

\begin{remark}
	An interesting consequence of Theorems~\ref{thm:Intro_DWdistinguish}, A, and B is that if two manifolds with $\pi$-finite tangential $n$-type and even dimension $d = 2q$ are distinguished by some semisimple field theories, then in fact they can be distinguished by finite path integral theories of type $(q-1)$, one less than the middle dimension. For example, using finite path integral theories of higher type does not result in stronger invariants. 
\end{remark}

On the other hand, non-degeneracy of our pairing also implies that in high enough dimensions $d$, finite path integral theories are distinguished by their partition functions.

\begin{theorem}
	Suppose that $n< \lfloor \frac{d}{2} \rfloor$. If $((Y_1, \xi_1),\omega_1)$ and $((Y_2, \xi_2), \omega_2)$ give rise to type-$n$ finite path integral theories whose partition functions are identical on closed $d$-manifolds with $\pi$-finite tangential $n$-type, then $\xi_1 \simeq \xi_2 = \xi$, and $\omega_1$ and $\omega_2$ are in the same orbit of $\Hom(\Omega^{T\xi}_d, \mathbb{C}^{\times})$ under the action of $\pi_0 \Aut(\xi)$. The resulting finite path integral theories are consequently isomorphic. 
\end{theorem}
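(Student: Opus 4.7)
The plan is to translate the theorem, via the pairing formalism of Example~\ref{exm:introbordismpairing}, into non-degeneracy of a linear pairing and then to invoke Theorem~\ref{thm:introtechnical}. Let $\cC$ denote the $\infty$-category of spaces $(X, \xi)$ over $\rB SO(d)$ whose structure map $\xi$ is $n$-truncated with $\pi$-finite homotopy fibers, and let $\Omega(X, \xi) = \Omega_d^{T\xi}$. Each input datum $((Y_i, \xi_i), \omega_i)$ then represents a class $[((Y_i, \xi_i), \omega_i)] \in \bigoplus_{[(X, \xi)]} \CC[\widehat{\Omega_d^{T\xi}}/\pi_0 \Aut(X, \xi)]$, and by Example~\ref{exm:introbordismpairing} the partition function $\DW_{\xi_i, \omega_i}(M)$ on any closed $d$-manifold $M$ with $\pi$-finite tangential $n$-type equals the pairing of this class against $(\tau_{\leq n}M, [M])$. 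The hypothesis therefore amounts to the assertion that the difference $[((Y_1, \xi_1), \omega_1)] - [((Y_2, \xi_2), \omega_2)]$ pairs to zero against every such manifold element.

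Next I would establish a realization step: as $M$ ranges over closed $d$-manifolds with $\pi$-finite tangential $n$-type, the pairs $(\tau_{\leq n}M, [M])$ exhaust every orbit in $\bigoplus_{[(X, \xi)]} \CC[\Omega_d^{T\xi}/\pi_0\Aut(X, \xi)]$. Given $(X, \xi) \in \pi_0\cC$ and $\alpha \in \Omega_d^{T\xi}$, start with any $(X,\xi)$-structured representative of $\alpha$ and apply Kreck's modified surgery~\cite{MR1709301} to perform $(X,\xi)$-compatible surgeries of dimensions $k \leq n$ within the $(X,\xi)$-bordism class. The hypothesis $n < \lfloor d/2 \rfloor$ is exactly the dimensional bound that keeps these surgeries strictly below the middle dimension and hence unobstructed; carrying them out produces an $(X,\xi)$-bordant manifold $M$ whose lift $\xi_M : M \to X$ is $n$-connected. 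Uniqueness of Moore--Postnikov factorizations of $T_M$ through the $n$-truncated map $\xi$ then identifies $\tau_{\leq n}M \simeq (X, \xi)$, while $[M] = \alpha$ by construction.

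Combining these two ingredients with Theorem~\ref{thm:introtechnical}, applied to $\cC$ equipped with its nested Postnikov factorization systems, gives non-degeneracy of $\langle-, -\rangle_{\cC, \Omega}$. The difference of character classes must therefore vanish, which in turn forces $\xi_1 \simeq \xi_2 =: \xi$ and places $\omega_1, \omega_2$ in the same $\pi_0 \Aut(\xi)$-orbit of $\Hom(\Omega_d^{T\xi}, \CC^\times)$. The claimed isomorphism of TFTs is then a formal consequence of the functoriality of the generalized Dijkgraaf--Witten construction of Section~\ref{sec:DWsTFTs}: equivalent inputs yield canonically isomorphic field theories.

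The principal obstacle is the geometric realization step. The categorical side is essentially handed to us by Theorem~\ref{thm:introtechnical}, but producing a manifold with a prescribed tangential $n$-type in a prescribed bordism class requires a genuine input from modified surgery theory; the bound $n < \lfloor d/2 \rfloor$ is the natural range for which below-middle-dimensional $(X,\xi)$-compatible surgery is unobstructed, and relaxing it would demand a careful analysis of middle-dimensional surgery obstructions that is substantially more delicate.
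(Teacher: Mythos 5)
Your proposal is correct and takes essentially the same approach as the paper: the geometric realization step you sketch (producing a manifold with prescribed tangential $n$-type and prescribed bordism class via below-middle-dimensional surgery) is precisely Proposition~\ref{pro:krecksurgery}, which the paper cites from Kreck, and the rest of your argument matches the paper's use of Lemma~\ref{lem:DWispontryagin} together with the non-degeneracy of the Pontryagin pairing (Corollary~\ref{cor:pontryaginnondeg}, the precise version of Theorem~\ref{thm:introtechnical}).
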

This theorem appears for more general tangential structures as Theorem~\ref{thm:partitionfunctiondetected}.

\subsection{Organization of the paper}

Throughout this paper we freely use the language of $\infty$-categories; in particular, all colimits and limits are to be considered homotopical. 

The structure of this paper can be roughly summarized as the reverse order of the introduction. 

In Section~\ref{sec:SpansAndLinearization}, we start by reviewing homotopy cardinality and finite path integrals. This transitions to a discussion of spans of topological spaces equipped with local systems. Spans of $\pi$-finite spaces can be linearized/integrated to linear maps, as discussed in Section~\ref{sec:linearizingspans}. Any $\infty$-category naturally gives rise to a certain span, and in Section~\ref{sec:linearizingfinitecats} we give criteria for when its linearization exists and is invertible. For $\infty$-categories enriched in $\pi$-finite spaces but with infinitely many isomorphism classes of objects, this linearization does not exist, but we may still construct a certain pairing. This is discussed in Sections~\ref{sec:linearizelocpifinite} and~\ref{sec:pontryaginpairing} along with conditions ensuring that this pairing is non-degenerate.

In Section~\ref{sec:bordandinvertTFT}, we discuss a variety of tangential structures, including structures on the stable normal bundle, structures on the stable tangent bundle, and on the unstable tangent bundle. Bordism groups for these are defined, as well as the bordism category. The classification of invertible topological field theories is discussed in Section~\ref{sec:invertTFT}. These are determined by characters on the homotopy groups of certain Madsen-Tillmann spectra. Section~\ref{sec:compntypes} contains some computations that relate these bordism groups to more familiar classical bordism groups.  

In Section~\ref{sec:DWtheories}, we construct finite path integral topological field theories. This utilizes the linearization of spans developed in Section~\ref{sec:SpansAndLinearization}. We show that the resulting manifold invariants can be expressed in terms of the Pontryagin pairings constructed in Section~\ref{sec:SpansAndLinearization}. Our main Theorem~\ref{thm:Intro_DWdistinguish}  is proven in Section~\ref{sec:mainthm} which classifies the information that finite path integral theories can detect about manifolds.
In Section~\ref{sec:dimred},   we also consider the process of dimensional reduction in the context of finite path integral theories. We show that the dimensional reduction of a finite path integral theory is again a finite path integral theory.

In Section~\ref{sec:semisimple},  we first generalize the results of \cite{Reutter:2020aa} to higher dimensions, super field theories, and more general tangential structures.  We define semisimple topological field theories and establish that they induce stable diffeomorphism invariants. Finally, we make use of the dimensional reduction results of the previous section to show that finite path integral theories are themselves semisimple topological field theories. 

In Section~\ref{sec:stablediffeomandmain}, we recall Kreck's classification of manifolds up to stable diffeomorphism~\cite{MR1709301} and relate it to our main Theorem~\ref{thm:Intro_DWdistinguish} concerning finite path integral theories (c.f. Section~\ref{sec:DWtheories}). From this we are able to deduce Theorem B and end with a discussion of various examples and applications.  

Appendix~\ref{sec:nfindom} includes a technical discussion of finitely $n$-dominated spaces which is used to give conditions under which certain mapping spaces will be $\pi$-finite.

\subsection{Acknowledgements}

We are grateful to Matthias Kreck, Theo Johnson-Freyd, Noah Snyder, Stephan Stolz and  Peter Teichner for their continued interest in this work and many helpful discussions. We would like to thank Arun Debray for sparking our interest in the question of what finite path integral theories can detect. We also thank Dan Freed and Lukas M\"uller for helpful comments on an early draft of this manuscript. This material is based upon work supported by the National Science Foundation under Grant No. DMS-1440140, while the authors were in residence at the Mathematical Sciences Research Institute in Berkeley, California, during the Spring 2020 semester. D.R. is funded by the Deutsche Forschungsgemeinschaft (DFG, German Research Foundation) – 493608176 and is grateful for the hospitality and financial support of the Max-Planck Institute for Mathematics in Bonn. C.SP. is supported by the National Science Foundation under Grant No. DMS-2204297.

\tableofcontents

\section{Spans and linearization}\label{sec:SpansAndLinearization}

\subsection{Homotopy cardinality and finite path integrals}

Recall that a space $X$ is \emph{$\pi$-finite} if it has finitely many components, and for each choice of basepoint $x \in X$, $\pi_i(X,x)$ is finite and non-trivial for only finitely many $i$. 
For a $\pi$-finite space
 and a base point $x\in X$, define the \emph{homotopy cardinality of $X$ at $x$} to be (cf.~\cite[Lecture 4]{MR1338394})
\begin{equation}\#(X,x):= \prod_{i \geq 1} | \pi_i(X, x)|^{(-1)^i} = |\pi_1(X,x)|^{-1} |\pi_2(X,x)| |\pi_3(X,x)|^{-1} \cdots \in \mathbb{Q} .
\end{equation}Evidently, this rational number only depends on the component $[x] \in \pi_0 X$ of $x$. 
Define the \emph{total homotopy cardinality} of $X$ to be $\#^\textrm{tot}X:= \sum_{x \in \pi_0 X} \#(X,x)$. A key property of total homotopy cardinality is its compatibility with fiber sequences: If $X\to B$ is a map of spaces with connected $\pi$-finite base $B$ and $\pi$-finite fiber $F$, it follows from the associated long exact sequence of homotopy groups that $X$ is also $\pi$-finite and that 
\begin{equation}\label{eq:totalhomotopyproduct} \#^\textrm{tot}X =( \#^{\mathrm{tot}} B) (\#^\textrm{tot}F).
\end{equation}

Homotopy cardinality can be used to develop a theory of `finite path integrals'. 
\begin{definition}\label{def:finitepathintegral}
If $V$ is a rational vector space, $X$ is a $\pi$-finite space and $\alpha: \pi_0 X\to V$ is a function, the \emph{integral} of $\alpha$ over $X$ is the vector
\begin{equation}\label{eq:integralnotation}\int_X \alpha := \sum_{x \in \pi_0 X} \#(X,x ) ~\alpha(x) \in V.
\end{equation}
 We will also utilize the notation $\int_{x \in X} \alpha(x)$, expressing $\alpha$ as a formula in $\pi_0X$. 
\end{definition}

Most importantly, and most evidently, this integral is linear. Given a linear function $\phi:V \to W$ and a function $\alpha: \pi_0 X \to V$ where $X$ is $\pi$-finite, the following elements of $W$ agree:
\begin{equation}\label{eq:integrallinear}
 \phi(\int_X \alpha) = \int_X \phi \circ \alpha 
 \end{equation}

Equation~\eqref{eq:totalhomotopyproduct} is an instance of the following `generalized Fubini theorem'.
\begin{prop}[Fubini theorem]\label{prop:Fubini}
Let $s:X\to A $ be a map of $\pi$-finite spaces and let $\alpha: \pi_0 X \to V$ be a function to a rational vector space $V$. Then, the fiber $\iota_a:X_a \to X$ of $X\to A$ at a point $a\in A$ is $\pi$-finite, and the following elements of $V$ agree:
\[\int_{x\in X} \alpha(x) = \int_{a \in A} \int_{f \in X_a} \alpha(\iota_a f)
\]
\end{prop}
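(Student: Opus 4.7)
The plan is to reduce to the case where $A$ is path-connected and then exploit the multiplicativity of total homotopy cardinality across fiber sequences recorded in \eqref{eq:totalhomotopyproduct}. First I would write $A = \bigsqcup_{[a] \in \pi_0 A} A_{[a]}$ and $X = \bigsqcup_{[a]} s^{-1}(A_{[a]})$, observing that both sides of the claimed formula split as sums over $[a] \in \pi_0 A$; concretely, the right-hand side restricts to $\#(A,a) \int_{X_a} \alpha \circ \iota_a$, while the left-hand side restricts to $\int_{s^{-1}(A_{[a]})} \alpha$. So it suffices to prove the identity when $A$ is connected with a chosen basepoint $a$.

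Next, with $A$ connected, I would work component by component on the left. For each $[x] \in \pi_0 X$, let $X_{[x]} \subseteq X$ denote the component containing $x$ and consider the restricted map $s|_{X_{[x]}}: X_{[x]} \to A$. Its homotopy fiber is (up to homotopy) the union of those components of $X_a$ whose image in $\pi_0 X$ under $\iota_a$ equals $[x]$. Applying \eqref{eq:totalhomotopyproduct} to this restricted fibration gives the key local identity
\[ \#(X,x) \;=\; \#(A,a)\cdot \sum_{[f]\in\pi_0 X_a,\ [\iota_a f] = [x]} \#(X_a,f).\]
Multiplying both sides by $\alpha(x) = \alpha(\iota_a f)$ (which is well-defined on the indicated orbit) and summing over $[x]\in \pi_0 X$ reorganizes the double sum into $\#(A,a)\sum_{[f]\in \pi_0 X_a}\#(X_a,f)\,\alpha(\iota_a f)$, which is precisely $\#(A,a)\int_{X_a}\alpha\circ\iota_a$. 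This is the right-hand side, finishing the proof modulo $\pi$-finiteness of the fiber.

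The remaining point is that each $X_a$ is $\pi$-finite, which is needed both for the integral on the right to make sense and for \eqref{eq:totalhomotopyproduct} to apply. This follows from the long exact sequence of the fibration $X_a \to X \to A$: since $\pi_i(X,x)$ and $\pi_i(A,a)$ are finite for all $i$ and vanish outside a bounded range, the same holds for $\pi_i(X_a, f)$ for every basepoint $f$, and the components of $X_a$ form a finite set (being a quotient of $\pi_1(A,a) \ltimes \pi_0 X_a$ data controlled by the exact sequence).

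I do not expect a genuine obstacle here; the substantive content is already encoded in \eqref{eq:totalhomotopyproduct}, and the main work is bookkeeping to identify components of the restricted homotopy fiber with the fibers of $\pi_0 X_a \to \pi_0 X$. The only subtlety is to ensure that when the restricted map $X_{[x]} \to A$ is considered, one correctly identifies its homotopy fiber as a union of components of $X_a$ rather than a different space; this is a direct consequence of the action of $\pi_1(A,a)$ on $\pi_0 X_a$ and the associated long exact sequence.
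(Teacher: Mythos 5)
Your proof is correct and follows essentially the same route as the paper's: both reduce to a component-by-component identity obtained by applying \eqref{eq:totalhomotopyproduct} to the restriction of $s$ to the connected component $X_{[x]}$, identifying its homotopy fiber with the union of those components of $X_a$ mapping to $[x]$ under $\iota_a$. The only organizational difference is that you first restrict to connected $A$, while the paper keeps the sum over $\pi_0 A$ and observes that all terms with $[a]\neq s([x])$ vanish; these are equivalent bookkeeping choices.
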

\begin{proof}
It suffices to show that for every $[x] \in \pi_0 X$, the following equation holds in $\QQ$: 
\begin{equation}\label{eq:proofFubini}
\#(X,x) = \sum_{[a] \in \pi_0 A} \#(A,a) \sum_{[f] \in  (\pi_0\iota_a)^{-1}([x]) \subseteq \pi_0(X_a)} \#(X_a, f)
\end{equation}
Proposition~\ref{prop:Fubini} follows from summing the product of this rational number~\eqref{eq:proofFubini} and the vector $\alpha([x]) \in V$ over $[x] \in \pi_0 X$. 
To show~\eqref{eq:proofFubini}, note that for any $[a] \neq \pi_0 s ([x])$ the preimage $(\pi_0 \iota_a)^{-1}([x])$ is empty, and hence the summand corresponding to such an $[a] \in \pi_0 A$ is zero. Therefore, equation~\eqref{eq:proofFubini} is equivalent to \begin{equation}\label{eq:proofFubini2}
\#(X, x) = 
 \#(A, s([x])) \sum_{[f] \in (\pi_0 \iota_{s([x])})^{-1}([x])} \#(X_{s([x])}, f).
\end{equation}
Let $X|_{[x]}$ denote the connected component of $[x]\in \pi_0X$ and note that $s|_{[x]}: X|_{[x]} \to A|_{s([x])}$ has  fiber $\sqcup_{[f] \in (\pi_0\iota_{s([x])})^{-1}([x]) \subseteq \pi_0(X_{s([x])})} X_{s([x])}|_{[f]}$. Equation~\eqref{eq:proofFubini2} follows from applying~\eqref{eq:totalhomotopyproduct} to $s|_{[x]}$ and noting that the homotopy cardinality of a disjoint union of spaces is the sum of their respective homotopy cardinalities.
\end{proof}
This generalized Fubini theorem indeed implies the more familiar statement that for a function $\alpha: \pi_0 (X\times Y) \to V$ out of a product of $\pi$-finite spaces
\[\int_{(a,b) \in A \times B} \alpha(a,b) = \int_{a\in A} \int_{b \in B} \alpha(a,b).
\]
This follows from applying Proposition~\ref{prop:Fubini} to the case where $s:X\to A$ is the projection $A \times B \to A$.

Another important property of this integral is its compatibility with group actions.
\begin{lemma}\label{lem:equivariance} Let $X$ be a $\pi$-finite space and let $G$ be a (discrete) group acting on $\pi_0 X$ so that for all $[x] \in \pi_0 X$ and $g\in G$ the connected components $[x]$ and $g[x]$ of $X$ are homotopy equivalent spaces. Then, for a rational vector space $V$ with a $G$ action and a $G$-equivariant function $\alpha: \pi_0 X \to V$, the integral $\int_{X} \alpha \in V$ is a $G$-fixed point in $V$. \end{lemma}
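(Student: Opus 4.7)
The plan is straightforward: unwind the definition of $\int_X \alpha$ as a sum over $\pi_0 X$, move a group element $g$ inside using linearity of the $G$-action on $V$ together with $G$-equivariance of $\alpha$, and then reindex the sum and use the hypothesis that $[x]$ and $g[x]$ are homotopy equivalent to conclude that the coefficients $\#(X,x)$ are permuted in the same way as the values of $\alpha$.

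More concretely, fix $g \in G$ and compute
\begin{equation*}
  g \cdot \int_X \alpha \;=\; g \cdot \sum_{[x] \in \pi_0 X} \#(X,x)\, \alpha([x]) \;=\; \sum_{[x] \in \pi_0 X} \#(X,x)\, \alpha(g[x]),
\end{equation*}
using $\QQ$-linearity of the $G$-action on $V$ and $G$-equivariance of $\alpha$. Now I would reindex the sum by the bijection $[x] \mapsto [y] := g[x]$ of $\pi_0 X$, giving
\begin{equation*}
  g \cdot \int_X \alpha \;=\; \sum_{[y] \in \pi_0 X} \#(X, g^{-1}[y])\, \alpha([y]).
\end{equation*}

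The key observation, which I would state as a small preliminary remark, is that the homotopy cardinality $\#(X,x)$ depends only on the homotopy equivalence type of the connected component of $x$, since it is defined purely in terms of the orders of the homotopy groups $\pi_i(X,x) = \pi_i(X|_{[x]}, x)$. By hypothesis, the components $X|_{g^{-1}[y]}$ and $X|_{[y]}$ are homotopy equivalent, so $\#(X, g^{-1}[y]) = \#(X, [y])$. Substituting this back yields $g \cdot \int_X \alpha = \int_X \alpha$, as desired.

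There is no substantive obstacle here; the only thing to be careful about is the invariance of homotopy cardinality under homotopy equivalence of components, which however is immediate from the definition. The lemma will be applied later in situations where $G$ acts on a $\pi$-finite space $X$ itself (so that the induced action on $\pi_0 X$ automatically satisfies the homotopy equivalence hypothesis), and this is presumably the setting motivating the statement.
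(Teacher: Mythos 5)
Your proof is correct and is essentially the same as the paper's: the paper likewise observes that the hypothesis forces $\#(X,[x]) = \#(X,g[x])$ and then reindexes the defining sum, which is exactly the computation you carry out in more detail.
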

\begin{proof} By assumption, $\#(X, [x])= \#(X, g[x])$ for all components $[x] \in \pi_0 X$ and $g\in G$. The lemma then follows from reindexing the sum in Definition~\ref{def:finitepathintegral}. 
\end{proof}
Of course, the main application of Lemma~\ref{lem:equivariance} is when there is a topological (or homotopy coherent) group $\GG$ with $\pi_0 \GG = G$ and when the action of $G$ on $\pi_0 X$ is induced by an action of  $\GG$ on the space $X$. In this case, the assumption that all components in a given $G$-orbit are homotopy equivalent is automatically satisfied.

In the next section, we unify these and further properties into Corollary~\ref{cor:spanfunctor}.

\subsection{Linearizing spans of spaces}\label{sec:linearizingspans}
For later applications to super field theories, we now generalize finite path integrals from rational vector spaces to objects in more general categories (such as the category of super vector spaces). For the remainder of this section, we therefore let $\cV$ be a $1$-category which is enriched in rational vector spaces and which has small colimits.  
Most of the ideas and observations in this Section~\ref{sec:linearizingspans} have appeared before~\cite{MR4133704} and are only included here for the sake of completeness.

A ($\cV$-valued) \emph{local system} on a space $A$ is a functor $\cL:A \to \cV$ and a map of local systems $\alpha: \cL \to \cP$ is a natural transformation, where $A$ is viewed as an $\infty$-groupoid. As $\cV$ is an ordinary $1$-category, a local system is equivalent to a functor $\cL:\pi_{\leq 1}A \to \cV$ out of the fundamental $1$-groupoid $\pi_{\leq 1}A$ of $A$. The \emph{$\cV$-object of sections} of $\cL$ is, if it exists, the limit of this functor $\cL:\pi_{\leq 1}A \to \cV$, the \emph{$\cV$-object of co-sections} is the colimit.
Choosing a basepoint $a\in A$ in every component $[a] \in \pi_0 A$, a local system therefore explicitly amounts to an object $\cL(a) \in \cV$ for every  $[a] \in \pi_0 A$ equipped with an action of $\pi_1(A, a)$. A map of local systems $\cL \Rightarrow \cP$ amounts to a choice of $\pi_1(A, a)$ intertwiner $\alpha_a: \cL(a) \to \cP(a)$ for every $[a] \in \pi_0 A$. These choices of basepoints identify the $\cV$-objects of sections of $\cL$ with the invariants $ \lim \cL \cong \prod_{[a] \in \pi_0 A} \cL_A(a)^{\pi_1(A,a)}$ and the $\cV$-object of co-sections with the coinvariants $ \colim \cL \cong \coprod_{[a] \in \pi_0 A} \cL_A(a)_{\pi_1(A,a)}$.

\begin{definition}\label{def:decoratedspan} Let $(A, \cL_A)$ and $(B, \cL_B)$ be spaces equipped with local systems. A \emph{decorated span} $(X, \alpha)$ is a span of spaces  $B \stackrel{t}{\leftarrow} X \stackrel{s}{\rightarrow} A$ equipped with a map of local systems $\alpha :s^*\cL_A \to t^* \cL_B$. 

Two decorated spans $(X, \alpha)$ and $(Y, \beta)$  between spaces equipped with local systems $(A, \cL_A)$ and $(B, \cL_B)$ are \emph{isomorphic} if there is a homotopy equivalence $f:X \to Y$ and homotopies $s_Y\circ f \cong s_X$ and $t_Y \circ f \cong t_X$ which intertwine the natural transformations $\alpha$ and $\beta$.

We say that a decorated span $(X, \alpha)$ is \emph{source finite} if $s:X \to A$ has $\pi$-finite (homotopy) fibers, \emph{target finite} if $t:X \to B$ has $\pi$-finite fibers, and \emph{$\pi$-finite} if all spaces $A, B$ and $X$ are $\pi$-finite. 
\end{definition}

\begin{prop}\label{prop:linearization}For a source finite decorated span $(B, \cL_B) \ot (X, \alpha) \to (A, \cL_A) $ and a point $a\in A$ consider the following integral in the vector space $\Hom(\cL_A(a), \colim \cL_B)$: 
\begin{equation}\label{eq:deflinearization}  \left( \Phi_{X, \alpha}\right)_a:= \int_{(x, \gamma) \in X_a} \left[ \cL_A(a) \to[\cL_A(\gamma)] \cL_A(s(x)) \to[\alpha_x] \cL_B(t(x)) \to [\iota^B_{t(x)}] \colim \cL_B \right]  \end{equation}
Here, $X_a$ denotes the homotopy fiber of $s:X\to A$ at $a$, whose points are represented by pairs  $(x, \gamma)$ of a point $x\in X$ and a path $\gamma: a \rightsquigarrow s(x)$ in $A$, and  $\iota^B_b: \cL_B(b) \to \colim \cL_B$ (for $b \in B$) denote the universal morphisms into the colimit.

These maps $\left( \Phi_{X, \alpha}\right)_a: \cL_A(a) \to \colim \cL_B$ assemble into a map \[ \Phi_{X, \alpha} : \colim \cL_A \to \colim \cL_B\]
which only depends on the isomorphism class of the decorated span.
\end{prop}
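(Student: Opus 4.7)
The plan is to verify, in order, four things: (i) that the integrand in~\eqref{eq:deflinearization} descends to a well-defined function on $\pi_0(X_a)$, so that~\eqref{eq:deflinearization} makes sense in $\Hom(\cL_A(a),\colim\cL_B)$; (ii) that source finiteness guarantees $X_a$ is $\pi$-finite, so that the finite path integral of Definition~\ref{def:finitepathintegral} applies; (iii) that the collection $a\mapsto(\Phi_{X,\alpha})_a$ is natural in $a$, so it descends to a single map $\Phi_{X,\alpha}\colon\colim\cL_A\to\colim\cL_B$; and (iv) that the construction is invariant under isomorphism of decorated spans.

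For (i), a point of $X_a$ is a pair $(x,\gamma)$ with $\gamma\colon a\rightsquigarrow s(x)$, and two such points $(x,\gamma)$ and $(x',\gamma')$ lie in the same component of $X_a$ precisely when there is a path $p\colon x\rightsquigarrow x'$ in $X$ together with a homotopy $\gamma'\simeq s(p)\cdot\gamma$. The integrand at $(x',\gamma')$ is then
\[
\iota^B_{t(x')}\circ\alpha_{x'}\circ\cL_A(s(p))\circ\cL_A(\gamma).
\]
Applying the naturality square of $\alpha$ along $p$, namely $\alpha_{x'}\circ\cL_A(s(p))=\cL_B(t(p))\circ\alpha_x$, followed by the colimit identity $\iota^B_{t(x')}\circ\cL_B(t(p))=\iota^B_{t(x)}$, reduces it to the integrand at $(x,\gamma)$. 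Step (ii) is immediate from the definitions, together with the hypothesis that $\cV$ is enriched in rational vector spaces, so that $\Hom(\cL_A(a),\colim\cL_B)$ is a $\QQ$-vector space to which Definition~\ref{def:finitepathintegral} applies.

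For (iii), a path $\delta\colon a\rightsquigarrow a'$ in $A$ induces a homotopy equivalence $\delta^{*}\colon X_{a'}\xrightarrow{\simeq}X_a$ by concatenation $(x,\gamma')\mapsto(x,\gamma'\cdot\delta)$. Using $\cL_A(\gamma'\cdot\delta)=\cL_A(\gamma')\circ\cL_A(\delta)$, the integrand defining $(\Phi_{X,\alpha})_{a'}\circ\cL_A(\delta)$ transports under $\delta^{*}$ to the integrand defining $(\Phi_{X,\alpha})_a$; reindexing the finite sum along the equivalence preserves homotopy cardinalities, giving $(\Phi_{X,\alpha})_{a'}\circ\cL_A(\delta)=(\Phi_{X,\alpha})_a$. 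This is precisely the naturality needed for $\{(\Phi_{X,\alpha})_a\}$ to be a map of local systems from $\cL_A$ to the constant local system at $\colim\cL_B$, so by the universal property of $\colim\cL_A$ it factors uniquely through a map $\Phi_{X,\alpha}\colon\colim\cL_A\to\colim\cL_B$. For (iv), an isomorphism of decorated spans restricts on fibers to a homotopy equivalence $X_a\xrightarrow{\simeq} Y_a$ intertwining the two integrands pointwise, so the integrals, and hence the maps $\Phi$, coincide.

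I expect the only delicate step to be the diagram chase in (i), where the naturality of $\alpha$, the functoriality of $\cL_A$ and $\cL_B$ along paths, and the universal property of $\colim\cL_B$ must be combined in the correct order; steps (iii) and (iv) then reuse the same calculation together with the invariance of finite path integrals under homotopy equivalence of the domain of integration, which is implicit in Proposition~\ref{prop:Fubini} and Lemma~\ref{lem:equivariance}.
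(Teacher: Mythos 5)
Your proposal is correct and follows essentially the same approach as the paper: the paper's proof invokes Lemma~\ref{lem:equivariance} to observe that the integrand intertwines the $\pi_1(A,a)$-action on $\pi_0 X_a$ with the action on $\Hom(\cL_A(a),\colim\cL_B)$, so the integral lands in the $\pi_1(A,a)$-fixed points and hence descends to $\cL_A(a)_{\pi_1(A,a)}$, whereas you verify the same compatibility by hand for arbitrary paths $\delta\colon a\rightsquigarrow a'$ and phrase the conclusion via the universal property of the colimit. The two formulations are equivalent (a map out of $\colim\cL_A\cong\coprod_{[a]}\cL_A(a)_{\pi_1(A,a)}$ is the same data as a compatible family of $\pi_1$-equivariant maps), and your more explicit treatment of steps (i) and (iv) fills in details the paper leaves implicit.
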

\begin{proof}
The morphism $\left(\Phi_{X, \alpha}\right)_a \in \Hom(\cL_A(a), \colim \cL_B)$ is defined as the integral of the function $\pi_0 X_a \to \Hom(\cL_A(a), \colim \cL_B)$ given by $(x, \gamma) \mapsto \iota^B_{t(x)} \circ \alpha_{x} \circ \cL_A(\gamma)$. This function intertwines the canonical $\pi_1(A, a)$ action on $\pi_0 X_a$ with the action on $\Hom(\cL_A(a), \colim \cL_B)$ induced by the local systems. Hence, it follows from Lemma~\ref{lem:equivariance} that this integral is a $\pi_1(A, a)$ fixed point in $\Hom(\cL_A(a), \colim \cL_B)$, or equivalently that it defines an element in $\Hom(\cL_A(a)_{\pi_1(A, a)}, \colim \cL_B)$ and hence that the assignments~\eqref{eq:deflinearization} assemble into a map $\Phi_{X, \alpha}: \colim \cL_A \to \colim \cL_B$. \end{proof}

Henceforth, we refer to the map $\Phi_{X, \alpha}: \colim \cL_A \to \colim \cL_B$ associated to a decorated span as the \emph{linearization} of this span.

\begin{example} Let $X$ be a $\pi$-finite space and consider the span $* \leftarrow X \to *$ decorated with the trivial local system in $\cV= \Vec_\QQ$. Then, $\Phi_{X}: \QQ \to \QQ$ is given by the total homotopy cardinality $\#^{\mathrm{tot}}(X)$. 
\end{example}

\begin{remark} \label{rem:limitversion}
If $\cV$ has limits and  $(B, \cL_B) \leftarrow (X, \alpha) \to (A, \cL_A)$ is a target finite span (i.e. $t:X \to B$ has $\pi$-finite fibers), then a completely analogous construction leads to a morphism \[\Phi^{X, \alpha}:  \lim \cL_A \to \lim \cL_B\] with component at $b \in B$ given by the following map $\lim \cL_A \to \cL_B(b)$:
\[ \int_{(x, \gamma:t(x) \rightsquigarrow b) \in X_b} \left[\lim \cL_A \to \cL_A(s(x)) \to[\alpha_x] \cL_B(t(x))  \to[ \cL_B(\gamma)] \cL_B(b) \right]\]
Here, $X_b$ denotes the fiber of $t:X \to B$ at $b \in B$.
In fact, for any $\pi$-finite space $X$ and rational vector space $V$, the integration function $\int_X: \mathrm{Func}(\pi_0 X, V) \to V$ itself arises as the linearization $\Phi^X$ of the span $ (*, \mathrm{const}_V) \leftarrow (X, \id) \to (X, \mathrm{const}_V) $ where $\mathrm{const}_V$ denotes the constant local system at $V$ with limit $\lim(\mathrm{const}_V: X \to \Vec_{\QQ}) = \mathrm{Func}(\pi_0 X, V)$. 
\end{remark}

\begin{remark}\label{rem:finiteversion}
If $\cV$ has limits and colimits and if $A$ is a $\pi$-finite space,  the \emph{norm map} $\mathrm{Nm}_A:\colim \cL_A \to  \lim \cL_A$ is defined as the map with coefficient $\cL_A (a) \to \cL_A(a')$ given by
\[ \int_{\gamma \in \mathrm{Path}_A(a,a')} \left[\cL_A(a) \to[\cL_A(\gamma)]\cL_A(a')\right].
\]
In the category $\cV=\Vec_k$ of vector spaces over a field $k$ of characteristic zero, and for connected $A$ with a choice of basepoint $a\in A$, the norm map therefore explicitly unpacks to a map $\cL_A(a)_{\pi_1(A,a)} \to \cL_A(a)^{\pi_1(A,a)}$ which sends a coinvariant $[v]$ to the invariant
\begin{equation}\label{eq:normmap}
\#(A,a)^{-1}~\frac{1}{|\pi_1(A,a)|}\sum_{\gamma \in \pi_1(A,a)} ~\cL(\gamma) v.\end{equation}
Here, we used that for any loop $\gamma \in \Omega_aA$,  
$$\#(\Omega_a A, \gamma) = \#(\Omega_a A, \id) =|\pi_2(A,a)|^{-1} |\pi_3(A,a)| \cdots = \#(A, a)^{-1} |\pi_1(A,a)|^{-1}.$$

Since $\cV$ is $\Vec_{\QQ}$-enriched, the norm map $\mathrm{Nm}_A: \colim \cL_A \to \lim \cL_A$ is invertible. If  $A$, $B$ and $X$ are $\pi$-finite, the linearizations $\Phi_{X, \alpha}: \colim \cL_A \to \colim \cL_B$ and $\Phi^{X, \alpha}: \lim \cL_A \to \lim \cL_B$ are identified with one another under these norm isomorphisms $\mathrm{Nm}_A$ and $\mathrm{Nm}_B$. 

In fact, for this last $\pi$-finite case, it is sufficient to assume that the $\Vec_{\QQ}$-enriched category $\cV$ is additive and idempotent complete, as all necessary limits and colimits can be computed in these terms.\end{remark}

 Given a pair of decorated spans
\begin{equation}\label{eq:composablespans}
\begin{tz}[scale=0.92]
	\node (S1) at (0,0) {$ (C, \cL_{C}) $};
	\node (S2) at (3,0) {$ (B, \cL_{B}) $};
	\node (S3) at (6,0) {$ (A, \cL_{A}) $};
		
	\node (X1) at (1.5,1) {$ (Y, \beta) $};
	\node (X2) at (4.5,1) {$ (X, \alpha) $};
		
	\draw [->] (X1) to node[above left]{$t_Y$} (S1);
 	\draw [->] (X1) to node[above right]{$s_Y$} (S2);
	\draw [->] (X2) to  node[above left]{$t_X$}(S2);
 	\draw [->] (X2) to node[above right ]{$s_X$} (S3);
	\end{tz}
\end{equation}
their \emph{pullback composite} is the span $C \ot[t_Y \circ p_Y] Y \times_B X \to[s_X \circ p_X] A$, where $p_X: Y \times_B X \to X$ and $p_Y: Y \times_B X \to Y$ denote the projections, decorated with the local systems $\cL_C$ on $C$, $\cL_A$ on $A$ and transformation $\beta \circ \alpha: p_X^* s_X^* \cL_A \To p_Y^* t_Y^* \cL_C$ with component  $(\beta\circ \alpha)_{(x,y, \gamma)}$ at a point $(y \in Y, x\in X,  \gamma: t_X(x)\rightsquigarrow s_Y(y)) \in Y \times_B X$ given by the composite
\[ \cL_A(s_X(x)) \to[\alpha_x] \cL_B(t_X(x)) \to [\cL_B(\gamma)] \cL_B(s_Y(y)) \to [\beta_y] \cL_C(t_Y(y)).
\]

\begin{theorem}\label{thm:composingspans}
Given a pair of decorated spans as in~\eqref{eq:composablespans} which are source finite (i.e. $s_Y:Y \to B$ and $s_X:X\to A$ have $\pi$-finite fibers). Then, their pullback composite $(C, \cL_C) \leftarrow (Y\times_B X, \beta \circ \alpha) \to (A, \cL_A)$ is again source finite and their linearizations compose:
\begin{equation}\label{eq:compositephi}
\Phi_{Y \times_B X, \beta \circ \alpha} = \Phi_{Y, \beta} \circ \Phi_{X, \alpha}  : \colim \cL_A \to \colim \cL_C
\end{equation}
\end{theorem}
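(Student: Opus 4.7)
The plan is to reduce the statement to a direct application of the Fubini theorem (Proposition~\ref{prop:Fubini}), once both sides of~\eqref{eq:compositephi} are unpacked in terms of the defining integral formula~\eqref{eq:deflinearization} for the linearization of a decorated span.

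First, I would verify source-finiteness of the pullback composite. The map $s_X \circ p_X: Y \times_B X \to A$ factors through $p_X : Y \times_B X \to X$, whose fiber over a point $x \in X$ is the homotopy fiber $Y_{t_X(x)}$ of $s_Y$ at $t_X(x) \in B$, which is $\pi$-finite by source-finiteness of $(Y,\beta)$. Hence the fiber of $s_X \circ p_X$ at a point $a \in A$ fits in a fibration sequence over $X_a$ (which is $\pi$-finite) whose fibers are of the form $Y_{t_X(x)}$, and is therefore $\pi$-finite by the long exact sequence of homotopy groups.

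Next I would unpack the right-hand side of~\eqref{eq:compositephi}. By Proposition~\ref{prop:linearization}, the component of $\Phi_{X,\alpha}$ at $a \in A$ lands in $\colim \cL_B$, and then the restriction of $\Phi_{Y,\beta}$ along the universal map $\iota^B_b: \cL_B(b) \to \colim \cL_B$ is $(\Phi_{Y,\beta})_b$. By linearity of the integral (equation~\eqref{eq:integrallinear}), the composite $(\Phi_{Y,\beta} \circ \Phi_{X,\alpha})_a$ can therefore be written as
\begin{equation*}
\int_{(x,\gamma_2) \in X_a} \int_{(y,\gamma_1) \in Y_{t_X(x)}} \iota^C_{t_Y(y)} \circ \beta_y \circ \cL_B(\gamma_1) \circ \alpha_x \circ \cL_A(\gamma_2),
\end{equation*}
an iterated integral in the vector space $\Hom(\cL_A(a), \colim \cL_C)$.

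The final step is to recognize this iterated integral as a single integral over the fiber $(Y \times_B X)_a$. A point of this fiber is, up to coherent paths, a quadruple $(x, y, \gamma_1, \gamma_2)$ with $x \in X$, $y \in Y$, $\gamma_1: t_X(x) \rightsquigarrow s_Y(y)$ a path in $B$ and $\gamma_2: a \rightsquigarrow s_X(x)$ a path in $A$, which matches exactly the parameter set of the iterated integral via the projection $(Y \times_B X)_a \to X_a$ whose fiber over $(x,\gamma_2)$ is $Y_{t_X(x)}$. Applying the Fubini theorem (Proposition~\ref{prop:Fubini}) to this projection and to the integrand viewed as a function $\pi_0 (Y \times_B X)_a \to \Hom(\cL_A(a), \colim \cL_C)$ collapses the iterated integral into the single integral defining $(\Phi_{Y \times_B X, \beta \circ \alpha})_a$ (note that the integrand on $(Y \times_B X)_a$ is, by construction of the composite decoration $\beta \circ \alpha$, precisely $\iota^C_{t_Y(y)} \circ (\beta \circ \alpha)_{(x,y,\gamma_1)} \circ \cL_A(\gamma_2)$).

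The main obstacle I anticipate is purely bookkeeping: keeping the homotopy-theoretic fibers and paths straight so that the identification of $(Y \times_B X)_a$ with the total space of the fibration $Y_{t_X(-)} \to (Y \times_B X)_a \to X_a$ is manifestly correct, and confirming that the integrand appearing on $(Y \times_B X)_a$ matches that of the composite decoration. No essentially new ingredient beyond Proposition~\ref{prop:Fubini} and the linearity~\eqref{eq:integrallinear} of the integral should be needed.
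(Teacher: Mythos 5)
Your proposal is correct and follows essentially the same route as the paper: both establish source-finiteness by factoring $Y\times_B X\to A$ through $X$ and identifying the fibers of $p_X$ with fibers of $s_Y$, and both prove the composition identity by matching $\Phi_{Y\times_BX,\beta\circ\alpha}$ against the iterated integral via the Fubini Proposition~\ref{prop:Fubini} together with bilinearity of composition~\eqref{eq:integrallinear}. The only cosmetic difference is direction: you start from $\Phi_{Y,\beta}\circ\Phi_{X,\alpha}$ and fuse the two integrals, while the paper starts from $\Phi_{Y\times_BX,\beta\circ\alpha}$ and splits it, which is the same calculation read in reverse.
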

\begin{proof}
The fiber of $Y\times_B X \to X$ at a point $x\in X$ is identified with the fiber of $Y \to B$ at the point $t_X(x) \in B$.  Since $Y\to B$ has $\pi$-finite fibers, it therefore follows that $Y\times_B X \to X$ has $\pi$-finite fibers. The source map of the composite span is therefore a composite $Y \times_B X \to X \to A$ of maps with $\pi$-finite fibers, and hence has itself $\pi$-finite fibers.

A point in $Y\times_B X$ is represented by a triple $(y, x, \mu)$ consisting of a point $y \in Y$, a point $x\in X$ and a path $\mu: t_X(x) \rightsquigarrow s_Y(y))$ in $B$. Hence, a point in the fiber $(Y\times_B X)_a$ of  $Y\times_B X \to X \to A$ at $a\in A$ is a quadruple $(y, x, \mu, \gamma)$ of a point $(y, x, \mu) \in Y \times_B X$ as above and a path $\gamma:a \to s_X(x)$. Using this notation, the map  $\left(\Phi_{Y \times_B X, \beta\circ \alpha}\right)_a \in \Hom(\cL_A(a), \colim \cL_C)$ is defined as the following integral (see~\eqref{eq:deflinearization}):
\begin{align}\nonumber
&\phantom{{}={}}\int_{\left(\vphantom{\frac{a}{b}}y, x, \mu: t_X (x) \rightsquigarrow s_Y(y),  \gamma:a \rightsquigarrow s_X(x)  \right) \in (Y \times_B X)_a} \iota^C_{t_Y(y)} \circ \beta_y\circ  \cL_B(\mu)\circ  \alpha_x \circ \cL_A(\gamma)
\\\label{eq:proofcomp1}
&=
\int_{\left(\vphantom{\frac{a}{b}}x, \gamma: a \rightsquigarrow s_X(x)\right) \in X_a} \left(\int_{\left(\vphantom{\frac{a}{b}}y, \mu: t_X(x) \rightsquigarrow s_Y(y)\right) \in Y_{t_X(x)}} \iota^C_{t_Y(y)} \circ \beta_y\circ  \cL_B(\mu)\right)\circ  \alpha_x \circ \cL_A(\gamma)
\\\label{eq:proofcomp2}
 &=
 \int_{\left(\vphantom{\frac{a}{b}}x, \gamma: a \rightsquigarrow s_X(x)\right) \in X_a} \left(\Phi_{Y, \beta}\right)_{t_X(x)} \circ \alpha_x \circ \cL_A(\gamma)
\\\label{eq:proofcomp3}
&=\Phi_{Y, \beta} \circ \left(\int_{\left(\vphantom{\frac{a}{b}}x, \gamma: a \rightsquigarrow s_X(x)\right) \in X_a}  \iota^B_{t_X(x)}\circ  \alpha_x \circ \cL_A(\gamma)\right) = \Phi_{Y, \beta} \circ \left(\Phi_{X,\alpha} \right)_a
\end{align}
In equation~\eqref{eq:proofcomp1}, we identified the fiber of $(Y\times_B X)_a\to X_a$ at $(x, \gamma)$ with $Y_{t_X(x)}$ and used Proposition~\ref{prop:Fubini} to split the integral. Bilinearity of composition allows to interchange the integral and composition (see~\eqref{eq:integrallinear}).
Equation~\eqref{eq:proofcomp2} uses the definition~\eqref{eq:deflinearization} of $(\Phi_{Y, \beta})_{t_X(x)}$, and equation~\eqref{eq:proofcomp3} uses the defining property $\Phi_{Y, \beta} \circ \iota^B_b = \left( \Phi_{Y, \beta} \right)_b$ for $b \in B$ and again bilinearity of composition. \end{proof}

\begin{example} Theorem~\ref{thm:composingspans} may be understood as a direct generalization of the compatibility of homotopy cardinality with fiber sequences. Indeed, let $X\to B$ be a map of spaces with $\pi$-finite fiber and connected pointed base $B$. Applying Theorem~\ref{thm:composingspans} to the sequence of spans $* \leftarrow * \to B  \leftarrow X \to *$ with composite span $* \leftarrow F \to *$ (all decorated with the trivial local system in $\Vec_\QQ$) recovers~\eqref{eq:totalhomotopyproduct}. \end{example}

Theorem~\ref{thm:composingspans} suggests to think of linearization $\Phi$ as a functor out of a category of decorated source finite spans. 

\begin{definition}\label{def:decoratedspancat}
	We denote the $1$-category of spaces and isomorphism classes of spans by $\mathrm{Span}(\Sp)$. Similarly, 
	we denote the $1$-category of spaces equipped with $\cV$-valued local systems and isomorphism classes of decorated spans by   $\mathrm{Span}(\Sp, \cV)$. Any symmetric monoidal structure on $\cV$ induces a symmetric monoidal structure on $\mathrm{Span}(\Sp, \cV)$ given on objects by the cartesian product $A\times B$ equipped with the local system $\cL_A \otimes \cL_B: A\times B \to \cV \times \cV \to \cV$. 

We also denote the (symmetric monoidal) subcategory of source finite and target finite decorated spans (see Definition~\ref{def:decoratedspan}) by $\mathrm{Span}^{s.f}(\Sp, \cV)$ and $\mathrm{Span}^{t.f}(\Sp, \cV)$, respectively, and denote the further subcategory on $\pi$-finite spaces and $\pi$-finite decorated spans by $\mathrm{Span}(\Sp^{\pi}, \cV)$. 
\end{definition}

 \begin{remark}\label{rem:inftyspan}
2-categories of spans in general categories were considered as early as \cite{MR0220789}. The importance of spans of spaces with local systems to topological field theories appears in \cite{MR2555928,MR2648901}. Higher $\infty$-categories of spans were constructed by \cite{Barwick:2013th}, and generalized to $(\infty,n)$-categories of iterated spans with local systems in \cite{MR3830256}. Spans of groupoids with local systems were also considered in \cite{MR3348259}.  
 \end{remark}

 \begin{cor}\label{cor:spanfunctor} If $\cV$ admits small colimits and is enriched in rational vector spaces, the linearizations $\Phi_{X, \cV}: \colim \cL_A \to \colim \cL_B$ from Proposition~\ref{prop:linearization} assemble into a functor 
 \[ \Phi:\mathrm{Span}^{s.f.}(\Sp, \cV) \to \cV.
 \]
 If $\cV$ has a symmetric monoidal structure which distributes over colimits, then this functor is symmetric monoidal. 
 \end{cor}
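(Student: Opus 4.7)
The plan is to verify the three functorial axioms in turn: well-definedness on morphism classes is already Proposition~\ref{prop:linearization}, preservation of composition is exactly Theorem~\ref{thm:composingspans}, and what remains is to check preservation of identities and, for the second claim, of the symmetric monoidal structure.

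For the identity on $(A, \cL_A)$, represented by the span $A \xleftarrow{\id} A \xrightarrow{\id} A$ decorated by $\id: \cL_A \Rightarrow \cL_A$, the homotopy fiber $A_a$ of $\id: A \to A$ at $a$ is the based path space, which is contractible with unique component represented by $(a, \mathrm{const}_a)$ of homotopy cardinality $1$. The integral~\eqref{eq:deflinearization} therefore collapses to $\iota^A_a \circ \id \circ \cL_A(\mathrm{const}_a) = \iota^A_a$, which is precisely the $a$-component of $\id_{\colim \cL_A}$.

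For symmetric monoidality, given decorated source-finite spans $(X, \alpha):(A, \cL_A) \to (B, \cL_B)$ and $(Y, \beta):(C, \cL_C) \to (D, \cL_D)$, their tensor product in $\mathrm{Span}^{s.f.}(\Sp, \cV)$ is represented by $(X \times Y, \alpha \otimes \beta)$ between $(A \times C, \cL_A \otimes \cL_C)$ and $(B \times D, \cL_B \otimes \cL_D)$. Distributivity of $\otimes$ over colimits in $\cV$ provides canonical isomorphisms $\colim(\cL_A \otimes \cL_C) \cong \colim \cL_A \otimes \colim \cL_C$, under which the universal morphism $\iota^{A \times C}_{(a,c)}$ identifies with $\iota^A_a \otimes \iota^C_c$. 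The homotopy fiber $(X \times Y)_{(a,c)}$ is equivalent to $X_a \times Y_c$, hence $\pi$-finite with homotopy cardinality $\#(X_a,(x,\gamma)) \cdot \#(Y_c,(y,\delta))$ at a point $(x,y,\gamma,\delta)$, and the integrand factorizes as $\bigl(\iota^B_{t(x)} \circ \alpha_x \circ \cL_A(\gamma)\bigr) \otimes \bigl(\iota^D_{t(y)} \circ \beta_y \circ \cL_C(\delta)\bigr)$. The Fubini Proposition~\ref{prop:Fubini}, together with bilinearity of $\otimes$ on morphism spaces, then identifies the $(a,c)$-component of $\Phi_{X \times Y, \alpha \otimes \beta}$ with $(\Phi_{X, \alpha})_a \otimes (\Phi_{Y, \beta})_c$, which is the $(a,c)$-component of $\Phi_{X, \alpha} \otimes \Phi_{Y, \beta}$.

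The main obstacle is purely bookkeeping: tracking the canonical identifications between fibers of product maps, the distributivity isomorphism for $\otimes$ over $\colim$, and the symmetry/associator/unitor coherences of $\mathrm{Span}^{s.f.}(\Sp, \cV)$ induced from $\Sp$ and $\cV$. Each such coherence reduces, via the same Fubini-plus-bilinearity argument already used in the composition law of Theorem~\ref{thm:composingspans}, to a tautology; no genuinely new computation is required.
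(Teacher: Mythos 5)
Your proof is correct and follows the same route as the paper: functoriality is Theorem~\ref{thm:composingspans} plus a check that the identity span linearizes to the identity, and symmetric monoidality reduces via Fubini and bilinearity of $\otimes$ to the product formula $\int_{X\times Y}\alpha\otimes\beta = (\int_X\alpha)\otimes(\int_Y\beta)$. The paper simply states these reductions without spelling out the unit and monoidality checks, so your version is a faithful (and more explicit) elaboration of the same argument.
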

 \begin{proof}
 Functoriality follows from Theorem~\ref{thm:composingspans} and the fact that the identity span on $(A, \cL_A)$ linearizes to the identity map on $\colim \cL_A$. Symmetric monoidality may ultimately be reduced to the following property of $\int$: If $X$ and $Y$ are $\pi$-finite spaces, and $\alpha: \pi_0 X \to V$ and $\beta: \pi_0 Y \to W$ are functions into rational vector spaces, then the following vectors in $V\otimes W$ agree:
 $$\int_{X\times Y} \alpha \otimes \beta = \left(\int_X \alpha \right) \otimes\left( \int_Y \beta\right)  $$
  \end{proof}

\begin{remark} The importance of the linearization functor to topological field theories was made clear in \cite{MR2648901}. A construction of this linearization for spans of 1-groupoids with cocycles appears in \cite{MR3348259}, and for spans of 1-groupoids with more general local systems in~\cite{Trova:2016aa,MR3862670}. Subsequently, the theory of \emph{ambidexterity} has provided a far reaching generalization, see \cite{HopkinsLurie13,MR3221291,MR4419631} and especially \cite{MR4133704}.
\end{remark}

\begin{remark}\label{rem:limitlinearization}As in Remark~\ref{rem:limitversion}, there is a limit/target-finite variant of Corollary~\ref{cor:spanfunctor}. If $\cV$ has limits (and a symmetric monoidal structure which distributes over limits), then the linearization $\Phi^{X, \alpha}: \lim\cL_A \to \lim \cL_B$ induces a (symmetric monoidal) functor $\mathrm{Span}^{t.f.}(\Sp, \cV) \to \cV$. If one further restricts to $\pi$-finite spaces as in Remark~\ref{rem:finiteversion}, 
	then the norm map isomorphisms induce a monoidal natural equivalence between the colimit and limit variant of the functor $\mathrm{Span}(\Sp^{\pi}, \cV) \to \cV$.
\end{remark}

\subsection{Linearizing $\pi$-finite categories}\label{sec:linearizingfinitecats}
For the remainder of this section, we restrict attention to the case $\cV = \Vec_k$ where $k$ is a field of characteristic zero. 

A rich source of decorated spans comes from $\pi$-finite categories. Let $\cC$ be an $\infty$-category (which for our purposes can be taken to be a category enriched in spaces). We let $\cC_0 = \Map([0], \cC)$ and $\cC_1 = \Map([1], \cC)$ be the \emph{moduli space of objects of $\cC$}, respectively the \emph{moduli space of arrows of $\cC$}. Here $[0] = pt$ is the terminal category and $[1]$ is the `free-walking-arrow'.

The source and target map lead to a span of spaces associated to the $\infty$-category $\cC$:
\begin{equation*}
	\cC_0 \ot[t] \cC_1 \to[s] \cC_0.
\end{equation*} 
Any functor $F: \cC \to \Vect_{k}$ restricts to a ($\Vec_{k}$-valued) local system $\cL_F$ on $\cC_0$, and a map of local systems $\alpha_F:s^* \cL_F \to t^* \cL_F$ on $\cC_1$ (with component at $(g:a\to b) \in \cC_1$ given by $(\alpha_F)_g: s^* \cL_F(g) = \cL_F(a) = F(a) \stackrel{F(g)}{\to} F(b) = \cL_F(b) = t^*\cL_F(g)$) .  Thus, any such functor induces a decorated span 
\begin{equation}\label{eq:decorationfunctor}
(\cC_0, \cL_F) \ot[t] (\cC_1, \alpha_F) \to[s] (\cC_0, \cL_F).
\end{equation}
We will also denote that span by $(\cC, F)$ or just $\cC$ when there is no confusion. 

We will say that an $\infty$-category $\cC$ is \emph{$\pi$-finite} if it has finitely many isomorphism classes of objects and if for all objects $a, b \in \cC$ the morphism spaces $\cC(a, b)$ are $\pi$-finite. Equivalently, it is $\pi$-finite if both $\cC_0$ and $\cC_1$ are $\pi$-finite spaces.

\begin{definition}\label{def:linearizationfunctor} Let $\cC$ be a $\pi$-finite $\infty$-category and let $F: \cC \to \Vec_k$ be a functor into the category of vector spaces over a field  $k$ of characteristic zero. The \emph{linearization} of $(\cC, F)$ is the linear map
\[ \Phi_{\cC, F}: \colim \cL_F \to \colim \cL_F 
\]
associated to the decorated span $(\cC, F)$. 
\end{definition}

In the following, we say that an $\infty$-category has the property that \emph{endomorphisms are invertible} if any morphism $f:a\to b$ with isomorphic source and target is itself invertible. 
\begin{theorem}[M\"obius inversion]\label{thm:Moebius} 
Let $\cC$ be a $\pi$-finite $\infty$-category in which endomorphisms are invertible. Then, for any functor $F:\cC \to \Vec_k$, the linearization $\Phi_{\cC, F}$ is invertible. \end{theorem}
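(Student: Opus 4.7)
The approach will be to exploit the invertibility-of-endomorphisms hypothesis to equip the finite set $\pi_0 \cC$ of isomorphism classes of objects with a partial order, with respect to which $\Phi_{\cC, F}$ becomes block upper triangular with identity diagonal blocks, and hence invertible by a nilpotent-perturbation argument reminiscent of classical M\"obius inversion on a finite poset.

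First I would declare $[a] \leq [b]$ whenever $\cC(a,b)$ is non-empty. Reflexivity comes from the identity morphism and transitivity from composition. For antisymmetry, given $f:a \to b$ and $g:b \to a$, both $gf$ and $fg$ are morphisms with source isomorphic to target and hence invertible by hypothesis; a short diagram chase then produces left and right inverses of $f$, so $f$ is itself an isomorphism and $[a]=[b]$. Since $\pi_0 \cC$ is finite, I can then refine this to a total order. Using the decomposition $\cC_0 \simeq \coprod_{[c]} B\Aut(c)$, the colimit splits as $\colim \cL_F \simeq \bigoplus_{[c]} \cL_F(c)_{\pi_0\Aut(c)}$, and the $([c],[d])$-block of $\Phi_{\cC, F}$ is the linearization of the sub-span obtained by restricting $\cC_1$ to arrows with source in $[c]$ and target in $[d]$. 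When $[c] \not\leq [d]$, this restricted moduli space is empty, so the corresponding block vanishes, giving block upper triangularity.

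For the diagonal blocks, the hypothesis forces the full subcategory $\cC|_{[c]}$ on objects isomorphic to $c$ to be a connected $\infty$-groupoid equivalent to $B\Aut(c)$, and the $([c],[c])$-block is the linearization of this subgroupoid applied to the restricted local system $F|_{[c]}$. Unpacking Proposition~\ref{prop:linearization} in this case, the homotopy fiber of the source map at any basepoint is contractible, so the integral collapses to a single term equal to the canonical map $\cL_F(c) \to \cL_F(c)_{\pi_0\Aut(c)}$; after passing to coinvariants this is the identity. Equivalently, one can extract this from the norm-map identification of Remark~\ref{rem:finiteversion}, using that the identity span linearizes to the identity.

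Writing $\Phi_{\cC, F} = I + N$ with $N$ block strictly upper triangular, finiteness of $\pi_0 \cC$ forces $N$ to be nilpotent, so $\Phi_{\cC, F}$ is invertible with explicit inverse $\sum_{k \geq 0}(-N)^k$. The main obstacle I expect is justifying the block decomposition of $\Phi$ itself: one must show that the linearization of the full span decomposes as the matrix of linearizations of its sub-spans indexed by pairs of components. This reduces to splitting the integral over the source-fiber according to the component of the target, which should follow from Fubini (Proposition~\ref{prop:Fubini}) combined with the fact that $\colim$ of a local system over a disjoint union is a direct sum. A secondary subtlety is the careful identification in the groupoid case that the relevant source-fiber is contractible, so that the diagonal block is literally the identity and not merely some other invertible map.
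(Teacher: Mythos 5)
Your proof is correct and is essentially the paper's argument repackaged in linear-algebra language: the paper likewise derives the poset on $\pi_0\cC_0$ from invertibility of endomorphisms and then inverts $\Phi_{\cC_1,\alpha_F}=\Phi_{\cC_0,\id}+\Phi_{\cC_1^<,\alpha_F^<}$ by the finite geometric series $\sum_n(-1)^n\Phi_{\cC_n^<}$, which is the same as your $\sum_k(-N)^k$ once one identifies your nilpotent $N$ with $\Phi_{\cC_1^<}$ and its powers $N^n$ with the linearizations of the chain-spaces $\cC_n^<$.
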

\begin{proof} 
Since $\cC$ is an $\infty$-category (and in particular $\cC_0, \cC_1$ are the spaces of $0$- and $1$-simplices in a complete Segal space~\cite{MR1804411}), the unit map $c_0:\cC_0 \to \cC_1$ is an inclusion of components (i.e. an injection on $\pi_0$ and a bijection on higher homotopy groups)\cite[Sections~5 and~6]{MR1804411}. Let $\cC_1^{<}$ denote the complement of $c_0:\cC_0 \hookrightarrow \cC_1$, i.e. the disjoint union of all components of $\cC_1$ not in the image of $c_0$, so that $\cC_1 \cong\cC_0 \sqcup \cC_1^{<}$. Intuitively, $\cC_1^{<}$ is the full subspace of $\cC_1$ of those morphisms $f:a\to b$ which are not invertible. More generally, define
\begin{equation}\label{eq:MoebiusdefCn}
\cC_n^{<} := \cC_1^{<} \times_{\cC_0} \cC_1^{<} \times_{\cC_0} \cdots \times_{\cC_0} \cC_1^{<}.
\end{equation} We may intuitively think of $\cC_n^{<}$ as the space of $n$-chains $a_0 \to[f_{0,1}] a_1 \to... \to[f_{n-1, n}] a_n$ for which no $f_{i, i+1}$ is invertible. 
There are maps $t,s: \cC^<_n \to \cC_0$ which send a chain $a_0 \to a_1 \to \cdots \to a_n$ to $a_n$ (target) and $a_0$ (source), yielding spans $\cC_0 \ot[t] \cC^<_n \to[s] \cC_0$. Abusing notation, we will henceforth simply denote these spans by $\cC_n^{<}$.

As in~\eqref{eq:decorationfunctor}, let $(\cC_1, \alpha_F)$ denote the decoration of the span $\cC_0 \ot[t] \cC_1 \to[s] \cC_0$ induced by the functor $F: \cC \to \Vec_{k}$. Let $\alpha_F^{<}$ denote the restriction of $\alpha_F$ to the subspan $\cC_1^{<} \hookrightarrow \cC_1$. Unitality of $F$ implies that the decomposition of spans $\cC_1\cong \cC_0 \sqcup \cC_1^{<}$ is compatible with decorations:  $$(\cC_1, \alpha_F) \cong (\cC_0, \id) \sqcup (\cC_1^{<}, \alpha_F^{<})$$
Composition of morphisms defines a map of spans $c_n: \cC_n^{<} \to \cC_1$ (generalizing the inclusions $c_0: \cC_0 \hookrightarrow \cC_1$ and $c_1: \cC_1^{<} \hookrightarrow \cC_1$). Functoriality of $F$ implies that the pulled-back decoration $c_n^* \alpha_F$ on $\cC_n^{<}$ is compatible with the defining decomposition~\eqref{eq:MoebiusdefCn} of $\cC_n^{<}$: \begin{equation}\label{eq:moebiusproof}
(\cC_n^{<}, c_n^*\alpha_F)  \cong (\cC_1^{<}, \alpha_F^{<}) \times_{(\cC_0, \cL_F)} \cdots \times_{(\cC_0, \cL_F)} (\cC_1^{<}, \alpha_F^{<})\end{equation}

We can amalgamate the decorated spans $(\cC_n^{<}, c_n^* \alpha_F)$ into a single decorated span $(\cC_0, \cL_F) \ot[t] (M, \mu) \to[s] (\cC_0, \cL_F)$ with
\begin{equation*}
	(M, \mu) := \coprod_{n \geq 0} (\cC_n^<, (-1)^n c_n^* \alpha_F) \cong (\cC_0, \id) \sqcup (\cC_1^<, -\alpha_F) \sqcup (\cC_2^<, \alpha_F \circ \alpha_F) \sqcup  \cdots 
\end{equation*}
Since endomorphisms in $\cC$ are invertible, for $[x], [y] \in \pi_0 \cC_0$ the relation $[x] \leq [y]$ if there exists a morphism $x\to y$ in $\cC$ defines a poset structure on the set $\pi_0 \cC_0$ of isomorphism classes of objects of $\cC$. In particular, it follows that if a composite $g_1\circ g_2 \circ \cdots \circ g_n$ of morphisms is invertible, then every single morphism $g_i$ has isomorphic source and target and hence is also invertible. In particular, for any point in $\cC_n^{<}$ represented by a chain $a_0 \to[f_{0,1}] a_1 \to... \to[f_{n-1, n}] a_n$ of non-invertible morphisms, it follows that also the composite of any subchain is non-invertible. Hence, if $n > |\pi_0 \cC_0|$, the space $\cC_n^{<}$ is empty. In particular, the space $M$ is $\pi$-finite and the span $(M, \mu)$ can be linearized to a map $\Phi_{(M, \mu)}: \colim \cL_F \to \colim \cL_F$. 

Using the decomposition of spans $(\cC_1, \alpha_F) \cong (\cC_0, \id) \sqcup (\cC_1^<, \alpha_F^<)$, it follows from~\eqref{eq:moebiusproof} that
\[(\cC_1, \alpha_F) \times_{(\cC_0, \cL_F)} (\cC_n^{<}, c_n^* \alpha_F) \cong (\cC_n^{<}, c_n^* \alpha_F) \sqcup (\cC_{n+1}^{<}, c_{n+1}^* \alpha_F).
\]
Since $\Phi_{(M,\mu)} = \sum_{n \geq 0} (-1)^n \Phi_{(\cC_n^<, c_n^* \alpha_F)}$ (with only finitely many non-zero terms in this sum), the linearization $\Phi_{(M, \mu)}$ is inverse to the linearization $\Phi_{(\cC_1, \alpha_F)}$.
\end{proof}

\begin{remark}Theorem~\ref{thm:Moebius} is closely related to the theory of decomposition spaces and M\"obius inversion therein~\cite{MR3804694,MR3818099, MR3828744}. Indeed, similar to~\cite{MR3818099}, the assumptions of Theorem~\ref{thm:Moebius} can be weakened: Instead of assuming that endomorphisms are invertible, it would have been sufficient to assume $\pi$-finiteness of $\cC$ and that every morphism $f:a\to b$ has \emph{finite length}, in the sense that there is a finite upper bound on the length of a chain of non-invertible morphisms into which it can be decomposed. In the proof of Theorem~\ref{thm:Moebius}, $\pi$-finiteness of $\cC$ together with this finite length condition is enough to ensure the eventual vanishing of $\cC_n^{<}$.
In fact, similar to~\cite{MR3818099}, we expect that it is not even necessary to assume that $\cC$ is an $\infty$-category --- a version of Theorem~\ref{thm:Moebius} should still apply to mere `decomposition spaces'~\cite[Definition~3.1]{MR3804694} (also known as $2$-Segal space~\cite{MR3970975}). However, the $\pi$-finite categories for which we show invertibility of $\Phi_{\cC, F}$ in Corollary~\ref{cor:factinv} do not in general fulfill this finite length condition and hence go beyond the categories considered in~\cite{MR3818099}. See Remark~\ref{rmk:finitesetcardN}.

\end{remark}

\begin{definition}\label{def:orthongonalfactsystem}[{cf.~\cite[Prop.~5.2.8.17]{MR2522659}}]
	Let $\cC$ be an $\infty$-category, and let $(\cL, \cR)$ be a pair of subcategories, each of which contains all the isomorphisms of $\cC$. The composition of morphism induces a natural map from the composite of the spans $\cC_0 \ot[t] \cR_1 \to[s] \cC_0$ and $\cC_0 \ot[t] \cL_1 \to[s] \cC_0$ to the span $\cC_0 \ot[t] \cC_1 \to[s] \cC_0$. The pair $(\cL, \cR)$ is an \emph{orthogonal factorization system} if this map is an equivalence of spans. We will often abuse notation and let $\cL$ and $\cR$ denote the corresponding spans, as well as the subcategories.
\end{definition}
\begin{remark}
	On homotopy fibers this condition becomes 
	\begin{equation*}
		\cC(a, c) \simeq \coprod_{[b] \in \pi_0\cC_0} \cR(b, c) \times_{\Aut_\cC(b)} \cL(a, b).
	\end{equation*}
	In particular every morphism can be canonically factored into the composite of a morphism in $\cL$ followed by a morphism in $\cR$. \end{remark}

\begin{definition}\label{def:nestedfactsystem}
A \emph{nested factorization system} is a collection $(\cL^{(k)}, \cR^{(k)})$ of orthogonal factorization systems for $1 \leq k \leq n$ such that  $\cR^{(k-1)} \subseteq \cR^{(k)}$ for all $k$ (and equivalently, $\cL^{(k+1)} \subseteq \cL^{(k)}$). Given a nested factorization system, we let $\cT^{(\ell)}$ denote the following subcategories for $0 \leq \ell \leq n$:
	\begin{equation*}
		\cT^{(\ell)} = \begin{cases}
			\cR^{(1)} & \textrm{if } \ell=0 \\
			\cR^{(\ell+1)} \cap \cL^{(\ell)} &\textrm{if } 1 \leq \ell \leq n-1  \\
			\cL^{(n)} & \textrm{if } \ell=n
		\end{cases}
	\end{equation*}
	\end{definition}

\begin{remark}
	Each subcategory $\cT^{(\ell)}$ gives a span $\cC_0 \leftarrow \cT^{(\ell)} \to \cC_0$. For each $k = 1, \dots, n$ we have the following compositions of spans
	\begin{align*}
		\cR^{(k)} &= \cT^{(0)} \times_{\cC_0} \cT^{(1)}  \times_{\cC_0} \cdots \times_{\cC_0}  \cT^{(k-1)} \\
		\cL^{(k)} &= \cT^{(k)} \times_{\cC_0} \cT^{(k+1)}  \times_{\cC_0} \cdots \times_{\cC_0}\cT^{(n)} \\
		\cC_1 &= \cT^{(0)} \times_{\cC_0} \cT^{(1)}  \times_{\cC_0} \cdots \times_{\cC_0}  \cT^{(n)}.
	\end{align*}
The nested sequence of orthogonal factorization systems can be recovered from the subcategories $\cT^{(\ell)}$, and so they provide an equivalent presentations.   
\end{remark}

\begin{example}\label{exm:truncatedconnected} For $n \geq -2$, a map $f: X \to Y$ between topological spaces is called \emph{$n$-connected} if for all points $x\in X$, $\pi_{i}(f;x)$ is an isomorphism for $i < n+1$ and surjective for $i=n+1$. A map $f:X\to Y$ is \emph{$n$-truncated} if $\pi_i (f;x)$ is injective for $i=n+1$ and an isomorphism for $i >n+1$. (Recall that the set $\pi_{-1}$ of a space $X$ is either $\emptyset$ if $X$ is, or is $\{*\}$ if $X$ is non-empty.) Equivalently, $f$ is $n$-truncated if the homotopy fiber at every basepoint in $Y$ is $n$-truncated (i.e. is an $n$-type; all homotopy groups $\pi_{i}$ for $i > n$ vanish) and it is $n$-connected if the homotopy fibers are $n$-connected (i.e. all homotopy groups $\pi_{i}$ for $i \leq n$ vanish).

Any map $f:X \to Y$ of spaces may be factored, in an essentially unique way, into a $n$-connected map followed by a $n$-truncated map. Formally, this amounts to the assertion that the $\infty$-category of spaces admits an orthogonal factorization system $(\cL^{(n)}, \cR^{(n)})$ in which the left class $\cL^{(n)}$ consists of the $(n-1)$-connected maps and the right class $\cR^{(n)}$ consists of the $(n-1)$-truncated 
maps, see  \cite[Example~5.2.8.16]{MR2522659}. This system is nested in the sense that $\cR^{(n-1)} \subseteq \cR^{(n)}$ and $\cL^{(n+1)} \subseteq \cL^{(n)}$. Given a map $f:X \to Y$, for each integer $n \geq -2$ we may factor $f$ as $X \to Z_n \to Y$ into an $n$-connected map followed by an $n$-truncated map. The fact that these orthogonal factorization systems are nested implies the the spaces $\{Z_n\}$ assemble into a tower:
	\begin{center}
	\begin{tikzpicture}
		\node (X) at (0,0) {$ X $};
		\node (Zb) at (2.5, 0) {$ Z_{-2}  $};
		\node (Za) at (2.5, 1) {$ Z_{-1}  $};
		\node (Z0) at (2.5, 2) {$ Z_{0}  $};
		\node (Z1) at (2.5, 3) {$ Z_{1}  $};		
		\node (Z2) at (2.5, 4) {$ \vdots  $};
		
		\node (Y) at (5, 0) {$ Y $};
		
		\draw [->] (X) to (Zb); 
		\draw [->] (Zb) to node [below] {$\simeq$} (Y); 
		\draw [->] (X) to (Za); 
		\draw [->] (Za) to  (Y);
		\draw [->] (Za) to  (Zb);
		\draw [->] (X) to (Z0); 
		\draw [->] (Z0) to  (Y);
		\draw [->] (Z0) to  (Za);
		\draw [->] (X) to (Z1); 
		\draw [->] (Z1) to  (Y);
		\draw [->] (Z1) to  (Z0);
		\draw [->] (X) to (Z2); 
		\draw [->] (Z2) to  (Y);
		\draw [->] (Z2) to  (Z1);
	\end{tikzpicture}
	\end{center}
	This is the \emph{Moore-Postnikov tower} of the map $f: X \to Y$. 
\end{example}

\begin{corollary}\label{cor:factinv} Let $\cC$ be a $\pi$-finite $\infty$-category which admits a nested factorization system $(\cL^{(k)}, \cR^{(k)})_{1 \leq k\leq n}$ such that for every $0 \leq \ell \leq n$,  endomorphism in the category $\cT^{(\ell)}$ are invertible. Then, for any functor $F: \cC \to \Vec_k$, the linearization $\Phi_{\cC, F}$ is invertible. 
\end{corollary}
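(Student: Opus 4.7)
The plan is to decompose $\Phi_{\cC, F}$ as a composite of the linearizations of the smaller decorated spans $(\cT^{(\ell)}, \alpha_F|_{\cT^{(\ell)}})$, each of which will be invertible by Theorem~\ref{thm:Moebius}. The key input is the equivalence of spans over $\cC_0$,
\begin{equation*}
\cC_1 \;\simeq\; \cT^{(0)} \times_{\cC_0} \cT^{(1)} \times_{\cC_0} \cdots \times_{\cC_0} \cT^{(n)},
\end{equation*}
recorded in the remark following Definition~\ref{def:nestedfactsystem}. This is obtained by recursively applying the factorization systems: a morphism $f: a \to b$ is first factored using $(\cL^{(1)}, \cR^{(1)})$ as $f = f_0 \circ f'$ with $f_0 \in \cR^{(1)} = \cT^{(0)}$ and $f' \in \cL^{(1)}$; then $f'$ is factored using $(\cL^{(2)}, \cR^{(2)})$, where the left-cancellation property of the left class of an orthogonal factorization system forces the resulting $\cR^{(2)}$-piece to lie in $\cL^{(1)} \cap \cR^{(2)} = \cT^{(1)}$; and so on, inductively. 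Essential uniqueness of these factorizations gives the equivalence of spans.

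Next I would verify that this span-level decomposition is compatible with the decoration induced by $F$. Each $\cT^{(\ell)}$ is a subcategory of $\cC$ containing every isomorphism, so $\cT^{(\ell)}_0 = \cC_0$ and the restricted decoration $\alpha_F|_{\cT^{(\ell)}}$ yields a decorated span $(\cT^{(\ell)}, \alpha_F|_{\cT^{(\ell)}})$ with the same boundary local system $(\cC_0, \cL_F)$. Functoriality of $F$---the same mechanism used in~\eqref{eq:moebiusproof} to match the pulled-back decoration on $\cC_n^{<}$ with the iterated fiber product of restricted decorations---ensures that the above equivalence lifts to an isomorphism of decorated spans
\begin{equation*}
(\cC_1, \alpha_F) \;\simeq\; (\cT^{(0)}, \alpha_F|_{\cT^{(0)}}) \circ (\cT^{(1)}, \alpha_F|_{\cT^{(1)}}) \circ \cdots \circ (\cT^{(n)}, \alpha_F|_{\cT^{(n)}}),
\end{equation*}
where $\circ$ denotes pullback composition of decorated spans over $(\cC_0, \cL_F)$.

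Applying Theorem~\ref{thm:composingspans} iteratively to this factorization then yields
\begin{equation*}
\Phi_{\cC, F} \;=\; \Phi_{\cT^{(0)}, F} \circ \Phi_{\cT^{(1)}, F} \circ \cdots \circ \Phi_{\cT^{(n)}, F}.
\end{equation*}
Each subcategory $\cT^{(\ell)}$ is $\pi$-finite, since its moduli space of objects is $\cC_0$ and its space of morphisms is a union of connected components of the $\pi$-finite space $\cC_1$; and by hypothesis endomorphisms in each $\cT^{(\ell)}$ are invertible. Theorem~\ref{thm:Moebius} therefore shows that each $\Phi_{\cT^{(\ell)}, F}$ is invertible, and hence so is their composite $\Phi_{\cC, F}$.

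The main technical obstacle is the second step: verifying rigorously that the span decomposition of $\cC_1$ is an equivalence of \emph{decorated} spans, in an $\infty$-categorical setting where composition is only defined up to coherent homotopy. This runs parallel to the compatibility argument in the proof of Theorem~\ref{thm:Moebius} (cf.~\eqref{eq:moebiusproof}), but now requires tracking the interaction of several nested factorization systems simultaneously rather than only the invertible/non-invertible dichotomy. Once this is in place, the rest of the proof is a straightforward assembly of results already proven in Section~\ref{sec:SpansAndLinearization}.
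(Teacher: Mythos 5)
Your proof is correct and follows essentially the same route as the paper: write the decorated span $(\cC_1, \alpha_F)$ as the composite $\cT^{(0)} \times_{\cC_0} \cdots \times_{\cC_0} \cT^{(n)}$ with decorations given by restricting $F$, apply Theorem~\ref{thm:composingspans} to get $\Phi_{\cC,F} = \Phi_{\cT^{(0)},F} \circ \cdots \circ \Phi_{\cT^{(n)},F}$, and invoke Theorem~\ref{thm:Moebius} for each factor. The only difference is that you spell out (and flag as a technical point) the compatibility of the span decomposition with the local-system decoration, which the paper treats as immediate from the definitions; this is a reasonable level of caution but not a genuine gap in either argument.
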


\begin{proof} By definition, the decorated span $\cC_0 \leftarrow \cC_1 \rightarrow \cC_0$ is the composite of decorated spans $\cT^{(0)} \times_{\cC_0} \ldots \times_{\cC_0} \cT^{(n)}$ (with maps of local systems given by restricting the functor $F$ to the subcategories $\cT^{(\ell)}$). Therefore, the corollary follows from Theorems~\ref{thm:Moebius} and~\ref{thm:composingspans}.
\end{proof}

\begin{remark}\label{rmk:finitesetcardN}
	Let $\cC = \Fin\Set_{\leq N}$ be the category of finite sets with cardinality less than or equal to $N$. By limiting the cardinality, $\cC$ is a $\pi$-finite category. This category contains non-trivial idempotents, and thus does not satisfy the \emph{M\"obius condition} of \cite{MR3818099}. Nevertheless, 
epimorphisms and monomorphisms of finite sets which are endomorphisms are automatically bijections. Thus the single factorization system $(\Surj, \Inj)$ satisfies the conditions of Corollary~\ref{cor:factinv}. Consequently for any functor $F: \Fin\Set_{\leq N} \to \Vec_k$, the linearization $\Phi_{\Fin\Set_{\leq N}, F}$ is invertible. 
\end{remark}

\subsection{Linearizing locally $\pi$-finite categories}\label{sec:linearizelocpifinite}
 In this section, we extend the results of Section~\ref{sec:linearizingfinitecats} to $\infty$-categories with possibly infinitely many isomorphism classes of objects. 
An $\infty$-category $\cC$ is \emph{locally $\pi$-finite} if all its morphism spaces $\cC(a, b)$ are $\pi$-finite for any $a, b \in \cC$. For such categories, neither map in the span $\cC_0 \leftarrow \cC_1 \rightarrow \cC_0$ necessarily has $\pi$-finite fibers. Hence, the linearization $\Phi_{\cC, F}$ of Definition~\ref{def:linearizationfunctor} is not  defined. However, the span $* \leftarrow \cC_1 \to[(t,s)] \cC_0 \times \cC_0$ is still source finite (Definition~\ref{def:decoratedspan}) since the fiber of $(t,s): \cC_1 \to \cC_0 \times \cC_0$ at a point $(b,a) \in \cC_0 \times \cC_0$ is precisely the morphism space $\cC(a, b)$.

Analogous to~\eqref{eq:decorationfunctor}, any functor $F:\cC \to \Vec_{k}$ induces a decoration of this span
\begin{equation}\label{eq:decoratedspanpairing}
  (*, k)\leftarrow (\cC_1, p_F) \to[(t,s)] (\cC_0, \cL_F^\vee) \times (\cC_0, \cL_F).
\end{equation}
As before,  $\cL_F: \cC_0 \to \Vec_{k}$ denotes the restriction of $F$ to $\cC_0$ and $\cL_F^{\vee}: \cC_0 \to \Vec_{k}$ denotes the local system with value at $d \in \cC_0$ given by the dual vector space $F(d)^{\vee}:= \Vec_{k}(F(d), k).$ The transformation  $p_F: t^*\cL_F^\vee  \otimes s^*\cL_F \To \mathrm{const}_{k}$ of local systems on $\cC_1$  is given at an $f:a\to b$ in $\cC_1$ by 
\[F(b)^\vee \otimes F(a) \to[ \id_{F(b)^{\vee}}\otimes F(f)] F(b)^\vee \otimes F(b)   \to k.\]

\begin{definition}\label{def:linearpairing} 
Let $\cC$ be a locally $\pi$-finite $\infty$-category and let $F: \cC \to \Vec_k$ be a functor into the category of vector spaces over a field $k$ of characteristic zero. The \emph{linear pairing} associated to $(\cC, F)$ is the linear map 
\[\langle -, - \rangle_{\cC, F}: \colim \cL_F^\vee \otimes \colim \cL_F \to k\]
obtained from linearizing the above span $(*, k)\leftarrow (\cC_1, p_F) \rightarrow (\cC_0, \cL_F^\vee) \times (\cC_0, \cL_F).$
\end{definition}

\begin{remark}\label{rem:explicitformulapairing} Expressing the colimit as a sum over coinvariants, the linear pairing associated to $(\cC, F)$ is a linear map 
\[\langle -, - \rangle_{\cC, F}: \left( \bigoplus_{ [d]\in \pi_0 \cC_0} \left(F(d)^{\vee}\right)_{\pi_0 \Aut_{\cC}(d)} \right) \otimes \left( \bigoplus_{ [c]\in \pi_0 \cC_0} F(c)_{\pi_0 \Aut_{\cC}(c)} \right)  \to k.\]
Explicitly unpacking~\eqref{eq:deflinearization}, given $(d \in \cC_0, \phi \in F(d)^\vee)$ and $(c\in \cC_0, v\in F(c))$, their pairing is given by 
\[\langle (d, \phi), (c, v)\rangle_{\cC, F} = \sum_{[f]\in \pi_0 \cC(c, d)} ~\#(\cC(c,d), f) ~ \phi(F(f)(v)).
\]
Note that this formula indeed only depends on the orbits of $\phi$ and $v$ under the $\Aut_{\cC}(d)$ and $\Aut_{\cC}(c)$ action, respectively, and the isomorphism classes of $c$ and $d$. 
\end{remark}

\begin{example}\label{exm:pairing1cat}
As an important special case of Definition~\ref{def:linearpairing}, suppose that $\cC$ is locally $\pi$-finite and that $F: \cC \to \Vec_{k}$ is the constant functor at the one-dimensional vector space $k$. In this case, the pairing $\langle -, - \rangle_{\cC, \mathrm{const}_{k}}$ (henceforth simply denoted by $\langle -, - \rangle_{\cC}$) is a linear map 
\[ k[\pi_0 \cC_0] \otimes k[\pi_0 \cC_0] \to k
\]
on the free $k$-vector spaces $k[\pi_0 \cC_0]$ on the set $\pi_0 \cC_0$
given at a $[c] \in \cC_0$ and $[d] \in \cC_0$ by 
\[ \langle [d], [c] \rangle_{\cC} = \#^{\mathrm{tot}} \cC(c,d).
\]
In particular, if $\cC$ is a $1$-category, then $\langle [d], [c] \rangle_{\cC} = |\cC(c,d)|$ records the cardinalities of the hom sets. 
For example, for $\cC =\mathrm{FinSet}$ the category of finite sets and after identifying $\pi_0 \cC \cong \mathbb{N}_{\geq 0}$, the pairing $\langle -, -\rangle_{\mathrm{FinSet}}$ is represented by the $\mathbb{N}_{\geq 0} \times \mathbb{N}_{\geq 0}$ matrix with $(n,m)$-coefficient given by $n^m$.\end{example}

\begin{example}\label{exm:diagonalgroupoid}
If $\cC$ is a locally $\pi$-finite $\infty$-groupoid, or equivalently a locally $\pi$-finite space $X$ (i.e. a possibly infinite disjoint union of $\pi$-finite spaces), equipped with a local system $\cL:X \to \mathrm{Vec}_{k}$, then it follows from the explicit formula in Remark~\ref{rem:explicitformulapairing} that the pairing
\[\langle -, - \rangle_{X, \cL}: \left( \bigoplus_{ [x]\in \pi_0 X} \left(\cL(x)^{\vee}\right)_{\pi_1(X, x)} \right) \otimes \left( \bigoplus_{ [y]\in \pi_0 X} \cL(y)_{\pi_1(X,y)} \right)  \to k\]
is \emph{diagonal}, i.e. $$\left\langle\vphantom{\frac{a}{b}} (x\in X, \phi \in \cL(x)^\vee), (y\in X, v \in \cL(y)) \right\rangle_{X, \cL} =0 \hspace{1cm}\text{ if }[x] \neq [y] \in \pi_0 X,$$ and has diagonal entries $$\left\langle\vphantom{\frac{a}{b}} (x, \phi\in \cL(x)^\vee), (x, v\in \cL(x)) \right\rangle_{X, \cL} =  \#(X, x)^{-1}~\frac{1}{|\pi_1(X,x)|}\sum_{\gamma \in \pi_1(X, x)} \phi(\cL(\gamma)(v)).$$
As in~\eqref{eq:normmap}, we used that for any loop $\gamma \in \Omega_x X$, $\#(\Omega_x X, \gamma) = \#(X, x)^{-1} |\pi_1(X,x)|^{-1}$.
\end{example}

\begin{remark} \label{rem:pairingsubcats}If the $\infty$-category $\cC$ is not locally $\pi$-finite, the decorated span~\eqref{eq:decoratedspanpairing} is not source finite and hence cannot be linearized. However, its restriction to a pairing between certain full subcategories of $\cC$ can still be defined. Namely, if $\cA, \cB \subseteq \cC$ are full subcategories of $\cC$ such that for every $a\in \cA$ and $b \in \cB$, the mapping space $\cC(a, b)$ is $\pi$-finite, then the decorated span $(*, k) \ot (\cC_1|_{\cA, \cB}, p_F|_{\cA, \cB}) \to (\cB_0,\cL_F^\vee|_{\cB}) \times (\cA_0, \cL_F|_{\cA})$ is source finite and can hence be linearized. Here, $\cC_1|_{\cA, \cB}$ is the pullback \[\begin{tikzcd}
	{\cC_1|_{\cA, \cB}} & {\cC_1} \\
	{\cB_0 \times \cA_0} & {\cC_0 \times \cC_0}
	\arrow[from=1-1, to=1-2]
	\arrow[from=1-1, to=2-1]
	\arrow[from=2-1, to=2-2]
	\arrow[from=1-2, to=2-2, "{(t, s)}"]
	\arrow["\lrcorner"{anchor=center, pos=0.125}, draw=none, from=1-1, to=2-2]
\end{tikzcd}\] (i.e. the space of morphisms of $\cC$ whose source is in $\cA$ and whose target is in $\cB$) and $\cL_F|_{\cA}$, $\cL_F^\vee|_{\cB}$,  and $p_F|_{\cA, \cB}$  are the obvious restrictions of the local systems and map of local systems. As in Remark~\ref{rem:explicitformulapairing}, this restricted pairing unpacks to a linear map \[\langle -, - \rangle^{\cA, \cB}_{\cC;  F}: \left( \bigoplus_{ [b]\in \pi_0 \cB_0} \left(F(b)^{\vee}\right)_{\pi_0 \Aut_{\cC}(b)} \right) \otimes \left( \bigoplus_{ [a]\in \pi_0 \cA_0} F(a)_{\pi_0 \Aut_{\cC}(a)} \right)  \to k.\]
explicitly given by
\begin{equation}\label{eq:subcatpairingunpacked}
\left\langle\vphantom{\frac{a}{b}} (b\in \cB_0, \phi \in F(b)^\vee), (a\in \cA_0, v \in F(a))\right\rangle^{\cA, \cB}_{\cC, F} = \sum_{[f]\in \pi_0 \cC(a, b)} ~\#(\cC(a,b), f) ~ \phi(F(f)(v)).
\end{equation}
\end{remark}

We say that a pairing $\langle -, - \rangle : V\otimes W \to k$ between $k$-vector spaces is \emph{left non-degenerate} if the induced map $V\to \Hom(W, k)$ is injective. We say it is \emph{right non-degenerate} if the induced map $W\to \Hom(V, k)$ is injective. We say the pairing is \emph{non-degenerate} if it is both left and right non-degenerate.  Note that a pairing between finite-dimensional vector spaces is non-degenerate if and only if it is \emph{perfect}, i.e. if the induced map $V\to \Hom(W, k)$ is an isomorphism. This is not true for infinite-dimensional vector spaces.

\begin{prop}\label{prop:nondeggroupoid} 
Let $X$ be a locally $\pi$-finite space and let $\cL:X \to \Vec_{k}$ be a local system. Then, $\langle -, -\rangle_{X, \cL}$ is non-degenerate. 
\end{prop}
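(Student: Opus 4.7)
The plan is to reduce non-degeneracy to a statement about finite group representations, and then invoke the classical fact that in characteristic zero, coinvariants and invariants agree via averaging.

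First, by Example~\ref{exm:diagonalgroupoid}, the pairing $\langle -, - \rangle_{X, \cL}$ is block-diagonal with respect to the decomposition of both its source and target over the set of path components $\pi_0 X$. Non-degeneracy therefore reduces to non-degeneracy of each diagonal block. Choosing a basepoint $x$ in a fixed component, set $G := \pi_1(X, x)$ (a finite group, since $X$ is locally $\pi$-finite) and $V := \cL(x)$. The diagonal block at $[x]$ is, up to the nonzero scalar $\#(X,x)^{-1}|G|^{-1} \in \QQ^{\times}$, the pairing
\[
(V^\vee)_G \otimes V_G \to k, \qquad ([\phi], [v]) \mapsto \sum_{\gamma \in G} \phi(\cL(\gamma) v).
\]

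Next, I would rewrite this via the averaging operator. Using the dual action $(\gamma \phi)(v) := \phi(\gamma^{-1} v)$ on $V^\vee$ and substituting $\gamma \mapsto \gamma^{-1}$ in the sum, the pairing becomes $([\phi],[v]) \mapsto |G|\,\mathrm{Av}(\phi)(v)$, where $\mathrm{Av}(\phi) := \frac{1}{|G|}\sum_{\gamma \in G} \gamma \phi \in (V^\vee)^G$. Since $k$ has characteristic zero and $G$ is finite, a standard Maschke-type argument shows that $\mathrm{Av}$ descends to an isomorphism
\[
(V^\vee)_G \xrightarrow{\ \sim\ } (V^\vee)^G:
\]
it annihilates every element of the form $\phi - \gamma\phi$, and conversely any $\phi$ with $\mathrm{Av}(\phi) = 0$ satisfies $\phi = \frac{1}{|G|}\sum_{\gamma}(\phi - \gamma\phi)$, and therefore lies in the subspace spanned by such elements.

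Finally, the canonical identification $(V^\vee)^G = (V_G)^\vee$ (a $G$-invariant functional on $V$ is the same as a functional on $V_G$) identifies the diagonal block pairing with the composite of this averaging isomorphism followed by the canonical evaluation pairing $(V_G)^\vee \otimes V_G \to k$. The latter is non-degenerate in both variables for any $k$-vector space: a nonzero functional does not annihilate all vectors, and any nonzero vector can be extended to a basis and separated from zero by the corresponding dual functional. This yields non-degeneracy of each diagonal block, and hence of $\langle -, - \rangle_{X, \cL}$. I do not anticipate a serious obstacle; the argument is essentially a Maschke-type reduction to the evaluation pairing on $V_G$.
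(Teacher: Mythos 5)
Your proof is correct and takes essentially the same route as the paper: reduce to diagonal blocks via Example~\ref{exm:diagonalgroupoid}, then recognize the block pairing as the composite of a Maschke-type averaging isomorphism $(V^\vee)_G \xrightarrow{\sim} (V_G)^\vee$ (which is exactly the paper's norm map $\mathrm{Nm}_{\cL^\vee, X|_x}$ from Remark~\ref{rem:finiteversion}, up to the scalar $\#(X,x)^{-1}$) with the canonical evaluation pairing. The one modest difference is that the paper treats left and right non-degeneracy separately using two norm maps (for $\cL$ and for $\cL^\vee$), whereas you handle both at once by noting the block pairing is literally isomorphic to the evaluation pairing $(V_G)^\vee \otimes V_G \to k$, which is slightly more economical.
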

\begin{proof} Following Example~\ref{exm:diagonalgroupoid}, $\langle-, -\rangle_{X,\cL}$ is diagonal. Hence, it suffices to show that for every $x\in X$ the pairing $\langle-,-\rangle_x: \cL(x)^\vee_{\pi_1(X,x)} \otimes \cL(x)_{\pi_1(X,x)} \to k$ given by 
\begin{equation}\label{eq:pairinggroupoidformula}
\langle f\in \cL(x)^\vee, v\in \cL(x)\rangle_x:=  \#(X, x)^{-1} ~\frac{1}{|\pi_1(X,x)|} \sum_{\gamma \in \pi_1(X,x)} f(\cL(\gamma)v)
\end{equation}
is non-degenerate. Consider the pairings 
$$( -,-)_x: (\cL(x)^\vee)_{\pi_1(X,x)} \otimes \cL(x)^{\pi_1(X,x)} \to k\hspace{1cm} f, v \mapsto f(v)$$
 $$(-,-)^x: (\cL(x)^\vee)^{\pi_1(X,x)} \otimes \cL(x)_{\pi_1(X,x)} \to k\hspace{1cm} f,v \mapsto f(v).$$
The maps $v\mapsto (-,v)_x$ and $f\mapsto (f,-)^x$ are injective. Let $\mathrm{Nm}_{\cL, X|_x}: \cL(x)_{\pi_1(X,x)} \to \cL(x)^{\pi_1(X,x)}$ denote the norm map~\eqref{eq:normmap} of the local system $\cL$ restricted to the connected component $X|_x$ of $x\in X$. Comparing the definition of the norm map with~\eqref{eq:pairinggroupoidformula}, it follows that $\langle -, -\rangle_x = (-, \mathrm{Nm}_{\cL, X|_x} - )_x$. Since $\mathrm{Nm}_{\cL, X|_x}$ is invertible, the map $v\mapsto \langle -, v \rangle_x$ is injective. 
On the other hand, let $\mathrm{Nm}_{\cL^\vee, X|_x}: (\cL(x)^\vee)_{\pi_1(X,x)} \to (\cL(x)^\vee)^{\pi_1(X,x)}$ denote the invertible norm map of the local system $\cL^\vee$. It follows that $\langle - , - \rangle_x = (\mathrm{Nm}_{\cL^\vee, X|_x}-, - )^x$ and hence that also $f\mapsto \langle f, -\rangle_x$ is injective.\end{proof}

The linear pairing $\langle-,-\rangle_{\cC, F}$ is closely related to the linearization $\Phi_{\cC, F}$ of Definition~\ref{def:linearizationfunctor}.

\begin{lemma}\label{lem:pifinitefactpairingformula}
	Let $\cC$ be $\pi$-finite with a factorization system $(\cL, \cR)$. Then, regarding $\cL$ and $\cR$ as categories, we have that for any $F:\cC \to \Vec_k$ \begin{equation*}
		\langle -, - \rangle_{\cC, F} = \langle -, \Phi_{\cL, F}(-)\rangle_{\cR, F} = \langle \Phi_{\cR^\op, F^\vee}(-), -\rangle_{\cL, F}.
	\end{equation*} 
\end{lemma}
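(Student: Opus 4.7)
The plan is to recognize each side of each equality as the linearization of a decorated span, and then to deduce both identities by invoking the functoriality of linearization (Corollary~\ref{cor:spanfunctor}) applied to a span-level factorization. Concretely, the pairing $\langle-,-\rangle_{\cC,F}$ is by definition the linearization of the source-finite decorated span
\[ (*,k)\ot (\cC_1, p_F) \to[(t,s)] (\cC_0,\cL_F^\vee)\times(\cC_0,\cL_F), \]
so both equalities reduce to identities between decorated spans in $\Span^{s.f.}(\Sp,\Vec_k)$.

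First I would unpack the factorization system: composition gives an equivalence of spans over $\cC_0 \times \cC_0$,
\[ \cC_1 \;\simeq\; \cR_1 \times_{\cC_0} \cL_1, \]
sending a composable pair $(r,\ell)$ with $s_r=t_\ell$ to $r\circ \ell$, whose source is $s_\ell$ and target is $t_r$. Functoriality of $F$ ensures that under this equivalence, the decoration $p_F$ on $\cC_1$ matches the composite of two decorations: first act on the source side by $F(\ell)$ (this is the decoration $\alpha_F$ restricted to $\cL_1$), then pair the middle object $F(t_\ell)=F(s_r)$ with $F^\vee(t_r)$ via $\phi\otimes w \mapsto \phi(F(r)w)$ (this is $p_F$ restricted to $\cR_1$). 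In other words, $(\cC_1, p_F)$ is isomorphic in $\Span^{s.f.}(\Sp,\Vec_k)$ to the composition of the decorated span representing $\id\otimes \Phi_{\cL,F}$ with the decorated span representing $\langle-,-\rangle_{\cR,F}$. Applying Theorem~\ref{thm:composingspans} yields the first equality $\langle-,-\rangle_{\cC,F} = \langle-,\Phi_{\cL,F}(-)\rangle_{\cR,F}$.

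The second equality follows by the same factorization of $\cC_1$, except that the $\cR$-half of the composite is now distributed onto the left input. Here one uses that the contravariant action of a morphism $r\colon e\to d$ on $F^\vee$ is precisely the decoration of the span corresponding to $\Phi_{\cR^\op,F^\vee}$: at $(d,\phi)$ the linearization integrates $\phi \mapsto F^\vee(r)\phi = \phi\circ F(r)$ over morphisms $r\in\cR$ with $t_r=d$. Splitting the decoration $p_F$ on $\cC_1\simeq \cR_1\times_{\cC_0}\cL_1$ instead as $\phi\otimes v \mapsto (\phi\circ F(r))\otimes v \mapsto (\phi\circ F(r))(F(\ell)v)$ identifies $(\cC_1,p_F)$ with the composite of the spans for $\Phi_{\cR^\op,F^\vee}\otimes \id$ followed by $\langle-,-\rangle_{\cL,F}$, and Corollary~\ref{cor:spanfunctor} yields the second equality.

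The main obstacle I expect is purely bookkeeping: carefully keeping straight the source/target conventions for $\cL$, $\cR$, and $\cR^\op$, and verifying that the decorations on the composed spans match under composition as claimed. A fully parallel, more computational proof can instead be carried out directly from the explicit pairing formula of Remark~\ref{rem:explicitformulapairing} by substituting the factorization $\cC(c,d)\simeq \coprod_{[e]}\cR(e,d)\times_{\Aut_\cC(e)}\cL(c,e)$ and applying the Fubini theorem (Proposition~\ref{prop:Fubini}) to split the sum over $f$ into nested sums over $r$ and $\ell$; this avoids span categories at the cost of some combinatorial verification, and it serves as a useful sanity check on the bookkeeping.
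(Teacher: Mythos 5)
Your proof is correct and is exactly the detailed unpacking of the paper's one-sentence argument: the paper proves the lemma by observing that each side is the linearization of a decorated span, that the factorization system gives the identity $(\cC_1, p_F) \simeq (\cR_1, p_F|_{\cR}) \circ (\cC_0 \times \cL_1, \id \times \alpha_F)$ (and its twin with $\cR$ on the left input) in $\mathrm{Span}(\cS^\pi, \Vec_k)$, and then applying the functoriality and monoidality of $\Phi$ from Corollary~\ref{cor:spanfunctor}. You identify the same span-level decomposition and invoke the same results, just spelling out the decoration bookkeeping; your Fubini-based alternative is a valid computational check but not needed.
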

	\begin{proof} This follows from functoriality and monoidality of the linearization functor (Corollary~\ref{cor:spanfunctor}) and the corresponding equation of spans in the category $\mathrm{Span}(\cS^{\pi }, \Vec_k)$.
	\end{proof}

\begin{corollary}\label{cor:pairingcatvsgroupoid}
	If $\cC$ is $\pi$-finite, then $\langle -, - \rangle_{\cC, F} = \langle -, \Phi_{\cC, F}-\rangle_{\cC_0, \cL_F}$. 
	\qed
\end{corollary}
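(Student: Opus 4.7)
The plan is to deduce this corollary directly from Lemma~\ref{lem:pifinitefactpairingformula} by applying it to the (trivial) orthogonal factorization system in which the left class is all of $\cC$ and the right class is the underlying core groupoid of isomorphisms, which I shall denote by $\cC^{\simeq}$.

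First, I would verify that the pair $(\cL, \cR) := (\cC, \cC^{\simeq})$ is indeed an orthogonal factorization system in the sense of Definition~\ref{def:orthongonalfactsystem}. Both subcategories evidently contain all isomorphisms. For the span condition, recall that the unit map $c_0 : \cC_0 \to \cC_1$ is an inclusion of connected components whose image is precisely the subspace of invertible morphisms (as used in the proof of Theorem~\ref{thm:Moebius}). Hence the morphism space of $\cC^{\simeq}$ is equivalent to $\cC_0$, and the span
$\cC_0 \leftarrow \cC^{\simeq}_1 \to \cC_0$
is equivalent to the identity span on $\cC_0$. The composite of this with the span of $\cC$ is then $\cC_1 \times_{\cC_0} \cC_0 \simeq \cC_1$, and the natural composition map to $\cC_1$ is the identity up to equivalence, confirming orthogonality.

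Next, I would apply Lemma~\ref{lem:pifinitefactpairingformula} to the factorization system $(\cC, \cC^{\simeq})$ to obtain the identity
\[
\langle -, - \rangle_{\cC, F} \;=\; \langle -, \Phi_{\cL, F}(-)\rangle_{\cR, F}.
\]
Since $\cL = \cC$, the linearization $\Phi_{\cL, F}$ on the right is precisely $\Phi_{\cC, F}$. It remains to identify $\langle -, -\rangle_{\cR, F}$ with $\langle -, -\rangle_{\cC_0, \cL_F}$. This follows by comparing the explicit formula of Remark~\ref{rem:explicitformulapairing} applied to $\cR = \cC^{\simeq}$ with the diagonal formula of Example~\ref{exm:diagonalgroupoid}: under the equivalence of the morphism space of $\cC^{\simeq}$ with $\cC_0$, the sum over $\pi_0 \cC^{\simeq}(c,d)$ reduces (since this set is empty unless $[c] = [d]$) to the averaged sum over loops appearing in Example~\ref{exm:diagonalgroupoid}. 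Combining these identifications yields the claimed equality. No genuine obstacle arises; the corollary is simply the specialisation of the preceding lemma to the trivial factorization where all the non-trivial content lies in the left class.
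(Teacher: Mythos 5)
Your proof is correct and follows exactly the route the paper takes: the paper's own proof is the one-line instruction to apply Lemma~\ref{lem:pifinitefactpairingformula} to the trivial factorization system $(\cC, \cC_0)$ (left class all maps, right class the isomorphisms), and your write-up simply fills in the verifications that the paper leaves implicit.
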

	
\begin{proof}
	Apply Lemma~\ref{lem:pifinitefactpairingformula} to the trivial factorization systems $(\cC, \cC_0)$ in which the left class are all maps in $\cC$ and the right class are just the invertible maps. 
\end{proof}

Combining Corollary~\ref{cor:factinv} with Proposition~\ref{prop:nondeggroupoid} and Corollary~\ref{cor:pairingcatvsgroupoid} immediately leads to the following corollary. 
\begin{corollary}\label{cor:nondegfinite} Let $\cC$ be a $\pi$-finite $\infty$-category equipped with a nested factorization system  such that for every $0 \leq \ell \leq n$,  endomorphism in the category $\cT^{(\ell)}$ are invertible. Then, for any functor $F: \cC \to \Vec_k$, the linear pairing $\langle -, -\rangle_{\cC, F}$ is non-degenerate. \qed
 \end{corollary}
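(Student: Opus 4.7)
The plan is to assemble the three results immediately preceding the statement: Corollary~\ref{cor:pairingcatvsgroupoid}, Corollary~\ref{cor:factinv}, and Proposition~\ref{prop:nondeggroupoid}. The key factorization comes from Corollary~\ref{cor:pairingcatvsgroupoid}, which identifies
\[
\langle -, -\rangle_{\cC, F} \;=\; \langle -, \Phi_{\cC, F}(-)\rangle_{\cC_0, \cL_F}.
\]
So the task decouples into (i) understanding the pairing on the underlying groupoid $\cC_0$ with its restricted local system $\cL_F$, and (ii) understanding the endomorphism $\Phi_{\cC, F}$ of $\colim \cL_F$.

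For (i), I would note that because $\cC$ is $\pi$-finite, the moduli space $\cC_0$ is a $\pi$-finite space, and $\cL_F$ is a $\Vec_k$-valued local system on it. Proposition~\ref{prop:nondeggroupoid} then applies verbatim and produces a non-degenerate pairing $\langle -, -\rangle_{\cC_0, \cL_F}$ on $\colim \cL_F^\vee \otimes \colim \cL_F$. For (ii), the hypothesis of the present corollary is exactly the hypothesis of Corollary~\ref{cor:factinv}: a nested factorization system $(\cL^{(k)}, \cR^{(k)})$ with the property that all endomorphisms in each $\cT^{(\ell)}$ are equivalences. Corollary~\ref{cor:factinv} therefore yields that $\Phi_{\cC, F}\colon \colim \cL_F \to \colim \cL_F$ is invertible.

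To conclude, I would combine these facts using the displayed factorization: $\langle -, -\rangle_{\cC, F}$ equals the non-degenerate pairing $\langle -, -\rangle_{\cC_0, \cL_F}$ post-composed with $\mathrm{id} \otimes \Phi_{\cC, F}$, which is a linear isomorphism in its second argument. Post-composing one side of a non-degenerate pairing with an isomorphism preserves non-degeneracy on both sides: if $v \mapsto \langle -, v\rangle_{\cC_0,\cL_F}$ is injective, then so is $v \mapsto \langle -, \Phi_{\cC,F}(v)\rangle_{\cC_0, \cL_F}$ because $\Phi_{\cC,F}$ is a bijection; and if $\phi \mapsto \langle \phi, -\rangle_{\cC_0, \cL_F}$ is injective, then so is $\phi \mapsto \langle \phi, \Phi_{\cC, F}(-)\rangle_{\cC_0, \cL_F}$, since the range of $\Phi_{\cC, F}$ is all of $\colim \cL_F$. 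Hence $\langle -, -\rangle_{\cC, F}$ is non-degenerate.

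There is no genuine obstacle here, precisely because the work has been done upstream: the non-trivial content of the result lives in the M\"obius-style inversion argument of Theorem~\ref{thm:Moebius} and its factorization-system refinement in Corollary~\ref{cor:factinv}, together with the norm-map analysis underlying Proposition~\ref{prop:nondeggroupoid}. The present statement is a clean consequence, and I expect the only care required in writing it up is to state explicitly why non-degeneracy is preserved under the factorization through $\Phi_{\cC, F}$, as done above.
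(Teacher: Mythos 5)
Your argument is exactly the paper's: the paper states this corollary as an immediate consequence of combining Corollary~\ref{cor:factinv}, Proposition~\ref{prop:nondeggroupoid}, and Corollary~\ref{cor:pairingcatvsgroupoid}, and you supply precisely that combination, spelling out why the factorization through the invertible $\Phi_{\cC,F}$ preserves non-degeneracy. Correct, and no meaningful divergence from the paper's intended proof.
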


In the remainder of this subsection, we generalize Corollary~\ref{cor:nondegfinite} to $\infty$-categories $\cC$ which are merely locally $\pi$-finite with possibly infinitely many isomorphism classes of objects. The main ingredient of this generalization will be the ability to restrict factorization systems to $\pi$-finite full subcategories.

\begin{definition} Let $\cC$ be a locally $\pi$-finite $\infty$-category with a  factorization system $(\cL, \cR)$. We call a full subcategory $\cB$ of $\cC$ \emph{factorizable} if the  factorization system restricts to $\cB$. Equivalently $\cB$ is factorizable if for every morphism $f:a \to a'$ in $\cB$, the intermediate object $c$ in the $(\cL, \cR)$-factorization $a\to c \to a'$ of $f$ remains in $\cB$.
\end{definition}	
	
\begin{lemma}\label{lem:factorizablesubcategory}
	Suppose that $\cC$ is locally $\pi$-finite with a factorization system $(\cL, \cR)$. Then any full  $\pi$-finite subcategory $\cA \subseteq \cC$ is contained in a factorizable full $\pi$-finite subcategory $\cA \subseteq \cB \subseteq \cC$. 
\end{lemma}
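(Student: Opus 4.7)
The plan is to construct $\cB$ directly as the full subcategory of $\cC$ generated by $\cA$ together with all ``intermediate objects'' arising in $(\cL,\cR)$-factorizations of morphisms between objects of $\cA$, and then verify both $\pi$-finiteness and factorizability. More precisely, I would let $\cB \subseteq \cC$ be the full subcategory on those objects $c \in \cC$ which are isomorphic to the intermediate object in some $(\cL,\cR)$-factorization $a \to[\ell] c \to[r] a'$ of a morphism in $\cC(a, a')$ with $a, a' \in \cA$. Since identities $a \to a$ factor trivially, this automatically contains $\cA$.

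To verify $\pi$-finiteness of $\cB$, I would use the decomposition
\begin{equation*}
    \cC(a, a') \simeq \coprod_{[b] \in \pi_0 \cC_0} \cR(b, a') \times_{\Aut_\cC(b)} \cL(a, b)
\end{equation*}
provided by the factorization system. Because $\cC(a,a')$ is $\pi$-finite, only finitely many iso classes $[b]$ can appear with a non-empty summand. Combined with the fact that $\cA$ has only finitely many iso classes of objects (so only finitely many pairs $(a,a')$ to consider), this shows that $\cB$ has only finitely many iso classes of objects. Local $\pi$-finiteness of hom spaces in $\cB$ is inherited from $\cC$, so $\cB$ is $\pi$-finite.

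To verify factorizability, I would take an arbitrary morphism $f: b \to b'$ in $\cB$ with $(\cL,\cR)$-factorization $b \to[\ell] c \to[r] b'$ and show $c \in \cB$. By definition of $\cB$, there exist $a, a' \in \cA$ and a factorization $a \to[\ell_1] b \to[r_1] a'$, and similarly $a'', a''' \in \cA$ with $a'' \to[\ell_2] b' \to[r_2] a'''$. Then the composite
\begin{equation*}
a \xrightarrow{\ell \circ \ell_1} c \xrightarrow{r_2 \circ r} a'''
\end{equation*}
uses that $\cL$ and $\cR$ are closed under composition, and by essential uniqueness of factorizations this must be (up to isomorphism) \emph{the} $(\cL,\cR)$-factorization of the composite morphism $a \to a''' \in \cC(a, a''')$. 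Hence $c$ is isomorphic to an intermediate object of a morphism between objects of $\cA$, so $c \in \cB$.

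The main (very mild) obstacle is verifying that the set of intermediate objects arising from a single $\pi$-finite morphism space is finite; this follows immediately from the displayed decomposition above once one notes that a space of the form $\coprod_{[b]} Y_{[b]}$ with only finitely many non-empty $Y_{[b]}$ is the only way the total space can be $\pi$-finite. The rest of the argument is essentially formal manipulation of the factorization axioms, so I would keep those steps brief.
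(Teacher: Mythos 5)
Your proof is correct and takes essentially the same approach as the paper: you define $\cB$ as the intermediate objects of $(\cL,\cR)$-factorizations of maps between objects of $\cA$, establish finiteness of $\pi_0\cB_0$ via the fact that each $\pi$-finite morphism space $\cC(a,a')$ contributes only finitely many intermediate objects up to isomorphism, and close by the same compose-with-$\cL$-on-the-left, compose-with-$\cR$-on-the-right trick. The paper packages the finiteness step slightly differently — via a map $\mathrm{interm}\colon \cA_1 \to \cB_0$ that is surjective on $\pi_0$ — but this records the same information as your hom-space decomposition, so the two arguments are interchangeable.
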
	

\begin{proof}
	We define $\cB$ to be the full subcategory of $\cC$ on those objects $c\in \cC$ for which there exists objects $a, a' \in \cA$ and morphisms $l:a \to c$ and $r: c \to a'$ with $l \in \cL$ and $r\in \cR$. Equivalently, $\cB$ is the full subcategory on all objects which occur as the middle object of an $(\cL, \cR)$-factorization of a map between objects of $\cA$. From this second description we see that 
factorization 
		with respect to $(\cL, \cR)$ provides a map $\mathrm{interm}: (\cA)_1 \to (\cB)_0$ sending a morphism to the intermediate object in its factorization. This is surjective on $\pi_0$, showing that $\cB$ has finitely many isomorphism classes of objects.
	
In fact the category $\cB$ is factorizable, as we will now see. Let $b, b' \in \cB$ be objects, and $f:b \to b'$ a map. Let 	
\begin{equation*}
	b\stackrel{l_1}{\to} c \stackrel{r_1}{\to} b'
\end{equation*}
be the $(\cL, \cR)$-factorization of $f$ in $\cC$. By assumption there exist objects $a, a'\in\cA$, a map $l: a \to b$ in $\cL$, and a map $r:b' \to a'$ in $\cR$. Then $l_1\circ l: a \to c$ and $r\circ r_1: c \to a'$ are the $(\cL, \cR)$ factorization of the map $r \circ f \circ l: a \to a'$. Thus the object $c$ is contained in $\cB$, showing that $\cB$ is factorizable.
\end{proof}
	
\begin{lemma}\label{lem:nondegfacttonondegenwhole}
	Let $\cC$ be locally $\pi$-finite with a factorization system $(\cL, \cR)$. Suppose that
	\begin{enumerate}
		\item the pairing $\langle -, -\rangle_{\cR, F}$ is left non-degenerate;
		\item in the left class $\cL$, endomorphisms are invertible;
	\end{enumerate}
	then the pairing $\langle -, -\rangle_{\cC, F}$ is left non-degenerate.
	
	Suppose instead that
	\begin{enumerate}
		\item the pairing $\langle -, -\rangle_{\cL, F}$ is right non-degenerate;
		\item in the right class $\cR$, endomorphisms are invertible;
	\end{enumerate}
	then the pairing $\langle -, -\rangle_{\cC, F}$ is right non-degenerate.
\end{lemma}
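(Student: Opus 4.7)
The plan is to reduce, via Lemma~\ref{lem:factorizablesubcategory}, to a $\pi$-finite factorizable full subcategory of $\cC$ on which Theorem~\ref{thm:Moebius} and Lemma~\ref{lem:pifinitefactpairingformula} can be applied directly. I focus on proving the first statement (left non-degeneracy); the second follows by an entirely symmetric argument, using instead the dual identity $\langle -,-\rangle_{\cB, F|_\cB} = \langle \Phi_{(\cR|_\cB)^{\op}, F^{\vee}|_\cB}(-), -\rangle_{\cL|_\cB, F|_\cB}$ from Lemma~\ref{lem:pifinitefactpairingformula} with the roles of $\cL$ and $\cR$ exchanged throughout.

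Concretely, let $\xi \in \bigoplus_{[d]\in \pi_0 \cC_0}(F(d)^{\vee})_{\pi_0 \Aut_\cC(d)}$ be nonzero. By left non-degeneracy of $\langle -,-\rangle_{\cR, F}$, there exists $\eta' \in \bigoplus_{[c]\in \pi_0 \cC_0}F(c)_{\pi_0 \Aut_\cC(c)}$ with $\langle \xi, \eta'\rangle_{\cR, F} \neq 0$. Since both $\xi$ and $\eta'$ are supported on finitely many isomorphism classes of objects, I can choose a full subcategory $\cA \subseteq \cC$ containing representatives of each; by Lemma~\ref{lem:factorizablesubcategory}, $\cA$ is contained in a factorizable $\pi$-finite full subcategory $\cB \subseteq \cC$, so that $(\cL|_\cB, \cR|_\cB)$ restricts to a factorization system on $\cB$. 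Endomorphisms in $\cL|_\cB$ remain invertible, being endomorphisms in $\cL$. Theorem~\ref{thm:Moebius} applied to the $\pi$-finite category $\cL|_\cB$ then shows that $\Phi_{\cL|_\cB, F|_\cB}$ is an invertible endomorphism of $\colim_{(\cB)_0} F|_\cB$. Since $\eta'$ is supported on $\cB$ it lies in this colimit, so there exists $\eta$ with $\Phi_{\cL|_\cB, F|_\cB}(\eta) = \eta'$. Lemma~\ref{lem:pifinitefactpairingformula} applied to $\cB$ then yields
\begin{equation*}
\langle \xi, \eta\rangle_{\cB, F|_\cB} \;=\; \langle \xi, \Phi_{\cL|_\cB, F|_\cB}(\eta)\rangle_{\cR|_\cB, F|_\cB} \;=\; \langle \xi, \eta'\rangle_{\cR|_\cB, F|_\cB}.
\end{equation*}

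Finally, because $\cB \subseteq \cC$ and $\cR|_\cB \subseteq \cR$ are full and all relevant elements are supported on $\cB$, the explicit formula \eqref{eq:subcatpairingunpacked} of Remark~\ref{rem:pairingsubcats} gives $\langle \xi, \eta\rangle_{\cB, F|_\cB} = \langle \xi, \eta\rangle_{\cC, F}$ and $\langle \xi, \eta'\rangle_{\cR|_\cB, F|_\cB} = \langle \xi, \eta'\rangle_{\cR, F}$; chaining these, $\langle \xi, \eta\rangle_{\cC, F} = \langle \xi, \eta'\rangle_{\cR, F} \neq 0$, establishing left non-degeneracy. The essential obstacle I expect to navigate is precisely that $\cL$ and $\cR$ are only locally $\pi$-finite, so that Theorem~\ref{thm:Moebius} cannot be applied to them globally and one cannot literally invert an ``all of $\cL$'' linearization; the passage through the factorizable $\pi$-finite subcategory $\cB$ supplied by Lemma~\ref{lem:factorizablesubcategory} is what makes a finite-dimensional M\"obius inversion available, after which everything is bookkeeping via full-subcategory restriction.
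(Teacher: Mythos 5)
Your proof is correct and takes essentially the same route as the paper's: find a witness via left non-degeneracy of $\langle -,-\rangle_{\cR,F}$, pass to a $\pi$-finite factorizable full subcategory $\cB$ supplied by Lemma~\ref{lem:factorizablesubcategory}, invert $\Phi_{\cL\cap\cB,F}$ using Theorem~\ref{thm:Moebius}, and chase through Lemma~\ref{lem:pifinitefactpairingformula}. The paper's argument is the same step for step (with $u,v,w$ in place of your $\xi,\eta,\eta'$), including the explicit observations that the restricted pairings agree with the ambient ones on supported vectors, and it likewise treats right non-degeneracy as the dual case.
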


\begin{proof}
	We will prove the first half of the lemma (regarding left non-degeneracy). The proof of the second half (right non-degeneracy) is completely analogous. Consider an arbitrary non-zero element
	\begin{equation*}
		u \in \bigoplus_{ [d]\in \pi_0 \cC_0} \left(F(d)^{\vee}\right)_{\pi_0 \Aut_{\cC}(d)}.
	\end{equation*}
	We wish to show that there is a $v \in \bigoplus_{ [c]\in \pi_0 \cC_0} \left(F(c)\right)_{\pi_0 \Aut_{\cC}(c)}$ such that $\langle u, v\rangle_{\cC, F} \neq 0$. 
	
	By assumption the pairing $\langle -, -\rangle_{\cR, F}$ is left non-degenerate, and so there exists a $w \in \bigoplus_{ [c]\in \pi_0 \cC_0} \left(F(c)\right)_{\pi_0 \Aut_{\cC}(c)}$ such that  
	\begin{equation*}
		\langle u,w \rangle_{\cR,F} \neq 0.
	\end{equation*}
	The vectors $u$ and $w$ are supported at a finite number of elements of $\pi_0\cC_0$. Since $\cC$ is locally $\pi$-finite, the full subcategory of $\cC$ on these objects is $\pi$-finite. By Lemma~\ref{lem:factorizablesubcategory} this may be enlarged to a full $\pi$-finite factorizable subcategory $\cB \subseteq \cC$, containing both the supports of $u$ and $w$. Theorem~\ref{thm:Moebius} applies to the $\pi$-finite category $\cB \cap \cL$ in which endomorphisms are invertible. Thus $\Phi_{\cB \cap \cL, F}$ is invertible. Set $v = \Phi^{-1}_{\cB \cap \cL, F}(w)$. Then, using Lemma~\ref{lem:pifinitefactpairingformula}, we have
		\begin{align*}
			\langle u,v \rangle_{\cC,F} &= \langle u,v \rangle_{\cB,F} \\ 
			&= \langle u, \Phi^{-1}_{\cB \cap \cL, F}(w)\rangle_{\cB,F} \\
			&= \langle u, \Phi_{\cB \cap \cL, F}\Phi^{-1}_{\cB \cap \cL, F}(w) \rangle_{\cR \cap B,F} \\
			& = \langle u, w \rangle_{\cR \cap B,F} \\
			&= \langle u, w \rangle_{\cR,F} \neq 0.
		\end{align*}
This establishes that $\langle -, -\rangle_{\cC, F}$ is left non-degenerate, as desired.  
\end{proof}

\begin{theorem}\label{thm:linearpairing} Let $\cC$ be a locally $\pi$-finite $\infty$-category equipped with a nested factorization system  such that for every $0 \leq \ell \leq n$,  endomorphisms in the category $\cT^{(\ell)}$ are invertible. Then, for any functor $F: \cC \to \Vec_k$, the linear pairing $\langle -, -\rangle_{\cC, F}$ is non-degenerate.
\end{theorem}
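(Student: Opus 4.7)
The plan is to prove non-degeneracy of $\langle -, -\rangle_{\cC, F}$ by induction on the length $n$ of the nested factorization system, using Lemma~\ref{lem:nondegfacttonondegenwhole} to peel off one layer at each inductive step. At each step one reduces the length of the nested factorization system by one, eventually landing in a base case where all endomorphisms of the ambient category are invertible.

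For the base case, in which the ambient category $\cC$ itself has invertible endomorphisms, I would exploit a ``triangularity'' argument. The relation $[c] \leq [d]$ iff $\cC(c, d) \neq \emptyset$ defines a partial order on $\pi_0 \cC_0$, and the explicit formula in Remark~\ref{rem:explicitformulapairing} shows that $\langle (d, \phi), (c, v)\rangle_{\cC, F} = 0$ unless $[c] \leq [d]$. Given a nonzero $u$, which has finite support by definition of the direct sum, pick $[d_0]$ maximal in its support; for any $v$ concentrated at $[d_0]$, the only term in the support of $u$ that can pair nontrivially with $v$ is $[d_0]$ itself, so $\langle u, v\rangle_{\cC, F}$ reduces to the diagonal block $\langle u_{[d_0]}, v\rangle$. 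This diagonal block is precisely the groupoid pairing of Example~\ref{exm:diagonalgroupoid} on the $\pi$-finite automorphism groupoid $\mathrm{BAut}_{\cC}(d_0)$ with local system $F(d_0)$, which is non-degenerate by Proposition~\ref{prop:nondeggroupoid}; this produces the desired $v$. Right non-degeneracy is dual, taking $[c_0]$ minimal in the support of the test element.

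For the inductive step I would establish left and right non-degeneracy separately. For left non-degeneracy, apply the first half of Lemma~\ref{lem:nondegfacttonondegenwhole} to the top factorization system $(\cL^{(n)}, \cR^{(n)})$: the invertibility of endomorphisms in $\cL^{(n)} = \cT^{(n)}$ is given by hypothesis, so it suffices to verify left non-degeneracy of $\langle -, -\rangle_{\cR^{(n)}, F}$. The key point is that $\cR^{(n)}$ inherits a nested factorization system of length $n-1$ given by $(\cL^{(k)} \cap \cR^{(n)}, \cR^{(k)})$ for $1 \leq k \leq n-1$; using $\cR^{(\ell+1)} \subseteq \cR^{(n)}$, its $\cT^{(\ell)}$-subcategories coincide with those of $\cC$ for $0 \leq \ell \leq n-1$, so the inductive hypothesis yields non-degeneracy on $\cR^{(n)}$. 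The symmetric argument, applying the second half of the lemma to $(\cL^{(1)}, \cR^{(1)})$, yields right non-degeneracy.

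The main bookkeeping obstacle, and the only nontrivial point, is verifying that $(\cL^{(k)} \cap \cR^{(n)}, \cR^{(k)})$ really is an orthogonal factorization system on $\cR^{(n)}$. Concretely, for $f \in \cR^{(n)}$ with $(\cL^{(k)}, \cR^{(k)})$-factorization $f = r \circ l$ in $\cC$, one must show $l \in \cR^{(n)}$. This is the standard left-cancellation property of right classes: factoring $l = r' \circ l'$ via $(\cL^{(n)}, \cR^{(n)})$ exhibits $(r \circ r') \circ l'$ as an $(\cL^{(n)}, \cR^{(n)})$-factorization of $f$, but since $f \in \cR^{(n)}$ already admits the trivial factorization $f \circ \id$, uniqueness forces $l'$ to be invertible and hence $l \in \cR^{(n)}$. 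Once this is in place, the induction runs cleanly.
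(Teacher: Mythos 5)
Your proof is correct and arrives at the same conclusion using the same key ingredients (Lemma~\ref{lem:nondegfacttonondegenwhole} and Proposition~\ref{prop:nondeggroupoid}), but it is organized differently from the paper's proof in two ways worth noting. First, the paper does a linear ``peeling'' induction along the chain of subcategories $\cC_0, \cR^{(1)}, \ldots, \cR^{(n)}, \cC$ (for left non-degeneracy, and the dual chain for right non-degeneracy), never restating the theorem; the base case is the groupoid $\cC_0$ handled by Proposition~\ref{prop:nondeggroupoid}, and the first step from $\cC_0$ to $\cR^{(1)}$ is via Lemma~\ref{lem:nondegfacttonondegenwhole} with the trivial factorization system $(\cR^{(1)}, \cC_0)$. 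You instead run a formal induction on the length $n$, passing from $\cC$ to $\cR^{(n)}$ (resp.\ $\cL^{(1)}$), and your base case is a locally $\pi$-finite category with invertible endomorphisms, which you dispatch by a direct triangularity argument (maximal element in the finite support, reduce to the diagonal block). This base case sidesteps one application of Lemma~\ref{lem:nondegfacttonondegenwhole} and makes the poset structure on $\pi_0\cC$ do the work explicitly; your chain is effectively one step shorter. Second, you explicitly verify that $(\cL^{(k)} \cap \cR^{(n)}, \cR^{(k)})$ restricts to an orthogonal factorization system on $\cR^{(n)}$ (via the left-cancellation property of right classes and uniqueness of factorizations) and that the resulting $\cT^{(\ell)}$-subcategories agree with those of $\cC$; the paper asserts the analogous facts (``the category $\cR^{(\ell)}$ admits a factorization system $(\cT^{(\ell-1)}, \cR^{(\ell-1)})$'') without proof. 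Both routes are valid; the paper's is slightly leaner, but yours makes the bookkeeping explicit and gives a cleaner inductive structure.
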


\begin{proof}
	We will prove that $\langle -, -\rangle_{\cC, F}$ is both left and right non-degenerate, separately. The proofs are similar, but dual to each other. 
	
	First, we will establish left non-degeneracy. We prove by induction that the pairing is left non-degenerate on the following sequence of subcategories:
	\begin{equation*}
		\cC_0, \cR^{(1)}, \cR^{(2)}, \dots, \cR^{(n)}, \cC.
	\end{equation*}
	For the base case we note that the restriction of $F$ to $\cC_0$ forms a local system on the locally $\pi$-finite space $\cC_0$. The non-degeneracy of $\langle - , - \rangle_{\cC_0, F}$ was established in  Prop.~\ref{prop:nondeggroupoid}, and so in particular this pairing is left non-degenerate.  
	
	On the category $\cR^{(1)}$ we have the factorization system $(\cR^{(1)}, \cC_0)$, that is the trivial factorization system in which the left class consists of all maps in $\cR^{(1)}$ and the right class consists of just the isomorphisms. We have already shown the the pairing induced by the right class is left non-degenerate, and by assumption the left class $\cR^{(1)} = \cT^{(0)}$ satisfies that endomorphisms are invertible. Thus the conditions of Lemma~\ref{lem:nondegfacttonondegenwhole} are satisfied, showing that $\langle - , - \rangle_{\cR^{(1)}, F}$ is left non-degenerate. 
	
	Similarly, the category $\cR^{(\ell)}$ admits a factorization system $(\cT^{(\ell-1)}, \cR^{(\ell-1)})$. The left class satisfies that endomorphisms are invertible by assumption, and by induction the pairing $\langle - , - \rangle_{\cR^{(\ell-1)}, F}$ is left non-degenerate. Thus Lemma~\ref{lem:nondegfacttonondegenwhole} shows that the pairing $\langle - , - \rangle_{\cR^{(\ell)}, F}$ is left non-degenerate.
	
	Finally, $\cC$ admits the factorization system $(\cL^{(n)}, \cR^{(n)})$. As we have seen, the pairing $\langle - , - \rangle_{\cR^{(n)}, F}$ is left non-degenerate, and again by assumption the left class $\cL^{(n)} = \cT^{(n)}$  satisfies that endomorphisms are invertible. Thus once again Lemma~\ref{lem:nondegfacttonondegenwhole} shows that the pairing $\langle - , - \rangle_{\cC, F}$ is left non-degenerate, as desired. 

To show that $\langle - , - \rangle_{\cC, F}$ is right non-degenerate, we may use an analogous induction on the categories 
\begin{equation*}
	\cC_0, \cL^{(n)}, \cL^{(n-1)}, \dots, \cL^{(1)}, \cC.
\end{equation*}
The pairing on $\cC_0$ is non-degenerate by Prop.~\ref{prop:nondeggroupoid}, establishing the base case. For the induction step we consider the factorization systems corresponding to each category:
\begin{align*}
	\cL^{(n)} &: (\cC_0, \cL^{(n)}) \\
	\cL^{(\ell)} &: (\cL^{(\ell + 1)}, \cT^{(\ell)}) , \quad \ell < n \\
	\cC &: (\cL^{(1)}, \cR^{(1)})
\end{align*}
In each case the right class satisfies that endomorphisms are invertible. By induction the pairing for the left class is right non-degenerate, and so Lemma~\ref{lem:nondegfacttonondegenwhole} applies. This establishes the right non-degeneracy of the pairing for the subsequent left class. The final case establishes that the pairing $\langle - , - \rangle_{\cC, F}$ is right non-degenerate, as claimed. 
\end{proof}

\begin{example} \label{exm:FinSetnondeg}The category $\mathrm{FinSet}$ of finite sets (or analogously, of finite graphs or finite groups etc.) is locally $\pi$-finite --- its hom sets are finite --- and admits a factorization system $\cL =\cT^{(1)} = \{\mathrm{surjections}\}$ and $\cR = \cT^{(0)}= \{\mathrm{injections}\}$. Since endomorphism injections and endomorphism surjections of finite sets are bijections, it follows that $\mathrm{FinSet}$ fulfills the conditions of Theorem~\ref{thm:linearpairing}. Hence, for any functor $F: \mathrm{FinSet} \to \Vec_k$, the associated pairing $\langle-, -\rangle_{\mathrm{FinSet}, F}$ is non-degenerate. In particular, for $F= \mathrm{const}_k$ the constant functor at $k$ the associated pairing $\langle n, m \rangle_{\mathrm{FinSet}} = n^m$ is indeed non-degenerate (c.f. Remark~\ref{rmk:finitesetcardN}). Categories like $\Fin\Set$ are more general than the M\"obius categories considered by \cite{MR3818099}. 
\end{example}	

\subsection{The Pontryagin pairing}\label{sec:pontryaginpairing}
 For our main application, we do not consider functors $F: \cC \to \Vec_{k}$ but rather functors $\Omega: \cC \to \mathrm{Ab}$ into the category of abelian groups. For such functors, we introduce a refinement of the pairing of Definition~\ref{def:linearpairing}. Let $k$ be an algebraically closed field of characteristic zero. For an abelian group $A$, let $\widehat{A}$ denote the `dual group' $\Hom(A, k^\times)$ of group homomorphisms, and let $k[A]$ denote the $k$-vector space obtained from linearizing the underlying set of $A$ (equivalently, the vector space of finitely supported $k$-valued functions on the underlying set of $A$). 
For a set $A$ with action by a group $G$, we write $A/G$ for its set of orbits.

\begin{definition}\label{def:pontryaginpairing}Let $\cC$ be a locally $\pi$-finite $\infty$-category, let $\Omega: \cC \to \mathrm{Ab}$ be a functor and let $k$ be an algebraically closed field of characteristic zero.
The \emph{Pontryagin pairing} is the map of $k$-vector spaces
\begin{equation}\label{eq:pontryaginpairing}
 \langle -,- \rangle_{\cC, \Omega}: \bigoplus_{[c] \in \pi_0 \cC} k[\widehat{\Omega(c)}/{\pi_0 \Aut_{\cC}(c)}] \otimes\bigoplus_{[d] \in \pi_0 \cC} k[\Omega(d)/\pi_0 \Aut_{\cC}(d)] \to k
\end{equation} with coefficient at  $\left(c\in \cC, \phi \in \widehat{\Omega(c)} = \Hom(\Omega(c), k^\times) \right)$ and $(d \in \cC, x \in \Omega(d))$ given by 
\begin{equation}\label{eq:pontryagin}
\sum_{[f] \in \pi_0 \cC(d, c)} \#\left(\cC(d,c), f\right)~\phi(\Omega(f)(x)).
\end{equation}
\end{definition}
Since the sum is over $\pi_0 \cC(c,d)$,  the expression~\eqref{eq:pontryagin} only depends on the orbits of $\phi$ and $x$ under the $\Aut_{\cC}(c)$ and $\Aut_{\cC}(d)$ action, respectively. Moreover, note that while $\widehat{\Omega(c)}$ and $\Omega(c)$ depend on a choice of basepoint $c$ in $[c]$, the set of orbits $\widehat{\Omega(c)}/\pi_0 \Aut_{\cC}(c)$ is independent of that choice up to unique isomorphism, and hence the pairing~\eqref{eq:pontryagin} only depends on the isomorphism classes of the objects $c$ and $d$. 
\begin{remark}\label{rem:subcatPontryaginpairing} As in Remark~\ref{rem:pairingsubcats} and analogously to~\eqref{eq:subcatpairingunpacked}, if $\cC$ is not locally $\pi$-finite we may still define a Pontryagin pairing $\langle -, - \rangle^{\cA, \cB}_{\cC, \Omega}$ between full subcategories $\cA, \cB \subseteq \cC$ for which for every $a\in \cA$ and $b \in \cB$ the mapping space $\cC(a, b)$ is $\pi$-finite. 
\end{remark}

\begin{corollary} \label{cor:pontryaginnondeg}
Let $\cC$ be a locally $\pi$-finite $\infty$-category with a nested factorization systems  such that for every $0 \leq \ell \leq n$,  endomorphisms in the category $\cT^{(\ell)}$ are invertible. Then, for any functor $\Omega: \cC \to \mathrm{Ab}$ and any algebraically closed field $k$ of characteristic zero, the Pontryagin pairing $\langle -, - \rangle_{\cC , \Omega}$ is non-degenerate.
\end{corollary}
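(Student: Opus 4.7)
The plan is to realize the Pontryagin pairing as a restriction of the linear pairing $\langle -, -\rangle_{\cC, F}$ from Theorem~\ref{thm:linearpairing} for suitable functors $F$, and to obtain the two halves of non-degeneracy from applications of Theorem~\ref{thm:linearpairing} to $\cC$ and to $\cC^\op$, respectively.

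For left non-degeneracy, I would take $F:\cC \to \Vec_k$ given by $F(c) = k[\Omega(c)]$, the $k$-linearization of the underlying set of the abelian group $\Omega(c)$. Theorem~\ref{thm:linearpairing} then gives a non-degenerate pairing on $\bigoplus_{[c]} (k[\Omega(c)])^\vee_{\pi_0\Aut(c)} \otimes \bigoplus_{[d]} k[\Omega(d)/\pi_0\Aut(d)]$, where I use the standard identification $k[\Omega(d)]_G \cong k[\Omega(d)/G]$ for the permutation representation on the right. The natural $\pi_0\Aut_\cC(c)$-equivariant map $\iota_c: k[\widehat{\Omega(c)}] \to (k[\Omega(c)])^\vee$ sending a character $\chi$ to the linear functional $a \mapsto \chi(a)$ is injective by linear independence of characters, and direct inspection of the formula in Definition~\ref{def:pontryaginpairing} identifies the Pontryagin pairing with the pullback of $\langle -, -\rangle_{\cC, F}$ along the descended map $\overline{\iota}_c: k[\widehat{\Omega(c)}/\pi_0\Aut_\cC(c)] \to (k[\Omega(c)])^\vee_{\pi_0\Aut_\cC(c)}$.

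The key technical step, and where I expect the main difficulty to lie, is to verify that $\overline{\iota}_c$ remains injective after passing to coinvariants. Since $G := \pi_0\Aut_\cC(c)$ is finite (by local $\pi$-finiteness of $\cC$) and $k$ has characteristic zero, the norm map is an isomorphism $V_G \cong V^G$ for every $G$-representation $V$. Using that $\iota_c$ is $G$-equivariant and hence commutes with $\mathrm{Nm}$, a short diagram chase shows that $\overline{\iota}_c$ is injective: if $\overline{\iota}_c[v]=0$ then $\mathrm{Nm}(\iota_c(v)) = \iota_c(\mathrm{Nm}(v)) = 0$, whence $\mathrm{Nm}(v) = 0$ by injectivity of $\iota_c$, and therefore $[v]=0$. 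Combined with left non-degeneracy of $\langle -, -\rangle_{\cC, F}$, this yields left non-degeneracy of the Pontryagin pairing.

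For right non-degeneracy, I would apply the symmetric strategy to $\cC^\op$. The opposite category inherits the hypothesis of Theorem~\ref{thm:linearpairing} under the reindexed nested factorization system with $\widetilde{\cT}^{(\ell)} = \cT^{(n-\ell), \op}$, since the endomorphism-invertibility condition is self-dual. I would apply the theorem to the functor $c \mapsto k[\widehat{\Omega(c)}]$, which is covariant on $\cC^\op$ because Pontryagin duality is contravariant on $\cC$, and then restrict along the embedding $\Omega(c) \hookrightarrow (k[\widehat{\Omega(c)}])^\vee$ by evaluation $a \mapsto (\chi \mapsto \chi(a))$. A direct check identifies the resulting pullback pairing with the Pontryagin pairing having its two arguments swapped. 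The evaluation embedding is injective because characters of any abelian group with values in $k^\times$ separate elements: any finitely generated subgroup admits enough characters, and these extend to the whole group by divisibility of $k^\times$. The same norm-map argument promotes this to injectivity on coinvariants, so left non-degeneracy of the linear pairing on $\cC^\op$ translates into right non-degeneracy of $\langle -, -\rangle_{\cC, \Omega}$.
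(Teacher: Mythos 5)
Your proof is correct and essentially identical to the paper's: both establish left non-degeneracy by embedding $k[\widehat{\Omega(c)}]$ into $k[\Omega(c)]^\vee$ via linear independence of characters, descending to coinvariants using the norm isomorphism for the finite group $\pi_0\Aut_\cC(c)$, and then invoking Theorem~\ref{thm:linearpairing} with $F = k[\Omega(-)]$. Right non-degeneracy is likewise handled in both via the symmetric argument on $\cC^\op$ with the functor $k[\widehat{\Omega(-)}]$, using injectivity of the canonical map $A \to \widehat{\widehat{A}}$ for an algebraically closed field $k$ of characteristic zero; your only cosmetic addition is spelling out the reindexed nested factorization system $\widetilde{\cT}^{(\ell)} = \cT^{(n-\ell),\op}$ on $\cC^\op$, which the paper leaves implicit.
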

\begin{proof}
For an abelian group $A$, let $\psi_A:  k[\widehat{A}] \to k[A]^\vee$ denote the evident `linearization' map from the linearization of the group of characters to the vector space dual $k[A]^\vee:= \Hom(k[A], k)$ of $k[A]$. It follows from linear independence of characters (see e.g.~\cite[Section~9.13]{Stacksproject}) that for any abelian group $A$ and any field $k$, this map $\psi_A$ is injective. For any group $G$ acting on the underlying set of $A$, the map $\psi_A$ is $G$-equivariant for the evident induced actions of $G$ on $k[\widehat{A}]$ and $k[A]^\vee$ and hence descends to a map on the coinvariants $(\psi_{A})_{G}: k[\widehat{A}]_G \to (k[A]^\vee)_{G}$. For finite $G$, using the norm map and its inverse, this map $(\psi_A)_G$ may be expressed in terms of the injective map $(\psi_A)^G:k[\widehat{A}]^G \to (k[A]^\vee)^{G}$ on the invariants and hence is again injective. 

 Since for any set $X$ with a group action, the linearization $k[X/G]$ is canonically isomorphic to the space of coinvariants $k[X]_G$, these maps assemble into an injective map $$\Psi:= \bigoplus_{[c] \in \pi_0 \cC_0} (\psi_{\Omega(c)})_{\pi_0 \Aut_{\cC}(c)} : \bigoplus_{[c] \in \pi_0 \cC_0} k[\widehat{\Omega(c)}/(\pi_0\Aut_{\cC}(c))] \to \bigoplus_{[c] \in \pi_0 \cC_0} (k[\Omega(c)]^\vee)_{\pi_0 \Aut_{\cC}(c)}$$
Let $F: \cC \to \Vec_{k}$ be the functor $k[\Omega(-)]$. Comparing~\eqref{eq:pontryagin} with the explicit expression for the linear pairing $\langle -, - \rangle_{\cC, F}$ from Remark~\ref{rem:explicitformulapairing}, it follows that 
\[ \langle x, y \rangle_{\cC, \Omega} = \langle \Psi x , y \rangle_{\cC, F}.
\]
Since $\Psi$ is injective,  it follows from Theorem~\ref{thm:linearpairing} that $x\mapsto \langle x , - \rangle_{\cC, \Omega}$ is injective.

To prove injectivity in the other slot, note that for any abelian group $A$ and any algebraically closed field $k$, the canonical map $ A\to \widehat{\widehat{A\, }}$ is injective.\footnote{We may see this as follows. Let $a\neq 1\in A$. Pick a character $\chi: \langle a \rangle \to k^\times$ on the cyclic group generated by $a$, such that $\chi(a) \neq 1$ (this exists because $k$ is algebraically closed). Then, since $k^\times$ is an injective abelian group, any homomorphism $\langle a \rangle \to k^\times$ may be lifted along the injection $\langle a \rangle \to A$ .}

 Let $\theta_A: k[A] \to k[\widehat{\widehat{A\, }}] \to \Hom(k[\widehat{A}], k)$ denote the composite of this injective map with the injective map $\phi_{\widehat{A}}$. As before, we may assemble these maps into an injective map
\[\Theta:= \bigoplus_{[d] \in \pi_0 \cC_0} (\theta_{\Omega(d)})_{\pi_0 \Aut_{\cC}(d)} :\bigoplus_{[d] \in \pi_0 \cC} k[\Omega(d)/\pi_0 \Aut_{\cC}(d)] \to \bigoplus_{[d] \in \pi_0 \cC} (k[\widehat{\Omega(d)}]^\vee)_{\pi_0 \Aut_{\cC}(d)}
\] Defining $G: \cC^{\op} \to \Vec_{k}$ to be the functor $k[\widehat{\Omega(-)}]$, comparing~\eqref{eq:pontryagin} with Remark~\ref{rem:explicitformulapairing} it follows that
\[\langle x,y  \rangle_{\cC, \Omega} = \langle \Theta y, x \rangle_{\cC^{\op}, G}.
\]
Since $\cC^{\op}, G$ also fulfills the requirements of Theorem~\ref{thm:linearpairing}, it follows from injectivity of $\Theta$ that $y \mapsto \langle -, y \rangle_{\cC, \Omega}$ is also injective. 
\end{proof}

\section{Bordism Groups and Invertible Topological Field Theories}\label{sec:bordandinvertTFT}

\subsection{Stable structures and their normal and tangential bordism groups}\label{sec:stabletangstablenorm}

Classical bordism theory, \emph{\`a la} Ren\'e Thom, considers structures on the \emph{stable normal bundle}. Given a space $B$ with a vector bundle\footnote{More generally $\beta$ only needs to be a \emph{stable} vector bundle on $B$ - a filtration of $B$ with vector bundles on each piece, but where the restriction maps are stable isomorphisms.} $\beta$ a \emph{(stable) normal $(B, \beta)$-structure} on $M$ is a pair $(f, \theta)$ of a map $f: M \to B$ and a \emph{stable isomorphism}
\begin{equation*}
	\theta: \varepsilon^{N} \cong \tau_M \oplus f^*\beta \oplus \varepsilon^{N-d -k}
\end{equation*}
where $N$ is large, $d = \dim M$, $k$ is the rank of $\beta$, and $\varepsilon$ is the trivial bundle of rank one. 

Once one fixes the classifying maps of the stable normal bundle $\nu_M: M \to \rB O$ and $\beta: B \to \rB O$, then the space of $(B,\beta)$-structures on $M$ is equivalent to the space of lifts (in the homotopical sense)
\[
\begin{tz}
	\node (B) at (2.5, 1.5) {$ B$};
	\node (C) at (0, 0) {$ M $};
	\node (D) at (2.5, 0) {$ \rB O $};
	\draw [->, dashed] (C) to node [above left] {$f$}  (B); 
	\draw [->] (B) to node [right] {$\beta$} (D); 
	\draw [->] (C) to node [below] {$\nu_M$} (D); 
	\node at (2, 0.5) {$\simeq \theta$};	
\end{tz}
\]
of $\nu_M$ through the map $\beta$. Notice, however, that this equivalence depends on the choice of classifying map (which is unique up to homotopy). We will henceforth use these two perspectives interchangeably.

If $M_0$ and $M_1$ are closed $n$-manifolds equipped with normal $(B, \beta)$-structures $(f_0, \theta_0)$ and $(f_1, \theta_1)$, then a normal $(B, \beta)$-bordism from $M_0$ to $M_1$ is a $d$-dimensional bordism $W$ from $M_0$ to $M_1$ together with a normal $(B, \beta)$-structure $(f, \theta)$ on $W$ which \emph{extends} the ones on $M_0$ and $M_1$. Normal bordism is an equivalence relation and the equivalence classes form a group which is usually denoted $\Omega_n^B$.

A similar theory can be developed using the \emph{stable tangent bundle} in place of the stable normal bundle. Given a smooth manifold $M$ of dimension $n$ we fix a map $T_M: M \to \rB O(n)$ classifying the tangent bundle of $M$. The stable tangent bundle is the composite map $\tau_M: M \stackrel{T_M}{\to} \rB O(n) \stackrel{i_n}{\to} \rB O$. A \emph{stable tangential $(B, \beta)$-structure} on $M$ is a lift (in the homotopical sense)
\[
\begin{tz}
	\node (B) at (2.5, 1.5) {$ B$};
	\node (C) at (0, 0) {$ M $};
	\node (D) at (2.5, 0) {$ \rB O $};
	\draw [->, dashed] (C) to  (B); 
	\draw [->] (B) to node [right] {$\beta$} (D); 
	\draw [->] (C) to node [above] {$\tau_M$} (D); 	
\end{tz}
\]
of $\tau_M$ through the map $\beta$. Bordisms are defined similarly to the normal case, and so a priori we have two notions of stable bordism groups:

\begin{definition}\label{def:stablebordism}
	Let $\beta: B \to \rB O$ be a stable structure.  The \emph{stable tangential bordism group} $\Omega_d^{\tau \beta}$ is the bordism group of $d$-dimensional manifolds equipped with $(B, \beta)$-structures on their \emph{stable tangent bundle} modulo the relation of cobordism with $(B, \beta)$-structures on their \emph{stable tangent bundle}. The \emph{stable normal bordism group} $\Omega_d^{\nu \beta}$ is the bordism group of $d$-dimensional manifolds equipped with $(B, \beta)$-structures on their \emph{stable normal bundle} modulo the relation of cobordism with $(B, \beta)$-structures on their \emph{stable normal bundle}. 
\end{definition}

However, stable tangential bordism does not actually provide a new notion. Given a stable structure $\beta: B \to \rB O$, we obtain a new stable structure $\overline{\beta}:B \to \rB O$ as the composite $B \to \rB O \to[(-1)] \rB O$, where $(-1): \rB O \simeq \rB O$ corresponds to inversion in the infinite loop space structure on $\rB O$ (which may be thought of as taking the `orthogonal complement' of a stable vector bundle).
 An $(B, \overline{\beta})$-structure on the stable normal bundle of $M$ is the same thing as an $(B,\beta)$-structure on the stable tangent bundle of $M$, and conversely.
\begin{equation*}
	\Omega_d^{\tau \beta} \cong \Omega_d^{\nu \overline{\beta}} \qquad \Omega_d^{\nu \beta} \cong \Omega_d^{\tau \overline{\beta}}.
\end{equation*}

\begin{remark}
	Sometimes it is the case that $\overline{\beta} \simeq \beta$. For example, this holds for orientations and spin structures - an orientation (or spin structure) on a manifold can equivalently be defined on the stable normal bundle or the (stable) tangent bundle. An example where $\overline{\beta} \not\simeq \beta$ is given by $Pin^\pm$-structures. Instead, the operation $(B, \beta) \mapsto (B, \overline{\beta})$ exchanges stable $Pin^+$ and $Pin^-$ structures~\cite[Lem.~1.3]{MR1171915}. 
\end{remark}

\begin{example}\label{ex:normalntype}
	Let $M$ be a smooth manifold, with stable normal map $\nu_M: M \to \rB O$, and fix a natural number $n$. The map $\nu_M$ factors uniquely up to homotopy $M \to B_M \to \rB O$ into a $n$-connected map followed by an $n$-truncated map (see Example~\ref{exm:truncatedconnected}). The space $B_M$ is the \emph{(stable) normal $n$-type} of $M$ \cite{MR1709301}. The map from $M$ to $B_M$ realizes $M$ as an element of $\Omega_n^{\nu B_M}$, the normal $B_M$-bordism group which appears in Kreck's modified approach to surgery. See \cite{MR1709301} for details. 
	Similarly, the \emph{stable tangential $n$-type} of $M$ is obtained as the $n$-connected/$n$-truncated factorization of the stable tangent bundle $\tau_M:M \to \rB O$. 
\end{example}

\subsection{Unstable tangential structures, bordism categories and topological field theories} \label{sec:unstabletangstr}

By an \emph{unstable tangential structure} we mean a rank $d$ vector bundle  $\beta$ on a space $B$. Given a smooth manifold $M$ of dimensions $\dim(M) \leq d$, a $(B, \beta)$-structure on $M$ consists of a pair $(f, \theta)$ of a map $f: M \to B$ and a vector bundle isomorphism $\theta: T_M \oplus \varepsilon^{d- \dim(M)} \cong f^* \beta$, where $\varepsilon$ is the trivial one-dimensional vector bundle. For manifolds of dimension strictly less than $d$, there is a notion of $(B,\beta)$-bordism. However, in contrast to the stable case, the relation of $(B,\beta)$-bordism may fail to be reflexive and hence can fail to be an equivalence relation. However bordisms can be composed, and so we will obtain a bordism \emph{category}. This can be thought of as a substitute in the unstable setting. 

 If $M_0$ and $M_1$ are closed $(d-1)$-manifolds equipped with $(B, \beta)$-structures $(f_0, \theta_0)$ and $(f_1, \theta_1)$, then a $(B, \beta)$-bordism from $M_0$ to $M_1$ is a $d$-dimensional compact bordism $W$ from $M_0$ to $M_1$ together with a $(B, \beta)$-structure $(f, \theta)$ on $W$ and a choice of an \emph{inward pointing normal vector} on the incoming boundary $M_0$, and the \emph{outward pointing normal} on the outgoing boundary $M_1$. These latter structures are understood as appropriate isomorphisms $(T_{M_i} \oplus \varepsilon) \cong T_W|_{M_i}$. Furthermore the  $(B, \beta)$-structure $(f, \theta)$ on $W$  \emph{extends} the ones on $M_0$ and $M_1$. The meaning of `extends' in this context is first that $f: W \to B$ restricts to $f_0$ and $f_1$ on $M_0$ and $M_1$, and furthermore for $i=1,2$, the vector bundle isomorphism $\theta_i: T_{M_i} \oplus \epsilon \cong f_i^* \beta$ agrees with 
 \begin{equation*}
	(T_{M_i} \oplus \varepsilon) \cong T_W|_{M_i}  \stackrel{\theta|_{M_i}}{\cong} f_i^* \beta.
\end{equation*}

Bordisms may be composed giving rise to a symmetric monoidal bordism category of manifolds with $(B,\beta)$-structure $\Bord_d^{B, \beta}$. The objects of this category are closed $(d-1)$-manifolds with $(B,\beta)$-structures, and the morphisms are equivalence classes of compact $d$-bordisms with $(B,\beta)$-structures. The equivalence relation on bordisms is generated by isotopy of $(B, \beta)$-structure together with diffeomorphism rel. boundary over $B$.

\begin{definition}
A \emph{$(B, \beta)$-structured topological field theory} valued in a symmetric monoidal category $\cC$ is a symmetric monoidal functor $Z:\Bord_d^{B, \beta} \to \cC$. A $(B, \beta)$-structured \emph{topological super field theory} is a topological field theory valued in the symmetric monoidal category $\sVec$ of super vector spaces.\end{definition}

\begin{remark}\label{rmk:bordasinftyone}
The category $\Bord_d^{B, \beta}$ is the homotopy $1$-category of an $(\infty, 1)$-category of $(B, \beta)$-bordisms (for example constructed as a topological category in \cite{MR2653727, MR2506750}) which also contains information about the moduli spaces of $B$-manifolds and $B$-bordisms. There are also $(\infty,n)$-versions of the bordisms $n$-category considered in \cite{MR2555928,MR3924174,Schommer-Pries:2017aa}.
\end{remark}

\begin{remark}
Fixing a classifying map $\beta: B \to \rB O(d)$ of the vector bundle $(B, \beta)$, given a smooth manifold $M$ of dimension $\dim(M) \leq d$ and fixing a classifying map $T_M: M \to \rB O(\dim(M))$ of its tangent bundle, a $(B, \beta)$-structure may equivalently be given by a lift (in the homotopical sense)
\begin{equation}\label{eq:unstablestructure}
\begin{tz}
	\node (B) at (2.5, 1.5) {$ B $};
	\node (C) at (-2.5, 0) {$ M $};
	\node (E) at (0,0) {$\rB O(\dim(M))$};
	\node (D) at (2.5, 0) {$ \rB O(d) $};
	\draw [->, dashed] (C) to (B); 
	\draw [->] (B) to node [right] {$\beta$} (D); 
	\draw [->] (C) to node [below] {$T_M$} (E); 	
	\draw [->] (E) to  (D); 	
\end{tz}
\end{equation}
 of the composite $M \to \rB O(\dim(M)) \to \rB O(d) $ through $\beta$. In particular, the mapping space $\Map_{\rB O(d)} (M, B)$ in the $(\infty,1)$-category $\cS_{/\rB O(d)}$ of spaces over $\rB O(d)$ is the \emph{moduli space of $(B, \beta)$-structures on $M$}. (If $\beta:B \to \rB O(d)$ is a fibration, this is the usual point-set mapping space.) The correspondence between these two perspectives on $(B, \beta)$-structures depends on a choice of these classifying maps (which are unique up to homotopy).
 
 \end{remark}

\subsection{Invertible topological field theories}\label{sec:invertTFT}
A topological super field theory is \emph{invertible} if it factors through the groupoid $\mathrm{sLine}_k$ whose objects are super lines (either odd or even) and whose morphisms are isomorphisms of super lines. In the following section, we briefly summarize the classification of such invertible super field theories. Variations of these results have for example appeared in \cite{MR4268163, MR4313235}

A symmetric monoidal category in which all morphisms are invertible and in which all objects are $\otimes$-invertible is called a \emph{Picard groupoid}. Every symmetric monoidal category $\cC$ has a maximal Picard subgroupoid $\Pic(\cC)$, and in particular any symmetric monoidal functor from a Picard groupoid into a symmetric monoidal category $\cC$ factors (essentially uniquely) through $\Pic(\cC)$. Conversely, a symmetric monoidal functor from $\cC$ into a Picard groupoid factors through the \emph{Picard completion} $\widehat{\cC}$ of $\cC$, which is obtained by freely inverting the morphisms, and then performing a group-completion on the objects. In other words, forming the Picard subgroupoid and the Picard completion are right and left adjoints to the inclusion of the $2$-category of Picard groupoids into the $2$-category of symmetric monoidal $1$-categories. Equivalently, the Picard completion may be constructed by first observing that the geometric realization $\|\cC\|$ of $\cC$ is an $E_\infty$-space, and then defining $\widehat{\cC}$ to be the fundamental groupoid $\pi_{\leq 1} \Omega \rB \|\cC\|$ of the group-completion of $\|\cC\|$. If the category $\cC$ has duals, then $\|\cC\|$ is already grouplike and $\Omega \rB \|\cC\| \simeq \|\cC\|$. For the bordism category the geometric realization is identified by the celebrated theorem of Galatius-Madsen-Tillmann-Weiss \cite{MR2506750} (see also \cite{MR2653727}).

\begin{theorem}[{\cite{MR2506750,MR2653727}}]
Let $\beta:B \to \rB O(d)$ be an unstable tangential structure. Then, 
	$\|\Bord_d^{B, \beta}\| \simeq \Omega^{\infty-1}MT\beta$, where the \emph{Madsen-Tillmann}  spectrum  $MT \beta$ associated to $\beta:B \to \rB O(d)$  is the Thom spectrum $ B^{- \beta}$ of the virtual vector bundle $- \beta$.  \qed
\end{theorem}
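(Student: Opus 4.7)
The plan is to follow the strategy of Galatius-Madsen-Tillmann-Weiss \cite{MR2506750}, extended to general unstable tangential structures $\beta : B \to \rB O(d)$. First, I would replace the homotopy $1$-category $\Bord_d^{B,\beta}$ by a topologically (equivalently, $(\infty,1)$-categorically) enriched refinement whose spaces of morphisms are moduli spaces of $(B,\beta)$-bordisms, as alluded to in Remark~\ref{rmk:bordasinftyone}. A convenient model uses embedded bordisms: for $N \gg 0$, an object is a closed $(d-1)$-manifold embedded in $\{t\}\times \RR^{N+d-1}$ carrying a $(B,\beta)$-structure, and a morphism from $M_0$ to $M_1$ is a properly embedded $d$-dimensional bordism in $[t_0,t_1]\times \RR^{N+d-1}$ with compatible $(B,\beta)$-structure; composition is by stacking. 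Passing to the colimit $N\to\infty$ gives the desired $(\infty,1)$-category whose homotopy category recovers $\Bord_d^{B,\beta}$.

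Second, I would relate the classifying space of this bordism category to a moduli space of all closed $d$-manifolds. Following the central geometric argument of GMTW, one introduces the sheaf-theoretic moduli space $\psi_d(\RR^{N+d-1} \times \RR; \beta)$ of $d$-dimensional submanifolds with $(B,\beta)$-structure, topologized so that Hausdorff-close submanifolds are close. A parametric cutting/gluing argument using transversality and choices of collars yields a weak equivalence $\|\Bord_d^{B,\beta}\| \simeq \Omega\, \psi_d(\RR^{\infty}; \beta)$, where in the colimit $N\to \infty$ the right-hand side is the stabilized moduli space of $d$-manifolds with $(B,\beta)$-structure.

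Third, I would apply a Pontryagin-Thom/scanning argument to identify $\psi_d(\RR^{\infty}; \beta)$ with the zeroth space of $\Sigma MT\beta$. Near a submanifold $W\subset \RR^{N+d}$ with $(B,\beta)$-structure, a neighborhood in $\psi_d$ is modeled by sections of the normal bundle $\nu_W$ that decay at infinity, i.e.\ by a suitable mapping space into the one-point compactification of $\nu_W$. Assembling these local models over the universal family on $B$ and passing to the limit in $N$ produces the Thom spectrum of the stable inverse $-\beta$ over $B$, which by definition is $MT\beta = B^{-\beta}$. Combining the two equivalences gives $\|\Bord_d^{B,\beta}\| \simeq \Omega\,\Omega^\infty \Sigma MT\beta = \Omega^{\infty-1} MT\beta$.

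The main obstacle is the second step: relating the classifying space of the bordism category to the moduli space of all $d$-manifolds requires delicate parametrized constructions — coherent collar neighborhoods, extensions of $(B,\beta)$-structures across cuts, and gluing of families of bordisms — together with the verification that categorical composition is faithfully encoded by loop concatenation in the moduli space. Handling the $(B,\beta)$-structure consistently throughout, so that the Pontryagin-Thom argument in the third step produces exactly the Thom spectrum $B^{-\beta}$ rather than a different twist, is the most technically subtle point, and is precisely the content of the full GMTW theorem.
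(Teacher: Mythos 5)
The paper does not prove this theorem: it cites GMTW \cite{MR2506750} and Nguyen \cite{MR2653727} and closes the statement with \qed. So there is no in-paper argument to compare against; what can be assessed is your sketch against the GMTW blueprint it invokes.

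Your overall architecture is faithful to GMTW: replace $\Bord_d^{B,\beta}$ by a topologically enriched model of embedded bordisms in $\RR^{N+d}$, relate the classifying space to a sheaf-theoretic moduli space of $d$-manifolds via a parametric cutting/gluing argument, and identify that moduli space with an infinite loop space of the Thom spectrum by scanning/Pontryagin--Thom. You also correctly isolate the main technical burdens (parametrized collars, compatibility of the $(B,\beta)$-structure through cuts, and carrying the structure through scanning so that the twist is exactly $-\beta$).

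There is, however, an indexing error in the final assembly, and it is more than a typo. You assert $\|\Bord_d^{B,\beta}\| \simeq \Omega\,\psi_d(\RR^{\infty};\beta)$ and separately $\psi_d(\RR^{\infty};\beta) \simeq \Omega^\infty\Sigma MT\beta$, and then write that the composite equals $\Omega^{\infty-1}MT\beta$. But
\[
\Omega\,\Omega^\infty\Sigma MT\beta \;=\; \Omega\,\Omega^{\infty-1}MT\beta \;=\; \Omega^\infty MT\beta ,
\]
which is one loop space \emph{below} the claimed $\Omega^{\infty-1}MT\beta$. A quick $\pi_0$ check confirms the discrepancy: $\pi_0 \|\Bord_d^{B,\beta}\|$ should be the bordism group $\Omega^{T\beta}_{d-1}$ of $(d-1)$-manifolds (objects of the category modulo morphisms), which matches $\pi_0\Omega^{\infty-1}MT\beta \cong \pi_{-1}MT\beta \cong \Omega^{T\beta}_{d-1}$, whereas $\pi_0\Omega^{\infty}MT\beta \cong \Omega^{T\beta}_d$ is the bordism group in dimension $d$. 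The source of the miscount is your second step: in GMTW the classifying space of the cobordism category is \emph{directly} weakly equivalent (no extra $\Omega$) to the moduli space of $d$-manifolds in $\RR^\infty$ that are closed subsets and bounded in all but one distinguished direction, and it is \emph{that} space which scanning identifies with $\Omega^{\infty-1}MT\beta$. The moduli space of genuinely closed $d$-manifolds in $\RR^\infty$ is one loop down; it is $\Omega\|\Bord_d^{B,\beta}\| \simeq \Omega^\infty MT\beta$, i.e.\ the $\Omega$ belongs on the other side of the equivalence from where you placed it.
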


Picard groupoids $\sA$ are classified by the following three \emph{Postnikov invariants} \cite[App.~B]{MR2192936} (see also \cite{MR2981952}):
\begin{itemize}
	\item An abelian group $A_0$, corresponding to the isomorphism classes of objects of $\sA$;
	\item An abelian group $A_1$, corresponding to the automorphisms of the unit object of $\sA$;
	\item A homomorphisms $k_\sA: A_0 \otimes \ZZ/2\ZZ \to A_1$, the \emph{$k$-invariant} of $\sA$.
\end{itemize}
The map $k_\sA$ is defined as follows. For each object $x \in \sA$ select a dual-inverse $\overline{x}$, equipped with unit $\eta:\mathbbm{1} \to x \otimes \overline{x}$ and counit $\varepsilon:\overline{x} \otimes x \to \mathbbm{1}$ isomorphisms, which satisfy the usual zig-zag equations
\begin{equation*}
		id_x = (id_x \otimes \varepsilon) \circ  (\eta \otimes id_x), \quad id_{\overline{x}} =  (  \varepsilon \otimes id_{\overline{x}}) \circ  (  id_{\overline{x}} \otimes\eta).
\end{equation*}
Define $k_\sA(x): \mathbbm{1} \to \mathbbm{1}$ as
\begin{equation*}
k_\sA(x) = \varepsilon \circ \beta_{ x, \overline{x}} \circ \eta
\end{equation*}
where $\beta_{x, \overline{x}}: x \otimes \overline{x} \to \overline{x} \otimes x$ is the braiding isomorphism of $\sA$. The morphism $k_\sA(x): \mathbbm{1} \to \mathbbm{1}$ only depends on the isomorphism class of $x$.  Since $\sA$ is symmetric monoidal, $k_A(x) \in \End_{\sA}(\mathbbm{1}) = A_1$ is an order 2 element and $k_\sA: A_0 \otimes \ZZ/2\ZZ \to A_1$ defines a homomorphism.

\begin{example}\label{ex:slines}
The Picard groupoid $\Pic(\Vec_k)$ of the category of vector spaces is the groupoid $\Line_k$ of one-dimensional vector spaces and isomorphisms between them. Its Postnikov invariants are $L_0 = 0$, $L_1 = k^{\times}$ and trivial $k$-invariant. The Picard groupoid $\Pic(\sVec_k)$ of the category of super vector spaces is the groupoid $ \sLine_k$ of super lines (even and odd). Its Postnikov invariants are $sL_0 = \ZZ/2\ZZ$, $sL_1 = k^{\times}$, and the $k$
-invariant is the homomorphisms $k_{\sLine_k}: \ZZ/2\ZZ \to k^{\times}$ which sends the non-trivial element to $-1 \in k^{\times}$. In particular, if the characteristic of $k$ is not $2$, $k_{\sLine_k}$ is injective. \end{example}

\begin{example}[{\cite[App.~B]{MR2192936}}] \label{ex:picardfromspectra}
	Given a spectrum $E$ and an integer $i$, we obtain a Picard groupoid $\pi_{\leq 1} \Omega^{\infty+i}E$, the fundamental groupoid of the shifted infinite loop space underlying $E$. It has Postnikov invariants:
	\begin{itemize}
		\item the isomorphism classes of objects are $\pi_i E$;
		\item the automorphisms of the unit object are $\pi_{i+1}E$;
		\item the $k$-invariant $k_E = (\eta \cdot -):\pi_i E \to \pi_{i+1}E$ is given by multiplication by $\eta \in \pi_1 \mathbb{S}$ in the sphere spectrum. 
	\end{itemize}
	All Picard groupoids arise in this way. (Indeed, the $2$-groupoid of Picard groupoids is equivalent to the moduli space of spectra $E$ with $\pi_i E =0 $ for $i \neq 0,1$.)
\end{example}

Symmetric monoidal functors between Picard groupoids $\sA$ and $\sB$ can be described in terms of their Postnikov invariants (see~\cite{MR2981952}, also cf.~\cite{stolz-appendix}). Specifically, there is an exact sequence:
\begin{equation*}
	0 \to \Ext(A_0, B_1) \to \pi_0 \Fun(\sA, \sB) \to \Hom(A_0, B_0) \times \Hom(A_1, B_1) \to \Hom(A_0 \otimes \ZZ/2\ZZ, B_1)
\end{equation*}
where the last map sends $(f,g) \in \Hom(A_0, B_0) \times \Hom(A_1, B_1)$ to $k_\sB \circ f - g \circ k_\sA$.

In particular, if $B_1$ is a divisible group and the map $B_0 \to B_0 \otimes \ZZ/2\ZZ \to B_1$ induced by the $k$-invariant of $\sB$ is injective, then $\pi_0 \Fun(\sA, \sB) \cong \Hom(A_1, B_1)$.

\begin{corollary}\label{cor:universalpropertysline}
Let $k$ be an algebraically closed field of characteristic $\neq 2$. Then, for all Picard groupoids $\sA$:
\begin{align*}
	\pi_0 \Fun^{\otimes}(\sA, \sLine_k) &\cong \Hom(A_1, k^{\times}) 
	\qedhere \qed
\end{align*}

\end{corollary}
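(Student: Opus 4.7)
My plan is to apply the general description of $\pi_0 \Fun(\sA, \sB)$ between Picard groupoids recalled in the excerpt, namely
\[
\pi_0 \Fun(\sA, \sB) \cong \bigl\{ (f,g) \in \Hom(A_0, B_0) \times \Hom(A_1, B_1) \;\big|\; k_\sB \circ (f \otimes \id) = g \circ k_\sA \bigr\},
\]
to the specific case $\sB = \sLine_k$. The Postnikov invariants of $\sLine_k$ are $(\ZZ/2\ZZ,\; k^\times,\; k_{\sLine_k})$ where $k_{\sLine_k}(1) = -1 \in k^\times$. Hence I will consider the projection $(f,g) \mapsto g$ onto $\Hom(A_1, k^\times)$ and show it is a bijection under the hypothesis $\mathrm{char}(k) \neq 2$.

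The essential observation is that $\mathrm{char}(k) \neq 2$ makes the homomorphism $\ZZ/2\ZZ \xrightarrow{k_{\sLine_k}} k^\times$ an injection onto the $2$-torsion subgroup $\{\pm 1\} \subseteq k^\times$. Combined with the fact that for every $a \in A_0$, the element $k_\sA(a \otimes 1) \in A_1$ is $2$-torsion (since $\otimes \ZZ/2\ZZ$ forces $2 \cdot k_\sA(a \otimes 1) = k_\sA(a \otimes 2) = 0$), this means the composite $g \circ k_\sA$ takes values in $\{\pm 1\}$, which is precisely the image of $k_{\sLine_k}$. Thus the compatibility condition $k_{\sLine_k} \circ (f \otimes \id) = g \circ k_\sA$ uniquely determines $f \colon A_0 \to \ZZ/2\ZZ$ from $g$ (by injectivity of $k_{\sLine_k}$), and conversely any $g \in \Hom(A_1, k^\times)$ determines a set-theoretic function $f$ via this equation.

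Then I will verify that the so-defined $f$ is automatically a group homomorphism: for $a, a' \in A_0$, applying $k_{\sLine_k}$ to $f(a+a')$ and to $f(a)+f(a')$ yields the same element of $k^\times$, because both $k_\sA$ and $g$ are homomorphisms, and injectivity of $k_{\sLine_k}$ forces the additivity of $f$. This shows that the projection $(f,g) \mapsto g$ is a bijection onto $\Hom(A_1, k^\times)$, yielding the desired identification. There is no real obstacle here beyond tracking the compatibility condition and invoking the characteristic hypothesis to ensure injectivity of $k_{\sLine_k}$; the corollary is essentially the direct application of the second half of the remark preceding it.
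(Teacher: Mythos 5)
Your proof is correct and follows the same route as the paper: apply the classification of functors between Picard groupoids in terms of their Postnikov invariants, specialize to $\sB = \sLine_k$, and show that the projection $(f,g) \mapsto g$ is a bijection. The one place you are actually more careful than the paper's own sketch is surjectivity of this projection. The paper's preceding remark asserts this whenever the composite $B_0 \to B_0 \otimes \ZZ/2\ZZ \to B_1$ is injective, but injectivity alone would only give that $(f,g)\mapsto g$ is injective; for surjectivity one also needs, as you correctly observe, that $g\circ k_\sA$ lands inside the image of $k_\sB$. You supply exactly the right argument for $\sLine_k$: the values of $g\circ k_\sA$ are $2$-torsion, and since $k$ is a field of characteristic $\neq 2$ the $2$-torsion of $k^\times$ is precisely $\{\pm 1\}$, the image of $k_{\sLine_k}$. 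Also fine is your check that the resulting set-theoretic $f$ is a homomorphism (again via injectivity of $k_{\sLine_k}$) — equivalently one can note that the equation determines the induced map $A_0 \otimes \ZZ/2\ZZ \to \ZZ/2\ZZ$ directly, which is automatically a homomorphism and recovers $f$ by precomposing with $A_0 \twoheadrightarrow A_0 \otimes \ZZ/2\ZZ$.
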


Thus, symmetric monoidal functors into the Picard groupoid of super lines (over an algebraically closed field, not of characteristic 2) are the same as characters on the automorphisms of the unit object, see \cite[Rmk.~6.91]{MR3969923}.

\begin{remark}\label{rmk:BrownComentz}
	The Picard groupoid $\sLine_k$ also arises from a spectrum as in Example~\ref{ex:picardfromspectra}. Specifically  let  $I_{k^{\times}}$ be the $k^{\times}$-based version of the Brown-Comenetz dual of the sphere spectrum \cite{MR405403}. Since $k^{\times}$ is an injective abelian group, this spectrum has homotopy groups $\pi_i I_{k^{\times}} \cong \hom(\pi_{-i}\mathbb{S}, k^{\times})$. Thus if the characteristic of $k$ is not $2$, $\pi_{-1}I_{k^{\times}} \cong \ZZ/2\ZZ \cong sL_0$ and $\pi_0I_{k^{\times}} \cong k^{\times} \cong sL_1$. The corresponding $k$-invariants coincide, and the universal property of $\sLine_k$ follows from the universal property of $I_{k^{\times}}$. See \cite{MR405403} and \cite[App.~B]{MR2192936}.
\end{remark}

\begin{definition}\label{def:tangentialbordism}
	Given an unstable structure $\beta: B \to \rB O(d)$, we write $\Omega_k^{T\beta}:= \Omega_k^{\tau (i_d \circ \beta)}$  where $i_d: \rB O(d)\to \rB O$ is the stabilization map. We refer to this as the \emph{semistable\footnote{We use the term `semistable' since the elements of this bordism group are $d$-manifolds equipped with a stable structure obtained by stabilizing the unstable structure $(B, \beta)$.} bordism group}.  
\end{definition}

Explicitly, given an unstable structure $\beta: B \to \rB O(d)$ thought of as a $d$-dimensional vector bundle $\beta$ on $B$, a stable $(B, i_d \circ \beta)$-structure on a manifold $M$ is a map $f:M \to B$ and an isomorphism 
$$TM \oplus \varepsilon^{N} \cong f^* \beta \oplus \varepsilon^{N + \dim(M) - d}$$ for some $N \gg 0$.

\begin{remark}
For $\beta: \rB SO(d) \to \rB O(d)$, the semistable bordism group $\Omega_d^{T\beta}=  \Omega_d^{T\rB SO(d)}$ has been widely studied. It was shown in~\cite{MR3212580} to be isomorphic to Reinhart's vector field cobordism group \cite{MR153021}, which in turn agrees with the SKK groups of oriented manifolds~\cite{MR0362360}. 

In fact, in forthcoming work, Kreck, Stolz and Teichner show that for an arbitrary structure $(B, \beta: B \to \rB O(d))$ and in all dimensions $d \neq 2$, the  semistable bordism group $\Omega_k^{T\beta}$ agrees with an appropriately defined SKK group of closed $d$-manifolds with $(B, \beta)$-structure. \end{remark}

Note that any (unstable) $(B, \beta)$-structure on a closed manifold (with $\dim(M) \leq d$) as in~\eqref{eq:unstablestructure} induces a stable tangential $(B, i_d\circ \beta)$-structure and hence an element $[M] \in \Omega_n^{T\beta}$. Furthermore, it follows from the Pontryagin-Thom construction that the homotopy groups $MT \beta$ can be identified with these semistable bordism groups~\cite{179327}.

\begin{lemma}\label{lem:homotopyisbordism}
Given an unstable structure $\beta: B \to \rB O(d)$, 
	$\pi_{n-d} MT \beta \cong \Omega^{T\beta}_n.$ \qed
\end{lemma}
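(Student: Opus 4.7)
The plan is a direct application of the Pontryagin--Thom construction. Model the Madsen--Tillmann spectrum $MT\beta = B^{-\beta}$ explicitly as follows: for each $N \geq 0$, set
\[
B_N := B \times_{\rB O(d)} \mathrm{Gr}_d(\RR^{N+d}),
\]
which carries the pullback rank-$d$ bundle $\tilde\beta_N$ together with its rank-$N$ orthogonal complement $\tilde\beta_N^{\perp}$, satisfying the universal identity $\tilde\beta_N \oplus \tilde\beta_N^{\perp} \cong \varepsilon^{N+d}$. Then the $(N+d)$-th space of $MT\beta$ can be taken to be the Thom space $\mathrm{Th}(\tilde\beta_N^{\perp})$, so that $\pi_{n-d} MT\beta = \colim_N \pi_{n+N} \mathrm{Th}(\tilde\beta_N^{\perp})$.

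To build a map $\pi_{n-d}MT\beta \to \Omega^{T\beta}_n$, I would take a representative $\phi : S^{n+N} \to \mathrm{Th}(\tilde\beta_N^{\perp})$, homotope it to be smooth and transverse to the zero section $B_N \hookrightarrow \mathrm{Th}(\tilde\beta_N^{\perp})$, and let $M := \phi^{-1}(B_N)$. This $M$ is a closed $n$-manifold, whose normal bundle in $S^{n+N}$ is canonically identified with $f^*\tilde\beta_N^{\perp}$, where $f: M \to B_N \to B$ is the induced map. Combining $TM \oplus \nu_M \cong \varepsilon^{n+N}$ with the identity $\tilde\beta_N \oplus \tilde\beta_N^{\perp} \cong \varepsilon^{N+d}$ yields a stable isomorphism $\theta : TM \oplus \varepsilon^{N+d} \cong f^*\beta \oplus \varepsilon^{n+N}$, which is precisely a stable tangential $(B, i_d \circ \beta)$-structure on $M$. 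Conversely, given $(M, f, \theta)$ representing an element of $\Omega^{T\beta}_n$, embed $M \hookrightarrow \RR^{n+N}$ for $N \gg 0$ and consider its rank-$N$ normal bundle $\nu_M$; combining $TM \oplus \nu_M \cong \varepsilon^{n+N}$ with $\theta$ (enlarging $N$ if necessary) produces a stable isomorphism $\nu_M \oplus f^*\beta \cong \varepsilon^{N+d}$, which is the same data as a lift of the classifying map of $\nu_M$ to $B_N$ covered by a bundle isomorphism $\nu_M \cong \tilde\beta_N^{\perp}$. The Pontryagin--Thom collapse $S^{n+N} \to \mathrm{Th}(\nu_M) \to \mathrm{Th}(\tilde\beta_N^{\perp})$ then represents the corresponding class in $\pi_{n-d}MT\beta$.

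To finish, I would verify that both constructions respect the relevant equivalence relations and are mutually inverse. A $(B, \beta)$-bordism between $(M_0, f_0, \theta_0)$ and $(M_1, f_1, \theta_1)$, embedded into $\RR^{n+N} \times [0,1]$ in a way compatible with the chosen embeddings of the boundary, gives rise to a homotopy of Pontryagin--Thom representatives; conversely, transversality applied to a homotopy $S^{n+N} \wedge [0,1]_+ \to \mathrm{Th}(\tilde\beta_N^{\perp})$ produces a bordism between the transverse preimages. That the two assignments are mutually inverse reduces to the standard tubular-neighborhood uniqueness argument: the Pontryagin--Thom collapse of an embedded submanifold recovers the submanifold (with its structure) upon taking the preimage of the zero section. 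The only genuine bookkeeping obstacle is to check that both constructions are compatible with the stabilization $N \mapsto N+1$ so that they descend to the colimit defining $\pi_{n-d}MT\beta$; this amounts to tracking compatible trivializations of the extra $\varepsilon$-summand and is routine. No individual step presents a significant difficulty --- the whole argument is the classical Pontryagin--Thom paradigm, adapted to unstable tangential structures rather than stable normal structures.
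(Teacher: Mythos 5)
Your argument is the standard Pontryagin--Thom identification for Thom spectra of virtual bundles, which is exactly the approach the paper invokes: the paper presents this lemma with a \texttt{\textbackslash qed} and a citation to the Pontryagin--Thom construction rather than an explicit proof. Your write-up correctly supplies the details that the paper delegates to the literature, including the correct degree shift from the rank-$d$ virtual bundle, the translation between the normal-bundle data produced by transversality and the stable tangential $(B,i_d\circ\beta)$-structure of Definition~\ref{def:tangentialbordism}, and the standard bordism-versus-homotopy and stabilization bookkeeping.
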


Combining this computation with the theorem of  Galatius-Madsen-Tillmann-Weiss\\ \cite{MR2506750}, we obtain the following proposition which asserts that invertible super topological field theories are uniquely characterized by their \emph{partition functions}.

\begin{theorem}\label{thm:invertarechars} Let $\beta:B \to \rB O(d)$ be an unstable structure and let $k$ be an algebraically closed field of characteristic $\neq 2$. Then, isomorphism classes of $d$-dimensional invertible $(B, \beta)$-structured super topological field theories $\cZ:\Bord_d^{B, \beta} \to \sVec_k$ are in natural bijection with group homomorphisms $\omega: \Omega^{T\beta}_d \to k^{\times}$.

Explicitly, the value of the TFT $\cZ_{\omega}$ associated to a group homomorphism $\omega$ on a closed $d$-dimensional $(B, \beta)$-manifold $W$ is given by 
\[\cZ_{\omega}(W)= \omega([W])\]
where $[W] \in \Omega^{T\beta}_d$ is the bordism class of $W$ with stable tangential $(B, \beta)$-structure induced from its given unstable structure.
\end{theorem}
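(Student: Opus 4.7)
The plan is to combine the three ingredients gathered in this section: the universal property of $\sLine_k$ from Corollary~\ref{cor:universalpropertysline}, the Galatius-Madsen-Tillmann-Weiss identification of $\|\Bord_d^{B,\beta}\|$, and the bordism-theoretic computation of $\pi_0 MT\beta$ from Lemma~\ref{lem:homotopyisbordism}.

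First I would observe that since any invertible super TFT factors essentially uniquely through the maximal Picard subgroupoid $\sLine_k = \Pic(\sVec_k)$, isomorphism classes of invertible $d$-dimensional $(B,\beta)$-structured super TFTs $\cZ:\Bord_d^{B,\beta}\to\sVec_k$ are in bijection with isomorphism classes of symmetric monoidal functors $\Bord_d^{B,\beta}\to\sLine_k$. By the adjunction between the Picard completion functor $\widehat{(-)}$ and the inclusion of Picard groupoids into symmetric monoidal categories, these in turn correspond to Picard-groupoid maps $\widehat{\Bord_d^{B,\beta}}\to\sLine_k$. Since $\Bord_d^{B,\beta}$ has duals, its classifying space is already grouplike, and thus $\widehat{\Bord_d^{B,\beta}}\simeq \pi_{\leq 1}\|\Bord_d^{B,\beta}\|$.

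Next I would invoke the Galatius-Madsen-Tillmann-Weiss theorem to identify $\|\Bord_d^{B,\beta}\|\simeq \Omega^{\infty-1}MT\beta$. Example~\ref{ex:picardfromspectra} then identifies the Postnikov invariants of $\pi_{\leq 1}\Omega^{\infty-1}MT\beta$: the abelian group of iso-classes of objects is $\pi_{-1}MT\beta$, while the abelian group of unit-automorphisms is $\pi_0 MT\beta$, which by Lemma~\ref{lem:homotopyisbordism} is canonically $\Omega_d^{T\beta}$. Applying Corollary~\ref{cor:universalpropertysline}, whose hypothesis (that the $k$-invariant of $\sLine_k$ is injective) is satisfied precisely because $\mathrm{char}(k)\neq 2$, yields a natural bijection
\[ \pi_0 \Fun\bigl(\widehat{\Bord_d^{B,\beta}},\,\sLine_k\bigr) \;\cong\; \Hom(\Omega_d^{T\beta},\,k^*), \]
which combines with the preceding equivalences to establish the first claim.

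Finally, for the partition function formula, I would trace a closed $d$-dimensional $(B,\beta)$-manifold $W$ through these equivalences. Viewed as a bordism $\emptyset\to\emptyset$, $W$ is an endomorphism of the unit object of $\Bord_d^{B,\beta}$, and the image of this endomorphism in $\pi_1\|\Bord_d^{B,\beta}\|\cong \pi_0 MT\beta$ is, by the Pontryagin-Thom construction underlying Lemma~\ref{lem:homotopyisbordism}, exactly the bordism class $[W]\in \Omega_d^{T\beta}$. Since the invertible theory $\cZ_\omega$ corresponding to $\omega$ is, by construction, the Picard-groupoid map which acts by $\omega$ on unit-automorphisms, we obtain $\cZ_\omega(W)=\omega([W])$ as claimed. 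The main obstacle will be bookkeeping across the interface between the $1$-category $\Bord_d^{B,\beta}$ and the underlying $(\infty,1)$-bordism category whose classifying space is $\|\Bord_d^{B,\beta}\|$ (see Remark~\ref{rmk:bordasinftyone}): one must check that a closed $d$-manifold really represents an endomorphism of the unit in the homotopy-coherent sense and that the Pontryagin-Thom identification respects the stabilization conventions of Definition~\ref{def:tangentialbordism}, after which the theorem follows by assembling the pieces above.
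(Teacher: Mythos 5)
Your argument follows the paper's proof essentially verbatim: Picard-complete the bordism category using duality, apply GMTW to identify the classifying space with $\Omega^{\infty-1}MT\beta$, read off $\pi_0 MT\beta \cong \Omega_d^{T\beta}$ via Lemma~\ref{lem:homotopyisbordism} and Example~\ref{ex:picardfromspectra}, and invoke Corollary~\ref{cor:universalpropertysline}. The only point where the paper is more explicit is in resolving the ``bookkeeping'' concern you flag at the end: it observes that $\Bord_d^{B,\beta}$ is the homotopy category of GMTW's topologically enriched category $\Cob_d^{B,\beta}$, so the canonical map induces an equivalence $\pi_{\leq 1}\|\Cob_d^{B,\beta}\| \simeq \pi_{\leq 1}\|\Bord_d^{B,\beta}\|$, which is all that is needed since one only extracts the fundamental $1$-groupoid.
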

\begin{proof}The bordism category $\Bord_d^{(B, \beta)}$ has duals, and thus the Picard completion $\widehat{\Bord_d^{(B, \beta)}}$ is the fundamental groupoid of the classifying space $\pi_{\leq 1} \|\Bord_d^{(B, \beta)}\|$.
In their famous work, Galatius, Madsen, Tillmann, and Weiss consider a category $\Cob^{(B, \beta)}_d$ enriched in topological spaces. They identify the homotopy type of the geometric realization as $||\Cob^{(B, \beta)}_d|| \simeq \Omega^{\infty -1} MT\beta$ \cite{MR2506750}. Our category $\Bord_d^{(B, \beta)}$ is the homotopy category of $\Cob_d^{(B, \beta)}$ and hence the canonical map $\Cob_d^{(B, \beta)} \to \Bord_d^{(B, \beta)}$ induces an equivalence on the fundamental $1$-groupoid of classifying spaces $$\pi_{\leq 1}\|\Cob_d^{(B, \beta)}\| \cong  \pi_{\leq 1}\|\Bord_d^{(B, \beta)}\|.$$ Hence, it follows that $\pi_1(\widehat{\Bord_d^{(B, \beta)}}) \cong \pi_0 MT\beta \cong \Omega_d^{T\beta}$. The proposition then follows from Corollary~\ref{cor:universalpropertysline}.
\end{proof}

\subsection{Computing semistable bordism groups}\label{sec:unstablebordism}
In this section, we explain how to compute semistable bordism groups from Definition~\ref{def:tangentialbordism} in terms of the more familiar stable tangential bordism groups from Definition~\ref{def:stablebordism}.

Let $\beta_{\infty}: B_{\infty} \to \rB O$ be a stable tangential structure and for each $k\geq 0$, consider the unstable tangential structure $\beta_k: B_k \to \rB O(k)$ defined as the pullback:
\begin{center}
\begin{tikzpicture}
	\node (A) at (0,1.5) {$ B_k $};
	\node (B) at (2.5, 1.5) {$ B_{\infty} $};
	\node (C) at (0, 0) {$ \rB O(k) $};
	\node (D) at (2.5, 0) {$ \rB O $};
	\draw [->] (A) to (B); 
	\draw [->] (A) to node [left] {$\beta_k$} (C); 
	\draw [->] (B) to node [right] {$\beta_\infty$}(D); 
	\draw [->] (C) to (D); 
	\node at (0.5, 1) {$\lrcorner$};	
\end{tikzpicture}
\end{center} 

\begin{example} Consider $\beta_{\infty}: \rB SO \to \rB O$. Then, $\beta_k: \rB SO(k) \to \rB O(k)$, and similarly for $\rB Spin$ and all other structures in the Whitehead tower of $\rB O$.  \end{example}

Recall from Lemma~\ref{lem:homotopyisbordism} that for each $n\geq 0$ the semistable bordism groups $\Omega_n^{T\beta_k} = \Omega_n^{\tau i\circ \beta_k}$ can be identified with homotopy groups of the Madsen-Tillmann spectrum $MT\beta_k$. 
We can compare these various bordism groups $\Omega_n^{T\beta_k}$ for varying $n$ and $k$ using the \emph{Genauer fiber sequence}:
	\begin{equation*}
		MT\beta_{k+1} \to \Sigma^\infty_+ B_{k+1} \to MT\beta_{k}.
	\end{equation*}
The existence of this fiber sequence of spectra is well-known to experts. Genauer constructed a $(-1)$-connective cover of this sequence using a bordism category of manifolds with boundary to model 
$\Sigma^\infty_+ B_{k+1} $	\cite{MR2833590}. It is treated in detail in the unoriented case in \cite[Prop.~3.1]{MR2506750} and the case of general tangential structures is discussed in \cite[Section~5]{MR2506750}. It is also mentioned in \cite[Section~6.1]{MR3158761}.  

\begin{lemma}\label{lem:TangetBordismIsstablebordismplusEuler}
	Let $\beta_{\infty}: B_{\infty} \to \rB O$ be a stable tangential structure and for each $k\geq 0$, let $\beta_k: B_k \to \rB O(k)$ denote the unstable tangential structure obtained by pulling back $\beta_{\infty}$ to $\rB O(k)$. 
	\begin{enumerate}
		\item Let $d < k$. Then $\Omega_d^{T\beta_k} \cong \Omega_d^{T\beta_{k+1}} \cong \Omega_d^{\tau \beta_\infty} \cong \Omega_d^{\nu \overline{\beta}_\infty}$.
		\item There is an exact sequence
		\begin{equation*}
			\Omega_{k+1}^{T\beta_{k+1}} \to \pi_0\Sigma^\infty_+ B_{k+1} \to \Omega_k^{T\beta_k} \to \Omega_k^{T\beta_{k+1}} \to 0.
		\end{equation*}
	\end{enumerate}
Suppose that $B_k$ is connected.
\begin{enumerate}
	\item [3.]  If $k$ is odd, then this last exact sequence reduces to a short exact sequence
\begin{equation*}
	0 \to \ZZ/\ell \to \Omega_k^{T\beta_k} \to \Omega_k^{\tau \beta_\infty} \to 0 
\end{equation*}
for some $\ell \in \{ 0, 1, 2, \dots \}$.
\item [4.] If $k$ is even, there is a map of short exact sequences
\begin{center}
\begin{tikzpicture}
	\node (A1) at (0,1.5) {$ 0 $};
	\node (A2) at (2.5, 1.5) {$ \ZZ  $};
	\node (A3) at (5, 1.5) {$ \Omega_k^{T\beta_k} $};
	\node (A4) at (7.5, 1.5) {$ \Omega_k^{\tau \beta_\infty} $};
	\node (A5) at (10, 1.5) {$ 0 $};

	\node (B1) at (0,0) {$ 0 $};
	\node (B2) at (2.5, 0) {$ \ZZ  $};
	\node (B3) at (5, 0) {$ \ZZ $};
	\node (B4) at (7.5, 0) {$ \ZZ/2\ZZ $};
	\node (B5) at (10, 0) {$ 0 $};

	\draw [->] (A1) to (A2);
	\draw [->] (A2) to (A3);
	\draw [->] (A3) to (A4);
	\draw [->] (A4) to (A5);
 	\draw [->] (B1) to  (B2);
 	\draw [->] (B2) to node [above] {$2 \cdot$} (B3);
 	\draw [->] (B3) to (B4);
 	\draw [->] (B4) to (B5);
 
	\draw [->] (A2) to node [right] {$\cong$} (B2);
	\draw [->] (A3) to node [right] {$\chi$} (B3);
	\draw [->] (A4) to node [right] {$\chi$ mod 2}(B4);
\end{tikzpicture}
\end{center}
where $\chi$ is the Euler characteristic function. 
\end{enumerate}
\end{lemma}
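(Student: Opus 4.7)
The plan is to apply Lemma~\ref{lem:homotopyisbordism} to identify each unstable bordism group as $\Omega_d^{T\beta_k} \cong \pi_{d-k} MT\beta_k$, and then to extract the four claims from the long exact sequence in homotopy associated to the Genauer fiber sequence
\begin{equation*}
MT\beta_{k+1} \to \Sigma^\infty_+ B_{k+1} \to MT\beta_k.
\end{equation*}
For~(1), set $n = d-k < 0$; since $\Sigma^\infty_+ B_{k+1}$ is connective, both $\pi_n \Sigma^\infty_+ B_{k+1}$ and $\pi_{n-1}\Sigma^\infty_+ B_{k+1}$ vanish and the long exact sequence collapses to the isomorphism $\pi_n MT\beta_k \cong \pi_{n-1} MT\beta_{k+1}$, i.e.\ $\Omega_d^{T\beta_k} \cong \Omega_d^{T\beta_{k+1}}$. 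Iterating through all $k'>d$ stabilizes this sequence of isomorphisms to the stable tangential bordism group $\Omega_d^{\tau\beta_\infty}$, and the final identification $\Omega_d^{\tau\beta_\infty} \cong \Omega_d^{\nu\overline{\beta}_\infty}$ was already recorded in Section~\ref{sec:stabletangstablenorm}. For~(2), taking $n=0$ in the same long exact sequence and again invoking the vanishing of $\pi_{-1}\Sigma^\infty_+ B_{k+1}$ gives exactly the stated four-term exact sequence.

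For~(3) and~(4), the connectedness assumption on $B_k$ (which implies connectedness of $B_{k+1}$ via the surjective, indeed $n$-connected, map $B_k \to B_{k+1}$ from Proposition~\ref{prop:pullbackstructures}) yields $\pi_0 \Sigma^\infty_+ B_{k+1} \cong \ZZ$, and, combined with~(1), the exact sequence of~(2) becomes
\begin{equation*}
\Omega_{k+1}^{T\beta_{k+1}} \xrightarrow{\delta} \ZZ \to \Omega_k^{T\beta_k} \to \Omega_k^{\tau\beta_\infty} \to 0.
\end{equation*}
The geometric content of the Genauer sequence \cite{MR2833590,MR2506750,MR3158761} identifies $\delta$ with the Euler characteristic functional $W \mapsto \chi(W)$ on closed $(k+1)$-manifolds, and identifies the ensuing map $\ZZ \to \Omega_k^{T\beta_k}$ with the homomorphism sending the generator $1$ to the sphere $[S^k]$ equipped with the canonical $\beta_k$-structure (arising from a point of $B_{k+1}$ via the unit sphere of the rank-$(k+1)$ bundle $\beta_{k+1}$). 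In case~(3) no further constraint on $\delta$ is available, so $\mathrm{im}(\delta) = \ell\ZZ$ for a unique $\ell\in\{0,1,2,\dots\}$, producing the short exact sequence $0 \to \ZZ/\ell \to \Omega_k^{T\beta_k} \to \Omega_k^{\tau\beta_\infty} \to 0$ claimed.

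In case~(4), the source dimension $k+1$ of $\delta$ is odd, so every closed $(k+1)$-manifold has vanishing Euler characteristic and hence $\delta=0$, yielding the top row $0 \to \ZZ \to \Omega_k^{T\beta_k} \to \Omega_k^{\tau\beta_\infty} \to 0$. The comparison to $0 \to \ZZ \xrightarrow{2\cdot} \ZZ \to \ZZ/2 \to 0$ is induced by the unstable Euler characteristic $\chi: \Omega_k^{T\beta_k}\to\ZZ$. This is a well-defined bordism invariant: a bordism representing a relation in $\Omega_k^{T\beta_k}$ is a $(k+1)$-manifold $W$ whose tangent bundle splits unstably as $T_W \cong \varepsilon \oplus (\textrm{pulled-back structure})$, so $W$ carries a nowhere-vanishing vector field, whence Poincar\'e--Hopf forces the relative Euler number to vanish and $\chi$ to be additive under bordism. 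On the subgroup $\ZZ = \langle [S^k]\rangle \subseteq \Omega_k^{T\beta_k}$, the composition $\ZZ \to \Omega_k^{T\beta_k} \xrightarrow{\chi} \ZZ$ is multiplication by $\chi(S^k)=2$, and the reduction of $\chi$ modulo~$2$ descends to $\Omega_k^{\tau\beta_\infty}$ since Wu's formula expresses $\chi \bmod 2 = \langle w_k(TM), [M]\rangle$ as a stable tangential invariant. The principal non-formal ingredient throughout is the explicit geometric identification of the Genauer connecting map and of the image of the generator of $\pi_0 \Sigma^\infty_+ B_{k+1}$; this is standard in the cited sources, so the remaining argument reduces to combining this input with the purely homotopical exact sequence and elementary parity considerations.
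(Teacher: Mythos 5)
Your proposal is correct and uses the same central ingredient as the paper — the long exact sequence in homotopy of the Genauer fiber sequence $MT\beta_{k+1}\to\Sigma^\infty_+ B_{k+1}\to MT\beta_k$, together with the identification of the first map with the Euler characteristic. Parts (1) and (2) match the paper's proof essentially verbatim.

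The one place you diverge slightly is in the handling of (3) and (4): the paper's proof does not argue directly that $\chi$ descends to $\Omega_k^{T\beta_k}$ and that $\chi\bmod 2$ descends to $\Omega_k^{\tau\beta_\infty}$; instead it maps the Genauer sequence for $(B_k,\beta_k)$ into the one for $(\rB O(k),\mathrm{id})$, getting a map of long exact sequences whose bottom row is $\Omega_{k+1}^{T\rB O(k+1)}\to\ZZ\to\Omega_k^{T\rB O(k)}\to\cN_k\to 0$, and then cites \cite{MR3356279} for the identification of the bottom two rows (including the $0\to\ZZ\xrightarrow{2\cdot}\ZZ\to\ZZ/2\to 0$ comparison). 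You replace this comparison-to-the-unoriented-case step with a direct geometric argument: Poincar\'e--Hopf to show that $\chi$ is a well-defined homomorphism out of $\Omega_k^{T\beta_k}$, the observation that the generator of $\pi_0\Sigma^\infty_+ B_{k+1}\cong\ZZ$ maps to $[S^k]$ with $\chi(S^k)=2$, and Wu's formula for the descent of $\chi\bmod 2$. That substitution is fine and is in fact the content packaged inside \cite{MR3356279}. The one small point worth being careful about in your Poincar\'e--Hopf argument is the passage from a stable splitting $T_W\oplus\varepsilon^N\cong V\oplus\varepsilon^{N+1}$ to an actual nowhere-vanishing vector field on the $(k+1)$-dimensional bordism $W$: the obstruction lives in $H^{k+1}(W;\ZZ)$, which vanishes because $W$ is a compact manifold with nonempty boundary, so the unstable splitting does exist — but this step is worth making explicit. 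The paper's route via the comparison sequence bypasses this by outsourcing to the literature; your route is more self-contained but requires the destabilization remark to be airtight. Either way, the two arguments are the same in substance.
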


\begin{proof}
	(1), (2), and (3) are direct consequences of the Genauer fiber sequence. We can also compare the Genauer fiber sequence for $(B_k, \beta_k)$ to the sequence for $(BO(k), id)$. We obtain a map of long exact sequences:
	\begin{center}
	\begin{tikzpicture}
		\node (A1) at (0,1.5) {$ \Omega_{{k+1}}^{T\beta_{k+1}} $};
		\node (A2) at (2.5, 1.5) {$ \ZZ  $};
		\node (A3) at (5, 1.5) {$ \Omega_k^{T\beta_k} $};
		\node (A4) at (7.5, 1.5) {$ \Omega_k^{\tau \beta_\infty} $};
		\node (A5) at (10, 1.5) {$ 0 $};
	
		\node (B1) at (0,0) {$ \Omega_{{k+1}}^{T\rB O(k+1)} $};
		\node (B2) at (2.5, 0) {$ \ZZ  $};
		\node (B3) at (5, 0) {$ \Omega_k^{T\rB O(k)} $};
		\node (B4) at (7.5, 0) {$ \cN_k $};
		\node (B5) at (10, 0) {$ 0 $};
	
		\draw [->] (A1) to (A2);
		\draw [->] (A2) to (A3);
		\draw [->] (A3) to (A4);
		\draw [->] (A4) to (A5);
	 	\draw [->] (B1) to node [above] {$\chi$} (B2);
	 	\draw [->] (B2) to (B3);
	 	\draw [->] (B3) to (B4);
	 	\draw [->] (B4) to (B5);	
	 
		\draw [->] (A1) to (B1); 
		\draw [->] (A2) to node [right] {$\cong$} (B2);
		\draw [->] (A3) to (B3);
		\draw [->] (A4) to (B4);
	\end{tikzpicture}
	\end{center}
	Here $\cN_k$ is the unoriented bordism group, and the first lower horizontal map $\chi$ is identified with the Euler characteristic \cite[Eq.~3.6]{MR2506750} (see also \cite{MR3356279}). Thus the $\ell$ in part (3) is the smallest common divisor of the possible Euler characteristics of $(k+1)$-dimensional manifolds with $(B_{k+1}, \beta_{k+1})$-structures. 
	When $k$ is even (so $k+1$ is odd), these Euler characteristics will be zero, and in that case we can extend to a diagram of short exact sequences as follows.
	\begin{center}
	\begin{tikzpicture}
		\node (A1) at (0,1.5) {$ 0 $};
		\node (A2) at (2.5, 1.5) {$ \ZZ  $};
		\node (A3) at (5, 1.5) {$ \Omega_k^{T\beta_k} $};
		\node (A4) at (7.5, 1.5) {$ \Omega_k^{\tau \beta_\infty} $};
		\node (A5) at (10, 1.5) {$ 0 $};
	
		\node (B1) at (0,0) {$ 0 $};
		\node (B2) at (2.5, 0) {$ \ZZ  $};
		\node (B3) at (5, 0) {$ \Omega_k^{T\rB O(k)} $};
		\node (B4) at (7.5, 0) {$ \cN_k $};
		\node (B5) at (10, 0) {$ 0 $};
	
		\node (C1) at (0, -1.5) {$ 0 $};
		\node (C2) at (2.5, -1.5) {$ \ZZ $};
	\node (C3) at (5, -1.5) {$ \ZZ $};
	\node (C4) at (7.5, -1.5) {$ \ZZ/2\ZZ $};
	\node (C5) at (10, -1.5) {$ 0 $};

		\draw [->] (A1) to (A2);
		\draw [->] (A2) to (A3);
		\draw [->] (A3) to (A4);
		\draw [->] (A4) to (A5);
	 	\draw [->] (B1) to  (B2);
	 	\draw [->] (B2) to (B3);
	 	\draw [->] (B3) to (B4);
	 	\draw [->] (B4) to (B5);
		\draw [->] (C1) to (C2);
		\draw [->] (C2) to node [above] {$2 \cdot$} (C3);
		\draw [->] (C3) to (C4);
		\draw [->] (C4) to (C5);
	 
		\draw [->] (A2) to node [right] {$\cong$} (B2);
		\draw [->] (A3) to (B3);
		\draw [->] (A4) to (B4);
		\draw [->] (B2) to node [right] {$\cong$}(C2);
		\draw [->] (B3) to node [right] {$\chi$} (C3);
		\draw [->] (B4) to node [right] {$\chi$ mod 2}(C4);
	\end{tikzpicture}
	\end{center}
The bottom half of the diagram is identified in \cite{MR3356279}. Again $\chi$ is the Euler characteristic. 
\end{proof}

\begin{example}  Let $\beta_d: \rB SO(d) \to \rB O(d)$. This case has been widely studied. It was shown in \cite{MR3212580} that the semistable oriented bordism group $\Omega_d^{T\beta} = \Omega_d^{T\rB SO(d)}$ is isomorphic to Reinhart's vector field cobordism group \cite{MR153021}. This, in turn, was previously shown to agree with the SKK groups of manifolds \cite{MR0362360}, which were shown to fit into a short exact sequence
	\begin{equation*}
		0 \to I_d \to SKK_d \to \Omega^{SO}_d \to 0
	\end{equation*}
where $\Omega^{SO}_d$ denotes the classical bordism group of oriented manifolds and $I_n$ is a cyclic summand - either zero, $\ZZ/2\ZZ$, or $\ZZ$ depending on $d$ mod 4. Moreover these extra summands were known to be detected by the Euler characteristic or Kervaire's semi-characteristic (defined below). 	
This was rediscovered in \cite{MR3356279}. We have:
	\begin{equation*}
		\Omega_d^{T\rB SO(d)} \cong \begin{cases}
		\Omega^{SO}_d  & d \equiv 3 \quad (\textrm{mod } 4) \\
		\ZZ \oplus \Omega^{SO}_d & d \equiv 2 \quad(\textrm{mod } 4) \\
			\ZZ/2\ZZ \oplus \Omega^{SO}_d & d \equiv 1 \quad(\textrm{mod } 4) \\
			\ZZ \oplus \Omega^{SO}_d & d \equiv 0 \quad(\textrm{mod } 4)
		\end{cases}
	\end{equation*}
where $\Omega^{SO}_d$ denotes the classical bordism group of oriented manifolds. If $\rho: \Omega_d^{T\rB SO(d)} \to \Omega^{SO}_d$ is the natural projection, then these isomorphisms are given, respectively, by $(\frac{1}{2} \chi, \rho)$ in dimensions $d = 4k+2$, $(k_\RR, \rho)$ in dimensions $d = 4k+1$, and by $(\frac{1}{2} (\chi + \sigma), \rho)$ in dimensions $d = 4k$, where $\chi$ denotes the Euler characteristic, $\sigma$ denotes the signature, and $k_\RR$ is Kervaire's semi-characteristic (defined when $d=4k +1$):
\begin{equation*}
	k_\RR(M) = \sum_{i = 0}^{\frac{d-1}{2}} \dim H^{2i}(M; \RR).
\end{equation*}
Pontryagin numbers and Stiefel-Whitney numbers can be used to give characters on the classical bordism group, and are well-known to completely determine $\Omega^{SO}_d$ \cite{MR120654}. It follows then from these calculations and Theorem~\ref{thm:invertarechars} that invertible oriented topological field theories determine precisely the following invariants:
\begin{itemize}
	\item Pontryagin and Stiefel-Whitney numbers;
	\item The Euler characteristic;
	\item Kervaire's semi-characteristic. 
\end{itemize}
Of these only Karvaire's semi-characteristic requires that the target category actually be \emph{super} vector spaces. 
\end{example}

\begin{remark}\label{rmk:injection}
	Following Lemma~\ref{lem:TangetBordismIsstablebordismplusEuler}, when $d=2q$ is even we have an injection $(\chi, pr):\Omega_{2q}^{T\beta_{2q}} \to \ZZ \oplus \Omega_{2q}^{\tau \beta_{\infty}}$. If $B_{2q}$ is orientable (i.e. it factors as $B_{2q} \to \rB SO(2q) \to \rB O(2q)$), then the previous example allows us to be more precise. Specifically it follows that  $(\frac{1}{2} \chi, pr): \Omega_{2q}^{T\beta_{2q}} \cong \ZZ \oplus \Omega_{2q}^{\tau \beta_{\infty}}$ if $q$ is odd, and $(\frac{1}{2} (\chi + \sigma), pr): \Omega_{2q}^{T\beta_{2q}} \cong \ZZ \oplus \Omega_{2q}^{\tau \beta_{\infty}}$ if $q$ is even, where $\chi$ is the Euler characteristic and $\sigma$ is the signature. 
\end{remark}

\begin{example}\label{ex:Ycohominvert}
	Fix a space $Y$ and let $\beta_d: Y \times \rB SO(d) \to \rB O(d)$ be the composition of projection to the second factor followed by the canonical map. Then a $\beta_d$-structure on a $d$-manifold consists of an orientation together with a map to $Y$. There is a map of spectra
	\begin{equation*}
		MT\beta_d \simeq MTSO(d) \wedge Y_+ \to \Sigma^{-d} H\ZZ \wedge Y_+
	\end{equation*} 
	induced from the bottom Postnikov truncation $MTSO(d) \to \Sigma^{-d} H\ZZ $. Thus there is map
	\begin{equation*}
		\Omega_d^{T\beta_d} \cong \pi_0(MTSO(d) \wedge Y_+) \to \pi_0(\Sigma^{-d} H\ZZ \wedge Y_+) \cong H_d(Y).
	\end{equation*}
	If $k$ is an algebraically closed field (not of characteristic 2), then we get an induced map
	\begin{equation*}
		H^d(Y; k^\times) \cong \Hom(H_d(Y), k^\times) \to \Hom(\Omega_d^{T\beta_d}, k^\times).
	\end{equation*}
	Thus, in light of Theorem~\ref{thm:invertarechars}, $k^\times$-valued cohomology classes on $Y$ give rise to invertible $\beta_d$-structured topological field theories. 
	
	Fix $\alpha \in H^d(Y; k^\times)$ and let $(M,f)$ be a closed $\beta_d$-structured $d$-manifold, i.e.  $M$ is an oriented manifold and $f: M \to Y$ is a map. Then the \emph{partition function} of the corresponding invertible field theory is given by
	\begin{equation*}
		\cZ_{Y, \alpha}(M,f) = \langle [M],  f^*\alpha\rangle
	\end{equation*}
	where $[M]$ is the fundamental class of $M$.
\end{example}

\subsection{Tangential $n$-types}\label{sec:compntypes}
In Section~\ref{sec:unstablebordism}, we showed that if an unstable tangential structure $\beta_k: B_k \to \rB O(k)$ is pulled back from a stable structure $\beta_{\infty}: B_{\infty} \to \rB O$, then the semistable bordism groups for $\beta_k$ can be computed from the stable bordism groups for $\beta_{\infty}$. In this section, we study an important sufficient condition for an unstable structure $\beta_k$ to arise from an $\beta_{\infty}$ in this way.

Indeed, most tangential structures $\beta_d: B_d \to \rB O(d)$ we will encounter in this paper will be $n$-truncated for some $n\geq 0$.  Recall that this means that the homotopy fibers of $\beta_d$ are $n$-types, equivalently for each basepoint the map $\pi_kB_d \to \pi_k\rB O(d)$ is injective for $k=n+1$ and an isomorphism for $k > n+1$ (see Example~\ref{exm:truncatedconnected}). 
\begin{definition}
\label{def:tangentialntype}
 A \emph{tangential $n$-type} is an $n$-truncated tangential structure $\beta_d: B_d \to \rB O(d)$.\end{definition}
For $k\leq d$, let  $\beta_k: B_k \to \rB O(k)$  denote the tangential structure obtained by pulling back $\beta_d$ along $\rB O(k) \to \rB O(d)$. 
By the universal property of pullbacks, a $B_k$-structure on a $k$-manifold $M$ ($k\leq d$) is the same as a $B_d$-structure on $T_M \oplus \varepsilon^{d-k}$, as defined in Section~\ref{sec:unstabletangstr}. If $\beta_d: B_d \to \rB O(d)$ is $n$-truncated, then so is the pulled back map $\beta_k: B_k \to \rB O(k)$.

If $\beta_d: B \to \rB O(d)$ is $n$-truncated for some $n \leq d-2$, it itself arises as a pullback of a $\beta_{d+1}: B \to \rB O(d+1)$ and hence ultimately of a stable tangential structure $\beta_{\infty}: B \to \rB O$. 
\begin{prop}\label{prop:pullbackstructures} 
	Assume that $\beta_d: B_d \to \rB O(d)$ is $n$-truncated for some $n \leq d-2$. Then, for all $ k > d$ (including $k=\infty$) there are $n$-truncated maps $\beta_k: B_k \to \rB O(k)$ and pullback squares:
\begin{center}
\begin{tikzpicture}
	\node (A) at (0,1.5) {$ B_{d} $};
	\node (B) at (2.5, 1.5) {$ B_{d+1} $};
	\node (BB) at (5, 1.5) {$\cdots$};
	\node (BBB) at (7.5, 1.5){$B_{\infty}$};
	\node (C) at (0, 0) {$ \rB O(d) $};
	\node (D) at (2.5, 0) {$ \rB O(d+1) $};
	\node (DD) at (5,0) {$\cdots$};
	\node (DDD) at (7.5,0) {$\rB O$};
	\draw [->] (A) to (B); 
	\draw[->] (B) to (BB);
	\draw[->] (BB) to (BBB);
	\draw[->] (D) to (DD);
	\draw[->] (DD) to (DDD);
	\draw[->] (BBB) to (DDD);
	\draw [->] (A) to (C); 
	\draw [->] (B) to (D); 
	\draw [->] (C) to (D); 
	\node at (0.5, 1) {$\lrcorner$};
	\node at (3, 1) {$\lrcorner$};
	\node at (5.5, 1) {$\lrcorner$};	
\end{tikzpicture}
\end{center}
\end{prop}
\begin{proof}
 Factor the composite of $\beta_d$ with the inclusion $\rB O(d) \to \rB O(d+1)$ into an $n$-connected map followed by an $n$-truncated map:
\begin{equation*}
	B_d \to[n\textrm{-connected}] B_{d+1} \to[n\textrm{-truncated}] \rB O(d+1). 
\end{equation*}
Since the map $\rB O(d) \to \rB O(d+1)$ is $(d-1)$-connected and hence also $(n+1)$-connected (as $n\leq d-2$), it follows that the maps in the following square have the indicated truncatedness/connectivity:
\begin{center}
\begin{tikzpicture}
	\node (A) at  (0,1.5) {$ B_d $};
	\node (B) at (3.5, 1.5) {$ B_{d+1} $};
	\node (C) at (0, 0) {$ \rB O(d) $};
	\node (D) at (3.5, 0) {$ \rB O(d+1) $};
	\draw [->] (A) to node[above,scale=0.75]{$n$-conn.} (B); 
	\draw [->] (A) to node[left, scale=0.75] {$n$-trunc.}  (C); 
	\draw [->] (B) to node[right, scale=0.75] {$n$-trunc.} (D); 
	\draw [->] (C) to node[below, scale=0.75] {$(n+1)$-conn.}(D); 
\end{tikzpicture}
\end{center} 
Such squares are necessarily pullback squares. (In fact, it would have been sufficient to assume the right vertical map to only be $(n+1)$-truncated.)
Iterating, we obtain a sequence of $n$-truncated $\beta_k: B_k \to \rB O(k)$ for all $k$, including $\beta_\infty: B_\infty \to BO$ (the $k= \infty$ case) such that $(B_k, \beta_k)$ is obtained as a pull-back of the subsequent tangential structure. 
\end{proof}

\section{Finite path integral theories and their topological sensitivity} \label{sec:DWtheories}

\subsection{Finite path integral super topological field theories} \label{sec:DWsTFTs}
As in Section~\ref{sec:bordandinvertTFT}, we fix a map $\beta:B \to \rB O(d)$ thought of as an ambient tangential structure.
Roughly speaking, to build finite path integral theories we start with a map $\xi: X \to B$ and a topological field theory $\cZ: \Bord_d^{(X,\beta \circ \xi)} \to \sVec_k$ and `integrate out the fibers of $\xi$' to obtain a field theory $\FP_{X, \xi, \cZ}: \Bord_d^{(B,\beta)} \to \sVec_k$. 

Explicitly, to a closed $(d-1)$-dimensional $(B, \beta)$-manifold $M$, we associate the space $\Map_B(M,X)$ of maps $f:M \to X$ over $B$. We may think of this space as the moduli space of $(X, \beta \circ \xi)$-structures on $M$ which refine the given $(B, \beta)$-structure. Similarly, to every $(B, \beta)$-bordism $W$ between $(B, \beta)$-manifolds $M_0$ and $M_1$ we associate the span of spaces \begin{equation}\label{eq:modulispan}
\Map_B(M_1, X) \ot[t] \Map_B(W,X) \to[s] \Map_B(M_0, X)\end{equation} which records the moduli space of $X$-structures on $W$ together with the restrictions of such structures to the incoming and outgoing boundary $M_0$ and $M_1$. 

Let $\mathrm{Span}(\cS)$ denote the $1$-category whose objects are spaces and whose morphisms are isomorphism classes of spans of spaces equipped with the symmetric monoidal structure given by the cartesian product of spaces (see Definition~\ref{def:decoratedspancat}). Since the moduli space $\Map_B(W' \circ_{M_1} W)$ of $X$-structures on a glued bordism $W' \circ_{M} W $ is homotopy equivalent to the (homotopy) pullback of moduli spaces $\Map_B(W', X) \times_{\Map_B(M, X)} \Map_B(W, X)$, and since the moduli space of $X$-structures on a disjoint union of $(B, \beta)$-manifolds is a product of moduli spaces of the components, we immediately obtain the following lemma: 

\begin{lemma}\label{lem:decoratedspanlift} For every map $\xi: X \to B$, the assignments of the spaces $\Map_B(M, X)$ to closed $(d-1)$-dimensional $(B, \beta)$-manifolds $M$ and spans~\eqref{eq:modulispan} to $d$-dimensional $(B, \beta)$-bordisms $W$ assemble into a symmetric monoidal functor
$\cF_{X, \xi}: \Bord_d^{B, \beta} \to \mathrm{Span}(\cS).
$ \qed
\end{lemma}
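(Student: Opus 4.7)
The plan is to verify the four properties required of a symmetric monoidal functor: well-definedness on morphisms, compatibility with composition, preservation of identities, and symmetric monoidality.

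First I would address well-definedness. Recall that morphisms in $\Bord_d^{B,\beta}$ are equivalence classes of compact $(B,\beta)$-bordisms under diffeomorphism rel.\ boundary over $B$ and isotopy of $(B,\beta)$-structure. If $W$ and $W'$ are so equivalent, the induced map $\Map_B(W,X) \to \Map_B(W',X)$ is a homotopy equivalence compatible with the restriction maps to the boundaries, hence produces an isomorphism of spans in $\mathrm{Span}(\cS)$. Thus the assignment on morphisms descends to equivalence classes.

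Next, I would verify compatibility with composition. Given composable bordisms $M_0 \to[W] M_1 \to[W'] M_2$, the glued bordism $W' \circ_{M_1} W$ is, on the level of underlying topological spaces, a pushout $W \cup_{M_1} W'$; since the inclusion of the boundary $M_1$ into each of $W,W'$ is a cofibration (using a collar), this pushout is also a homotopy pushout. Applying the contravariant functor $\Map_B(-,X)$ then yields a homotopy pullback
\[ \Map_B(W' \circ_{M_1} W, X) \simeq \Map_B(W', X) \times_{\Map_B(M_1, X)} \Map_B(W, X), \]
which is exactly the composite span in $\mathrm{Span}(\cS)$ of the spans associated to $W$ and $W'$. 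For the identity, the identity morphism on $M$ in $\Bord_d^{B,\beta}$ is the cylinder $M \times [0,1]$, and contractibility of $[0,1]$ gives $\Map_B(M \times [0,1], X) \simeq \Map_B(M, X)$ compatibly with both boundary restrictions, showing that the associated span is the identity span on $\Map_B(M, X)$.

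Finally, for symmetric monoidality, the disjoint union $M \sqcup N$ has $\Map_B(M \sqcup N, X) \simeq \Map_B(M, X) \times \Map_B(N, X)$ by the universal property of coproducts, and analogously for disjoint unions of bordisms; this is compatible with the symmetric structures on both sides. The main obstacle, and really the only nontrivial point, is justifying the homotopy pushout claim for glued bordisms; this is standard given collar neighborhoods, but it is the step that connects the smooth gluing operation on $\Bord_d^{B,\beta}$ to the homotopical pullback in $\mathrm{Span}(\cS)$, so care must be taken to phrase it correctly in the $(\infty,1)$-categorical setting alluded to in Remark~\ref{rmk:bordasinftyone}.
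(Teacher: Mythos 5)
Your proposal is correct and follows the same approach the paper takes: the paper states the two key facts (that $\Map_B(-,X)$ sends a glued bordism to the homotopy pullback of the pieces, and disjoint unions to products) immediately before the lemma and then marks the lemma as immediate. You have merely filled in the standard supporting details---well-definedness on equivalence classes, the cylinder giving the identity span, and the collar/cofibration justification for the homotopy pushout---none of which departs from the paper's implicit argument.
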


For any $(B, \beta)$-manifold $M$, any point $f\in \Map_B(M, X)$ induces a $(X, \beta \circ \xi)$-structure on $M$. Hence, given a topological field theory $\cZ: \Bord_d^{(X, \beta \circ \xi)} \to \sVec_k$ and a point $f\in \Map_B(M, X)$ we obtain a vector space $\cZ(M, f)$. Likewise, a path in $\Map_B(M, X)$ induces an isomorphism of $(X, \beta \circ \xi)$-structures on $M$ and hence the structure of an (invertible) $(X, \beta\circ \xi)$-bordism on $M \times [0,1]$ between its endpoints $(M, f_0)$ and $(M, f_1)$. Applying the TFT $\cZ$ induces an isomorphism of vector spaces $\cZ(M, f_0)  \to  \cZ(M, f_1)$ which only depends on the homotopy class of the path. In this way, the topological field theory $\cZ$ induces an $\sVec_k$-valued local system $\cL_{M}$ on the moduli space $\Map_B(M, X)$. 
Similarly, for a $(B, \beta)$-bordism $W$ and a point $f \in \Map_B(W, X)$ with $s(f) = f_0\in \Map_B(M_0, X)$ and $t(f) = f_1 \in \Map_B(M_1, X)$, the topological field theory induces a morphism in $\sVec_k$:
\[
\alpha_W(f):= \cZ(W, f): \cL_{M_0}(f_0) = \cZ(M_0, f_0) \to \cZ(M_1, f_1) = \cL_{M_1}(f_1).
\]
Functoriality of the TFT $\cZ$ shows that this defines a map of local systems $\alpha_W: s^*\cL_{M_0} \to t^* \cL_{M_1}$. 
 
As in Definition~\ref{def:decoratedspancat}, let $\mathrm{Span}(\cS, \sVec_k)$ denote the symmetric monoidal $1$-category of spans of spaces equipped with $\sVec_k$-valued local systems. 
The above assignments of local systems and maps of local systems are compatible with gluing of bordisms and disjoints unions and hence may be summarized in the following lemma. 
\begin{lemma} Let $\xi: X \to B$ be a map of spaces and let $\cZ: \Bord_d^{(X, \beta \circ \xi)} \to \sVec_k$ be a symmetric monoidal functor. Then, the above choices of local systems and maps of local systems lift the functor $\cF_{X, \xi}: \Bord_d^{(B, \beta)} \to \mathrm{Span}(\cS)$ to a symmetric monoidal functor $\cF_{X, \xi, \cZ}: \Bord_d^{(B, \beta)} \to \mathrm{Span}(\cS, \sVec_k)$. \qed
\end{lemma}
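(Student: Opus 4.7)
The plan is to verify that the assignments $M \mapsto (\Map_B(M,X), \cL_M)$ and $W \mapsto (\Map_B(W,X), \alpha_W)$ cohere into a symmetric monoidal functor, thereby lifting the span-valued functor $\cF_{X,\xi}$ of Lemma~\ref{lem:decoratedspanlift} through the forgetful functor $\mathrm{Span}(\cS, \sVec_k) \to \mathrm{Span}(\cS)$. Since $\cF_{X,\xi}$ is already symmetric monoidal, what remains is to check that the assigned local systems and maps of local systems are well-defined, are natural in the relevant sense, and compose correctly under gluing of bordisms and disjoint union. Each of these reduces to an application of the functoriality or monoidality of the input TFT $\cZ$ applied to a cylinder $M \times [0,1]$ or a glued bordism.

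First I would verify that $\cL_M$ is a local system, i.e.\ a functor from the $\infty$-groupoid $\Map_B(M,X)$ to $\sVec_k$. A path $\gamma: f_0 \rightsquigarrow f_1$ in $\Map_B(M,X)$ corresponds to an $(X, \beta\circ\xi)$-structure on the cylinder $M \times [0,1]$ interpolating between the structures $f_0$ and $f_1$ on the boundary, giving an invertible $(X, \beta\circ\xi)$-bordism whose image under $\cZ$ is the required isomorphism $\cL_M(f_0) \to \cL_M(f_1)$. Unitality of $\cZ$ handles constant paths, and functoriality applied to the concatenation of two cylinders handles composition of paths; the same reasoning shows homotopic paths yield equal morphisms, since a homotopy of $(X, \beta\circ\xi)$-structures on $M\times[0,1]$ is precisely the type of isotopy by which bordisms are identified in $\Bord_d^{(X,\beta\circ\xi)}$. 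The analogous argument with $W \times [0,1]$ in place of $M \times [0,1]$ shows that $\alpha_W : s^* \cL_{M_0} \to t^* \cL_{M_1}$ is natural, hence a bona fide map of local systems.

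For functoriality under composition, consider bordisms $W: M_0 \to M_1$ and $W': M_1 \to M_2$ with glued composite $W' \circ_{M_1} W$. The moduli space of $(X, \beta\circ\xi)$-structures on the gluing is canonically homotopy equivalent to $\Map_B(W', X) \times^h_{\Map_B(M_1, X)} \Map_B(W, X)$, which is the source of the composite span in $\mathrm{Span}(\cS)$. At the level of decorations, a point of this homotopy pullback is a triple $(f', f, \eta)$ where $\eta$ is a path witnessing $t(f) \rightsquigarrow s(f')$ in $\Map_B(M_1, X)$, and the composite decoration assigned by the pullback composition in $\mathrm{Span}(\cS, \sVec_k)$ is $\alpha_{W'}(f') \circ \cL_{M_1}(\eta) \circ \alpha_W(f)$. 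Functoriality of $\cZ$ applied to the gluing of $W$, $M_1 \times [0,1]$, and $W'$ along $\eta$ identifies this composite with $\cZ(W' \circ_{M_1} W, f' \cup_\eta f)$, which is the decoration assigned to the composite bordism. Preservation of identities follows from the same argument specialized to trivial cylinders.

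Symmetric monoidality is then a straightforward consequence of the canonical equivalence $\Map_B(M \sqcup M', X) \simeq \Map_B(M, X) \times \Map_B(M', X)$ together with the monoidality of $\cZ$, which supplies natural isomorphisms $\cZ(M \sqcup M', f \sqcup f') \cong \cZ(M, f) \otimes \cZ(M', f')$ and similarly for bordisms; these are exactly the data required for the decorated span associated to a disjoint union to match the tensor product in $\mathrm{Span}(\cS, \sVec_k)$. I expect the main subtlety to be the careful manipulation of the homotopy pullback of mapping spaces under gluing, in particular keeping track of the path $\eta$ in the pullback description and ensuring it is used consistently on both sides of the functoriality equation; once the right model is fixed, all verifications reduce to invoking functoriality, unitality, or monoidality of $\cZ$ applied to a cylinder or to a gluing.
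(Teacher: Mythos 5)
Your proof is correct and fills in precisely the verification the paper leaves implicit: the paper constructs $\cL_M$ and $\alpha_W$ in the text immediately preceding the lemma and then states the lemma with an omitted proof, asserting compatibility with gluing and disjoint union. Your cylinder argument for the local-system structure, the $W \times [0,1]$ argument for naturality of $\alpha_W$, the identification of the composite decoration with the triple $(f',f,\eta)$ matching the paper's definition of pullback composition in $\mathrm{Span}(\cS, \sVec_k)$, and the monoidality check via $\Map_B(M\sqcup M', X)\simeq \Map_B(M,X)\times\Map_B(M',X)$ all match the intended argument.
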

 
 Our finite path integral theory is built by composing this functor $\cF_{X, \xi, \cZ}: \Bord_d^{(B, \beta)} \to \mathrm{Span}(\cS, \sVec_k)$ with the linearization functor $\mathrm{Span}(\cS^\pi, \sVec_k) \to \sVec_k$ from 
 Corollary~\ref{cor:spanfunctor}. 
 To do so, we need to ensure that the moduli spaces $\Map_B(M, X)$ appearing in the definition of $\cF_{X, \xi}$ are $\pi$-finite. 
 
 The following is an easy exercise in obstruction theory. 
 \begin{lemma} Let $\xi: X \to B$ be a map with $\pi$-finite fibers. Then, for any map $Y \to B$ where $Y$ has the homotopy type of a finite CW complex (such as $Y=M$ for a compact $(B, \beta)$-structured manifold $M$), the space $\Map_B(Y, X)$ is $\pi$-finite. 
\end{lemma}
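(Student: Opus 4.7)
The plan is to proceed by obstruction theory, inducting on the dimension of a chosen finite CW structure on $Y$. The only auxiliary fact needed is a closure lemma: the class of $\pi$-finite spaces is stable under homotopy pullback, provided the base of the pullback is itself $\pi$-finite. This follows from the long exact sequence of homotopy groups for the pullback projection (whose homotopy fibers coincide with the homotopy fibers of one of the structure maps, hence are $\pi$-finite by assumption) together with the fact that an extension of $\pi$-finite groups by $\pi$-finite groups is $\pi$-finite.

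For the base case $\dim Y = 0$, $Y$ is a finite disjoint union of points and so $\Map_B(Y,X)$ is a finite product of homotopy fibers of $\xi$, each $\pi$-finite by hypothesis. For the inductive step, suppose $\dim Y = n$ and write $Y \simeq Y_{n-1} \cup_{\phi} \bigsqcup_{i=1}^{k} D^{n}_i$ as the skeletal pushout along attaching maps $\phi : \bigsqcup_i S^{n-1}_i \to Y_{n-1}$. After replacing $\xi$ by an equivalent fibration if necessary, this produces a homotopy pullback square
\begin{equation*}
\Map_B(Y, X) \simeq \Map_B(Y_{n-1}, X) \times^{h}_{\prod_i \Map_B(S^{n-1}_i, X)} \prod_i \Map_B(D^{n}_i, X).
\end{equation*}
The left factor is $\pi$-finite by the inductive hypothesis applied to $Y_{n-1}$. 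Each $\Map_B(D^{n}_i, X)$ is equivalent, via contractibility of the disk, to a homotopy fiber of $\xi$, hence $\pi$-finite. Each $\Map_B(S^{n-1}_i, X)$ is $\pi$-finite because $S^{n-1}$ carries a finite CW structure of dimension $n-1 < n$, to which the inductive hypothesis again applies. The closure lemma then yields that $\Map_B(Y,X)$ is $\pi$-finite.

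The main obstacle, to the extent there is one, is bookkeeping: ensuring the CW pushout really does induce the claimed homotopy pullback of mapping spaces over $B$ (which requires either fibrancy of $\xi$ or working derivedly throughout), and verifying the closure property of $\pi$-finite spaces under homotopy pullback. Both are routine once set up carefully, and no deeper input beyond the $\pi$-finiteness of the fibers of $\xi$ is needed.
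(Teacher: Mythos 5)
Your proof is correct and is precisely the "easy exercise in obstruction theory" the paper alludes to: inducting on the finite CW skeleton of $Y$, using that a pushout of spaces over $B$ yields a homotopy pullback of mapping spaces, and closing off with the stability of $\pi$-finiteness under pullbacks over $\pi$-finite bases via the long exact sequence. The paper omits the argument entirely, so there is nothing substantive to compare against; your write-up fills that gap cleanly.
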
		

\begin{proof}
	Let $F$ be the homotopy fiber of $\xi: X \to B$. By obstruction theory, $\pi_0\Map_B(Y, X)$ is non-canonically isomorphic to a subset of the product $\prod_k H^k(Y; \pi_k F)$. Since $Y$ is a finite CW complex and each $\pi_kF$ is assumed to be a finite group, each term of this product is finite. Moreover as $\pi_kF$ vanishes for sufficiently large $k$, there are only finitely many non-trivial terms. Hence $\pi_0\Map_B(Y, X)$ is isomorphic to a subset of a finite set, and is hence finite. 
	
	Likewise, fixing a given map $f: Y \to X$ over $B$, obstruction theory implies that $\pi_m(\Map_B(Y, X), f)$ is non-canonically isomorphic to a subset of $\prod_k H^k(\Sigma^mY; \pi_k F)$. Hence $\pi_m(\Map_B(Y, X), f)$ is also finite following similar reasoning as above. Finally, since $\pi_kF$ vanishes for sufficiently large $k$ and $H^k(\Sigma^mY; \pi_k F)$ vanishes for $k<m$, it follows that $\pi_m(\Map_B(Y, X), f)$ vanishes once $m$ is sufficiently large. 
\end{proof}

\begin{definition}\label{def:GenDWTheory}
Fix a map $\beta: B \to \rB O(d)$ thought of as an ambient tangential structure.
Let $\xi: X \to B$ be a map with $\pi$-finite fibers and let $\cZ: \Bord_d^{(X, \beta \circ \xi)}\to \sVec_k$ be a symmetric monoidal functor. The associated  \emph{finite path integral topological field theory} $\FP_{X,\xi, \cZ}: \Bord_d^{(B, \beta)} \to \sVec_k$ is the composite $$\Bord_d^{(B, \xi)} \to[\cF_{X, \xi, \cZ}] \mathrm{Span}(\cS^\pi, \sVec_k) \to[\Phi] \sVec_k,$$
where $\cF_{X, \xi, \cZ}$ is the functor from Lemma~\ref{lem:decoratedspanlift} and $\Phi$ is the functor from Corollary~\ref{cor:spanfunctor}.
If $\cZ$ is an invertible field theory which by Theorem~\ref{thm:invertarechars} is uniquely characterized by a character $\omega:\Omega_d^{T(\beta \circ \xi)} \to k^{\times}$, we denote the associated finite path integral theory by $\FP_{X, \xi, \omega}$. 	
\end{definition}

\begin{example}\label{rmk:valueofDWonclosed}
	If $W$ is a closed $d$-dimensional $(B, \beta)$-manifold, the value of $\FP_{X, \xi, \cZ}$ on $W$ may be unpacked to the following formula:
		\begin{equation*}
		\FP_{X,\xi, \cZ}(W) = \sum_{[f] \in \pi_0\Map_B(W, X)} \#(\Map_B(W, X), f) ~\cZ([W,f]),
	\end{equation*}
	where $\cZ([W, f])$ denotes the value of $\cZ$ on $W$ with its $(X, \beta \circ \xi)$-structure induced by its $(B, \beta)$-structure and $f\in \Map_B(W, X)$.
	
	In particular, if $\cZ$ is invertible and hence determined by a character $\omega: \Omega_d^{T(\beta \circ \xi)} \to k^{\times}$, this becomes 
	\begin{equation}\label{eq:DWclosedmanifold}
	\FP_{X,\xi, \omega}(W) = \sum_{[f] \in \pi_0\Map_B(W, X)} \#(\Map_B(W, X), f) ~\omega([W,f]),
	\end{equation}
	where $[W, f] \in \Omega^{T(\beta \circ \xi)}_d$ is the stable tangential $(B, \beta \circ \xi)$-structure on $W$ induced by $f$ and the (unstable) $(B, \beta)$-structure on $W$. 
\end{example}

\begin{example} \label{ex:DWQuinnasGenDW} 
	 Let $Y$ be a $\pi$-finite space with a $d$-dimensional $k^\times$-valued cohomology class $\alpha \in \mathrm{H}^d(Y, k^\times)$. As in Example~\ref{ex:Ycohominvert} we obtain an invertible $X$-structured topological field theory where $X = Y \times \rB SO(d)$. An $X$-structured $d$-dimensional manifold consists of an oriented manifold $M$ together with a map $f: M \to Y$.  The partition function of this invertible topological field theory (i.e. the character $\omega: \Omega_d^{Y \times \rB SO} \to k^\times$) is given by pairing with the fundamental class $[M]$ of $M$:
$$M \mapsto \langle [M],  f^*\alpha\rangle$$ .
	 
Set $B = \rB SO(d)$ and let $\xi: X \to B$ be projection. Then Definition~\ref{def:GenDWTheory} produces an oriented topological field theory $\FP_{X, \xi, \omega}: \Bord_d^{\rB SO(d)} \to \sVec$ out of the above invertible topological field theory. This field theory factors through $\Vec$ and recovers Quinn's twisted finite homotopy TQFT~\cite{MR1338394} for $(Y, \alpha)$ as a special case of Definition~\ref{def:GenDWTheory}.

In particular, if $Y= \rB G$ is the classifying space of a finite group $G$, this recovers the traditional twisted Dijkgraaf-Witten theory.
\end{example}

\subsection{The finite path integral invariant via Pontryagin pairings}

Comparing Formula \eqref{eq:DWclosedmanifold} with Definition~\ref{def:pontryaginpairing}, we see that $\FP_{X, \xi, \omega}(W)$ should be understood as the Pontryagin pairing (as in Definition~\ref{def:pontryaginpairing}) of the $\infty$-category $\cS_{/ \cB}$ of spaces over $B$ (whose mapping spaces are precisely $\Map_B(Y, X)$) and the functor $\Omega_d: \cS_{/B} \to \mathrm{Ab}$ which maps a $\xi: X \to B$ to the semistable bordism group $\Omega^{T(\beta \circ \xi)}_d$ (Definition~\ref{def:tangentialbordism}). However, the category $\cS_{/B}$ is not locally $\pi$-finite and its Pontryagin pairing is therefore not defined everywhere.

We therefore follow Remark~\ref{rem:pairingsubcats} and~\ref{rem:subcatPontryaginpairing} and consider the restricted Pontryagin pairing
\[
\langle -, - \rangle^{\cS^f_{/B}, \cS^\pi_{/B}}_{\cS_{/B} , \Omega_d}: \bigoplus_{[\xi] \in \pi_0 \cS^\pi_{/B}} k[\widehat{\Omega_d^{T(\beta \circ \xi)}}/{\pi_0 \Aut_{\cS_{/B}}(\xi)}] \otimes\bigoplus_{[\psi] \in \pi_0 \cS^f_{/B}} k[\Omega_d^{T(\beta \circ \psi)}/\pi_0 \Aut_{\cS_{/B}}(\psi)] \to k
\]
between the full subcategory $\cS^f_{/B}$ of $\cS_{/B}$ on those $\psi:Y \to B$ for which $Y$ is homotopy equivalent to a finite CW complex and the full subcategory $\cS^\pi_{/B}$ of those $\phi:X \to B$ with $\pi$-finite fibers. 

\begin{lemma}\label{lem:DWPont} For any closed $d$-dimensional $(B, \beta)$-manifold $W$, the following holds:
\[\FP_{X, \xi, \omega}(W) = \left\langle \vphantom{\frac{a}{b}}(\xi:X \to B, \omega), (\theta:W \to B, [W])\right\rangle_{\cS_{/B}, \Omega_d}^{\cS^f_{/B}, \cS^\pi_{/B}} 
\]
Here, $\theta:W \to B$ is the map which comes as part of the $(B, \beta)$-structure on $W$, and $[W] \in \Omega_d^{T(\beta \circ \theta)}$ is the stable tangential $(W, \beta \circ \theta)$-structured bordism class induced by $W$ with its unstable $(W, \beta \circ \theta)$-structure (where the lower triangle encodes the given $(B, \beta)$-structure on $W$):\[\begin{tikzcd}
	&& W \\
	&& B \\
	W && {\rB O(d)}
	\arrow["TW"', from=3-1, to=3-3]
	\arrow["\theta", from=3-1, to=2-3]
	\arrow["\beta", from=2-3, to=3-3]
	\arrow["\id", from=3-1, to=1-3]
	\arrow["\theta", from=1-3, to=2-3]
\end{tikzcd}\]
\end{lemma}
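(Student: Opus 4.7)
The plan is to unfold both sides of the claimed equality and match them term-by-term. By Example~\ref{rmk:valueofDWonclosed}, specifically formula~\eqref{eq:DWclosedmanifold}, the left-hand side is the weighted sum
$$\DW_{X,\xi,\omega}(W) = \sum_{[f]\in \pi_0\Map_B(W,X)} \#(\Map_B(W,X), f)\cdot \omega([W,f]),$$
where $[W,f]$ is the stable tangential $(\beta\circ\xi)$-bordism class of $W$ endowed with the structure induced by $f$. So the task is to show the right-hand side evaluates to the same expression.

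First I would verify that the restricted Pontryagin pairing is defined at the pair $((\xi,\omega),(\theta,[W]))$. The mapping space in $\cS_{/B}$ from $\theta$ to $\xi$ is precisely $\Map_B(W,X)$, which is $\pi$-finite because $W$ has the homotopy type of a finite CW complex and $\xi$ has $\pi$-finite fibers; thus $\theta\in \cS^f_{/B}$ and $\xi\in\cS^\pi_{/B}$ lie in the permissible subcategories identified in Remark~\ref{rem:subcatPontryaginpairing}. Applying formula~\eqref{eq:pontryagin} of Definition~\ref{def:pontryaginpairing} (in its restricted-subcategory variant), the right-hand side expands to
$$\sum_{[f]\in \pi_0\Map_B(W,X)} \#(\Map_B(W,X), f)\cdot \omega\bigl(\Omega_d(f)[W]\bigr).$$

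The only non-trivial identification to make is that $\Omega_d(f)[W] = [W,f]$ as elements of $\Omega_d^{T(\beta\circ\xi)}$. The class $[W]\in \Omega_d^{T(\beta\circ\theta)}$ is represented by the manifold $W$ itself with the $(W,\beta\circ\theta)$-structure given by $\id_W:W\to W$, together with the tangent bundle identification $TW\cong \theta^*\beta$ supplied by the ambient $(B,\beta)$-structure. Functoriality of $\Omega_d:\cS_{/B}\to \mathrm{Ab}$ along the morphism $f:(W,\theta)\to (X,\xi)$ pushes this class forward by post-composition with $f$, yielding the $(X,\beta\circ\xi)$-structure on $W$ given by $f:W\to X$ together with the induced tangent identification $TW\cong f^*\xi^*\beta$. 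This is exactly the class $[W,f]$ appearing in the formula for $\DW$.

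The main point of care, and the step I expect to be the most delicate, is the bookkeeping of tangential data under the morphism $f$ in $\cS_{/B}$: one must confirm that the homotopy $\xi\circ f\simeq \theta$ inherent in a morphism over $B$ intertwines the two tangent identifications so that the pushed-forward stable tangential structure genuinely equals the $(X,\beta\circ\xi)$-structure produced by the same $f$ in~\eqref{eq:DWclosedmanifold}. Once this compatibility is in hand, the two summation formulas coincide term-by-term and the proof is complete.
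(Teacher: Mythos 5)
Your proposal is correct and follows essentially the same route as the paper, which simply invokes a direct comparison of formula~\eqref{eq:DWclosedmanifold} with~\eqref{eq:pontryagin} together with Remarks~\ref{rem:pairingsubcats} and~\ref{rem:subcatPontryaginpairing}; you have merely spelled out the term-by-term matching and, helpfully, made explicit the one substantive identification $\Omega_d(f)[W]=[W,f]$, which indeed holds by functoriality of $\Omega_d$ and by the very construction of the $(X,\beta\circ\xi)$-structure induced by $f$ over $B$.
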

\begin{proof}This follows immediately from comparing the expression~\eqref{eq:DWclosedmanifold} with~\eqref{eq:pontryaginpairing} and Remarks~\ref{rem:subcatPontryaginpairing} and~\ref{rem:pairingsubcats}. 
\end{proof}

In particular, $\FP_{X, \xi, \omega}(W) \in k$ only depends on the equivalence class of the map $\theta:W\to B$ in $\cS_{/B}$ and the orbit of $[W] \in \Omega^{T(\beta \circ \theta)}_d$ under the $\Aut_{\cS_{/B}}(\theta)$ action. Likewise, it also only depends on $\xi$ up to equivalence in $\cS_{/B}$ and only on the orbit of $\omega$ under the $\Aut_{\cS_{/B}}(\xi)$ action on the group of characters $\widehat{\Omega^{T(\beta \circ \xi)}}$.

\subsection{Finite path integral theories of type $n$}

For a systematic study of the sensitivity of finite path integral theories, we introduce the following filtration on the collection of such theories.

\begin{definition}\label{def:typeDW} Fix an ambient tangential structure $B \to \rB O(d)$ and let $n \geq 0$ be an integer. We say that a finite path integral theory $\FP_{X, \xi, \omega}$ associated to a map $\xi:X \to B$ and a character $\omega: \Omega^{T(\beta \circ \xi)} \to k^\times$ is \emph{of type $n$}, if $\xi$ is $n$-truncated (see Example~\ref{exm:truncatedconnected}).

We will also say that two closed $B$-structured $d$-manifolds $M$ and $N$ are \emph{indistinguishable by type $n$ finite path integral theories} if for all $n$-truncated $\xi:X \to B$ and all characters $\omega: \Omega^{T(\beta \circ \xi)} \to k^\times$, \[\FP_{X, \xi, \omega}(M) = \FP_{X,\xi, \omega}(N) \in k
.\] \end{definition}
Note that if $\pi_0 B$ is finite, then any map $\xi: X \to B$ with $\pi$-finite fibers is $n$-truncated for some $n \geq 0$, and hence every finite path integral theory is of type $n$ for some $n \geq 0$.

Let $\tau_{\leq n} \cS_{/B}$ denote the full subcategory of $\cS_{/B}$ on those $\xi: X\to B$ which are $n$-truncated.
   
Given a map  $X \to B$, the $n$-connected/$n$-truncated factorization system on the $\infty$-category of spaces $\cS$ induces a factorization
\begin{equation*}
	X \to \tau_{\leq n}^B X \to B
\end{equation*} 
 into an $n$-connected map $X\to \tau_{\leq n}^B X$ followed by an $n$-truncated map $\tau_{\leq n}^B X \to B$. In this way, given an $X \in \cS_{/B}$, we functorially obtain an $\tau_{\leq n}^B X \in \tau_{\leq n} \cS_{/B}$ which we will henceforth refer to  as the \emph{$B$-$n$-type of $X$}. 
 
\begin{example}
	Let $M$ be a smooth $d$-manifold with $T_M:M \to \rB O(d)$ classifying the tangent bundle of $M$. The map $T_M$ has a factorization (unique up to homotopy) $M \to \tau_{\leq n}^{\rB O(d)} M \to \rB O(d)$ into an $n$-connected map followed by an $n$-truncated map (see Example~\ref{exm:truncatedconnected}). $\tau_{\leq n}^{\rB O(d)} M$ is the \emph{tangential $n$-type of $M$} (c.f. the stable tangential and normal $n$-types of $M$ (Example~\ref{ex:normalntype}))
\end{example}

\begin{definition} Given an $X\in \cS_{/B}$, we will say that it has \emph{$\pi$-finite $B$-$n$-type} if its $B$-$n$-type $\tau_{\leq n}^B X \to B$ has $\pi$-finite fibers. Equivalently, $X\in \cS_{/B}$ has $\pi$-finite $B$-$n$-type if the first $n$ homotopy groups $\pi_k, 0 \leq k \leq n$ of all fibers of $X\to B$ are finite. We let $\tau_{\leq n} \cS_{/B}^\pi$ denote the full subcategory of $\cS_{/B}$ on spaces with $\pi$-finite $B$-$n$-type.
\end{definition}

To apply the non-degeneracy results of Section~\ref{sec:pontryaginpairing}, we need to introduce a mild finiteness condition on the space $B$ describing the ambient tangential structure. This finiteness condition is fulfilled by the classifying spaces $\rB O(d)$ and their connective covers, and includes all examples of interest that we are aware of. This condition is discussed more thoroughly in Appendix~\ref{sec:nfindom}. 

\begin{definition}\label{def:nfindominatedmain}
	A space 
	$X$ is \emph{$n$-finitely dominated} if there exists an $n$-dimensional finite CW complex $K$ and an
	 $(n-1)$-connected map $K \to X$. 
\end{definition}

\begin{assumption}\label{ass:nfindom}
	For most of the following, we will assume that the space $B$ is $n$-finitely dominated. This includes the spaces $\rB O$, $\rB O(d)$, and any $k$-connective covers of these spaces such as $\rB SO(d)$, $\rB Spin(d)$, etc. (Example~\ref{ex:BOfindom}).
\end{assumption}

Under Assumption~\ref{ass:nfindom}, the category  $\tau_{\leq n} \cS_{/B}^{\pi}$ is locally $\pi$-finite (Corollary~\ref{cor:pifinislocpifin}). Thus $\tau_{\leq n} \cS_{/B}^{\pi}$ has an associated Pontryagin pairing. The finite path integral invariant can be expressed in terms of this pairing:

\begin{lemma}\label{lem:DWispontryagin} Let $\beta:B\to \rB O(d)$ be an ambient tangential structure satisfying Assumption~\ref{ass:nfindom}, let $\xi: X \to B$ be an $n$-truncated map with $\pi$-finite fibers, and let $\omega: \Omega_d^{T(\beta \circ \xi)}\to k^{\times}$ be a group homomorphism. 

Then, for any closed $d$-dimensional $(B, \beta)$-manifold $W$ with $\pi$-finite $B$-$n$-type, the finite path integral invariant is given by
\[\FP_{X, \xi, \omega}(W) = \left\langle \vphantom{\frac{a}{b}} (\xi, \omega), (\tau_{\leq n}^B W , [W]) \right\rangle_{\tau_{\leq n} \cS^{\pi}_{/B}, \Omega_d}
\]
where $\langle -, -\rangle_{\tau_{\leq n} \cS_{/B}^{\pi}, \Omega_d}$ denotes the Pontryagin pairing associated to the locally $\pi$-finite $\infty$-category $\tau_{\leq n} \cS_{/B}^{\pi}$ and the functor $\Omega_d$ (see Definition~\ref{def:pontryaginpairing}).
Here, $$[W] \in \Omega^{T(\beta\circ \tau_{\leq n}^B W)}_d/ \pi_0\Aut_{\cS_{/B}}(\tau_{\leq n}^BW \to B)$$ is the bordism class of $W$ with its canonical $\tau^B_{\leq n}W$-structure. 
\end{lemma}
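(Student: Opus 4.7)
The plan is to derive the statement from Lemma~\ref{lem:DWPont} by invoking the universal property of the $n$-connected/$n$-truncated factorization system on $\cS_{/B}$ (Example~\ref{exm:truncatedconnected}). Lemma~\ref{lem:DWPont} expresses $\DW_{X,\xi,\omega}(W)$ as the subcategory pairing $\langle(\xi,\omega),(\theta,[W])\rangle^{\cS^f_{/B},\cS^\pi_{/B}}_{\cS_{/B},\Omega_d}$, which by Remarks~\ref{rem:subcatPontryaginpairing} and~\ref{rem:explicitformulapairing} unpacks to
\[
\sum_{[f]\in\pi_0\Map_B(W,X)}\#(\Map_B(W,X),f)\,\omega\bigl(\Omega_d(f)([W]_\theta)\bigr),
\]
where $[W]_\theta$ denotes the bordism class of $W$ carrying the tautological $W$-structure associated to $\theta:W\to B$. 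The goal is to identify this expression with the analogous sum over $\pi_0\Map_B(\tau_{\leq n}^B W, X)$, which is the explicit form (Definition~\ref{def:pontryaginpairing}) of the desired Pontryagin pairing $\langle(\xi,\omega),(\tau_{\leq n}^B W,[W])\rangle_{\tau_{\leq n}\cS^\pi_{/B},\Omega_d}$.

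The key step is the following. Let $p: W \to \tau_{\leq n}^B W$ denote the $n$-connected factor of $\theta$ in the Moore--Postnikov tower of $\theta$ over $B$. Since $\xi: X \to B$ is $n$-truncated by hypothesis and $p$ is $n$-connected, orthogonality of the two classes in the factorization system yields a homotopy equivalence
\[
p^*:\Map_B(\tau_{\leq n}^B W, X)\xrightarrow{\simeq}\Map_B(W,X).
\]
This bijects components and preserves all higher homotopy groups, so the homotopy cardinalities of corresponding points coincide. Moreover, for any $g\in\Map_B(\tau_{\leq n}^B W, X)$ with corresponding $f = g\circ p$, functoriality of $\Omega_d:\cS_{/B}\to\mathrm{Ab}$ gives
\[
\Omega_d(f)([W]_\theta) \;=\; \Omega_d(g)\bigl(\Omega_d(p)([W]_\theta)\bigr) \;=\; \Omega_d(g)([W]_{\tau_{\leq n}^B W}),
\]
since $\Omega_d(p)$ by definition sends the $W$-structured bordism class of $W$ to the bordism class of $W$ equipped with the $\tau_{\leq n}^B W$-structure induced via $p$.

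Substituting these two identifications term-by-term into the sum above converts it into the desired Pontryagin pairing formula. The final ingredient is to verify that the right-hand side is actually well-defined: under Assumption~\ref{ass:nfindom}, the category $\tau_{\leq n}\cS^\pi_{/B}$ is locally $\pi$-finite, so both $(\xi,\omega)$ and $(\tau_{\leq n}^B W,[W])$ represent legitimate inputs to the Pontryagin pairing of Definition~\ref{def:pontryaginpairing}. I do not expect a substantial technical obstacle here; the orthogonality of the factorization system and functoriality of $\Omega_d$ do the real work, and the only additional bookkeeping is to check that the $\Aut_{\cS_{/B}}(\xi)$- and $\Aut_{\cS_{/B}}(\tau_{\leq n}^B W)$-orbits in the two pairing expressions correspond under the mapping space equivalence $p^*$, which is immediate from its naturality.
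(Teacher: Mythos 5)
Your proof is correct and takes essentially the same approach as the paper's. The paper states the key equivalence $\Map_B(\tau_{\leq n}^B W, X) \simeq \Map_B(W,X)$ via the adjunction between the truncation functor and the inclusion $\tau_{\leq n}\cS_{/B} \hookrightarrow \cS_{/B}$, whereas you derive it from orthogonality of the $n$-connected/$n$-truncated factorization system --- these are two phrasings of the same fact, and the rest of your argument (comparing the sum term-by-term and invoking functoriality of $\Omega_d$) matches what the paper leaves implicit.
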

\begin{proof} The lemma immediately follows from~\eqref{eq:DWclosedmanifold} and the fact that the truncation functor $\cS_{/B} \to \tau_{\leq n} \cS_{/B}$ is left adjoint to the inclusion $\tau_{\leq n} \cS_{/B} \to \cS_{/B}$: for any $Y \to B$ and $n$-truncated $X\to B$ the canonical map $\Map_B(\tau_{\leq n}^B Y, X) \to \Map_B(Y, X)$ is an equivalence. 
 \end{proof}

 \subsection{The main theorem}\label{sec:mainthm}

The $k$-connected/$k$-truncated factorization system $(\cL^{(k)}, \cR^{(k)})$ on spaces induces one on $\cS_{/B}$, the category of spaces over $B$. Explicitly, a map of spaces $f:X\to Y$ over $B$ is $k$-connected or $k$-truncated if the underlying map of spaces $f:X \to Y$ is $k$-connected or $k$-truncated, respectively. This connected/truncated factorization system restricts to a factorization system on the subcategories $\tau_{\leq n}\cS_{/B}$ of $B$-$n$-types and $\tau_{\leq n} \cS_{/B}^\pi$ of $\pi$-finite $B$-$n$-types.

These define a nested sequence of factorization systems as in Definition~\ref{def:nestedfactsystem}, i.e. $\cR^{(k-1)} \subseteq \cR^{(k)}$ or equivalently $\cL^{(k+1)} \subseteq \cL^{(k)}$. In particular, we have subcategories 
$\cT^{(\ell)} \subseteq \tau_{\leq n}\cS^{\pi}_{/B}$ for $-2 \leq \ell \leq n$:
	\begin{equation*}
		\cT^{(\ell)} = \begin{cases}
			\cR^{(-1)} & \textrm{if } \ell=-2 \\
			\cR^{(\ell+1)} \cap \cL^{(\ell)} &\textrm{if } -1 \leq \ell \leq n-1  \\
			\cL^{(n)} & \textrm{if } \ell=n
		\end{cases}
	\end{equation*}
The morphisms of $\cT^{\ell}$ are those maps $f:X \to Y$ over $B$ for which $\pi_{\ell}(f)$ is surjective, $\pi_{\ell + 1}(f)$ is injective, and $\pi_i(f)$ is an isomorphism for all $i \neq \ell, \ell+1$. 

\begin{remark}
	In Definition~\ref{def:nestedfactsystem} the indexing of the nested system was for $0\leq \ell \leq n$, but here it is more convenient to index on $-2 \leq \ell \leq n$. This makes no substantiative change and we hope it will not lead to confusion.  
\end{remark}

For $\pi$-finite $B$-$n$-types, endomorphisms in $\cT^{(\ell)}$ are automatically weak equivalences since endomorphism of finite groups which are either injections or surjections are automatically isomorphisms.
Combined with Theorem~\ref{thm:linearpairing} and Corollary~\ref{cor:pontryaginnondeg}, we obtain the following theorem.

\begin{theorem}\label{thm:MainThm}
Let $\beta:B  \to \rB O(d)$ be an ambient tangential structure and let $n \geq 0$. Let $M$ and $N$ be $d$-dimensional $(B, \beta)$-manifolds with $\pi$-finite $B$-$n$-types. 

Then $M$ and $N$ are indistinguishable by type-$n$ finite path integral theories if and only if \begin{enumerate}
	\item they have equivalent $B$-$n$-types $\tau_{\leq n}^BM \simeq (Y, \psi) \simeq \tau_{\leq n}^B N$;
	\item  the bordism classes $[M]$ and $[N]$ in $\Omega_d^{T(\beta \circ \psi)}$ lie in the same orbit under the action of $\pi_0 \Aut_{\cS_{/B}}(\psi)$.
	\end{enumerate}
\end{theorem}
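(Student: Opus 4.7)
The plan is to reduce both directions of the biconditional to properties of the Pontryagin pairing on the locally $\pi$-finite $\infty$-category $\tau_{\leq n}\cS^{\pi}_{/B}$, using Lemma~\ref{lem:DWispontryagin} to identify generalized Dijkgraaf-Witten invariants with values of this pairing and Corollary~\ref{cor:pontryaginnondeg} to establish the nondegeneracy that upgrades agreement of all pairings to equality of inputs.

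First I would dispatch the easy direction. Suppose $\tau^{B}_{\leq n} M \simeq (Y,\psi) \simeq \tau^{B}_{\leq n} N$ and the bordism classes $[M], [N]$ lie in the same orbit of $\Omega^{T(\beta\circ\psi)}_d/\pi_0 \Aut_{\cS_{/B}}(\psi)$. By Lemma~\ref{lem:DWispontryagin}, for any type-$n$ generalized Dijkgraaf-Witten data $(\xi : X \to B, \omega)$ we have
\begin{equation*}
\DW_{X,\xi,\omega}(M) = \bigl\langle (\xi,\omega), (\tau^{B}_{\leq n}M, [M]) \bigr\rangle_{\tau_{\leq n}\cS^{\pi}_{/B}, \Omega_d},
\end{equation*}
and similarly for $N$. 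Since the Pontryagin pairing depends only on the isomorphism class of each object of $\tau_{\leq n}\cS^{\pi}_{/B}$ and on the orbits of the bordism class and the character (as noted after Definition~\ref{def:pontryaginpairing}), the two values are equal.

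For the converse, assume $\DW_{X,\xi,\omega}(M) = \DW_{X,\xi,\omega}(N)$ for every $n$-truncated $\xi: X \to B$ with $\pi$-finite fibers and every character $\omega$. Writing $v_M := (\tau^{B}_{\leq n}M, [M])$ and $v_N := (\tau^{B}_{\leq n}N, [N])$ as basis vectors in $\bigoplus_{[\psi]} k[\Omega^{T(\beta\circ\psi)}_d / \pi_0\Aut_{\cS_{/B}}(\psi)]$, Lemma~\ref{lem:DWispontryagin} rephrases the hypothesis as
\begin{equation*}
\bigl\langle (\xi,\omega),\ v_M - v_N \bigr\rangle_{\tau_{\leq n}\cS^{\pi}_{/B}, \Omega_d} = 0
\end{equation*}
for every basis vector $(\xi,\omega)$ in the left factor of the pairing. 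By linearity this means $\langle -, v_M - v_N\rangle$ is identically zero. If the Pontryagin pairing is right non-degenerate, this forces $v_M = v_N$ as elements of the direct sum, which is precisely the conclusion that $[\tau^{B}_{\leq n}M] = [\tau^{B}_{\leq n}N]$ (equality of components) and $[M] = [N]$ in the corresponding orbit set (equality of basis vectors inside that component).

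Hence everything reduces to verifying that $\tau_{\leq n}\cS^{\pi}_{/B}$ meets the hypotheses of Corollary~\ref{cor:pontryaginnondeg}, and this is the one place that requires care. Local $\pi$-finiteness follows from Assumption~\ref{ass:nfindom} via Corollary~\ref{cor:pifinislocpifin}. For the nested factorization system I take the Postnikov $(k\text{-connected}, k\text{-truncated})$ systems for $-1 \leq k \leq n$ of Example~\ref{exm:truncatedconnected}, which restrict to $\tau_{\leq n}\cS^{\pi}_{/B}$ since truncatedness and $\pi$-finiteness of fibers are preserved under the intermediate objects of these factorizations. The intermediate subcategory $\cT^{(\ell)}$ then consists of maps $X \to Y$ over $B$ all of whose homotopy fibers have $\pi_i = 0$ outside $\{\ell, \ell+1\}$, with $\pi_{\ell+1}$ injective and $\pi_\ell$ surjective on homotopy. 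Given such an endomorphism in $\cT^{(\ell)}$ of some object of $\tau_{\leq n}\cS^{\pi}_{/B}$, the induced self-maps on the (finitely many, finite) homotopy groups of fibers are simultaneously injective and surjective, hence isomorphisms, so the endomorphism itself is an equivalence. The main obstacle is precisely this bookkeeping: confirming that each $\cT^{(\ell)}$ is closed in the required sense inside $\tau_{\leq n}\cS^{\pi}_{/B}$ and that the $\pi$-finiteness of the $B$-$n$-types of $M$ and $N$ plays the essential role in converting self-surjections/injections into isomorphisms. With that in hand, Corollary~\ref{cor:pontryaginnondeg} supplies the non-degeneracy needed to complete the converse.
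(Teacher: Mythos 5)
Your proposal is correct and takes essentially the same route as the paper: identify the generalized Dijkgraaf--Witten invariants with values of the Pontryagin pairing on $\tau_{\leq n}\cS^{\pi}_{/B}$ via Lemma~\ref{lem:DWispontryagin}, then invoke non-degeneracy (Corollary~\ref{cor:pontryaginnondeg}) using the Postnikov nested factorization system, whose $\cT^{(\ell)}$-endomorphisms are equivalences because self-injections and self-surjections of finite homotopy groups of fibers are automatically isomorphisms. One small omission: the theorem statement does not assume Assumption~\ref{ass:nfindom}, and the paper closes this gap by replacing $\tau_{\leq n}\cS^{\pi}_{/B}$ with the full subcategory $\tau_{\leq n}\cS^{\pi\,\textrm{f.d.}}_{/B}$ of $n$-finitely dominated objects (which is locally $\pi$-finite by Corollary~\ref{cor:pifinislocpifin} and contains the tangential $n$-types of compact manifolds); you should add this observation if you want the theorem in full generality.
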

\begin{proof} With Assumption~\ref{ass:nfindom}, we may argue as follows. 
	By Lemma~\ref{lem:DWispontryagin}, the value of $\FP_{X, \xi, \omega}(M)$ for any $n$-truncated $X\to B$ and $\omega$ is given by the Pontryagin pairing, and hence only depends on the vector $M$ induces in $\bigoplus_{\psi \in \pi_0 \tau_{\leq n} \cS_{/B}^{\pi}} k[\Omega_d^{T(\beta \circ \psi)}/\pi_0 \Aut_{\cS_{/B}}(\psi)]$, and hence only on data (1) and (2). 

Conversely, since the truncated/connected nested factorization system on $\tau_{\leq n} \cS_{/B}^{\pi}$ has the property that endomorphisms in $\cT^{(\ell)}$ are isomorphisms, it follows from Corollary~\ref{cor:pontryaginnondeg} that the Pontryagin pairing is non-degenerate and hence that if $M$ and $N$ are indistinguishable by finite path integral theories, then they induce the same data (1) and (2).  

However Theorem~\ref{thm:MainThm} holds without Assumption~\ref{ass:nfindom}. In that case in place of $\tau_{\leq n} \cS_{/B}^{\pi}$ we use the category $\tau_{\leq n} \cS_{/B}^{\pi~\textrm{f.d.}}$ described in Appendix~\ref{sec:nfindom}. Corollary~\ref{cor:pifinislocpifin} states that it is locally $\pi$-finite, and thus, arguing as before, it Pontryagin pairing is non-degenerate. The tangential $n$-types of compact manifolds will be $n$-finitely dominated and hence live in $\tau_{\leq n} \cS_{/B}^{\pi~\textrm{f.d.}}$. In this case it is in fact sufficient to use type-$n$ finite path integral theories based on $n$-finitely dominated $(Y, \psi)$. 
\end{proof}

Lemma~\ref{lem:DWispontryagin} shows that the partition function of finite path integral theories can be computed using the Pontryagin pairing for $B$-manifolds which have $\pi$-finite $B$-$n$-type. If every $\pi$-finite $B$-$n$-type $\xi$ and element of $\Omega_d^T\xi$ were represented by such a manifold, then the non-degeneracy of the Pontryagin pairing on $\tau_{\leq n} \cS_{/B}^{\pi}$ would immediately lead to a converse to Theorem~\ref{thm:MainThm} - that they are determined by their partition functions. However it is a subtle and interesting surgery question to determine which elements of $\Omega_d^T\xi$ can be so represented. The following result of Kreck will allow us to obtain a partial converse to Theorem~\ref{thm:MainThm}.

\begin{proposition}[{\cite[Prop.~4]{MR1709301}}]\label{pro:krecksurgery}
	Suppose that $n < \lfloor \frac{d}{2} \rfloor$. Let $\xi:X \to B$ be a $\pi$-finite $n$-$B$-type (or more generally $X$ need only be $n$-finitely dominated). Then every class $[M] \in \Omega^{TX_d}$ may be represented by a $(X, \xi)$-manifold $N$ such that the map $N \to X$ identifies $\tau_{\leq n}^BN \simeq X$. \qed 
\end{proposition}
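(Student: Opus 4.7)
\medskip

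\textbf{Proof plan.} This is Kreck's classical surgery result, so my proof would follow his argument: given a representative $(M, f_M: M \to X)$ of the bordism class $[M] \in \Omega_d^{T\xi}$, I would modify $(M, f_M)$ by a finite sequence of surgeries compatible with the $(X, \xi)$-structure until $f_M$ becomes $n$-connected. Since $\xi: X \to B$ is $n$-truncated (by assumption $X$ is a $\pi$-finite $n$-$B$-type, or this is arranged via the discussion of $n$-finitely dominated spaces in Appendix~\ref{sec:nfindom}), the factorization $M \to X \to B$ is then the Moore-Postnikov factorization into an $n$-connected map followed by an $n$-truncated map, which identifies $\tau_{\leq n}^B M \simeq X$ as required. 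Each surgery is performed along an embedded sphere whose normal bundle is framed compatibly with the $\xi$-structure, so the trace of the surgery is an $(X,\xi)$-bordism and hence the class $[M] \in \Omega_d^{T\xi}$ is preserved.

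The surgeries are carried out inductively on $k = 0, 1, \dots, n$, at each stage making $f_M$ one degree more connected. After first arranging $\pi_0(M) \twoheadrightarrow \pi_0(X)$ by taking appropriate connected sums, I would assume inductively that $f_M$ is $k$-connected and aim to kill $\pi_{k+1}(f_M)$. Because $X$ is $n$-finitely dominated and $M$ is compact, this relative homotopy group is finitely generated over $\pi_1(X)$. Each generator $\alpha$ is represented by a pair $(S^k \to M,\, D^{k+1} \to X)$ with the disc restricting to $f_M$ on the boundary. The condition $n < \lfloor d/2 \rfloor$ implies $2k + 1 \leq 2n + 1 \leq d - 1$, which by general position allows the sphere to be taken as an embedding $\bar{\alpha}: S^k \hookrightarrow M$.

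To perform surgery, the crucial point is to produce a framing of the normal bundle $\nu_{\bar{\alpha}}$ of rank $d - k > k$ that is compatible with the $\xi$-structure. Here one uses that the null-homotopy of $f_M \circ \bar{\alpha}$ in $X$ together with the $(X,\xi)$-structure (which identifies $T_M \oplus \varepsilon^{N-d}$ with $f_M^* \xi^* \beta$ stably) determines a stable framing of $\nu_{\bar{\alpha}}$; because $d - k > k$, we are in the stable range and this stable framing lifts to an honest framing. Performing the surgery along $(\bar{\alpha}, \text{framing})$ replaces $M$ by a surgered manifold $M'$, and the trace is a bordism equipped with an extension of the $(X,\xi)$-structure, so $[M'] = [M]$ in $\Omega_d^{T\xi}$. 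A standard computation shows that the surgery kills $\alpha$ in $\pi_{k+1}(f_{M'})$ without creating new classes in degrees $\leq k$. After finitely many such surgeries the stage is advanced to $k+1$, and iterating up to $k = n$ yields the desired $N := M^{(n)}$ with $f_N: N \to X$ being $n$-connected.

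\textbf{Main obstacle.} The technical heart of the argument is the framing step: one must verify that the stable trivialization of $\nu_{\bar{\alpha}}$ provided by the null-homotopy in $X$ and the $\xi$-structure does destabilize. This is where the inequality $n < \lfloor d/2 \rfloor$ is essential — it guarantees both that the surgery spheres embed (by general position) and that their normal bundles lie in the stable range. Everything else is bookkeeping on relative homotopy groups and the standard interaction between surgery and tangential structures.
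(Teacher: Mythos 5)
Your proof reconstructs exactly the argument behind Kreck's Proposition~4 in~\cite{MR1709301}, which is what the paper cites (the $\qed$ after the statement signals that no proof is reproduced in the paper itself). Your outline — surgery below the middle dimension on embedded spheres to kill the relative homotopy groups $\pi_{\leq n+1}(X,M)$, with the $(X,\xi)$-structure together with the null-homotopy in $X$ providing a stable framing of the surgery sphere's normal bundle, destabilized using $d-k>k$, and the trace of each surgery preserving the bordism class — is precisely Kreck's strategy, and the inequality $n<\lfloor d/2\rfloor$ is used exactly where Kreck uses $k\leq[m/2]$ (his $k$ is your $n+1$). The one wrinkle worth flagging is a small convention mismatch: the paper's ``$n$-connected'' (Example~\ref{exm:truncatedconnected}) corresponds to vanishing of $\pi_i(X,M)$ for $i\leq n+1$, which in the classical convention is $(n+1)$-connected; so your induction should terminate after surgery on $n$-spheres has killed $\pi_{n+1}(X,M)$, not merely after reaching ``$n$-connected'' in the classical sense. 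With that indexing adjustment the bound checks out: $k\leq n<\lfloor d/2\rfloor$ gives $2k+1<d$ for the general-position embedding and $d-k>k$ for the destabilization of the framing. One additional point your argument relies on but glosses over is that the $n$-finite domination hypothesis on $X$ is what guarantees each relative homotopy group $\pi_{k+1}(X,M)$ is finitely generated as a $\ZZ[\pi_1 X]$-module, so the surgery process terminates; this is the role of Wall's finiteness results, and it is the reason the statement allows the weaker ``$n$-finitely dominated'' hypothesis in place of $\pi$-finiteness.
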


\begin{theorem} \label{thm:partitionfunctiondetected}
	Suppose that $n< \lfloor \frac{d}{2} \rfloor$ and that Assumption~\ref{ass:nfindom} holds. If $((X_1, \xi_1),\omega_1)$ and $((X_2,\xi_2), \omega_2)$ give rise to type-$n$ finite path integral theories whose partition functions are identical on closed $d$-manifolds with $\pi$-finite tangential $n$-type, then $X_1 \simeq X_2 = X$ (over $B$), and $\omega_1$ and $\omega_2$ have the same restriction to the fixed points of $\Omega^{T\xi}_d$ under the action of $\pi_0 \Aut(\xi)$. The resulting finite path integral theories are consequently isomorphic. 
\end{theorem}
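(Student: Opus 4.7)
The plan is to combine three ingredients already at our disposal: the identification in Lemma~\ref{lem:DWispontryagin} of the partition function with a value of the Pontryagin pairing, the non-degeneracy of this pairing (Corollary~\ref{cor:pontryaginnondeg}), and Kreck's surgery realization statement (Proposition~\ref{pro:krecksurgery}). Throughout I regard each $(\xi_i, \omega_i)$ as a basis vector in $\bigoplus_{[\xi]\in\pi_0\tau_{\leq n}\cS^{\pi}_{/B}} k[\widehat{\Omega_d^{T\xi}}/\pi_0\Aut(\xi)]$, and the pairing partners $(\tau_{\leq n}^BW, [W])$ as basis vectors of $\bigoplus_{[\psi]} k[\Omega_d^{T\psi}/\pi_0\Aut(\psi)]$.

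First I would translate the hypothesis into a statement about the Pontryagin pairing: by Lemma~\ref{lem:DWispontryagin}, the assumed equality of partition functions on every closed $d$-manifold $W$ with $\pi$-finite $B$-$n$-type is exactly
\[
\left\langle (\xi_1, \omega_1), (\tau_{\leq n}^B W, [W]) \right\rangle_{\tau_{\leq n}\cS^{\pi}_{/B}, \Omega_d} = \left\langle (\xi_2, \omega_2), (\tau_{\leq n}^B W, [W]) \right\rangle_{\tau_{\leq n}\cS^{\pi}_{/B}, \Omega_d}.
\]
Next I would apply Proposition~\ref{pro:krecksurgery}: because $n < \lfloor d/2 \rfloor$, every pair $(\psi\colon Y \to B, [N])$ consisting of an $n$-finitely dominated $B$-$n$-type $\psi$ and a class $[N] \in \Omega_d^{T\psi}/\pi_0\Aut(\psi)$ is realized by a closed $d$-manifold $W$ whose $B$-$n$-type is $\psi$ and whose bordism class is $[N]$. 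Thus by linearity, $(\xi_1, \omega_1) - (\xi_2, \omega_2)$ pairs to zero against every element of $\bigoplus_{[\psi]} k[\Omega_d^{T\psi}/\pi_0\Aut(\psi)]$.

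Finally I would invoke non-degeneracy: the Postnikov nested factorization system on $\tau_{\leq n}\cS^{\pi}_{/B}$ satisfies the hypotheses of Corollary~\ref{cor:pontryaginnondeg}, since the finiteness of the homotopy groups involved forces endomorphisms in each $\cT^{(\ell)}$ to be equivalences. Hence $(\xi_1, \omega_1) = (\xi_2, \omega_2)$ as vectors in the direct sum, which forces $[\xi_1] = [\xi_2]$ in $\pi_0\tau_{\leq n}\cS^{\pi}_{/B}$ (yielding an equivalence $X_1 \simeq X_2 = X$ over $B$, so $\xi_1 \simeq \xi_2 = \xi$) and places $\omega_1$ and $\omega_2$ in the same $\pi_0\Aut(\xi)$-orbit of $\widehat{\Omega_d^{T\xi}}$. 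The latter immediately implies that $\omega_1$ and $\omega_2$ agree on the $\pi_0\Aut(\xi)$-fixed subgroup of $\Omega_d^{T\xi}$, and that the theories $\DW_{X,\xi,\omega_1}$ and $\DW_{X,\xi,\omega_2}$ are isomorphic, since by Lemma~\ref{lem:DWPont} the Dijkgraaf-Witten theory depends only on the equivalence class of $\xi$ and on the orbit of $\omega$. The only delicate step is the realization supplied by Proposition~\ref{pro:krecksurgery}; this is precisely the reason the dimensional hypothesis $n < \lfloor d/2 \rfloor$ is required, as it is what permits surgery below the middle dimension on manifolds representing a given bordism class.
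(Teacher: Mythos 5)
Your proof is correct and follows essentially the same route as the paper's: identify the partition function with the Pontryagin pairing via Lemma~\ref{lem:DWispontryagin}, invoke Proposition~\ref{pro:krecksurgery} to realize every basis vector of the pairing partner, and conclude from the non-degeneracy of the pairing (Corollary~\ref{cor:pontryaginnondeg}). Note that, like the paper's own argument, your reasoning in fact yields the stronger conclusion stated in the introduction --- that $\omega_1$ and $\omega_2$ lie in the same $\pi_0\Aut(\xi)$-orbit of $\widehat{\Omega_d^{T\xi}}$ --- which then implies the fixed-point restriction claimed in the body of the theorem, as you observe.
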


\begin{proof}
	Proposition~\ref{pro:krecksurgery} implies that every element in $\bigoplus_{\psi \in \pi_0 \tau_{\leq n} \cS_{/B}^{\pi}} k[\Omega_d^{T(\beta \circ \psi)}/\pi_0 \Aut_{\cS_{/B}}(\psi)]$ is represented by a (linear combination) of $B$-manifolds with $\pi$-finite $n$-$B$-type, and Lemma \ref{lem:DWispontryagin}  shows that the partition function of finite path integral theories on these manifolds can be computed using the Pontryagin pairing on $\tau_{\leq n} \cS_{/B}^{\pi}$. The non-degeneracy of this pairing implies that $((X_1, \xi_1),\omega_1)$ and $((X_2,\xi_2), \omega_2)$ represent the same element in $\bigoplus_{[\xi] \in \pi_0 \cC} k[\widehat{\Omega^T\xi_d}/{\pi_0 \Aut_{\cC}(\xi)}]$, which immediately implies the claimed result. 
\end{proof}

\subsection{Dimensional reduction of finite path integral theories}\label{sec:dimred}

The \emph{dimensional reduction} of a field theory $\cZ: \Bord_d \to \cC$ along a closed $(d-k)$-manifold $M$ is defined to be the field theory $\cZ(M \times -): \Bord_{k}\to \cC$. 

Dimensional reduction of tangentially structured field theories is considered in~\cite{MR3811769}.
Given tangential structures  $B_{d-k} \to \rB O(d-k)$, $B_k \to \rB O(k)$  and $B \to \rB O(d)$ fitting into a commutative diagram 
\begin{equation}\label{eq:structuresquare}
\begin{tikzpicture}[baseline=0.75cm]
	\node (A) at (0,1.5) {$ B_k \times B_{d-k} $};
	\node (B) at (3.5, 1.5) {$ B $};
	\node (C) at (0, 0) {$ \rB O(k) \times \rB O(d-k) $};
	\node (D) at (3.5, 0) {$ \rB O(d) $};
	\draw [->] (A) to (B); 
	\draw [->] (A) to (C); 
	\draw [->] (B) to (D); 
	\draw [->] (C) to (D); 	
\end{tikzpicture}
\end{equation}
the product of a $B_{d-k}$-structured $(d-k)$-manifold $M$ and a $B_{k}$-structured $k$-manifold $N$ induces a canonical $B$ structure on $M\times N$. 

Therefore, any closed $B_{d-k}$-structured $(d-k)$-manifold $M$ induces a symmetric monoidal functor $\red_M = (M \times -) : \Bord_{k}^{B_{k}} \to \Bord_d^B $, and any $B$-structured $d$-dimensional field theory may be dimensionally reduced along $M$ to a $B_{k}$-structured $k$-dimensional field theory. 

In fact, for any (unstructured) $(d-k)$-manifold $M$ and any structure $B \to \rB O(d)$, there is a universal $k$-dimensional tangential structure $B_M \to \rB O(k)$ which is essentially defined so that the data of a $B_M$-structure on a $k$-manifold $N$ is precisely the data of a $B$-structure on $M\times N$ and which hence gives rise to a \emph{total dimensional reduction} functor $\red_M^{\textrm{tot}} = (M\times -): \Bord_k^{B_M} \to \Bord_d^B$ (see~\cite[Sec 9.2]{MR3811769}). 

We will later need a version of this construction relative to given tangential structures in a commutative square~\eqref{eq:structuresquare}: Suppose $X \to B$ is a space over $B$. Then, given any $B_{d-k}$-structured closed $(d-k)$-manifold $M = (M, \theta)$, we define a map $X_{M} \to B_k$ as the following pullback:
\begin{equation}\label{eq:totalreduction}
	\begin{tikzpicture}[baseline=0.75cm]
		\node (A) at (0,1.5) {$ X_M $};
		\node (B) at (7, 1.5) {$ \Map(M,X)$};
		\node (C) at (0, 0) {$ B_k \times \{\theta\} $};
		\node (C2) at (3.5, 0) {$B_k \times \Map(M, B_{d-k})$} ;
		\node (D) at (7, 0) {$ \Map(M,B) $};
		\node at (0.5, 1) {$\lrcorner$};
		\draw [->] (A) to (B); 
		\draw [->] (A) to (C); 
		\draw [->] (B) to (D);
		 \draw [->] (C) to (C2); 
		\draw [->] (C2) to (D); 	
	\end{tikzpicture}
\end{equation}
As in~\cite[Sec 9.2]{MR3811769}, the structure $X_M \to B_k$ is universal in the sense that for a $B_k$-structured $k$-manifold $N$, a lift of the canonical $B$-structure on $M\times N$ along $X\to B$ to an $X$ structure precisely amounts to a lift of the $B_k$-structure on $N$ along $X_M \to B_k$ to a $X_M$ structure. In particular, $M$ induces a \emph{total dimensional reduction} symmetric monoidal functor $\red_M^\textrm{tot} = (M \times -) : \Bord_{k}^{X_M} \to \Bord_d^X$. 

\begin{remark}
	The dimensional reduction and total dimensional reductions of invertible TFTs are again invertible. 
\end{remark}

\subsubsection{Dimensional reduction of finite path integral theories}

\begin{theorem}\label{thm:DWdimreductisDW} 
	Consider a $B$-structured $d$-dimensional finite path integral theory $\FP_{X, \cW}: \Bord_d^B \to \sVect$ constructed as in Def.~\ref{def:GenDWTheory} from a map $X\to B$ with $\pi$-finite fibers and a field theory $\cW: \Bord_d^X \to \sVect$.
Given tangential structures as in the commutative square~\eqref{eq:structuresquare} and a closed $B_{d-k}$-structured $(d-k)$-manifold $M$, the dimensionally reduced theory $\FP_{(X, \cW)} \circ \red_M: \Bord_k^{B_k} \to \sVect$ is equivalent to the finite path integral theory $\FP_{(X_M, \cW \circ \red_M^\textrm{tot})}$ built from the map $X_M \to B_k$ defined in~\eqref{eq:totalreduction} and the total dimensional reduction of $\cW$ to a field theory $ \cW \circ \red_M^\textrm{tot} = \cW(M \times -): \Bord_k^{X_M} \to \sVect$. 
\end{theorem}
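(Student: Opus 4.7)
The plan is to exhibit both sides as the composition of the monoidal linearization functor $\Phi$ of Corollary~\ref{cor:spanfunctor} with a decorated span functor $\Bord_k^{B_k} \to \Span(\cS^\pi, \sVec)$, and then to produce a natural symmetric monoidal equivalence between the two decorated span functors
\[\cF_{X, \xi, \cW}\circ \red_M \quad \text{and}\quad \cF_{X_M,\, \cW\circ \red_M^{\mathrm{tot}}}\]
of Lemma~\ref{lem:decoratedspanlift}. Applying $\Phi$ then gives the theorem.

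The first step is to produce, for every compact $B_k$-structured manifold $W$ (possibly with boundary), a natural equivalence of moduli spaces
\[\Map_{B_k}(W, X_M) \simeq \Map_B(M \times W, X).\]
This is essentially the universal property defining $X_M$ in~\eqref{eq:totalreduction}: unpacking the pullback and using the $(M\times -) \dashv \Map(M,-)$ adjunction, a lift of the $B_k$-structure on $W$ to an $X_M$-structure amounts to the same data as a lift of the canonical $B$-structure on $M \times W$ to an $X$-structure. Since $M$ has the homotopy type of a finite CW complex and $X \to B$ has $\pi$-finite fibers, the resulting moduli spaces are $\pi$-finite; in particular $X_M \to B_k$ has $\pi$-finite fibers and the Dijkgraaf-Witten construction $\DW_{X_M,\, \cW \circ \red_M^{\mathrm{tot}}}$ is defined.

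The second step is naturality. For a $B_k$-bordism $W: N_0 \to N_1$, the above equivalence intertwines restriction to the two boundary components, so it identifies the spans of spaces underlying $\cF_{X_M}(W)$ and $\cF_{X, \xi}(\red_M W)$; it is also compatible with composition of bordisms and with disjoint union, since $M \times -$ preserves pullbacks and coproducts. The third step upgrades this to the level of decorated spans: by the very definition of $\cW \circ \red_M^{\mathrm{tot}}$, the local system $f \mapsto \cW(M \times N, \tilde f)$ on $\Map_{B_k}(N, X_M)$ is the pullback along the above equivalence of the local system on $\Map_B(M \times N, X)$ induced by $\cW$, and the same matching holds for the natural transformation of local systems associated to a bordism. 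Applying the monoidal functor $\Phi$ yields the stated equivalence of super TFTs.

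The main obstacle, such as it is, lies in verifying the compatibilities in the second and third steps: namely that the equivalence of mapping spaces intertwines gluing of bordisms and the natural transformations of local systems induced by $\cW$. These all reduce to the preservation of pullbacks by $M \times -$, the $(M \times -)\dashv \Map(M,-)$ adjunction, and the functoriality of $\cW$, and are essentially bookkeeping once the key equivalence $\Map_{B_k}(W, X_M) \simeq \Map_B(M \times W, X)$ has been established.
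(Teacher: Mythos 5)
Your proposal is correct and follows essentially the same approach as the paper's proof: establish the equivalence $\Map_{B_k}(W, X_M) \simeq \Map_B(M \times W, X)$ from the universal property of the pullback defining $X_M$, verify the $\pi$-finiteness of the fibers of $X_M \to B_k$, and then match the decorated spans (both the underlying spans and the $\cW$-induced local systems) before applying $\Phi$. Your reframing of the bookkeeping as an equivalence of symmetric monoidal functors $\cF_{X, \xi, \cW}\circ \red_M \simeq \cF_{X_M,\, \cW\circ \red_M^{\mathrm{tot}}}$ into $\mathrm{Span}(\cS^\pi, \sVec)$ is a slightly cleaner way to organize the same computation the paper carries out on objects and bordisms separately.
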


\begin{proof}
First we establish that $X_M \to B_k$ has $\pi$-finite homotopy fibers, so that the dimensional reduction $\FP_{(X_M, \cW \circ \red_M^\textrm{tot})}$ is meaningful. 
Since~\eqref{eq:totalreduction} is a pullback square, the fiber of $X_M\to B_k$ at a point $b_k \in B_k$ agrees with the fiber of $\Map(M, X) \to \Map (M,B)$ at the map $M \to B_{d-k} \to[B_{d-k} \times b_k] B_{d-k} \times B_k \to B$. Since the fibers of $X\to M$ are $\pi$-finite and $M$ is a compact manifold, this fiber is also $\pi$-finite. 

Furthermore, by construction for each $B_k$-manifold $N$ we have a natural equivalence of spaces:
\begin{equation*}
	\Map_{B_k}(N, X_M) \simeq \Map_B(M \times N, X),
\end{equation*}
and a commuting diagram:
\begin{center}
\begin{tikzpicture}
	\node (A) at (0,1.5) {$ \Map_B(M \times N, X) $};
	\node (B) at (4.5, 0.75) {$ \sVect $};
	\node (C) at (0, 0) {$ \Map_{B_k}(N, X_M) $};
	\draw [->] (A.east) to  node [above right] {$\cW$} (B); 
	\draw [<-] (A) to node [right] {$\simeq$} node [left] {$\red_M^\textrm{tot}$} (C); 
	\draw [->] (C.east) to  node [below right] {$\cW \circ \red_M^\textrm{tot}$}(B); 
\end{tikzpicture}.
\end{center}
Thus we have a natural equivalence of (super) vector spaces:
\begin{align*}
	\FP_{(X, \cW)} \circ \red_M (N) &= \bigoplus_{[\psi] \in \pi_0\Map_B(M \times N, X)} (\cW(M \times N, \psi))_{\pi_1(\Map_B(M \times N, X), \psi)} \\
	&\cong \bigoplus_{[\psi] \in \pi_0\Map_{B_k}(N, X_M)} (\cW \circ \red_M^\textrm{tot}(N))_{\pi_1(\Map_{B_k}(N, X_M), \psi)} \\
	& = \FP_{(X_M, \cW \circ \red_M^\textrm{tot})}(N).
\end{align*}
Similarly, for a $k$-dimensional $B_k$-bordism $W$ from $N_0$ to $N_1$, the value of $\FP_{(X, \cW)} \circ \red_M (W)$ is the linearization of the following decorated span
\begin{equation*}
	(\Map_B(M \times N_1, X), \cL_\cW) \ot[t] (\Map(M \times W, X), \alpha^\cW) \to[s] (\Map_B(M \times N_0, X), \cL_\cW).
\end{equation*}
The value of $\FP_{(X_M, \cW \circ \red_M^\textrm{tot})}(W)$ is the linearization of the following equivalent decorated span
\begin{equation*}
	(\Map_{B_k}(N_1, X_M), \cL_{\cW \circ \red_M^\textrm{tot}}) \ot[t] (\Map_{B_k}(W, X_M), \alpha^{\cW \circ \red_M^\textrm{tot}}) \to[s] (\Map_{B_k}(N_0, X_M), \cL_{\cW \circ \red_M^\textrm{tot}})
\end{equation*}
and thus the two TFTs are isomorphic. 
\end{proof}

\begin{example}Consider a Dijkgraaf-Witten theory in the classical sense, where $B =\rB SO(d)$, $X = \rB SO(d) \times \rB G \to \rB SO(d)$ for $G$ a finite group, and $\cW: \Bord_d^{or, G} \to \sVec$ is a $G$-equivariant field theory, which  leads to an oriented field theory $\FP_{\cW, G}: \Bord_k^{or} \to \sVec$. Let $M$ be an oriented $(d-r)$-manifold. Then, Theorem~\ref{thm:DWdimreductisDW} asserts that the dimensionally reduced\footnote{In the notation of Theorem~\ref{thm:DWdimreductisDW}, we set $B_k = \rB SO(k)$ and $B_{d-k} = \rB SO(d-k)$ and consider the evident maps in the square~\eqref{eq:structuresquare}.} theory $\FP_{\cW,G}(M \times -): \Bord_k^{or} \to \sVec$ is the $k$-dimensional oriented Dijkgraaf-Witten theory built from the finite $1$-groupoid \[\Map(M, \rB G) \cong \bigsqcup_{\substack{\text{isomorphism classes }[P]\\\text{of }G\text{-bundles over }M}} \rB \Aut(P)\]
and $\cW(M \times -)$ considered as a $\Map(M, \rB G)$-equivariant field theory. This recovers (the unextended version of) a result of M\"uller and Woike~\cite{MR4239182}. 
\end{example}

\section{Semisimple field theories and stable diffeomorphism} \label{sec:semisimple}

The goal of this section is two-fold. In~\cite{Reutter:2020aa}, the first author showed that \emph{semisimple} oriented four-dimensional field theories valued in the category of vector spaces lead to stable diffeomorphism invariants. Our first aim is to generalize this to higher dimensions, more general tangential structures, and to TFTs valued in super vector spaces. These results give an upper bound on what this class of topological field theories can detect about smooth manifolds. Our second aim is to show that finite path integral theories satisfy this semisimplicity condition. 

As in~\cite{Reutter:2020aa}, an oriented four-dimensional field theory is defined to be semisimple, if the algebras $Z(S^3)$ and $Z(S^2\times S^1)$, with multiplication and unit given as follows, respectively, are semisimple:
\begin{equation}
\label{eq:S3alg}
Z(D^4 \backslash{} (D^4\sqcup D^4) )
\hspace{2.5cm} 
Z(D^4)
\end{equation}
\begin{equation}
\label{eq:S2S1alg}
Z\left((D^3 \backslash{} D^3 \sqcup D^3) \times S^1\right) 
\hspace{1.5cm}
Z\left(D^3 \times S^1\right) 
\end{equation}
A physical interpretation of these algebra objects is given in~\cite[Rem 2.2]{Reutter:2020aa}, similar interpretations apply to the algebra objects in the rest of this section.

In order to generalize these results to $B$-structured field theories, the spheres and bordisms inducing the unital multiplication must all admit $B$-structures. Likewise, in order to define stable diffeomorphism of $B$-manifolds, one needs to have a well-defined notion of connected sum, and for the product of middle dimensional spheres to admit a canonical $B$-structure. These two issues are closely related, and, as we will see shortly, will require us to make mild assumptions on the ambient tangential structure $B$.

\subsection{Connected sum of $B$-manifolds and stable diffeomorphism}\label{sec:connectsumBFrob}

Let $\beta: B \to \rB O(d)$ be a tangential structure. The tangent bundle of the disk $D^d$ is trivial, and thus it is always possible to give the disk a $B$-structure. In fact if we assume that $B$ is connected, up to isotopy $B$-structures on the disk correspond to elements of $\pi_0F$, where $F$ is the homotopy fiber of $\beta$, and this can either be one element or two depending on whether $\pi_1 B \to \pi_1 \rB O(d)$ is surjective or not. Equivalently it depends on whether the pullback of the first Stieffel-Whitney class $\beta^*w_1$ is non-trivial or trivial. In the latter case the two possible isotopy classes of lifts result in diffeomorphic $B$-manifolds which are exchanged by an orientation reversing diffeomorphism.

The $(d-1)$-sphere $S^{d-1}$ is the boundary of $D^d$, but to get an induced $B$-structure on $S^{d-1}$, we must choose either an inward pointing or outward pointing of the normal bundle of $S^{d-1}$ in $D^d$. These correspond to viewing the disk either as a bordism from the empty manifold to the sphere, or as a bordism from the sphere to the empty manifold. There are thus potentially two canonical diffeomorphism classes of $B$-structures on the $(d-1)$-sphere --- the \emph{bounding} and the \emph{cobounding} $B$-structures. Without further restrictions on $B$ it is possible for these to be distinct. 
For example, when the tangential structure $\beta$ is tangential framing $*\to \rB O(d)$, then the bounding and cobounding framings on $S^{d-1}$ only agree for $d=0,1,3,7$, i.e. precisely when $S^{d}$ admits a tangential framing.

The operation of connected sum requires removing disks from the manifolds in question and then replacing the result with a cylinder. In the presence of tangential $B$-structures this requires that the disks, cylinders, and their boundary spheres all have compatible $B$-structures. In particular, the bounding and cobounding $B$-structures on $S^{d-1}$ need to coincide. For this we need to place some restrictions on the ambient tangential structure $B$. 

\begin{definition} A \emph{connected tangential $n$-type} $\beta:B \to \rB O(d)$ will be a tangential $n$-type (see Definition~\ref{def:tangentialntype}) for which $B$ is connected. \end{definition}

\begin{lemma}\label{lem:Bsphere}
	Let $\beta: B \to \rB O(d)$ be a connected tangential $(d-2)$-type. Then, $S^{d-1}$ admits a unique $B$-structure up to diffeomorphism. This $B$-structure is both bounding and cobounding; it extends both over the incoming and outgoing disk $D^d$. The gluing of these $B$-disks defines the unique $B$-structure on $S^d$ up to diffeomorphism. \end{lemma}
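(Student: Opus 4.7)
The plan is to study the moduli spaces $\Map_{\rB O(d)}(D^d, B)$ and $\Map_{\rB O(d)}(S^{d-1}, B)$ of $B$-structures on the disk and its boundary sphere, exploiting (i) the stable parallelizability of $D^d$ and $S^{d-1}$ and (ii) the $n$-truncatedness of $\beta$ together with $n \leq d-2$ to kill the relevant higher obstructions. Throughout, let $F$ denote the homotopy fiber of $\beta$; since $B$ is connected, the long exact sequence of the fibration $F \to B \to \rB O(d)$ forces $|\pi_0 F| \in \{1,2\}$, the two cases being distinguished by whether $\pi_1 B \to \pi_1 \rB O(d) \cong \ZZ/2\ZZ$ is surjective, i.e., whether $\beta^* w_1$ vanishes.

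For $D^d$, contractibility gives $\Map_{\rB O(d)}(D^d,B) \simeq F$, so isotopy classes of $B$-structures on $D^d$ biject with $\pi_0 F$. For $S^{d-1}$, stable parallelizability (via bounding $D^d$) makes $T_{S^{d-1}} \oplus \varepsilon : S^{d-1} \to \rB O(d)$ null, so $\Map_{\rB O(d)}(S^{d-1}, B) \simeq \Map(S^{d-1}, F)$ after choosing a null-homotopy. Because $F$ is $n$-truncated with $n \leq d-2$, one has $\pi_{d-1} F = 0$, and hence $\pi_0 \Map(S^{d-1}, F) = \pi_0 F$. Under these identifications the restriction map from disk to sphere $B$-structures corresponds to the inclusion $F \hookrightarrow \Map(S^{d-1}, F)$ of constant maps, which is a $\pi_0$-bijection. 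The same calculation applies whether $D^d$ is treated as an incoming or as an outgoing bordism, so every $B$-structure on $S^{d-1}$ is simultaneously bounding and cobounding, and the $B$-disk filling it in is unique up to isotopy on either side.

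For the orientation-reversal statement: when $|\pi_0 F| = 1$ there is nothing to prove; when $|\pi_0 F| = 2$, the two isotopy classes of $B$-structures on $S^{d-1}$ are exchanged by the nontrivial $\ZZ/2\ZZ$-action coming from $\pi_1 \rB O(d)$ in the long exact sequence. I would realize this action geometrically by a hyperplane reflection of $S^{d-1} \subset \RR^d$, which reverses orientation and, through its action on the stabilized tangent bundle, induces the nontrivial permutation of $\pi_0 F$.

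For $S^d$ itself, obstructions to lifting $T_{S^d}: S^d \to \rB O(d)$ through $\beta$ lie in $H^{k+1}(S^d; \pi_k F)$, and they vanish either because $H^{k+1}(S^d)=0$ for $0 < k+1 < d$ or because $\pi_k F = 0$ for $k \geq n+1$; the borderline case $k = d-1$ is ruled out by $d-1 \geq n+1$. Hence $B$-structures on $S^d$ exist, and by the same $\pi_0 F$ count there are at most two isotopy classes. Geometrically, any such structure decomposes along an equatorial $S^{d-1}$ into two $B$-disks with compatible $B$-boundary, so by the previous steps both the boundary structure and the disk fillings are determined up to isotopy and up to reflection; a hyperplane reflection of $S^d \subset \RR^{d+1}$ realizes the remaining ambiguity as an orientation-reversing self-diffeomorphism. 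The main obstacle I anticipate is bookkeeping: aligning the abstract moduli identifications with the concrete geometric conventions for bounding/cobounding structures (inward versus outward normal) and for orientation-reversing diffeomorphisms, so that the $\ZZ/2\ZZ$ acting on $\pi_0 F$ from the long exact sequence is genuinely realized by a hyperplane reflection.
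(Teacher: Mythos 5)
Your proof is correct and follows essentially the same obstruction-theoretic approach as the paper, which counts $B$-structures on $D^d$, $S^{d-1}$, and $S^d$ using the $n$-truncatedness of $\beta$ together with $n \leq d-2$; you simply make the count more explicit by computing $\pi_0 \Map_{\rB O(d)}(-,B)$ via the fiber $F$ and the fact that $\tau_{\leq n} S^{d-1}$ is a point. Your proposed geometric realization of the $\pi_0 F = \ZZ/2\ZZ$ action by a hyperplane reflection is likewise the content of the paper's "exchanged by an orientation reversing diffeomorphism" step, verified by noting that pulling back the standard framing of $D^d$ along a reflection changes it by a map into the non-identity component of $O(d)$.
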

	\begin{proof}
	Obstruction theory shows that $S^{d-1}$ admits a $B$-structure and that there can be at most  two (up to isomorphism) depending on whether $\beta^*w_1$ is non-trivial or trivial. We will now show that these $B$-structures agree up to diffeomorphism. The disk $D^d$ admits precisely two $B$-structures up to isomorphism which are exchanged by an orientation reversing diffeomorphism. Hence, by uniqueness, the two $B$-structures on $S^{d-1}$ must bound these $B$-structures on $D^d$ and are exchanged by the restriction of the orientation reversing diffeomorphism of $D^d$ to the boundary. Similarly, both $B$-structures on $S^{d-1}$ are cobounding. By obstruction theory, $S^d$ has a unique $B$-structure (up to diffeomorphism) which hence must be given by the gluing of the unique (up to diffeomorphism) $B$-structures on $D^d$.\end{proof}

In particular, if $\beta:B \to \rB O(d)$ is a connected tangential $(d-2)$-type, the \emph{connected sum} $M\#N$ of two connected $B$-manifolds may be defined by choosing a $B$-structure preserving embedding of a standard $B$-disk $D^d$ into $M$ and into $N$, excising them both, and gluing the resulting boundary spheres by the standard orientation reversing diffeomorphism.  Lemma~\ref{lem:Bsphere} guarantees that this procedure results in a well-defined $B$-manifold, which a priori depends on the choice of the embedded $B$-disks. However, like the usual connected sum, in some cases the operation is independent of these choices:

The first Stiefel-Whitney class $\beta^* w_1$ is trivial on $B$ if and only if $\beta :B \to \rB O(d)$ factors through $\rB SO(d)$, in which case a $(B, \beta)$-structure induces an orientation. In this case, a $B$-structure preserving embedding of a $B$-disk in particular preserves orientation, and we may appeal to the celebrated \emph{disk theorem} of Palais which states that any two orientation preserving embeddings of disks into a connected oriented manifold are ambient isotopic and hence also isotopic as embedded $B$-disks. 
 It follows that the connected sum is independent of the choice of embedded $B$-disk, just as in the usual oriented setting. 

When the first Stiefel-Whitney class $\beta^*w_1$ is non-trivial on $B$, i.e. when $\beta: B \to \rB O(d)$ cannot be factored through $\rB SO(d) \to \rB O(d)$,  then the connected sum operation may depend on the choice of embedded $B$-disk. This case mimics connected sum of possibly non-orientable manifolds. As in that situation, there are certain cases when the connected sum operation remains independent of the choice of embedded $B$-disks. If a connected manifold is \emph{non-orientable} then any two embedded disks are also ambient isotopic. Thus the connected sum $M\#N$ is well-defined if either of $M$ or $N$ is non-orientable. Likewise if either $M$ or $N$ admits an orientation reversing $B$-diffeomorphism, then the connected sum  $M\#N$ is independent of the choice of embedded $B$-disk. (This is the situation relevant for what follows below). However if $M$ and $N$ are orientable and neither admits an orientation reversing $B$-diffeomorphism, then result of the connected sum operation may genuinely depend on the choice of embedded $B$-disks with different choices yielding distinct, non-isomorphic $B$-manifolds.

We can also obtain canonical $B$-structures on products of spheres. As in Section~\ref{sec:compntypes}, we can compose $\beta: B \to \rB O(d)$ with the inclusion $\rB O(d) \to \rB O(d+2)$, and then factor the resulting map into an $n$-connected map followed by an $n$-truncated map:
\begin{equation*}
	B \to[n\textrm{-connected}] B_{d+2} \to[n\textrm{-truncated}] \rB O(d+2). 
	\end{equation*}
By Proposition~\ref{prop:pullbackstructures}, if $n \leq d-2$, the resulting diagram
\begin{center}
\begin{tikzpicture}
	\node (A) at (0,1.5) {$ B $};
	\node (B) at (2.5, 1.5) {$ B_{d+2} $};
	\node (C) at (0, 0) {$ \rB O(d) $};
	\node (D) at (2.5, 0) {$ \rB O(d+2) $};
	\draw [->] (A) to (B); 
	\draw [->] (A) to (C); 
	\draw [->] (B) to (D); 
	\draw [->] (C) to (D); 
	\node at (0.5, 1) {$\lrcorner$};	
\end{tikzpicture}
\end{center} 
is a homotopy pull-back diagram. Let $a + b = d$, and consider the disk $D^{a+1} \times D^{b+1}$. The disk admits a $B_{d+2}$-structure (which is unique up to orientation reversal). This restricts to a $B_{d+2}$-structure on $S^a \times S^b$, which is equivalent to a $B$-structure on $S^a \times S^b$ by virtue of the above square being a pullback. 
\begin{definition}\label{def:canonicalBstructure}
The $B$-structure on $S^a \times S^b$ so obtained will be called the \emph{canonical} $B$-structure and we will denote it by $\overline{\theta}$.\end{definition} Unlike the case of Lemma~\ref{lem:Bsphere}, this $B$-structure is in general not unique --- there can be many other $B$-structures on $S^a \times S^b$. 
We note that in the problematic case when the first Stieffel-Whitney class $\beta^*w_1$ is non-trivial on $B$, then there is a unique $B_{d+d}$-structure on $D^{a+1} \times D^{b+1}$, which is then necessarily preserved by any orientation reversing diffeomorphism. It follows, that in this situation $(S^a \times S^b, \overline{\theta})$ also admits an orientation reversing $B$-diffeomorphism, and so the operation of connected sum with $(S^a \times S^b, \overline{\theta})$ is well-defined.

\begin{definition}\label{def:stablediffeo}
	If $d=2q$, and $\beta: B \to \rB O(d)$ is a connected tangential $(d-2)$-type, then two $B$-structured connected manifolds $(M_1, \psi_1)$ and $(M_2, \psi_2)$ are \emph{$B$-stably diffeomorphic} if there exists an natural number $N$ and a diffeomorphism of $B$-manifolds
	\begin{equation*}
		(M_1, \psi_1) \# \left( \#^N (S^q \times S^q, \overline{\theta}) \right) \cong (M_2, \psi_2) \# \left( \#^N (S^q \times S^q, \overline{\theta}) \right).
	\end{equation*} 
	where $\overline{\theta}$ is the canonical $B$-structure on $S^q \times S^q$.
\end{definition}

Assuming sufficient additional truncatedness of $\beta$, the canonical $B$-structure on $S^q \times S^q$ used in Definition~\ref{def:stablediffeo} is indeed unique and may be omitted from the notation.
\begin{lemma}\label{lem:uniquecanonical}
	If $d=2q$, and $\beta: B \to \rB O(d)$ is a tangential $(q-1)$-type with $B$ connected, then the canonical $B$-structure on $S^q \times S^q$ is unique (up to orientation reversal). \qed
\end{lemma}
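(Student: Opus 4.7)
The plan is to use classical obstruction theory on the standard minimal CW structure of $S^q \times S^q$, which has a single cell in each of dimensions $0$, $q$, $q$, and $2q$. Since $\beta$ is $(q-1)$-truncated, its homotopy fibers $F$ satisfy $\pi_i F = 0$ for $i \geq q$. A $B$-structure on $S^q \times S^q$ lifting the tangent classifying map $f \colon S^q \times S^q \to \rB O(2q)$ corresponds to a point in $\Map_{\rB O(2q)}(S^q\times S^q, B)$, equivalently to a section of the pullback fibration $f^*B \to S^q\times S^q$ with fiber $F$. The goal is to show that any two such lifts, $\overline\theta$ and $\theta'$, are homotopic over $\rB O(2q)$ after possibly pre-composing one of them with an orientation-reversing self-diffeomorphism of $S^q\times S^q$.

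I would begin by trying to construct a fiberwise homotopy $\overline\theta \simeq \theta'$ skeleton by skeleton. The obstruction to extending a partial homotopy across a $k$-cell lies in $\pi_k F$. By the $(q-1)$-truncatedness of $\beta$, this group vanishes in the two relevant positive dimensions, $k=q$ and $k=2q$, so no obstructions appear on the $q$- and $2q$-cells of $S^q \times S^q$.

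The main obstacle, and the place where the orientation-reversal clause enters, is the $0$-cell obstruction in $\pi_0 F$. I would handle this using the exact sequence $\pi_1 B \to \pi_1 \rB O(2q) \to \pi_0 F \to \pi_0 B$ coming from the fibration $F \to B \to \rB O(2q)$: since $B$ is connected and $\pi_1 \rB O(2q) = \ZZ/2$, the pointed set $\pi_0 F$ has at most two elements. When $\pi_0 F$ is trivial there is nothing to check; when it has two elements, the two components of $F$ correspond, via restriction to the $0$-cell, precisely to the two $B$-lifts of the disk $D^{2q}$ discussed at the beginning of Section~\ref{sec:connectsumBFrob}, which are exchanged by orientation reversal. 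After pre-composing $\theta'$ with an orientation-reversing self-diffeomorphism if necessary, $\overline\theta$ and $\theta'$ then agree on the $0$-skeleton up to homotopy in $F$, so the obstruction disappears.

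Combining the three steps, any two $B$-structures on $S^q\times S^q$ lifting $f$ become homotopic over $\rB O(2q)$ after possibly applying an orientation-reversing diffeomorphism, which is exactly the claimed uniqueness of the canonical $B$-structure $\overline\theta$ up to orientation reversal. The only essential input beyond the definition of $\overline\theta$ is the hypothesis that $\beta$ is $(q-1)$-truncated, which precisely kills the obstruction groups on the $q$- and $2q$-cells of $S^q \times S^q$.
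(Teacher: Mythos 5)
Your proof is correct and is exactly the obstruction-theoretic argument the paper has in mind (the paper states the lemma with an immediate \qed, and the analogous Lemma~\ref{lem:Bsphere} is proved by the same reasoning). The only possible obstruction group that survives $(q-1)$-truncatedness is $\pi_0 F$, and you handle it correctly via the exact sequence of the fibration and the orientation-reversal discussion at the start of Section~\ref{sec:connectsumBFrob}.
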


\subsection{Local operators control decompositions of topological field theories}\label{sec:localoperatorsdecomp}
The \emph{algebra of local operators} of a $d$-dimensional oriented TFT is the Frobenius algebra $Z(S^{d-1})$ with unit $Z(D^d)$ and multiplication $Z(D^d\backslash{} (D^d \sqcup D^d))$. This algebra may be generalized to other $B$-structures as follows:
\begin{lemma}\label{lem:sphereFrobAlg}
	Let $\beta: B \to \rB O(d)$ be a connected tangential $(d-2)$-type. Then for any topological field theory $Z: \Bord_d^B \to \sVec$, the value of the $(d-1)$-sphere $Z(S^{d-1})$ (with its unique $B$-structure) is a super (commutative for $d>1$) Frobenius algebra with multiplication and unit given as follows:
\begin{equation*}
Z(D^d \backslash{} (D^d\sqcup D^d) )
\hspace{2.5cm} 
Z(D^d).
\end{equation*}
\end{lemma}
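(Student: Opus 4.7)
The plan is to adapt the classical argument that the value of an oriented TFT on $S^{d-1}$ is a Frobenius algebra to the $B$-structured setting. The core task is to verify that all bordisms witnessing the Frobenius structure, and all diffeomorphisms implementing its axioms, lift to the $B$-structured category. The hypothesis $n \leq d-2$ from Lemma~\ref{lem:Bsphere}, combined with obstruction theory on the relevant low-dimensional CW models, provides essentially all the uniqueness needed.

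First I would produce the structure maps. By Lemma~\ref{lem:Bsphere}, $D^d$ admits a $B$-structure (unique up to orientation reversal) restricting to the canonical $B$-structure on $\partial D^d = S^{d-1}$; viewed as a $B$-bordism $\varnothing \to S^{d-1}$ (respectively $S^{d-1} \to \varnothing$) this gives the unit $\eta$ (respectively a counit $\varepsilon$). For the multiplication, embed the pair of pants $P = D^d \setminus (D^d \sqcup D^d)$ as a submanifold of $D^d$ and restrict any chosen $B$-structure on $D^d$ to $P$; Lemma~\ref{lem:Bsphere} identifies the restrictions to each boundary sphere with the canonical $B$-structure (adjusting for inward/outward normals), so $P$ becomes a $B$-bordism $S^{d-1}\sqcup S^{d-1} \to S^{d-1}$ that yields $\mu$. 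Reversing time gives a comultiplication $\Delta$.

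Next I would verify associativity, unitality, and the Frobenius relation. Each such axiom asserts the equality of two $B$-bordisms that are known to be diffeomorphic as plain bordisms (in every case, the underlying manifold is a disk with several open disks removed, embedded in $D^d$). It therefore suffices to know that a $B$-structure on such a composite extending the canonical $B$-structures on its boundary spheres is unique up to isotopy rel boundary. This is an obstruction theory computation: because $\beta$ is an $n$-type with $n \leq d-2$, the obstructions (living in $H^{i+1}(W,\partial W; \pi_i F)$, where $F$ is the homotopy fiber of $\beta$) vanish for dimensional reasons on the relative CW complex underlying each composite. Equivalently, one may realize both sides as submanifolds of a common $B$-structured disk and transport between them by an ambient isotopy.

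For super-commutativity when $d > 1$, the swap self-diffeomorphism of $P$ which interchanges the two incoming boundary spheres can be realized by an isotopy rotating the two small interior disks around each other inside $D^d$; this requires at least one dimension of transverse room, i.e.\ $d \geq 2$. The isotopy extends to one of $B$-structured bordisms by the uniqueness argument above, so $\mu \circ \sigma = \mu$ where $\sigma$ is the super-symmetric braiding on $\sVec$. Non-degeneracy of the pairing $\varepsilon\circ \mu$ follows from the usual snake identity: the composite $(\mathrm{id} \otimes \varepsilon \circ \mu)\circ (\mu \otimes \mathrm{id} \circ \Delta\circ \eta \otimes \mathrm{id})$ is represented by a $B$-bordism diffeomorphic to the cylinder $S^{d-1}\times [0,1]$ with its canonical $B$-structure, again by the same uniqueness of $B$-structures on this composite.

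The main obstacle is precisely the uniqueness-of-$B$-structures input above: one must confirm that all the relevant compositions of pairs of pants and disks have essentially unique $B$-structures extending the canonical structures on their boundaries. This is where the assumption $n \leq d-2$ is used in an essential way, exactly as it was in the proof of Lemma~\ref{lem:Bsphere}.
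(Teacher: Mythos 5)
Your proposal follows essentially the same route as the paper: obtain every structure bordism (disk, pair of pants, etc.) by removing disks from a $B$-structured sphere/disk, then verify the Frobenius axioms by realizing the relevant diffeomorphisms as ambient isotopies. That ambient isotopy argument (your final sentence of the second paragraph) is exactly the paper's proof, which is stated very tersely.

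However, the obstruction-theory detour you offer as an ``equivalent'' argument contains a genuine error. You claim that a $B$-structure on each composite bordism $W$ extending the canonical $B$-structures on $\partial W$ is \emph{unique} up to isotopy rel boundary, with obstructions vanishing ``for dimensional reasons.'' The uniqueness obstructions live in $H^i(W,\partial W;\pi_i F)$, and these do \emph{not} all vanish. Take $W$ to be the pair of pants $P \simeq S^{d-1}\vee S^{d-1}$ with $d\geq 3$: Lefschetz duality gives $H^1(P,\partial P;\pi_1 F) \cong H_{d-1}(P;\pi_1 F) \cong (\pi_1 F)^2$, which is nonzero whenever $\pi_1 F \neq 0$. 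So rel-boundary uniqueness of $B$-structures on $P$ genuinely fails for most $\pi$-finite $F$. (You wrote the existence groups $H^{i+1}(W,\partial W;\pi_i F)$ — those do vanish in the relevant range — but existence is not what the argument needs here.) Fortunately this uniqueness is not actually required: you do not need \emph{all} $B$-structures on $P$ rel boundary to agree, only that the specific ones obtained by restriction from $S^d$ are carried to one another by the ambient isotopies implementing the Frobenius axioms. So you should drop the obstruction-theory claim and rely solely on the ambient-isotopy argument, which is correct and is the paper's argument.
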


\begin{proof}
Assuming that $B$ is connected and $\beta: B \to \rB O(d)$ is a tangential $(d-2)$-type, consider the unique (up to orientation-reversal) $B$-structure on $S^d$.
Removing a number of incoming and outgoing disks results in a bordism from $\sqcup_m S^{d-1}$ to $\sqcup_\ell S^{d-1}$ where each of these $(d-1)$-spheres is given the unique (up to orientation-reversal) $B$-structure which is simultaneously bounding and cobounding. Analogous to the oriented case, these $B$-bordisms assemble into the structure of a commutative Frobenius algebra. \end{proof}

Following a theorem of Sawin~\cite{MR1359651}, we will now show that this algebra $Z(S^{d-1})$ controls direct sum decomposition of the field theory.
Given a finite family of field theories $Z_i: \Bord_d^B \to \sVec$, $i\in I$, we follow~\cite{MR1295467,MR1359651} and define their \emph{direct sum} $\bigoplus_{i \in I} Z_i$ as the following field theory: To a non-empty connected $(n-1)$-manifold $M$, it assigns the super vector space $\bigoplus_{i \in I} Z_i (M)$  and the tensor product of these to non-connected manifolds. Similarly, to a non-empty connected compact bordism $W$, the field theory $\bigoplus_{i \in I} Z_i $ assigns the direct sum $\bigoplus_{i \in I} Z_i (W)$, interpreted appropriately as a map between the appropriate tensor products of direct sums.  

Direct sums decompositions of a field theory are determined by direct sum decompositions of the algebra $Z(S^{d-1})$.

\begin{lemma}\label{lem:decomposablealgtotft}
	Let $\beta: B \to \rB O(d)$ be a connected tangential $(d-2)$-type and let $Z: \Bord_d^B \to \sVec$ be a topological field theory.	Suppose that there is a direct sum decomposition $Z(S^{d-1}) \iso \bigoplus_{i \in I} A_i$ of super-algebras. Then, $Z$ admits a direct sum decomposition $Z\iso \bigoplus_{i \in I} Z_i$ into field theories with algebra isomorphisms $Z_i(S^{n-1}) \iso A_i$. 
\end{lemma}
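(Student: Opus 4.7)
My plan is to use the classical idempotent decomposition argument, in the spirit of Sawin~\cite{MR1359651}, adapted to the super and $B$-structured setting. First I would extract from the given decomposition $Z(S^{d-1}) \cong \bigoplus_{i \in I} A_i$ the even, pairwise orthogonal, central idempotents $e_i \in Z(S^{d-1})$ given by the units of the factors $A_i$; centrality is automatic from the direct product structure of a decomposition of algebras, and evenness follows because each $A_i$ is a super-subalgebra and hence has even unit. Note that Lemma~\ref{lem:Bsphere} --- which applies by hypothesis --- is precisely what makes the Frobenius algebra $Z(S^{d-1})$ of Lemma~\ref{lem:sphereFrobAlg} available in the $B$-structured setting.

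Next, for any connected closed $B$-structured $(d-1)$-manifold $M$ I would build an even endomorphism $p_i^M : Z(M) \to Z(M)$ by ``inserting $e_i$'': remove a small open disk from the interior of the cylinder $M \times [0,1]$, view the resulting sphere boundary as an incoming $S^{d-1}$ equipped with its unique bounding-cobounding $B$-structure (Lemma~\ref{lem:Bsphere}), and apply $Z$ to the resulting $B$-bordism $M \sqcup S^{d-1} \to M$ to obtain an action $\mu_M : Z(S^{d-1}) \otimes Z(M) \to Z(M)$; then set $p_i^M(v) := \mu_M(e_i \otimes v)$. Standard topological moves expressing the Frobenius algebra structure of $Z(S^{d-1})$ --- two disk insertions fuse into one after multiplying the labels, and the insertion of the unit $Z(D^d)$ is isotopy-trivial --- translate directly into the relations $p_i^M p_j^M = \delta_{ij} p_i^M$ and $\sum_i p_i^M = \mathrm{id}_{Z(M)}$. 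Setting $Z_i(M) := p_i^M Z(M)$ on connected $M$ and extending multiplicatively to disconnected manifolds then yields the required decomposition $Z(M) \cong \bigoplus_i Z_i(M)$ as super vector spaces, and the identification $Z_i(S^{d-1}) \cong A_i$ is tautological since $p_i^{S^{d-1}}$ is just multiplication by $e_i$ in the Frobenius algebra.

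To define each $Z_i$ on morphisms, I need the key geometric fact that for any connected $B$-bordism $W : M \to N$ the insertion of $e_i$ commutes with $Z(W)$, i.e. $Z(W) \circ p_i^M = p_i^N \circ Z(W)$. This would be established by exhibiting a $B$-diffeomorphism between the two bordisms ``$W$ punctured near the incoming boundary'' and ``$W$ punctured near the outgoing boundary'', both regarded as $B$-bordisms from $M \sqcup S^{d-1}$ to $N$; such a diffeomorphism is obtained by choosing an embedded arc through the interior of $W$ between the two puncture sites and sliding the puncture along a $B$-structured tubular neighbourhood, with the compatibility of the $B$-structure on the moving sphere ensured by Lemma~\ref{lem:Bsphere}. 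Iterating, when $M$ or $N$ is disconnected, inserting idempotents at several incoming components and sliding them together through $W$ forces, via $e_i e_j = \delta_{ij} e_i$, the nonzero blocks of $Z(W)$ to live exactly where the definition of $\bigoplus_i Z_i$ demands. Consequently $Z(W)$ restricts to $Z_i(W) : Z_i(M) \to Z_i(N)$, and the remaining symmetric monoidal functor axioms for the $Z_i$ follow formally from those for $Z$.

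The main obstacle is this sliding step for bordisms: producing the $B$-diffeomorphism moving the puncture and checking that the resulting identity of linear maps is independent of the choices involved. The topological ingredient is connectedness of $W$, which permits the requisite ambient isotopy; the structural ingredient is exactly the content of Lemma~\ref{lem:Bsphere}, which guarantees that all the spheres and disks created during the argument carry essentially unique, compatible $B$-structures. Once this step is in place, the rest of the argument is a direct bookkeeping exercise with the symmetric monoidal structure on $\sVec$ and the tensor-product behaviour of $Z$ on disjoint unions.
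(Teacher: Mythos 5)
Your proposal is correct and is essentially a detailed reconstruction of Sawin's idempotent-insertion argument \cite[Thm.~1]{MR1359651}, which is exactly what the paper's own proof appeals to: the paper simply cites Sawin and remarks that the argument is identical in the presence of a $B$-structure satisfying the hypotheses of Lemma~\ref{lem:Bsphere}. You have in effect unwound that citation, and the way you isolate the role of Lemma~\ref{lem:Bsphere} --- it is what guarantees that the punctured spheres and the sliding tubes carry essentially unique, compatible $B$-structures, so that the ambient isotopy of punctures goes through --- is precisely the point the paper is implicitly asserting when it says the $B$-structured proof is ``identical''. The one place you might be slightly more careful is the treatment of disconnected bordisms (and bordisms with empty boundary pieces), where one should separate the case analysis rather than ``iterate'' informally, but this is a bookkeeping matter already present in Sawin's original proof and does not affect the substance.
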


\begin{proof}
	In the oriented case this is \cite[Thm.~1]{MR1359651}. The proof in the presence of general $B$-structure (satisfying the assumptions of Lemma~\ref{lem:Bsphere}) is identical. 
\end{proof}

\begin{remark} Both the definition of the direct sum of field theories, and Lemma~\ref{lem:decomposablealgtotft} hold more generally for arbitrary semi-additive and idempotent complete target categories replacing the category of super vector spaces.
\end{remark}

\begin{lemma}\label{lem:semisimplesuperalgebra} A finite-dimensional super-commutative semisimple superalgebra is isomorphic to a direct sum of trivial algebras $k$ concentrated in purely even degree. 
\end{lemma}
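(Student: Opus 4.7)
The plan is to reduce the statement to the classical Artin--Wedderburn theorem by showing that the odd part $A_1$ must vanish. The key mechanism is that super-commutativity forces every odd element to be nilpotent and in fact to generate a square-zero two-sided ideal, which is incompatible with semisimplicity.

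First I would observe that for any $x \in A_1$, super-commutativity gives $x \cdot x = (-1)^{|x||x|} x \cdot x = -x^2$, hence $2x^2 = 0$, and since $k$ has characteristic zero this yields $x^2 = 0$. Next, every odd element $x$ is super-central in the ungraded sense: for homogeneous $a \in A$ one has $xa = (-1)^{|a|} ax$. Consequently the left ideal $Ax$ coincides with the two-sided ideal generated by $x$, and for homogeneous $a, b \in A$ one computes $(ax)(bx) = a(xb)x = \pm (ab)x^2 = 0$, so $Ax \cdot Ax = 0$. If $x \neq 0$ this would produce a nonzero nilpotent two-sided ideal, contradicting semisimplicity (equivalently, the vanishing of the Jacobson radical of $A$ as an ungraded algebra). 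Hence $A_1 = 0$.

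It remains to analyze $A = A_0$, which is a finite-dimensional commutative semisimple $k$-algebra. By Artin--Wedderburn it decomposes as a finite product of finite field extensions of $k$, and since $k$ is algebraically closed of characteristic zero (the standing assumption from Section~\ref{sec:pontryaginpairing} onward, where the Pontryagin pairing and Dijkgraaf--Witten constructions live), each factor is isomorphic to $k$ itself, giving $A \cong k^{\oplus N}$ concentrated in purely even degree, as claimed.

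The only mildly subtle points are the sign bookkeeping in the computation $Ax \cdot Ax = 0$ and a clarification that ``semisimple'' is being used in the ungraded sense: this is exactly what is required by Lemma~\ref{lem:decomposablealgtotft}, the sole place where this lemma will be applied. Everything else reduces to standard commutative algebra.
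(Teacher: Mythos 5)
Your proof is correct and follows the same overall strategy as the paper's: show the odd part vanishes via super-commutativity and semisimplicity, then apply Artin--Wedderburn for commutative algebras. In fact your write-up is more careful than the paper's: the paper simply asserts that ``by semisimplicity there cannot be any non-zero nilpotent elements,'' which is only literally true for commutative rings (a matrix algebra is semisimple but full of nilpotents), whereas you supply the missing step --- that any odd $x$ generates a two-sided ideal $Ax$ with $(Ax)^2 = 0$, which the vanishing of the Jacobson radical then kills. You are also right to flag that the conclusion ``direct sum of copies of $k$'' (rather than a product of finite extensions) requires $k$ algebraically closed; the paper's Lemma~\ref{lem:decomposablealgtotft} and Corollary~\ref{cor:decomposesemisimple} are stated in that setting, though this hypothesis is left implicit in the statement of the lemma.
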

\begin{proof} By super-commutativity, any odd element is nilpotent. However, by semisimplicity there cannot be any non-zero nilpotent elements. Hence, the algebra is even and hence is of the claimed form by Artin-Wedderburn. 
\end{proof}

\begin{definition} Let $\beta:B \to \rB O(d)$ be a connected tangential $(d-2)$-type. We will say that a field theory is \emph{simple} if the super vector space $Z(S^{d-1})$ is isomorphic to $k^{1|0}$, i.e. is one-dimensional and in even degree. We will say that a field theory is \emph{indecomposable} if it cannot be decomposed into a direct sum of non-zero field theories.\end{definition}

Using the classification of semisimple super-algebras, indecomposability and simplicity of a sufficiently semisimple super field theory are equivalent:
\begin{corollary} \label{cor:decomposesemisimple} 
Let $d\geq 2$,  $\beta: B \to \rB O(d)$ be a connected tangential $(d-2)$-type and let $Z: \Bord_{d}^{B} \to \mathrm{sVec}$ be a $d$-dimensional topological field theory for which $Z(S^{d-1})$ is a semisimple algebra. 

Then, $Z$ is 
 indecomposable if and only if it is simple.  Moreover, any $d$-dimensional topological field theory with semisimple $Z(S^{d-1})$ decomposes into a finite direct sum of simple field theories. 
\end{corollary}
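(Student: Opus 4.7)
The plan is to combine the three preceding lemmas, using the super-commutative Frobenius algebra $Z(S^{d-1})$ as a bookkeeping device which controls direct sum decompositions of $Z$. First, since $d \geq 2$ and $\beta$ satisfies the hypotheses of Lemma~\ref{lem:Bsphere}, Lemma~\ref{lem:sphereFrobAlg} endows $Z(S^{d-1})$ with the structure of a finite-dimensional super-commutative Frobenius algebra. Combined with the semisimplicity hypothesis, Lemma~\ref{lem:semisimplesuperalgebra} produces an algebra isomorphism $Z(S^{d-1}) \iso k^{\oplus n}$ for some $n \geq 0$, where each summand is the trivial algebra $k$ sitting in even degree.

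Second, I would invoke Lemma~\ref{lem:decomposablealgtotft} to lift this algebra decomposition to a direct sum decomposition $Z \iso \bigoplus_{i=1}^{n} Z_i$ of field theories with $Z_i(S^{d-1}) \iso k^{1|0}$. Each $Z_i$ is therefore simple, which immediately proves the second assertion that any $Z$ with semisimple $Z(S^{d-1})$ splits as a finite direct sum of simple field theories.

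For the equivalence, I would argue both directions using this canonical decomposition. If $Z$ is simple, then $n=1$, and any putative further splitting $Z \iso Z_1' \oplus Z_2'$ into non-zero theories induces, by functoriality applied to $S^{d-1}$, an algebra decomposition $k^{1|0} \iso Z_1'(S^{d-1}) \oplus Z_2'(S^{d-1})$, forcing one summand to vanish and hence the corresponding $Z_i'$ to be zero. Conversely, if $Z$ is indecomposable, then $n$ must equal $1$ (otherwise the decomposition exhibits $n \geq 2$ non-zero summands, each detected by $Z_i(S^{d-1}) \neq 0$), so $Z$ is simple.

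The main obstacle I anticipate is the step $Z_i'(S^{d-1}) = 0 \Rightarrow Z_i' = 0$ invoked in the simple-implies-indecomposable direction. I would address this by reading Lemma~\ref{lem:decomposablealgtotft} as in fact providing a bijection between non-trivial algebra decompositions of $Z(S^{d-1})$ and non-trivial TFT decompositions of $Z$; equivalently, one can observe that $Z_i'(S^{d-1}) = 0$ forces the unit $Z_i'(D^d)(1) = 0$, which via a handle-body decomposition of any non-empty closed $B$-manifold makes $Z_i'$ vanish on all closed manifold invariants built from the empty boundary.
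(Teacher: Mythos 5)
Your proof is correct and follows the same route as the paper's: decompose $Z(S^{d-1}) \iso k^{\oplus n}$ via Lemma~\ref{lem:semisimplesuperalgebra}, lift to a splitting of $Z$ into simple summands via Lemma~\ref{lem:decomposablealgtotft}, and read off the equivalence of ``simple'' and ``indecomposable.'' The paper dispatches this in one line, so the only difference is that you spell out the details --- including the only genuinely nontrivial step (that $Z_i'(S^{d-1})=0$ forces $Z_i'$ to be the zero theory), which the paper leaves implicit; your punctured-cylinder/disk argument is the right way to see it, since $Z_i'(D^d)=0$ forces $\mathrm{id}_{Z_i'(M)}=0$ for every $(d-1)$-manifold $M$.
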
 
\begin{proof} Immediate by Lemmas~\ref{lem:semisimplesuperalgebra} and~\ref{lem:decomposablealgtotft} and the observation that a summand of a semisimple algebra is again a semisimple algebra.
\end{proof}

We recall from~\cite[Prop 3.2]{Reutter:2020aa} that simple field theories are multiplicative under connected sums. 
\begin{prop} \label{prop:multconnectsum}
	Let $\beta:B \to \rB O(d) $ be a connected tangential $(d-2)$-type and let $Z: \Bord^B_d \to \cC$ be a simple topological field theory.  Then, $Z(S^d)$ is invertible and $Z$ is multiplicative under connected sums: For a connected closed $B$-structured $d$-manifold $M$ and a connected $d$-dimensional $B$-bordism $W:A \to B$, the following holds, where the connected sum is taken in the interior of $W$:
\[Z(M\# W) = Z(S^n)^{-1} Z(M) Z(W)
\]
\end{prop}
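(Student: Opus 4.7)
The plan is to reduce the statement to a direct calculation in the one-dimensional Frobenius algebra $Z(S^{d-1})$, mimicking the argument of \cite[Prop.~3.2]{Reutter:2020aa} but adapted to super targets and arbitrary $(B,\beta)$ satisfying the hypotheses of Lemma~\ref{lem:Bsphere}. The crucial input is simplicity: by definition $Z(S^{d-1}) \cong k^{1|0}$ as a super vector space, and by Lemma~\ref{lem:sphereFrobAlg} this carries a commutative Frobenius algebra structure whose unit $\eta:= Z(D^d_{\text{in}}): k \to Z(S^{d-1})$ and counit $\varepsilon := Z(D^d_{\text{out}}): Z(S^{d-1}) \to k$ are given by the incoming and outgoing disks (which under our assumptions both bound the unique $B$-structure on $S^{d-1}$ by Lemma~\ref{lem:Bsphere}). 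Identifying $Z(S^{d-1}) = k$, the unit is multiplication by $1$ and the counit is multiplication by some scalar $c \in k$, and since the Frobenius form of a simple algebra is nondegenerate we must have $c \neq 0$. The closed sphere $S^d$ decomposes as $D^d_{\text{in}} \cup_{S^{d-1}} D^d_{\text{out}}$, so $Z(S^d) = \varepsilon \circ \eta = c$, establishing invertibility.

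Next I would compute $Z(M)$, $Z(W)$, and $Z(M\#W)$ in terms of the same puncturing. Write $M^\circ$ for $M$ with an open disk removed, viewed as a bordism $S^{d-1} \to \emptyset$, and let $m := Z(M^\circ):Z(S^{d-1}) \to k$, which is again a scalar. Then $M = M^\circ \cup_{S^{d-1}} D^d_{\text{in}}$, so $Z(M) = m \circ \eta = m$; equivalently, writing $M^\circ$ as a bordism $\emptyset \to S^{d-1}$ with scalar $m_+$, one has $Z(M) = \varepsilon \circ m_+ = c \cdot m_+$, hence $m = c \cdot m_+$. Similarly, puncture $W$ in its interior to obtain $W^\circ: A \to B \sqcup S^{d-1}$, and let $w := Z(W^\circ): Z(A) \to Z(B) \otimes Z(S^{d-1}) = Z(B)$. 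Then $W = (\id_B \sqcup D^d_{\text{out}}) \circ W^\circ$ gives $Z(W) = c \cdot w$. Finally, the connected sum is by definition obtained by gluing $M^\circ$ into the punctured $W$, i.e.
\begin{equation*}
M \# W \;=\; (\id_B \sqcup M^\circ) \circ W^\circ \colon A \to B,
\end{equation*}
so $Z(M \# W) = m \cdot w = c \, m_+ \cdot w$. Comparing with $Z(S^d)^{-1}\, Z(M) \, Z(W) = c^{-1} \cdot (c \, m_+) \cdot (c\, w) = c\, m_+ \, w$ yields the claimed identity.

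The one genuinely subtle point, and the main thing to verify carefully, is that all the pieces of $B$-structure in the decompositions are compatible: the disks and sphere carry the unique (up to orientation reversal) $B$-structure guaranteed by Lemma~\ref{lem:Bsphere}, and both the ``incoming'' and ``outgoing'' roles of $D^d$ are realised by the same $B$-manifold. This is precisely why we require $\beta$ to satisfy the hypotheses of Lemma~\ref{lem:Bsphere}; once this is in place, the manipulation above is a formal consequence of functoriality and symmetric monoidality of $Z$, together with the triviality of the algebra $Z(S^{d-1})$. No further input is needed, and the argument for general target $\cC$ with sufficient (co)limits is identical after interpreting the scalars appropriately as endomorphisms of the tensor unit.
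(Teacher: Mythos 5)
Your proof is correct and follows the same route the paper takes (the paper defers to \cite[Prop.~3.2]{Reutter:2020aa}, whose argument is precisely this ``cut out a disk, identify the algebra $Z(S^{d-1})$ as a one-dimensional Frobenius algebra, compare scalars'' manipulation). The reduction to the scalars $c = Z(S^d)$, $m = Z(M^\circ)$, and the map $w = Z(W^\circ)$ is right; the identity follows from $Z(M) = m$, $Z(W) = c\,w$, and $Z(M\#W) = m\,w$, with $c \neq 0$ forced by nondegeneracy of the Frobenius form on the one-dimensional algebra $Z(S^{d-1})$. You also correctly isolate the only genuinely delicate point: the identification of the incoming and outgoing disks and their boundary spheres as $B$-structured objects, which is exactly what Lemma~\ref{lem:Bsphere} (uniqueness of the $B$-structure on $D^d$ and $S^{d-1}$ up to orientation reversal, and the bounding/cobounding coincidence) supplies. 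As an aside, you implicitly correct a typo in the statement: the formula should read $Z(S^d)^{-1}$, not $Z(S^n)^{-1}$.

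Two small remarks. First, the detour through $m_+$ is harmless but unnecessary: once you know $Z(M) = m$ and $Z(W) = c\,w$, the computation $c^{-1}\cdot m \cdot (c\,w) = m\,w = Z(M\#W)$ closes the argument directly, and introducing $m_+$ only duplicates the work. Second, since the proposition is stated for a general target $\cC$ (rather than $\sVec_k$), the clean formulation is to take ``simple'' to mean $Z(S^{d-1}) \cong \mathbbm{1}_\cC$ and to read all the scalars $c, m$ as elements of the commutative monoid $\End_\cC(\mathbbm{1})$; your closing remark gestures at this, and it is indeed immediate, but it is worth noting that this is a hypothesis on the target (that $\End_\cC(\mathbbm{1})$ behaves like a ring in which the Frobenius nondegeneracy forces $c$ invertible) rather than something provable for arbitrary $\cC$.
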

\begin{proof}The proof is the same as \cite[Prop 3.2]{Reutter:2020aa}. 
\end{proof}

\subsection{Semisimple topological field theories}

In Section~\ref{sec:localoperatorsdecomp}, we used the unique $B$-structure on $S^d$ to obtain a commutative Frobenius algebra structure on $S^{d-1}$. In a similar way, for $a+b =d$, we can use the canonical $B$-structure $\overline{\theta}$ on $S^a\times S^b$ from Definition~\ref{def:canonicalBstructure} to obtain a Frobenius algebra structure on the value of $S^{a-1} \times S^b$. Specifically, we may decompose $S^a$ as a union of an incoming disk followed by an outgoing disk, and then take the product with $S^b$. This gives rise to a $B$-structure on $S^{a-1} \times S^b$ which we call the \emph{bounding-cobounding} $B$-structure, which we will also denote $\overline{\theta}$. Removing disks from $S^{a}$, and crossing with $S^b$ give $B$-structured bordisms which are easily seen to satisfy the axioms of a Frobenius algebra object which is commutative for $a \geq 2$. 

\begin{lemma}\label{lem:Frobspheres}
	Let $\beta: B \to \rB O(d)$ be a connected tangential $(d-2)$-type. Then for any topological field theory $Z: \Bord_d^B \to \sVec$, and any $a + b = d$, the value of $Z(S^{a-1}\times S^b, \overline{\theta})$ is a (commutative for  $a\geq 2$) super Frobenius algebra with multiplication and unit given as 
	\begin{equation}
	Z\left((D^a \backslash{} D^a \sqcup D^a) \times S^b, \overline{\theta}\right) 
	\hspace{1.5cm}
	Z\left(D^a \times S^b,\overline{\theta}\right). \qedhere
	\end{equation}
\end{lemma}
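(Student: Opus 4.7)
The plan is to mimic the proof of Lemma~\ref{lem:sphereFrobAlg}, replacing the role of the unique $B$-structure on $S^d$ by the canonical $B$-structure $\overline{\theta}$ on $S^a \times S^b$ from Definition~\ref{def:canonicalBstructure}. Throughout, the key point is that every relevant piece of the construction inherits its $B$-structure by restriction from a single $B_{d+2}$-structure on $D^{a+1} \times D^{b+1}$, whose uniqueness (up to orientation reversal) follows from the hypotheses on $\beta$ via the pullback square in Proposition~\ref{prop:pullbackstructures}.

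First, I would pin down the bounding-cobounding $B$-structure $\overline{\theta}$ on $S^{a-1} \times S^b$. Write $S^a \cong D^a \cup_{S^{a-1}} D^a$, so that $S^a \times S^b \cong (D^a \times S^b) \cup_{S^{a-1} \times S^b} (D^a \times S^b)$. Restricting the canonical $B$-structure on $S^a \times S^b$ to each half yields two $B$-structures on $D^a \times S^b$ (one viewed as incoming, one as outgoing), each restricting to a $B$-structure on $S^{a-1} \times S^b$. An obstruction-theoretic argument analogous to Lemma~\ref{lem:Bsphere} shows that these two structures on $S^{a-1} \times S^b$ coincide up to diffeomorphism, yielding a well-defined $B$-structure that is simultaneously bounding and cobounding; I call this the bounding-cobounding $B$-structure and also denote it $\overline{\theta}$.

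Next, for each $k \geq 0$, I would write $S^a$ as the union of a $k$-holed sphere $\Sigma_{0,k}$ and $k$ standard disks, and take the product with $S^b$. This exhibits $\Sigma_{0,k} \times S^b$ as a $B$-bordism between disjoint unions of copies of $(S^{a-1} \times S^b, \overline{\theta})$, with $B$-structure inherited from $\overline{\theta}$ on $S^a \times S^b$. The cases of $\Sigma_{0,3}$ with two incoming discs (respectively $\Sigma_{0,1} = D^a$ with one outgoing boundary) give the claimed multiplication $Z((D^a \setminus (D^a \sqcup D^a)) \times S^b, \overline{\theta})$ and unit $Z(D^a \times S^b, \overline{\theta})$, and the other choices of orientations supply a compatible comultiplication and counit.

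Finally, associativity, unitality, the Frobenius relation, and (for $a \geq 2$) commutativity all follow from the analogous diffeomorphisms of the underlying oriented pair-of-pants bordisms, crossed with $S^b$. Each such diffeomorphism respects the ambient $B$-structure because both sides arise by restriction from the same $\overline{\theta}$ on $S^a \times S^b$ (and ultimately from the unique $B_{d+2}$-structure on $D^{a+1} \times D^{b+1}$). The main obstacle, and essentially the only verification required beyond the classical oriented case, is checking precisely this coherence of $B$-structures on the various pieces; the inductive pullback description of $\beta$ in Proposition~\ref{prop:pullbackstructures} makes this a direct obstruction-theory computation, entirely parallel to Lemmas~\ref{lem:Bsphere} and~\ref{lem:sphereFrobAlg}.
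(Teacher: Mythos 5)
Your proposal is correct and follows essentially the same approach as the paper: define the bounding–cobounding $B$-structure on $S^{a-1}\times S^b$ by restricting $\overline{\theta}$ from $S^a\times S^b$, exhibit the relevant bordisms as holed $S^a$'s crossed with $S^b$, and observe that the Frobenius (and commutativity for $a\geq 2$) axioms hold because all pieces inherit compatible $B$-structures. One small simplification: there is no need for a separate obstruction-theory argument to show the "bounding" and "cobounding" structures on $S^{a-1}\times S^b$ agree — they are tautologically the same, being the single restriction of $\overline{\theta}$ to the equatorial $S^{a-1}\times S^b$ inside $S^a\times S^b$ (the two hemisphere disks $D^a\times S^b$ then witness it as bounding and cobounding, respectively).
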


We may now define \emph{semisimple} even-dimensional topological field theories. 

\begin{definition}\label{def:semisimpleTFT}
	Let $d=2q$ and let $\beta: B \to \rB O(d)$ be a connected tangential $(q-1)$-type. Then a topological field theory $Z: \Bord_d^B \to \sVec$, is called \emph{semisimple} if the algebras $Z(S^{2q-1}, \overline{\theta})$ and $Z(S^q\times S^{q-1}, \overline{\theta})$, with multiplication and unit given as in Lemmas~\ref{lem:sphereFrobAlg} and~\ref{lem:Frobspheres}, are semisimple. 
\end{definition}

We will show in Section~\ref{sec:semisimpletostablediffeo} that semisimple topological field theories lead to stable diffeomorphism invariants. 

\begin{example}\label{exm:TFTs}
	The examples in \cite{Reutter:2020aa} of semisimple TFTs still apply. Specifically (\cite[Thm.~2.9]{Reutter:2020aa}) any unitary topological field theory is semisimple, and (\cite[Thm.~2.10]{Reutter:2020aa}) any once-extended topological field theory valued in the $2$-category of Cauchy complete $k$-linear categories and $k$-linear functors, or the $2$-category of $k$-algebras and $k$-bimodules is semisimple. 
\end{example}

\begin{remark}As stated, Definition~\ref{def:semisimpleTFT} would make sense for arbitrary tangential structure $\beta:B \to \rB O(d)$. However, in the case where $\beta$ is not $(q-1)$-truncated, we do not expect this to lead to a good notion of semisimplicity. Indeed, one essential reason that Definition~\ref{def:semisimpleTFT} imposes a considerable constraint on our field theory is that the middle-dimensional sphere $S^q$ admits a unique $B$-structure which we access via the canonical $B$-structure $\overline{\theta}$ on $S^q\times S^{q-1}$. For arbitrary tangential structures, we expect a better notion of semisimplicity to impose conditions on all $B$-structured middle-dimensional spheres $S^q$. 

For our current comparison with the notion of stable diffeomorphism, we only need to consider tangential $(q-1)$-types $(B, \beta)$ for which Definition~\ref{def:semisimpleTFT} suffices.
\end{remark}

\subsection{An eigenvalue equation in the bordism category}

In the next section, we will generalize the arguments of~\cite{Reutter:2020aa} and show that the manifold invariant induced by any even-dimensional semisimple field theory only depends on the stable diffeomorphism class of the manifold. 
Following~\cite[Prop. 3.4]{Reutter:2020aa}, the main step of this argument involves a diffeomorphism between certain $B$-bordisms which we now construct.

For a Frobenius algebra the composition of the comultiplication followed by the multiplication is called the window map. For example, under the assumptions of Lemma~\ref{lem:Bsphere}, we have seen in Section~\ref{sec:localoperatorsdecomp} that the sphere $S^{d-1}$ becomes a Frobenius algebra object with comultiplication and multiplication given by $D^d\backslash (D^d \sqcup D^d)$ with appropriate $B$-structure, seen as a bordism from $S^{d-1} \sqcup S^{d-1} \to S^{d-1}$, or $S^{d-1} \to S^{d-1} \sqcup S^{d-1}$, respectively. Hence, its associated \emph{window bordism} $W_{S^{d-1}}: S^{d-1} \to S^{d-1}$ is diffeomorphic to 
\begin{equation*}
	W_{S^{d-1}} := (D^d \backslash D^d \sqcup D^d) \cup_{S^{d-1} \sqcup S^{d-1}} (D^d\backslash (D^d \sqcup D^d) \cong (S^{d-1} \times S^1) \backslash{} (D^d \sqcup D^d). 
\end{equation*}
More generally, under the same assumptions and for  every $a + b = d$,  we have seen in Lemma~\ref{lem:Frobspheres} that $S^{a-1} \times S^b$ becomes a Frobenius algebra object with \emph{window bordism} $S^{a-1} \times S^b \to S^{a-1} \times S^b$: 
\begin{equation*}
	W_{S^{a-1} \times S^b} := W_{S^{a-1}} \times S^b \cong ((S^{a-1} \times S^1) \backslash{} (D^a \sqcup D^a)) \times S^b
\end{equation*}

The first goal of this section is to prove the following proposition:
\begin{proposition}\label{pro:importantdiffeo}
	Let $\beta:B \to \rB O(d)$ be a connected tangential $(d-2)$-type. Then, we have a diffeomorphism of $B$-structured bordisms $\emptyset \To (S^{a-1} \times S^b, \overline{\theta})$:
	\begin{equation*}
		\left( W_{S^{a-1}} \times S^b, \overline{\theta} \right) \circ \left( S^{a-1} \times D^{b+1}, \overline{\theta} \right) \cong \left( S^{a-1} \times D^{b+1}, \overline{\theta} \right) \#  \left( S^{a-1} \times S^{b+1}, \theta_2\right) \# \left(S^1 \times S^{d-1}, \theta_1 \right) 
	\end{equation*}
	where $\#$ denotes connected sum on the interior, each $\overline{\theta}$ denotes a canonical $B$-structure from Section~\ref{sec:connectsumBFrob}, and $\theta_1$ and $\theta_2$ are some $B$-structures on  $S^{a-1} \times S^{b+1}$ and $S^1 \times S^{d-1}$, respectively. 
\end{proposition}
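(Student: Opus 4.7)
The plan is to reduce the claimed diffeomorphism of bordisms to one of closed manifolds by capping both sides with the bordism $(D^a \times S^b, \overline{\theta}): (S^{a-1} \times S^b, \overline{\theta}) \to \emptyset$. On the left-hand side, writing $W_{S^{a-1}} = (S^{a-1} \times S^1) \setminus (D_1^a \sqcup D_2^a)$, the $D^a \times S^b$ cap restores one of the two deleted tubes in $S^{a-1} \times S^1 \times S^b$, while the $S^{a-1} \times D^{b+1}$ cap replaces the other $D^a \times S^b$-region; since $\partial(D^a \times S^b) = S^{a-1} \times S^b = \partial(S^{a-1} \times D^{b+1})$, this replacement is exactly $b$-surgery on the fibre $\{\ast\} \times S^b \subset S^{a-1} \times S^1 \times S^b$. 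On the right-hand side, the $D^a \times S^b$ cap fuses with $S^{a-1} \times D^{b+1}$ into an $S^d$-summand, so the closed right-hand side simplifies to $(S^1 \times S^{d-1}) \# (S^{a-1} \times S^{b+1})$. The proposition thereby reduces to the closed diffeomorphism
\begin{equation*}
	X \;\cong\; (S^1 \times S^{d-1}) \,\#\, (S^{a-1} \times S^{b+1}),
\end{equation*}
where $X$ denotes $(S^{a-1} \times S^1 \times S^b)$ after $b$-surgery on a fibre $\{\ast\} \times S^b$; the passage from this closed statement back to the bordism statement is by a standard isotopy matching the $D^a \times S^b$ excision on both sides.

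I would establish this closed-manifold diffeomorphism via handle calculus. The product handle decomposition of $S^{a-1} \times S^1 \times S^b$ has eight handles indexed by subsets of $\{S^{a-1}, S^1, S^b\}$, of indices $0, 1, a{-}1, b, a, b{+}1, a{+}b{-}1, a{+}b$. The $b$-surgery on the central fibre is equivalent to attaching a $(b{+}1)$-handle along that fibre; this $(b{+}1)$-handle's attaching sphere meets the belt sphere of the trivially-attached $b$-handle (the one coming from $(0,0,b)$) transversely once, so the two cancel. A handle slide then reorganizes what remains into the standard handle decomposition of the target: the $1$-handle pairs with the $(a{+}b{-}1) = (d{-}1)$-handle to produce the $S^1 \times S^{d-1}$-summand, and the $(a{-}1)$-handle pairs with the surviving $(b{+}1)$-handle to produce the $S^{a-1} \times S^{b+1}$-summand. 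As sanity checks, one verifies directly that Euler characteristic, fundamental group (via van Kampen applied to the removed tubular neighborhood), and middle homology (via Mayer--Vietoris) coincide on the two sides.

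For the $B$-structures, recall from Lemma~\ref{lem:Bsphere} that $\beta$ is assumed to be a tangential $n$-type with $n \leq d-2$, forcing $B$-structures on $(d-1)$-spheres to be unique up to orientation-reversal. The canonical structures $\overline{\theta}$ on $W_{S^{a-1}} \times S^b$ (obtained from the product of the unique $B$-structure on the $a$-disk pieces with the canonical $B$-structure on $S^b$) and on $S^{a-1} \times D^{b+1}$ combine to give a definite $B$-structure on the left-hand side. We then simply \emph{define} $\theta_1$ on $S^1 \times S^{d-1}$ and $\theta_2$ on $S^{a-1} \times S^{b+1}$ to be the $B$-structures induced by transport along the smooth diffeomorphism of the preceding paragraph; existence of such structures is immediate from the diffeomorphism itself.

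The main obstacle is the handle-theoretic verification of the closed diffeomorphism, especially identifying the precise handle slides needed to recognize the surgered product as the advertised connected sum. In low dimensions one can often bypass this by direct construction (for instance, in dimension $3$ the resulting manifold is determined up to diffeomorphism by its $\pi_1$ via standard prime decomposition), while in general the argument amounts to a surgery-theoretic computation that could alternatively be packaged via Kirby-calculus-like moves on the product decomposition.
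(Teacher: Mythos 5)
Your outline captures the correct smooth-topological picture and the closed-manifold diffeomorphism you identify (surgered $S^{a-1}\times S^1\times S^b$ equals $(S^1\times S^{d-1})\# (S^{a-1}\times S^{b+1})$) is exactly Lemma~\ref{lem:keydiffeoclosed} in the paper. However, your handling of the $B$-structures has a genuine gap: the proposition requires the $S^{a-1}\times D^{b+1}$ summand on the right-hand side to carry the \emph{canonical} $B$-structure $\overline{\theta}$, and you only argue that $\theta_1,\theta_2$ are ``whatever transport gives.'' Nothing in your proposal forces the transported structure on the $S^{a-1}\times D^{b+1}$ piece to be $\overline{\theta}$, and this is essential for the downstream eigenvalue argument in Lemma~\ref{lem:nonzerotori}, where $Z(S^{a-1}\times D^{b+1},\overline{\theta})$ must be recognized as a distinguished nonzero vector. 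The paper's fix is that Lemma~\ref{lem:movetoconnectifnull} realizes this $S^{a-1}\times D^{b+1}$ as $S^d\setminus (D^a\times S^b)$ for an unknotted $D^a\times S^b$ inside a $d$-disk, so its $B$-structure is restricted from the unique (up to orientation reversal) $B$-structure on $S^d$; that forces it to be canonical. You should also observe that the fact that the transported $B$-structure factors as a connected sum at all uses the uniqueness of $B$-structures on $S^{d-1}$ from Lemma~\ref{lem:Bsphere}.

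Two further remarks on the route. First, you reduce to a closed statement by capping both bordisms with $D^a\times S^b$ and then invoke ``a standard isotopy matching the excision''; the paper instead proves the analogue of Lemma~\ref{lem:movetoconnectifnull} directly for the manifold-with-boundary $W_{S^{a-1}}\times S^b\circ S^{a-1}\times D^{b+1}$ (viewing it as the surgered closed manifold minus a $D^a\times S^b$ whose core bounds an embedded disk and is hence unknotted in a ball), avoiding any need to isotope a diffeomorphism of closed manifolds to respect a chosen cap. Second, your handle-cancellation sketch of the closed diffeomorphism is a plausible alternative to the paper's explicit decomposition of $S^{a-1}\times S^1\times S^b$ into pieces labeled by hemispheres of $S^{a-1}$ and $S^1$; the paper's route, while perhaps less slick, has the advantage that every identification is a product diffeomorphism, which is what feeds cleanly into the $B$-structure tracking. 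If you keep the handle approach you would need to verify the claimed cancellation and slides in all the relevant index ranges, including the low-dimensional cases $a-1\le 1$ or $b\le 1$ where index coincidences occur.
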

\begin{remark} 
Connected sum is only unambiguously defined for connected manifolds, and hence the right hand side of Proposition~\ref{pro:importantdiffeo} is only unambiguous for $a>1$. However, although we are not going to use this case in the rest of the paper, there is still a diffeomorphism (constructed in the proof of Proposition~\ref{pro:importantdiffeo}) if the connected sum is taken in the correct components. 
\end{remark}

As in~\cite{Reutter:2020aa}, interpreting the connected sum as a `scalar multiplication' operation, the diffeomorphism of Proposition~\ref{pro:importantdiffeo} may be understood as an `eigenvalue equation' in the bordism category; asserting that the endomorphism $W_{S^{a-1} \times S^b}$ has `eigenvector' $S^{a-1} \times D^{b+1}$ with eigenvalue $\left( S^{a-1} \times S^{b+1}\right)\#\left(S^1 \times S^{d-1} \right) $.

\begin{lemma}\label{lem:movetoconnectifnull}
	Let $M$ be a smooth $d$-dimensional manifold, $a+b = d$, and $i:D^a \times S^b \hookrightarrow M$ an embedding. Suppose that $i$ is isotopic to an embedding so that the image lies in an embedded $d$-disk and the corresponding $S^b$ is unknotted inside this disk. Then there is a diffeomorphism:
	\begin{equation*}
		\left(M \backslash{} D^a \times S^b \right) \cong \left(S^{a-1} \times D^{b+1}\right) \# M.
	\end{equation*} 
	Here $\#$ denotes connected sum on the interior. 
\end{lemma}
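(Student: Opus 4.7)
The plan is to reduce everything to a local computation inside the embedded $d$-disk, where we can use a standard handle decomposition of the sphere. By hypothesis, after an isotopy we may assume that $i(D^a\times S^b)$ is contained in an embedded $d$-disk $D^d\hookrightarrow M$, and that the core $\{0\}\times S^b$ is unknotted inside this $D^d$. Since the complement we wish to identify is unchanged under isotopy of $i$, it suffices to understand $D^d\setminus \mathrm{int}(i(D^a\times S^b))$ and then glue back the rest of $M$ along $\partial D^d = S^{d-1}$.

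The key local input is the standard decomposition
\begin{equation*}
S^d \;=\; \partial(D^a\times D^{b+1}) \;=\; (D^a\times S^b)\,\cup_{S^{a-1}\times S^b}\,(S^{a-1}\times D^{b+1}).
\end{equation*}
Capping off $D^d$ by a second $d$-disk to obtain $S^d$, the unknottedness hypothesis together with uniqueness of tubular neighborhoods of the embedded sphere $\{0\}\times S^b\hookrightarrow S^d$ identifies $i(D^a\times S^b)$ (up to ambient diffeomorphism of $S^d$) with the left-hand summand of the decomposition above. Consequently
\begin{equation*}
S^d\setminus\mathrm{int}(i(D^a\times S^b)) \;\cong\; S^{a-1}\times D^{b+1},
\end{equation*}
and hence $D^d\setminus\mathrm{int}(i(D^a\times S^b))$ is diffeomorphic to $(S^{a-1}\times D^{b+1})$ with an open $d$-disk removed from its interior (the disk being the one we added to make up $S^d$).

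Finally, decompose $M\setminus\mathrm{int}(D^a\times S^b)$ along the sphere $S^{d-1}=\partial D^d$:
\begin{equation*}
M\setminus\mathrm{int}(D^a\times S^b)\;\cong\;\bigl(M\setminus\mathrm{int}(D^d)\bigr)\cup_{S^{d-1}}\bigl(D^d\setminus\mathrm{int}(D^a\times S^b)\bigr).
\end{equation*}
Substituting the local computation gives the gluing along $S^{d-1}$ of $M\setminus\mathrm{int}(D^d)$ with $(S^{a-1}\times D^{b+1})\setminus\mathrm{int}(D^d)$, which is by definition the interior connected sum $(S^{a-1}\times D^{b+1})\#M$.

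The only genuinely non-routine point is the identification in the middle step: that a tubular neighborhood of an unknotted $S^b\subset D^d$ is standard up to diffeomorphism of the ambient disk. This uses the isotopy extension theorem and uniqueness of tubular neighborhoods, together with the observation that an unknotted $S^b$ in a disk has trivial normal bundle because it bounds an embedded $D^{b+1}$.
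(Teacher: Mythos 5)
Your proof is correct and follows essentially the same route as the paper's: after isotoping $i$ into an embedded $d$-disk, the complement in $M$ becomes $M$ connect-summed with $S^d\setminus i(D^a\times S^b)$, and unknottedness of the core identifies the latter with $S^{a-1}\times D^{b+1}$ via the standard genus decomposition $S^d=\partial(D^a\times D^{b+1})$. The paper's proof states the two steps more tersely, while you additionally unpack the middle identification via uniqueness of tubular neighborhoods and the explicit gluing along $\partial D^d$, but the logical content is the same.
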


\begin{proof}
	By assumption we may change $i$ by an isotopy so that its image lies inside a disk. Thus
	\begin{equation*}
		\left(M \backslash{} D^a \times S^b \right)  \cong \left(S^d \backslash{} i(D^a \times S^b) \right) \# M .
	\end{equation*}
	Since the image of the core $S^b$ is assumed to be unknotted, we have $\left(S^d \backslash{} i(D^a \times S^b) \right) \cong \left(S^{a-1} \times D^{b+1}\right)$, which proves the result. 
\end{proof}

\begin{corollary}\label{cor:closingmanifolds}
		There exists a diffeomorphism of smooth manifolds:
		\begin{multline*}
			\left( W_{S^{a-1}} \times S^b\right) \circ \left( S^{a-1} \times D^{b+1}\right) \\ \cong \left(S^{a-1} \times D^{b+1}\right) \# \left[ \left(S^{a-1} \times S^1 \times S^b \backslash{} D^{a} \times S^b \right) \cup_{S^{a-1} \times S^b} S^{a-1} \times D^{b+1} \right].
		\end{multline*}
		
\end{corollary}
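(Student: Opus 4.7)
The plan is to recognize the composite on the left-hand side as the right-hand manifold (before connected sum) with one additional copy of $D^a \times S^b$ excised, and then to invoke Lemma~\ref{lem:movetoconnectifnull}. Concretely, write
\[
	M := \bigl[\bigl((S^{a-1}\times S^1)\setminus D^a\bigr) \times S^b\bigr] \cup_{S^{a-1}\times S^b} \bigl(S^{a-1}\times D^{b+1}\bigr),
\]
which coincides with the manifold in the square brackets on the right-hand side (using the identification $(S^{a-1}\times S^1 \times S^b)\setminus(D^a \times S^b) \cong ((S^{a-1}\times S^1)\setminus D^a)\times S^b$). Since $W_{S^{a-1}}\times S^b$ is obtained from $((S^{a-1}\times S^1)\setminus D^a)\times S^b$ by removing an additional $D^a\times S^b$, and this additional disk is removed from the interior (disjoint from the glued boundary), the composite $(W_{S^{a-1}}\times S^b)\circ(S^{a-1}\times D^{b+1})$ is precisely $M \setminus (D^a \times S^b)$ for the indicated embedding $i:D^a \times S^b \hookrightarrow M$.

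Next, I would verify that $i$ satisfies the hypotheses of Lemma~\ref{lem:movetoconnectifnull}. The $S^b$-factor of this embedded $D^a\times S^b$ has the form $\{x\}\times S^b$ for a point $x$ in a $D^a$-disk inside $(S^{a-1}\times S^1)\setminus D^a$. Since $(S^{a-1}\times S^1)\setminus D^a$ is connected with boundary $S^{a-1}$, one may isotope $x$ across this boundary and into the cap $S^{a-1}\times D^{b+1}$, carrying $\{x\}\times S^b$ through the gluing locus $S^{a-1}\times S^b$ to a small sphere of the form $\{x_0\}\times S^b_r \subset \{x_0\}\times D^{b+1}$ for some $x_0 \in S^{a-1}$ and small $r>0$. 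A product disk neighborhood $U \times D^{b+1}$ of $\{x_0\}\times D^{b+1}$ inside $S^{a-1}\times D^{b+1}$, where $U \subset S^{a-1}$ is a small $D^{a-1}$-disk around $x_0$, is diffeomorphic to $D^{a-1}\times D^{b+1} \cong D^d$ and contains $\{x_0\}\times S^b_r$ as a standardly embedded (hence unknotted) $S^b$. Correspondingly, a tubular neighborhood of this sphere in $U\times D^{b+1}$ is isotopic to $i(D^a\times S^b)$.

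Having verified the hypothesis, Lemma~\ref{lem:movetoconnectifnull} immediately yields
\[
	M\setminus(D^a \times S^b) \;\cong\; (S^{a-1}\times D^{b+1})\,\#\,M,
\]
which is exactly the claimed diffeomorphism. The main (albeit modest) technical point is the isotopy step: one must ensure the $S^b$-core is carried into the cap in such a way that both its unknottedness and the product structure of its tubular neighborhood are preserved. Once this is set up, the identification of the connected-sum summand $(S^{a-1}\times D^{b+1})$ is immediate since it is the complement of an unknotted $S^b$ inside $S^d$.
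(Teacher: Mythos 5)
Your proof is correct and follows essentially the same approach as the paper: recognize the composite on the left as the bracketed manifold $M$ with an additional $D^a\times S^b$ excised, then verify that the extra embedding satisfies the hypothesis of Lemma~\ref{lem:movetoconnectifnull}. The only difference is presentational: the paper says the core $\{0\}\times S^b$ of the remaining $D^a\times S^b$ now bounds an embedded $(b+1)$-disk (provided by pushing along the $(S^{a-1}\times S^1)\setminus D^a$ factor and capping in the $S^{a-1}\times D^{b+1}$ piece) and hence can be isotoped into a $d$-disk where it is unknotted, whereas you spell out the isotopy explicitly, tracking the core across the gluing locus into the cap and identifying the resulting tubular neighborhood with $D^{a-1}\times D^{b+1}\cong D^d$. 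These are the same argument, stated at slightly different levels of explicitness.
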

\begin{proof}
	The manifold $W_{S^{a-1}} \times S^b$ is $S^{a-1} \times S^1 \times S^b$ with two parallel copies of $D^a \times S^b$ removed. In $\left( W_{S^{a-1}} \times S^b\right) \circ \left( S^{a-1} \times D^{b+1}\right)$, a copy of $S^{a-1} \times D^{b+1}$ has been attached to the boundary of one of these $D^a \times S^b$ (i.e. we preformed surgery on one of these $D^a \times S^b$). In the resulting manifold, the core $\{0\} \times S^b$ of the remaining  $D^a \times S^b$ now bounds an embedded $(b+1)$-disk. Thus it can be isotoped into a $d$-disk in which it is unknotted. Now Lemma~\ref{lem:movetoconnectifnull} applies and gives the desired result. 
\end{proof}

Note that the second summand on the right hand side of Corollary~\ref{cor:closingmanifolds} is the manifold obtained from $S^{a-1} \times S^1 \times S^b$ by surgery along the embedded $\{0\} \times \{0\} \times S^b$.
\begin{lemma}\label{lem:keydiffeoclosed}
The manifold $\left(S^{a-1} \times S^1 \times S^b \backslash{} D^{a} \times S^b \right) \cup_{S^{a-1} \times S^b} S^{a-1} \times D^{b+1}$ obtained from surgery along $\{0\} \times \{0\} \times S^b \hookrightarrow S^{a-1}\times S^1 \times S^b$ is diffeomorphic to 
	\begin{equation*}
		 S^{a-1} \times S^{b+1}\# S^1 \times S^{d-1}  .
	\end{equation*}
\end{lemma}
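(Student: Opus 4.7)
The plan is to exhibit $M' := (S^{a-1} \times S^1 \times S^b \setminus D^a \times S^b) \cup_{S^{a-1} \times S^b} (S^{a-1} \times D^{b+1})$ as the boundary of an explicit compact $(d+1)$-manifold that is manifestly a boundary-connected sum. Setting $Y := (S^{a-1} \times S^1) \setminus \mathrm{int}(D^a)$, a direct inspection of the defining decomposition of $M'$ shows
\[M' = (Y \times S^b) \cup_{S^{a-1} \times S^b} (S^{a-1} \times D^{b+1}) = \partial(Y \times D^{b+1}),\]
since $\partial(Y \times D^{b+1}) = (\partial Y \times D^{b+1}) \cup (Y \times \partial D^{b+1})$ with $\partial Y = S^{a-1}$ and $\partial D^{b+1} = S^b$.

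To identify $Y \times D^{b+1}$, I would choose coordinates so that $D^a = D^{a-1}_\varepsilon \times D^1_\varepsilon$ is a product disk around $(p_0, q_0) \in S^{a-1} \times S^1$, and introduce the complementary large disks $D^{a-1}_{p_1} := S^{a-1} \setminus \mathrm{int}(D^{a-1}_\varepsilon)$ and $D^1_{q_1} := S^1 \setminus \mathrm{int}(D^1_\varepsilon)$ around antipodal points $p_1, q_1$. Each is diffeomorphic to a standard disk, and a point $(x,y)$ avoids $D^a$ iff $x \in D^{a-1}_{p_1}$ or $y \in D^1_{q_1}$, so
\[Y = (D^{a-1}_{p_1} \times S^1) \cup (S^{a-1} \times D^1_{q_1}),\]
with intersection $D^{a-1}_{p_1} \times D^1_{q_1} \cong D^a$. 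Taking products with $D^{b+1}$ exhibits $Y \times D^{b+1}$ as $A \cup B$, where $A := D^{a-1}_{p_1} \times S^1 \times D^{b+1} \cong S^1 \times D^d$ and $B := S^{a-1} \times D^1_{q_1} \times D^{b+1} \cong S^{a-1} \times D^{b+2}$, intersecting in the $(d+1)$-disk $A \cap B \cong D^{d+1}$.

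The crucial step is to recognize $A \cup B$ as the boundary-connected sum $A \natural B$. The cleanest route is handle-theoretic: the product handle decomposition of $S^{a-1} \times S^1$ with the top $a$-handle removed equips $Y$ (and hence $Y \times D^{b+1}$) with a handle decomposition consisting of a single handle in each of the indices $0, 1, a-1$. One would check that the $1$-handle attaches trivially (its attaching $S^0$ lies in a coordinate disk with standard framing), producing $S^1 \times D^d$ after attachment, and that the subsequent $(a-1)$-handle then attaches along a standardly embedded $S^{a-2} \subset S^1 \times S^{d-1} = \partial(S^1 \times D^d)$ contained in a coordinate disk with trivial normal framing. This realizes $Y \times D^{b+1}$ precisely as the boundary-connected sum $S^1 \times D^d \natural S^{a-1} \times D^{b+2}$. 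Taking boundaries then yields
\[M' \cong \partial(S^1 \times D^d \natural S^{a-1} \times D^{b+2}) = (S^1 \times S^{d-1}) \# (S^{a-1} \times S^{b+1}).\]

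The principal obstacle is justifying that the $(a-1)$-handle attaches trivially, i.e. that after attaching the $1$-handle the embedded $S^{a-2}$ can be isotoped into a coordinate disk with the product framing. Equivalently, one can argue geometrically: the codimension-zero overlap $A \cap B \cong D^{d+1}$ in $Y \times D^{b+1}$ is a bicollar of the shared boundary disk $D^{a-1}_{p_1} \times D^1_{q_1} \times S^b$, so by standard collar uniqueness one can smoothly thin this bicollar into a codimension-one boundary gluing, which is by definition a boundary-connected sum. Either argument is essentially routine once the handle structure is made explicit, but it is where all of the smooth input enters the proof.
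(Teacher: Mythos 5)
Your observation that the surgered manifold $M'$ equals $\partial(Y \times D^{b+1})$ is correct, and the strategy of realizing $M'$ as a boundary and decomposing the filling is a genuinely different route from the paper's, which decomposes $M'$ directly in dimension $d$ by hemispherical splittings of $S^{a-1}$ and $S^1$ (Figures 2--3).

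However, the decisive step --- identifying $Y \times D^{b+1}$ with $(S^1 \times D^d)\,\natural\,(S^{a-1} \times D^{b+2})$ --- is a genuine gap, and the ``bicollar'' justification you offer is incorrect. The overlap $A\cap B = D^{a-1}_{p_1}\times D^1_{q_1}\times D^{b+1}$ is \emph{not} a bicollar of the $d$-disk $D^{a-1}_{p_1}\times D^1_{q_1}\times S^b$: a bicollar would have the form $D^{a-1}\times D^1\times S^b\times[-1,1]$, whereas the last factor here is the disk $D^{b+1}$, not $S^b\times[-1,1]$. Indeed, the analogous decomposition at the level of $Y$ itself is definitively \emph{not} a boundary-connected sum: for $a=2$ the manifold $Y$ is the once-punctured torus, which has a single boundary circle, while the boundary-connected sum of the annuli $D^1\times S^1$ and $S^1\times D^1$ has three boundary circles, so $A'\cup B'\not\cong A'\natural B'$. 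Some essential use of the extra $D^{b+1}$-thickening is therefore required, and general collar uniqueness cannot supply it. The handle-theoretic route is the right one, but the claim that the $(a-1)$-handle attaches trivially --- i.e.\ along an $S^{a-2}\subset \partial(S^1\times D^d)$ that is isotopic into a coordinate ball with the product framing --- is exactly where the content of the lemma lives: one has to exhibit the explicit disk bounded by the attaching sphere and identify the framing (general-position unknotting of $S^{a-2}\subset S^d$ is not available in all of the relevant range, and the framing is a further datum). Doing so amounts to carrying out, one dimension up, the same explicit identification the paper performs directly via its shaded/unshaded decomposition; that verification needs to appear in the proof rather than being deferred as ``essentially routine.''
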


\begin{proof}
We will begin by writing $S^{a-1} = D^{a-1}_+ \cup_{S^{a-2}} D^{a-1}_-$ and $S^1 = D^1_+ \cup_{S^0} D^1_-$ as unions of upper and lower hemispheres. This gives the decomposition depicted in Figure~\ref{fig:Torus} of $S^{a-1} \times S^1 \times S^b$ as a gluing of manifolds with corners.
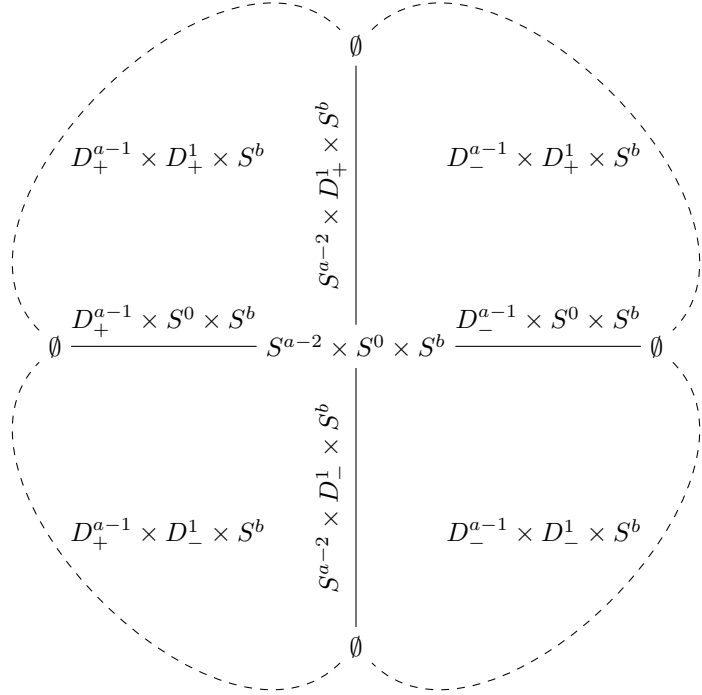
\begin{figure}[ht]
	\begin{center}
	\begin{tikzpicture}
		 \clip (-4.75,-4.75) rectangle (4.75, 4.75);
			\node (C) at (0, 0) {$ S^{a-2} \times S^0  \times S^b$};
			\node (W) at (-4, 0) {$ \emptyset $};
			\node (N) at (0, 4) {$ \emptyset $};
			\node (E) at (4, 0) {$\emptyset $};
			\node (S) at (0, -4) {$\emptyset $};
			\draw [-] (W) -- node [above] {$ D^{a-1}_+ \times S^0 \times S^b $} (C);
			\draw [-] (N) -- node [above, rotate=90] {$ S^{a-2} \times D^1_+ \times S^b$} (C);
			\draw [-] (E) -- node [above] {$ D^{a-1}_- \times S^0 \times S^b $} (C);
			\draw [-] (S) -- node [above, rotate=90] {$ S^{a-2} \times D^1_- \times S^b $} (C);
		
			\draw [dashed] (W) to [out=135, in = 135] (N);
			\node at (-2.5,2.5) {$D^{a-1}_+ \times D^1_+ \times S^b$};
		
			\draw [dashed] (N) to [out=45, in = 45] (E);
			\node at (2.5,2.5) {$D^{a-1}_- \times D^1_+ \times S^b$};
		
			\draw [dashed] (E) to [out=315, in = 315] (S);
			\node at (2.5,-2.5) {$D^{a-1}_- \times D^1_- \times S^b$};
		
			\draw [dashed] (S) to [out=225, in = 225] (W);
			\node at (-2.5,-2.5) {$D^{a-1}_+ \times D^1_- \times S^b$};
		
	\end{tikzpicture}
	\end{center}
	\caption{A decomposition of $S^{a-1} \times S^1 \times S^b$.}
	\label{fig:Torus}
\end{figure}
The surgered manifold is obtained from $S^{a-1} \times S^1 \times S^b$ by removing a copy of $D^a \times S^b$ and replacing it with $S^{a-1} \times D^{b+1}$ (i.e. doing surgery). The copy of $D^a \times S^b$ that we shall use is $D^{a-1}_- \times D^1_- \times S^b$, denoted in the lower right-hand corner of the above decomposition. 
We also have 
\begin{align*}
	S^{a-1} \times D^{b+1} &\cong \left( S^{a-2} \times D^1_- \cup_{S^{a-2} \times S^0} D^{a-1}_- \times S^0 \right) \times D^{b+1} \\
	& \cong \left( S^{a-2} \times D^1_- \times D^{b+1} \right) \cup_{S^{a-2} \times S^0 \times D^{b+1}} \left(D^{a-1}_- \times S^0 \times D^{b+1} \right)
\end{align*}	
and so our surgered manifold is given as the decomposition depicted in Figure~\ref{fig:Torusfirstsurgery} as a gluing of manifolds with corners.

\begin{figure}[ht]
	\begin{center}
	\begin{tikzpicture}
		 \clip (-4.75,-5.5) rectangle (5.5 ,4.75);
			\node (SE) at (4,-4) {$\emptyset$};
			\fill [black!5] (-4,0) to [out=135, in = 135] (0,4) to [out=45, in = 45] (4,0) to [out = 0, in = 0] (4,-4) -- (0,0) -- (-4,0);
		
		
			\fill [black!5] (W) to [out=135, in = 135] (N) -- (W);
			\fill [black!5] (N) to [out=45, in = 45] (E) -- (N);
			\fill [black!5] (E) to [out = 0, in = 0] (SE) -- (E);
		
			\node (C) at (0, 0) {$ S^{a-2} \times S^0  \times S^b$};
			\node (W) at (-4, 0) {$ \emptyset $};
			\node (N) at (0, 4) {$ \emptyset $};
			\node (E) at (4, 0) {$\emptyset $};
			\node (S) at (0, -4) {$\emptyset $};
		
			\node (SE) at (4,-4) {$\emptyset$};

			\draw [-] (W) -- node [above] {$ D^{a-1}_+ \times S^0 \times S^b $} (C);
			\draw [-] (N) -- node [above, rotate=90] {$ S^{a-2} \times D^1_+ \times S^b$} (C);
			\draw [-] (E) -- node [above] {$ D^{a-1}_- \times S^0 \times S^b $} (C);
			\draw [-] (S) -- node [above, rotate=90] {$ S^{a-2} \times D^1_- \times S^b $} (C);
		
			\draw [dashed] (W) to [out=135, in = 135] (N);
			\node at (-2.5,2.5) {$D^{a-1}_+ \times D^1_+ \times S^b$};
		
			\draw [dashed] (N) to [out=45, in = 45] (E);
			\node at (2.5,2.5) {$D^{a-1}_- \times D^1_+ \times S^b$};
		
		
			\draw [dashed] (S) to [out=225, in = 225] (W);
			\node at (-2.5,-2.5) {$D^{a-1}_+ \times D^1_- \times S^b$};

			\draw (C) -- node [below, rotate=-45] {$ S^{a-2} \times S^0 \times D^{b+1}$} (SE);
			\node at (3.5,-1.5) {$D^{a-1}_- \times S^0 \times D^{b+1}$};
			\draw [dashed] (E) to [out = 0, in = 0] (SE);
			\node at (2,-4) {$S^{a-2} \times D^1_- \times D^{b+1}$};
			\draw [dashed] (SE) to [out = -90, in = -90] (S);
	\end{tikzpicture}
	\end{center}
	\caption{A decomposition of the manifold $L$.}
	\label{fig:Torusfirstsurgery}
\end{figure}
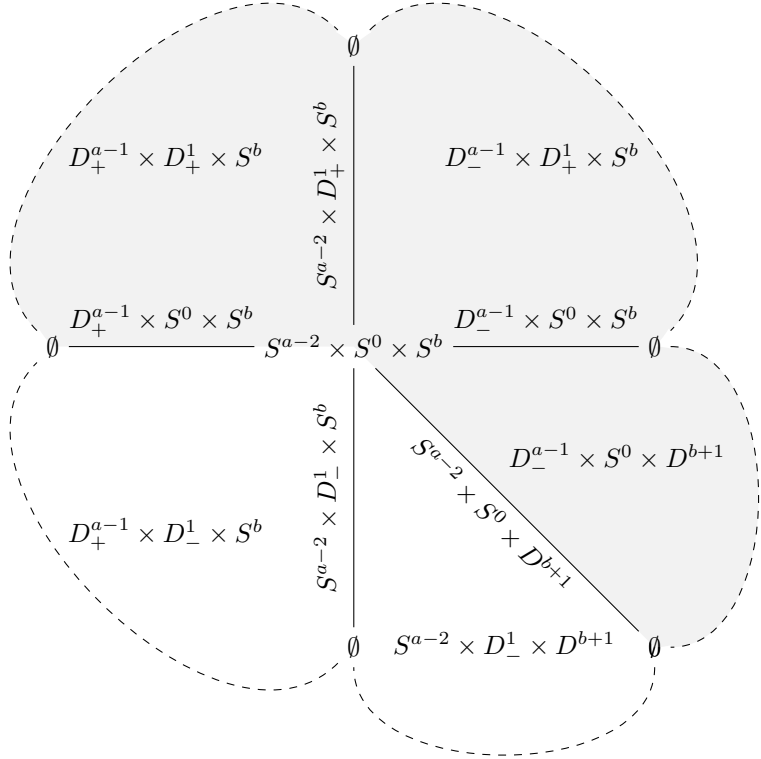

Let $X$ be the manifold corresponding to the shaded portion of the decomposition in Figure~\ref{fig:Torusfirstsurgery}. The manifold corresponding to the unshaded portion is diffeomorphic to the cylinder $S^{d-1} \times D^1_-$. Since the surgered manifold is orientable, the two disks along which the cylinder is attached to $X$ are isotopic.  Thus to prove the lemma it is sufficient to show that $X$ (corresponding to the shaded region) is diffeomorphic to $ (S^{a-1} \times S^{b+1}) \backslash (D^d \times S^0) $. This in turn follows if $X \cup_{S^{d-1} \times S^0} (D^d \times S^0)$ is diffeomorphic to $S^{a-1} \times S^{b+1}$, which is what we will show.

The manifold $X \cup_{S^{d-1} \times S^0} (D^d\times S^0)$ is obtained by taking the manifold from the shaded region of the decomposition in Figure~\ref{fig:Torusfirstsurgery} and attaching $D^d \times S^0 \cong D^{a-1}_+ \times S^0 \times D^{b+1}$ in place of the unshaded region. This gives the decomposition of $X \cup_{S^{d-1} \times S^0 } (D^d \times S^0)$ as a gluing of manifolds with corners depicted in Figure~\ref{fig:ManifoldXwithdisks}.

\begin{figure}[ht]
	\begin{center}
	\begin{tikzpicture}
		 \clip (-4.75,-5.6) rectangle (5.5 ,4.75);
			\fill [black!5]  (0,4) to [out=45, in = 45] (4,0) to [out = 0, in = 0] (4,-4) -- (0,0) -- (0,4);
			\fill [black!5] (N) to [out=45, in = 45] (E) -- (N);
			\fill [black!5] (E) to [out = 0, in = 0] (SE) -- (E);
		
			\node (C) at (0, 0) {$ S^{a-2} \times S^0  \times S^b$};
			\node (W) at (-4, 0) {$ \emptyset $};
			\node (N) at (0, 4) {$ \emptyset $};
			\node (E) at (4, 0) {$\emptyset $};
		
			\node (SE) at (4,-4) {$\emptyset$};

			\draw [-] (W) -- node [above] {$ D^{a-1}_+ \times S^0 \times S^b $} (C);
			\draw [-] (N) -- node [above, rotate=90] {$ S^{a-2} \times D^1_+ \times S^b$} (C);
			\draw [-] (E) -- node [above] {$ D^{a-1}_- \times S^0 \times S^b $} (C);
			\draw (C) -- node [below, rotate=-45] {$ S^{a-2} \times S^0 \times D^{b+1}$} (SE);

			\draw [dashed] (W) to [out=135, in = 135] (N);
			\node at (-2.5,2.5) {$D^{a-1}_+ \times D^1_+ \times S^b$};
		
			\draw [dashed] (N) to [out=45, in = 45] (E);
			\node at (2.5,2.5) {$D^{a-1}_- \times D^1_+ \times S^b$};
		
		
			\node at (3.5,-1.5) {$D^{a-1}_- \times S^0 \times D^{b+1}$};
			\draw [dashed] (E) to [out = 0, in = 0] (SE);
		
			\draw [dashed] (SE) to [out=270, in = 225] (W);

			\node at (-1.5,-2) {$D^{a-1}_+ \times S^0 \times D^{b+1}$};

	\end{tikzpicture}
	\end{center}
	\caption{The manifold $X$ with two disks attached.}
	\label{fig:ManifoldXwithdisks}
\end{figure}
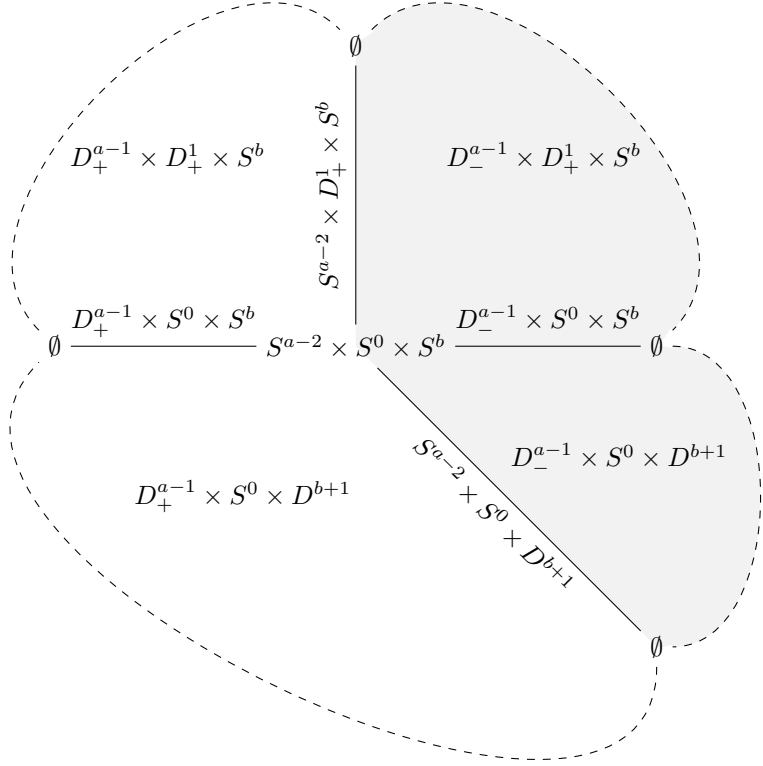

The manifold corresponding to the shaded half of the decomposition in Figure~\ref{fig:ManifoldXwithdisks} is $D^{a-1}_- \times S^{b+1}$. The manifold corresponding to the unshaded half is $D^{a-1}_+ \times S^{b+1}$ and they are glued together along $S^{a-2} \times \times S^{b+1}$. Whence the manifold in question is $S^{a-1} \times S^{b+1}$, as claimed. 
\end{proof}

\begin{proof}[Proof of Prop.~\ref{pro:importantdiffeo}]
	It follows immediately from Lemma~\ref{lem:keydiffeoclosed} and Corollary~\ref{cor:closingmanifolds} that there exists a diffeomorphism of bordisms
	\begin{equation*}
				\left( W_{S^{a-1}} \times S^b\right) \circ \left( S^{a-1} \times D^{b+1}\right)  \cong \left(S^{a-1} \times D^{b+1}\right) \#    S^{a-1} \times S^{b+1} \# S^1 \times S^{d-1}.
			\end{equation*} 
	The bordisms on the left-hand side can be equipped with canonical $B$-structures, and these induce $B$-structures on the right-hand side. The fact that these $B$-structures come from the connected sum of ones on $S^{a-1} \times D^{b+1}$, $S^1 \times S^{d-1}$, and $S^{a-1} \times S^{b+1}$ separately follows from the uniqueness of $B$-structures on $D^d$ and $S^{d-1}$ (implicit in the assumptions of Lemma~\ref{lem:Bsphere}). The proof of Lemma~\ref{lem:movetoconnectifnull} shows that the $B$-structure on $S^{a-1} \times D^{b+1}$ is induced by one on $S^{d}$, which must be the canonical $B$-structure, and hence is the canonical $B$-structure on  $S^{a-1} \times D^{b+1}$.  
\end{proof}

\subsection{Semisimple field theories lead to stable diffeomorphism invariants}\label{sec:semisimpletostablediffeo}

We now show that semisimple topological field theories $Z$ lead to stable diffeomorphism invariants. We first consider the case where $Z$ is furthermore simple, i.e. $Z(S^{d-1})\cong k$. For such theories, we may use the 'eigenvalue equation' of Proposition~\ref{pro:importantdiffeo}, to identify $Z(S^q\times S^q)$ as a (multiplicative factor of an) eigenvalue of an invertible endomorphism and hence conclude that it is non-zero.

\begin{lemma}\label{lem:nonzerotori}
	Let $d=2q$, let $\beta:B \to \rB O(d)$ be a connected tangential $(q-1)$-type, and let 
	 $Z:\Bord_d^B \to \sVec$ be a semisimple topological field theory with $Z(S^{d-1}) \cong k$. Then,  $Z(S^q \times S^q, \overline{\theta})$ is non-zero.\end{lemma}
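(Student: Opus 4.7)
The plan is to specialize Proposition~\ref{pro:importantdiffeo} to $a = q+1$ and $b = q-1$, so that the last connected-sum factor $S^{a-1} \times S^{b+1}$ on the right-hand side is precisely $S^q \times S^q$. Applying $Z$ to the resulting $B$-diffeomorphism of bordisms $\emptyset \To (S^q \times S^{q-1}, \overline{\theta})$, and then using connected-sum multiplicativity (Proposition~\ref{prop:multconnectsum}, which applies because $Z$ is simple and hence $Z(S^d)$ is invertible), I obtain the identity
\begin{equation*}
W(u) \;=\; Z(S^d)^{-2}\, Z(S^1 \times S^{d-1}, \theta_1)\, Z(S^q \times S^q, \theta_2) \cdot u
\end{equation*}
in the super Frobenius algebra $A := Z(S^q \times S^{q-1}, \overline{\theta})$, where $W := Z(W_{S^q} \times S^{q-1}, \overline{\theta}) \colon A \to A$ is the window endomorphism (Lemma~\ref{lem:Frobspheres}), $u := Z(S^q \times D^q, \overline{\theta}) \in A$, and $\theta_1, \theta_2$ are the $B$-structures produced by Proposition~\ref{pro:importantdiffeo}.

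From this eigenvalue identity I derive the conclusion through two subclaims. First, $W$ is invertible: by semisimplicity of $A$ and Lemma~\ref{lem:semisimplesuperalgebra}, the super-commutative Frobenius algebra $A$ is a direct sum of copies of $k$ concentrated in even degree, on each of which the window map acts by a non-zero scalar (the inverse Frobenius dimension of that summand). Second, $u \neq 0$: this follows from the geometric observation that the two bordisms $S^q \times D^q$ and $D^{q+1} \times S^{q-1}$, glued along their common boundary $S^q \times S^{q-1}$, realize $\partial(D^{q+1} \times D^q) \cong S^d$ with its canonical $B$-structure; since $D^{q+1} \times S^{q-1}$, with reversed boundary convention, represents the Frobenius counit $\epsilon \colon A \to k$ associated to the algebra structure in Lemma~\ref{lem:Frobspheres}, this gluing computes $\epsilon(u) = Z(S^d)$, and the latter is non-zero by Proposition~\ref{prop:multconnectsum}.

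Combining the two subclaims, $W(u) \neq 0$ forces the scalar coefficient above to be non-zero, and in particular $Z(S^q \times S^q, \theta_2) \neq 0$. Finally, Lemma~\ref{lem:uniquecanonical} ensures that $\theta_2$, being built from canonical $B$-structures on the left-hand side of the diffeomorphism, must coincide with $\overline{\theta}$ up to orientation reversal and hence up to a $B$-diffeomorphism of $S^q \times S^q$, giving $Z(S^q \times S^q, \overline{\theta}) = Z(S^q \times S^q, \theta_2) \neq 0$. The main subtlety I anticipate is verifying $u \neq 0$, since otherwise the eigenvalue identity would be vacuous; the handle-decomposition $S^d = \partial(D^{q+1} \times D^q)$ is the key geometric input that makes this work.
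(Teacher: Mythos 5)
Your proposal is correct and follows essentially the same route as the paper's proof: specialize Proposition~\ref{pro:importantdiffeo} to $a=q+1$, $b=q-1$, apply $Z$ and Proposition~\ref{prop:multconnectsum} to obtain the eigenvalue identity in $A=Z(S^q\times S^{q-1},\overline\theta)$, show the window endomorphism is invertible from semisimplicity of $A$, show $u=Z(S^q\times D^q,\overline\theta)\neq 0$ by pairing against the counit $Z(D^{q+1}\times S^{q-1},\overline\theta)$ to produce $Z(S^d)\neq 0$, and use Lemma~\ref{lem:uniquecanonical} to identify $\theta_2\simeq\overline\theta$. The only stylistic differences are that you unfold the invertibility of the window map via Lemma~\ref{lem:semisimplesuperalgebra} where the paper cites \cite[Prop.~3.3]{Reutter:2020aa}, and you spell out the handle decomposition $S^d\cong\partial(D^{q+1}\times D^q)$ behind the pairing computation, which the paper leaves implicit.
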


\begin{proof}
	By definition, the Frobenius algebra $Z(S^q \times S^{q-1}, \overline{\theta})$ is semisimple. By \cite[Thm. 3.4]{MR1760598} (also see~\cite[Prop.~3.3]{Reutter:2020aa}) the window endmorphism $m \circ \Delta: A \to A$ is invertible for any semisimple commutative Frobenius algebra $A$ in $\mathrm{sVec}$. Whence,	
	\begin{equation*}
		Z(W_{S^q} \times S^{q-1}, \overline{\theta}): Z(S^q \times S^{q-1}, \overline{\theta}) \to Z(S^q \times S^{q-1}, \overline{\theta}) 
	\end{equation*}
	is invertible. Applying $Z$ to the diffeomorphism of Proposition~\ref{pro:importantdiffeo} and using multiplicativity under connected sums (Proposition~\ref{prop:multconnectsum}), we find:
	\begin{equation*}
		Z(W_{S^{q}} \times S^{q-1}, \overline{\theta}) \circ Z(S^q \times D^{q}, \overline{\theta}) = Z(S^d, \overline{\theta})^{-2} Z(S^{2q-1} \times S^1, \theta_1) Z(S^q \times S^q, \theta_2) \cdot Z(S^q \times D^{q}, \overline{\theta})
	\end{equation*} 
	for the respective canonical $B$-structures $\overline{\theta}$ and some $B$-structures $\theta_1, \theta_2$. By Lemma~\ref{lem:uniquecanonical}, the canonical $B$-structure on $S^q\times S^q$ is in fact the unique one up to orientation reversing diffeomorphism, and hence $(S^q\times S^q, \theta_2) \cong (S^q \times S^q, \overline{\theta})$.
	
	The composite morphism
	\begin{equation*}
		Z(\emptyset) \to[Z(S^q \times D^q, \overline{\theta})] Z(S^q \times S^{q-1}, \theta) \to [Z(D^{q+1} \times S^{q-1}, \overline{\theta})] Z(\emptyset)
	\end{equation*}
	is $Z(S^d, \overline{\theta})$, which is non-zero (Proposition~\ref{prop:multconnectsum}). Thus 
	\begin{equation*}
			k=Z(\emptyset) \to[Z(S^q \times D^q, \overline{\theta})] Z(S^q \times S^{q-1}, \theta)
	\end{equation*}
	 is a non-zero vector in $Z(S^q \times S^{q-1}, \theta)$. It follows that $Z(S^d, \overline{\theta})^{-2} Z(S^{2q-1} \times S^1, \theta_1) Z(S^q \times S^q, \overline{\theta})$ is an eigenvalue of the invertible endomorphism $Z(W_{S^{q}} \times S^{q-1}, \overline{\theta})$, and is therefore invertible itself. 
\end{proof}

Decomposing any semisimple field theory into simple summands, the main theorem of this section follows.
\begin{theorem}\label{thm:semisimpletostablediffoinv}
	Let $d=2q$, let $\beta:B \to \rB O(d)$ be a connected tangential $(q-1)$-type, and let 
	 $Z:\Bord_d^B \to \sVec$ be any semisimple topological field theory (Definition~\ref{def:semisimpleTFT}). If $B$-structured manifolds $(M_1, \psi_1)$ and $(M_2, \psi_2)$ are  $B$-stably diffeomorphic, then $Z(M_1, \psi_1) = Z(M_2, \psi_2)$.
\end{theorem}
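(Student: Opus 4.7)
The plan is to leverage the two main structural results already established: the decomposition of semisimple TFTs into simple summands, and the multiplicativity of simple TFTs under connected sum.

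First, I would reduce to the case where $Z$ is simple. Since $\beta: B\to \rB O(d)$ is a tangential $(q-1)$-type with $q-1 \le d-2$, the hypotheses of Lemma~\ref{lem:Bsphere} are met, and by Corollary~\ref{cor:decomposesemisimple} the semisimple TFT $Z$ decomposes as a finite direct sum $Z \cong \bigoplus_{i\in I} Z_i$ of simple TFTs $Z_i$, each of which remains semisimple (since $Z_i(S^{d-1}) \cong k$ and $Z_i(S^q \times S^{q-1}, \overline{\theta})$ is a direct summand of the semisimple Frobenius algebra $Z(S^q\times S^{q-1}, \overline{\theta})$, hence semisimple). Since the value of a direct sum on a connected closed manifold is the sum of the values (and any manifold decomposes into connected components with values multiplying by monoidality), it suffices to prove the theorem when $Z$ is both semisimple and simple, and when $M_1, M_2$ are connected.

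Under these assumptions, Proposition~\ref{prop:multconnectsum} guarantees that $Z(S^d, \overline{\theta})$ is invertible and that $Z$ is multiplicative under connected sums: for connected closed $B$-manifolds $M$ and $W$,
\begin{equation*}
  Z(M \# W) \;=\; Z(S^d, \overline{\theta})^{-1}\, Z(M)\, Z(W).
\end{equation*}
Iterating this, and writing $T := (S^q \times S^q, \overline{\theta})$ (which carries an essentially unique canonical $B$-structure by Lemma~\ref{lem:uniquecanonical}), we obtain
\begin{equation*}
  Z\bigl(M_i \# \#^N T\bigr) \;=\; Z(S^d, \overline{\theta})^{-N}\, Z(T)^{N}\, Z(M_i), \qquad i=1,2.
\end{equation*}
The crucial input is Lemma~\ref{lem:nonzerotori}, which asserts that $Z(T) \neq 0$, so that the scalar $Z(S^d, \overline{\theta})^{-N} Z(T)^N$ is invertible. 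By the definition of $B$-stable diffeomorphism, there exists $N$ with $M_1 \# \#^N T \cong M_2 \# \#^N T$ as $B$-manifolds, so the left-hand sides agree and we may cancel the common invertible scalar to conclude $Z(M_1, \psi_1) = Z(M_2, \psi_2)$.

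The main obstacle is really already overcome by the previous lemmas: the non-vanishing of $Z(S^q \times S^q, \overline{\theta})$ hinges on the diffeomorphism in Proposition~\ref{pro:importantdiffeo}, which identifies $Z(T)$ (up to invertible factors $Z(S^d)^{-2} Z(S^{2q-1}\times S^1, \theta_1)$) with an eigenvalue of the invertible window endomorphism $Z(W_{S^q}\times S^{q-1}, \overline{\theta})$ acting on the semisimple Frobenius algebra $Z(S^q \times S^{q-1}, \overline{\theta})$. Once that non-vanishing is in hand, the theorem is essentially an algebraic consequence of multiplicativity under connected sum, and no further geometric input is required.
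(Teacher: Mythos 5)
Your proposal is correct and follows essentially the same route as the paper's proof: reduce to a simple summand via Corollary~\ref{cor:decomposesemisimple}, apply multiplicativity under connected sum (Proposition~\ref{prop:multconnectsum}), and cancel the invertible factor $Z(S^d,\overline{\theta})^{-N}Z(S^q\times S^q,\overline{\theta})^N$ using Lemma~\ref{lem:nonzerotori}. Your added remark that each simple summand $Z_i$ remains semisimple (since $Z_i(S^q\times S^{q-1},\overline{\theta})$ is a direct summand of a semisimple algebra) is a small but genuine clarification the paper leaves implicit, and it is needed to invoke Lemma~\ref{lem:nonzerotori} on $Z_i$.
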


\begin{proof}	
	Any semisimple topological field theory decomposes as a direct sum of simple topological field theories (Cor.~\ref{cor:decomposesemisimple}), and hence it is sufficient to assume $Z$ is simple. 
	By assumption there exists a natural number $N$ and a diffeomorphism of $B$-manifolds
	\begin{equation*}
		(M_1, \psi_1) \# \left( \#^N (S^q \times S^q, \overline{\theta}) \right) \cong (M_2, \psi_2) \# \left( \#^N (S^q \times S^q, \overline{\theta}) \right).
	\end{equation*} 
	where $\overline{\theta}$ is the canoncial $B$-structure on $S^q \times S^q$. By the multiplicativity of simple topological field theories (Proposition~\ref{prop:multconnectsum}) we have
	\begin{equation*}
		Z(S^d)^{-N} Z(S^q \times S^q, \overline{\theta})^N Z(M_1, \psi_1) = Z(S^d)^{-N} Z(S^q \times S^q, \overline{\theta})^N Z(M_2, \psi_2).
	\end{equation*}
	However $Z(S^d)$ is invertible (Proposition~\ref{prop:multconnectsum}) and $Z(S^q \times S^q, \overline{\theta})$ is invertible (Lemma~\ref{lem:nonzerotori}), and thus $Z(M_1, \psi_1) =Z(M_2, \psi_2)$.
\end{proof}

\subsection{Finite path integral theories are semisimple}

Our goal in this section is to show that finite path integral theories induced from invertible field theories are semisimple. 

Our main proof will rely on the fact that any finite path integral theory as in Definition~\ref{def:GenDWTheory} can be \emph{once extended} to a symmetric monoidal $2$-functor $\Bord^B_{d,d-1,d-2} \to \mathrm{2sVec}_k$. Here, $\Bord^B_{d,d-1,d-2}$ denotes the symmetric monoidal bicategory of closed $(d-2)$-$B$-manifolds, compact compatibly $B$-structured $(d-1)$-bordisms and compact compatibly $B$-structured $d$-dimensional bordisms with corners between them (see e.g.~\cite{SPthesis} for details on the construction as a bicategory), and $\mathrm{2sVec}_k$ denotes the symmetric monoidal bicategory of $k$-linear finite semisimple $\mathrm{sVec}_k$-module categories, module functors and module natural transformations. 

\begin{remark}This bicategory $\mathrm{2sVec}_k$ is equivalent to the symmetric monoidal bicategory $\mathrm{sAlg}_k$ of semisimple super algebras (i.e. separable algebra objects in $\mathrm{sVec}_k$), super bimodules and grading-preserving bimodule maps (this follows e.g. from~\cite[Thm 1]{MR1976459}).
\end{remark}

In Remark~\ref{rem:alternativessproof} below, we will sketch another more elementary proof which does not use any bicategorical machinery and instead proceeds by explicitly studying the algebras associated to products of spheres. 

We first show that invertible $\sVec_k$-valued field theories can be uniquely extended to $\mathrm{2sVec}_k$. (The analogous statement about extending $\mathrm{Vec}_k$-valued theories to $\mathrm{2Vec}_k$ fails spectacularly, see~\cite{Schommer-Pries:2017aa}.)
\begin{lemma}\label{lem:extend} Let $W:\Bord_d^X \to \mathrm{sVec}_k$ be any invertible topological field theory. Then, there exists a unique $2$-categorical invertible field theory $\widetilde{W}: \Bord_{d,d-1,d-2}^B \to \mathrm{2sVec}_k$ which extends $W$.
\end{lemma}

Before proving the lemma, we make a brief detour on Picard 2-groupoids. A Picard $2$-groupoid is a symmetric monoidal bicategory, in which all objects, $1$-morphisms and $2$-morphisms are invertible. Equivalently~\cite{MR3958095}, a Picard $2$-groupoid is a spectrum whose only non-trivial homotopy groups are $\pi_0, \pi_1, \pi_2$. 
Out of any Picard 2-groupoid $\cA$  we may extract two Picard 1-groupoids $\cA_{[1 2]}:= \tau_{\geq 1} \cA$ and $\cA_{[0 1]} := \tau_{\leq 1} \cA$. The former may be identified with the groupoid of automorphisms of the unit object of $\cA$, and the later is the groupoid whose objects are those of $\cA$ and whose morphisms are isomorphism classes of 1-morphisms in $\cA$. As in Section~\ref{sec:invertTFT}, these Picard 1-groupoids are determined by their \emph{Postnikov invariants} \cite[App.~B]{MR2192936} (see also \cite{MR2981952}), which take the form $(\pi_1\cA, \pi_2 \cA, k_{\cA_{[1 2]}}: \pi_1 \cA \otimes \ZZ/2\ZZ \to \pi_2 \cA)$ and  $(\pi_0 \cA, \pi_1 \cA, k_{\cA_{[0 1]}}: \pi_0 \cA \otimes \ZZ/2\ZZ \to \pi_1 \cA)$. We will call these the \emph{elementary Postnikov invariants} of $\cA$. 

Let $\mathrm{2sLine}_k$ denote the underlying Picard $2$-groupoid of the symmetric monoidal bicategory $\mathrm{2sVect}_k$. By definition, $\mathrm{2sVect}_k$ is a delooping of $\mathrm{sVect}_k$ and hence we have $(\mathrm{2sLine}_k)_{[12]} \simeq \sLine_k$,  considered in Example~\ref{ex:slines}. On the other hand, $ \pi_0\mathrm{2sLine}_k \cong \pi_0 (\mathrm{2sLine}_k)_{[01]} \cong \ZZ/2\ZZ$ corresponding to the super Morita equivalence classes of Clifford algebras, also known as the Brauer-Wall group or the super Brauer group \cite{MR167498}. A simple calculation shows that the $k$-invariant
\begin{equation*}
	k_{(\mathrm{2sLine}_k)_{[01]}}: \pi_0\mathrm{2sLine}_k \cong \ZZ/2\ZZ \to \ZZ/2\ZZ \cong \pi_1\mathrm{2sLine}_k = \pi_0 \mathrm{sLine}_k
\end{equation*} 
is an isomorphism:  The $Cl_1 \otimes Cl_1$-$Cl_1 \otimes Cl_1$ super bimodule corresponding to the graded twist automorphism turns into the odd line after employing the super Morita equivalence $Cl_1 \otimes Cl_1 \cong Cl_2 \sim k$.

In general, the elementary Postnikov invariants do not determine a Picard 2-groupoid $\cA$. However we have the following special case.

\begin{lemma}\label{lem:charBrownComenetz}
	Let $k$ be an algebraically closed field, not of characteristic 2. Let $\cA$ be a Picard 2-groupoid with $\pi_0 \cA \cong \ZZ/2\ZZ$, $\pi_1 \cA \cong \ZZ/2\ZZ$, $\pi_2 \cA \cong k^\times$, and with non-trivial elementary $k$-invariants 
	\begin{align*}
		k_{\cA_{[01]}}: &\pi_0\cA \cong \ZZ/2\ZZ \stackrel{\cong}{\to} \ZZ/2\ZZ \cong \pi_1 \cA \\
		k_{\cA_{[12]}}: &\pi_1\cA \cong \ZZ/2\ZZ \hookrightarrow k^\times\cong \pi_2 \cA 
	\end{align*} 
Then the underlying $E_{\infty}$-2-type of $\cA$ is equivalent to $\tau_{\geq 0} \Sigma^2 I_{k^{\times}}$, the connective cover of the two-fold suspension of the $k^\times$-Brown-Comenetz dual of the sphere.
\end{lemma}

\begin{proof}
	We have $\pi_0( \tau_{\geq 0} \Sigma^2 I_{k^{\times}}) \cong \ZZ/2\ZZ$, $\pi_1 (\tau_{\geq 0} \Sigma^2 I_{k^{\times}}) \cong \ZZ/2\ZZ$, and $\pi_2( \tau_{\geq 0} \Sigma^2 I_{k^{\times}}) \cong k^\times$. The universal property of the $k^\times$-Brown-Comenetz dual of the sphere implies that 
	\begin{equation*}
		\pi_0 \mathrm{2Fun}(\cA, \tau_{\geq 0} \Sigma^2 I_{k^{\times}}) \cong \Hom(\pi_2 \cA, k^\times),
	\end{equation*}
and thus there is a map $f:\cA \to \tau_{\geq 0} \Sigma^2 I_{k^{\times}}$ which induces an isomorphism on $\pi_2$. Because the elementary $k$-invariant $k_{\cA_{[12]}}$ is injective, it follows that the elementary $k$-invariant $k_{(\tau_{\geq 0} \Sigma^2 I_{k^{\times}})_{[12]}}$ is injective and that $f$ induces an isomorphism on $\pi_1$. Likewise, because the elementary $k$-invariant $k_{\cA_{[01]}}$ is an isomorphism, it follows that the elementary $k$-invariant $k_{(\tau_{\geq 0} \Sigma^2 I_{k^{\times}})_{[01]}}$ is an isomorphism and that $f$ induces an isomorphism on $\pi_0$. Since $f$ is an isomorphism on all homotopy groups, it is an equivalence of $E_{\infty}$-2-types.
\end{proof}

It follows from Lemma~\ref{lem:charBrownComenetz} that the Picard $2$-groupoid $\mathrm{2sLine}_k$ of $\mathrm{2sVec}_k$ has the underlying $E_{\infty}$-2-type of $\tau_{\geq 0} \Sigma^2 I_{k^{\times}}$. See \cite{BLM-preprint} for another proof of this and related results, as well as \cite{Freed-Lecturenotes, Freed:2014aa, MR4313235, MR4268163}.  In other words, the Picard $2$-groupoid $\mathrm{2sLine}_k$ can be characterized by the following universal property, analogous to the characterization of $\mathrm{sLine}_k$ in Corollary~\ref{cor:universalpropertysline}.
\begin{corollary}\label{cor:universalproperty2sline} Let $k$ be an algebraically closed field of characteristic $\neq 2$. Then, $\mathrm{2sLine}_k$ is the unique Picard $2$-groupoid with the property that for any Picard $2$-groupoid $\cA$, the restriction map \begin{equation}\label{eq:universalprop2sline}
	\pi_0 \mathrm{2Fun}(\cA, \mathrm{2sLine}_k) \to \Hom(\pi_2 \cA, k^\times) 
\end{equation}
 is an isomorphism. 
\end{corollary}

 This immediately leads to a proof of Lemma~\ref{lem:extend}.
\begin{proof}[Proof of Lemma~\ref{lem:extend}]
Let $\cB$ be the Picard completion of the 2-category $\Bord^X_{d, d-1, d-2}$. It follows from \cite{Schommer-Pries:2017aa} that $ \cB_{[12]}$ is the Picard completion of $\Bord^X_{d, d-1}$. Thus 
the restriction map 
\begin{equation*}
	\pi_0 \mathrm{2Fun}(\Bord^X_{d, d-1, d-2}, \mathrm{2sLine}_k) \to \pi_0 \Fun(\Bord^X_{d, d-1}, \sLine_k)
\end{equation*}
is an isomorphism, since they are both isomorphic to $\Hom(\Omega_d^{TX}, k^\times)$ by Corollaries~\ref{cor:universalpropertysline} and~\ref{cor:universalproperty2sline}.
\end{proof}

\begin{theorem}\label{thm:DWareSemisimple}
If $d=2q$ and $\beta:B \to \rB O(d)$ is a connected tangential $(q-1)$-type (i.e. the assumptions for Definitions~\ref{def:semisimpleTFT} and~\ref{def:stablediffeo}), then $\FP_{\xi, \omega}: \Bord^B_d \to \sVec_k$ is semisimple for any choice of $\xi:X \to B$ and $\omega$, and hence leads to stable $B$-diffeomorphism invariants. \end{theorem}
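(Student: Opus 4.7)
The plan is to verify the two semisimplicity conditions of Definition~\ref{def:semisimpleTFT} separately --- semisimplicity of the Frobenius algebras $\DW_{\xi, \omega}(S^{d-1}, \overline\theta)$ and $\DW_{\xi, \omega}(S^q \times S^{q-1}, \overline\theta)$ --- and then invoke Theorem~\ref{thm:semisimpletostablediffoinv} to conclude the stable $B$-diffeomorphism invariance.

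The sphere algebra is the easy case: since $\beta$ is a tangential $(q-1)$-type and $q-1 \leq 2q-2 = d-2$ (for $q \geq 1$), $\beta$ is in particular $(d-2)$-truncated, so Proposition~\ref{pro:DWsphereissemisimple} applies directly to yield semisimplicity of $\DW_{\xi, \omega}(S^{d-1}, \overline\theta)$.

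The real work is the middle algebra, which we will identify as the sphere algebra of a suitable $(q+1)$-dimensional generalized Dijkgraaf-Witten theory and handle by reapplying Proposition~\ref{pro:DWsphereissemisimple}. Using the pullback construction of Section~\ref{sec:compntypes} along the stabilizations $\rB O(q+1) \to \rB O(2q)$ and $\rB O(q-1) \to \rB O(2q)$, from $\beta: B \to \rB O(2q)$ we obtain tangential structures $\beta_{q+1}: B_{q+1} \to \rB O(q+1)$ and $\beta_{q-1}: B_{q-1} \to \rB O(q-1)$, each of which remains $(q-1)$-truncated. These fit together with $\beta$ into a commutative square of the form~\eqref{eq:structuresquare}, and connectedness of $B_{q+1}$ follows from connectedness of $B$ together with the high connectivity of $\rB O(q+1) \to \rB O(2q)$. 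By Lemma~\ref{lem:Bsphere} applied to $\beta_{q+1}$ (noting that the hypothesis $q-1 \leq (q+1)-2$ holds with equality), the sphere $S^q$ admits a unique canonical $B_{q+1}$-structure, and the bounding-cobounding $B$-structure $\overline\theta$ on $S^q \times S^{q-1}$ decomposes as the product of this canonical $B_{q+1}$-structure on $S^q$ with a specific induced $B_{q-1}$-structure $\theta_0$ on $S^{q-1}$. Performing total dimensional reduction along $(S^{q-1}, \theta_0)$ as in Section~\ref{sec:dimred} yields a $(q+1)$-dimensional $B_{q+1}$-structured theory $\DW_{\xi, \omega} \circ \red^{\mathrm{tot}}_{S^{q-1}}$. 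By Theorem~\ref{thm:DWdimreductisDW} this reduction is itself a generalized Dijkgraaf-Witten theory, built from the map $X_{S^{q-1}} \to B_{q+1}$ (which has $\pi$-finite fibers) together with the invertible super field theory obtained by totally reducing $\cZ_\omega$. Unpacking the bordisms defining the two algebra structures, the Frobenius-algebra structure on $\DW_{\xi, \omega}(S^q \times S^{q-1}, \overline\theta)$ from Lemma~\ref{lem:Frobspheres} coincides with the sphere-algebra structure from Lemma~\ref{lem:sphereFrobAlg} applied to this reduction at dimension $q+1$. Since $\beta_{q+1}$ is $(q-1)$-truncated and $(q+1)-2 = q-1$, Proposition~\ref{pro:DWsphereissemisimple} applies to the reduced theory, and we conclude semisimplicity of the middle algebra.

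The principal technical obstacle is the bookkeeping in this last step: one must choose tangential structures $\beta_{q-1}$, $\beta_{q+1}$ that simultaneously complete the commutative square~\eqref{eq:structuresquare}, make the $B$-structure $\overline\theta$ on $S^q \times S^{q-1}$ visible as a genuine product structure compatible with the dimensional reduction, and guarantee that the reduced ambient $\beta_{q+1}$ is both connected and $(q-1)$-truncated --- which is precisely what falls within the scope of Proposition~\ref{pro:DWsphereissemisimple}. All these properties are inherited from $\beta$ via the pullback description of Proposition~\ref{prop:pullbackstructures} and standard connectivity bounds for the stabilization maps. Once both sphere algebras are semisimple, Definition~\ref{def:semisimpleTFT} is satisfied and Theorem~\ref{thm:semisimpletostablediffoinv} immediately yields stable $B$-diffeomorphism invariance of $\DW_{\xi, \omega}$.
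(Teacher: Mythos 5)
Your proposal is correct and follows essentially the same route as the paper's proof: establish semisimplicity of $\DW_{\xi,\omega}(S^{d-1},\overline\theta)$ directly from Proposition~\ref{pro:DWsphereissemisimple}, then identify $\DW_{\xi,\omega}(S^q\times S^{q-1},\overline\theta)$ as the sphere algebra of the dimensional reduction along $(S^{q-1},\overline\theta)$, invoke Theorem~\ref{thm:DWdimreductisDW} to recognize the reduction as a generalized Dijkgraaf-Witten theory over $B_{q+1}\to\rB O(q+1)$ (which is still $(q-1)$-truncated, hence $(q+1)-2$-truncated), reapply Proposition~\ref{pro:DWsphereissemisimple}, and conclude with Theorem~\ref{thm:semisimpletostablediffoinv}. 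The only stylistic difference is that you spell out in more detail the construction of the commutative square~\eqref{eq:structuresquare} and the connectedness of $B_{q+1}$, points the paper leaves more terse.
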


\begin{proof}[Proof sketch]
Recall that the character $\omega: \Omega^{T(\beta \circ \xi)} \to k^{\times}$ classifies an invertible field theory $\cW: \Bord^X_d \to \mathrm{sLine}_k$. By Lemma~\ref{lem:extend}, there is therefore a unique invertible extension $\widetilde{W}: \Bord_{d, d-1, d-2}^B \to \mathrm{2sLine}_k$.

Essentially, Theorem~\ref{thm:DWareSemisimple} then follows since any finite path integral theory $\FP_{X, \cW}: \Bord^B_d \to \sVec_k$ built from an invertible field theory $\cW: \Bord_d^X \to \sVec_k$ can be once extended to a 2-categorical finite path integral theory $\widetilde{\FP}_{X, \widetilde{\cW}}:\Bord_{d, d-1, d-2}^X \to \mathrm{2sVec}_k$ constructed from this unique extension $\widetilde{\cW}: \Bord_{d, d-1,d-2}^X \to \mathrm{2sVec}_k$.
The construction of $\widetilde{\FP}_{X, \widetilde{\cW}}$ may be outlined as follows. A version of this construction can also be found in~\cite[\S 8]{MR2648901}.

Let $\mathbb{S}pan(\cS^{\pi}, \mathrm{2sVec})$ denote the symmetric monoidal bicategory whose objects are $\pi$-finite spaces equipped with $2$-functors $ \cL: \tau_{\leq 2} X \to \mathrm{2sVec}$, whose $1$-morphisms are spans $X\ot[f] Y \to[f'] X'$ of $\pi$-finite spaces equipped with natural transformations $f^*\cL \To (f')^*\cL'$ of $2$-functors $\tau_{\leq 2} Y \to \mathrm{2sVec}_k$ and whose $2$-morphisms are appropriate equivalence classes of spans of spans
\[\begin{tikzcd}
& Y \arrow[rd]\arrow[ld] & \\
X&Z \arrow[u] \arrow[d] & X'\\
& Y' \arrow[lu]  \arrow[ru]&
\end{tikzcd}
\]
 equipped with modifications between the pulled back natural transformations. Composition of $1$-morphisms and 2-morphisms is given by (homotopy) pullback of spans with appropriate composition of the associated natural transformations and modifications. In the case where all spaces are $1$-groupoids (equivalently, $1$-types), a  detailled construction of this symmetric monoidal bicategory can be found in~\cite{MR3825014}, the general case is entirely analogous. (This symmetric monoidal bicategory $\mathbb{S}pan(\cS^{\pi}, \mathrm{2sVec}_k)$ is the homotopy bicategory of the symmetric monoidal $(\infty,2)$-category of iterated spans with local systems constructed in~\cite{MR3830256}.)

Analogously to Definition~\ref{def:GenDWTheory}, our extended finite path integral theory $\widetilde{\FP}_{X, \widetilde{\cW}}$ is defined as a composite of symmetric monoidal 2-functors $$\Bord_{d,d-1, d-2}^B  \to \mathbb{S}pan(\cS^{\pi}, \mathrm{2sVec}_k) \to \mathrm{2sVec}_k~.$$ Here, the first functor maps a closed $(d-2)$-$B$-manifold $W$ to the space of $X$-structures on $W$ refining the given $B$ structure, equipped with the $\mathrm{2sVec}_k$-valued local system determined by $\widetilde{W}: \Bord^B_{d,d-1,d-2} \to \mathrm{2sVec}_k$ and analogous assignments to higher-dimensional bordisms. 

The second functor $\mathbb{S}pan(\cS^{\pi}, \mathrm{2sVec}_k) \to \mathrm{2sVec}_k$ is a $2$-categorical linearization functor extending the 1-categorical linearization functor from Corollary~\ref{cor:spanfunctor}. In the case where all $\pi$-finite spaces considered are $1$-groupoids, and the target is $\mathrm{2Vec}$ instead of $\mathrm{2sVec}$, this linearization 2-functor is constructed in great detail by Schweigert and Woike in~\cite[Theorem 4.8]{MR3825014}. Replacing the target $\mathrm{2Vec}$ by $\mathrm{2sVec}$, finite $1$-groupoids by $\pi$-finite spaces, and all occurences of their `integral with respect to groupoid cardinality' (see~\cite[Equation~(2.3)]{MR3825014}) by our finite path integral from Definition~\ref{def:finitepathintegral} (which essentially amounts to inserting various homotopy cardinality factors to account for higher homotopies), the proof of~\cite[Theorem 4.8]{MR3825014} goes through almost verbatim to construct a symmetric monoidal $2$-functor $\mathbb{S}pan(\cS^{\pi}, \mathrm{2sVec}_k) \to \mathrm{2sVec}_k$.  (Note that Schweigert and Woike's construction~\cite{MR3825014} more evidently extends the `limit variant' of our linearization functor from Remark~\ref{rem:limitlinearization}. However, as explained in Remark~\ref{rem:limitlinearization}, since all spaces considered here are $\pi$-finite, the limit and colimit variants of the linearization functors are equivalent.)

By construction, this new bicategorical finite path integral theory extends our given field theory $\FP_{\xi, \omega}$. Analogously to~\cite[Thm.~2.10]{Reutter:2020aa}, any once-extendable field theory is automatically semisimple, proving the theorem. \end{proof}

\begin{remark}\label{rem:alternativessproof} We now outline an alternative proof of Theorem~\ref{thm:DWareSemisimple} which avoids bicategories and proceeds by explicitly computing the algebras associated by $\FP_{\xi,\omega}$ to products of spheres.

First, we claim that for any $d$, any $(d-2)$-truncated map $\beta:B\to \rB O(d)$ with connected $B$, any $\xi:X \to B$ with $\pi$-finite fibers, and any character $\omega:\Omega^{T\xi} \to k^\times$ (equivalently, invertible field theory $W: \Bord_d^X \to \mathrm{sLine}_k$), the sphere algebra  $\FP_{\xi, \omega}(S^{d-1})$ (with its unique $B$-structure, cf. Lemma~\ref{lem:sphereFrobAlg}) of the associated finite path integral theory $\FP_{\xi, \omega}: \Bord_d^B \to \sVec_k$ is semisimple. 
This essentially follows from an explicit computation: Let $F$ denote the fiber of $\xi:X \to B$ (at some choice of basepoint) and assume without loss of generality that $F$ is connected (else, $\FP_{\xi, \omega}(S^{d-1})$ decomposes into a direct sum of algebras, one for each component of $F$). Then, it can be shown that $\FP_{\xi, \omega}(S^{d-1})$ is either zero or may be identified, as an algebra, with the fixed point subalgebra $\left(k^{\alpha}[\pi_{d-1}F]\right)^{\pi_1(F)}$ of the $\pi_1(F)$ action on the twisted group algebra $k^{\alpha}[\pi_{d-1}(F)]$, twisted by a certain $2$-cocycle $\alpha \in \rH^2(\pi_{d-1}F, k^\times)$ which is determined by the invertible field theory $W$. In characteristic zero,  twisted group algebras and their fixed point subalgebras are semisimple~\cite[Thm.~9]{MR1503262}.

It remains to show that for finite path integral theories fulfilling the assumptions of Theorem~\ref{thm:DWareSemisimple}, the algebra associated to the product of middle dimensional spheres is semisimple. 
Indeed, for such a theory (i.e. one for which $d=2q$ and for which $\beta: B \to \rB O(2q)$ is $(q-1)$-truncated and $B$ is connected), the algebra $\FP_{\xi, \omega}(S^q\times S^{q-1})$ is the algebra associated to $S^q$ by the dimensionally reduced theory $\FP_{X, \omega} (S^{q-1} \times -): \Bord_{q+1}^{B_{q+1}} \to \mathrm{sVec}_k$ where $B_{q+1} \to \rB O(q+1)$ is the pull back of our original ambient structure $B\to \rB O(2q)$. By Theorem~\ref{thm:DWdimreductisDW}, this reduced theory is again a finite path integral theory. Since we assumed $B \to \rB O(2q)$ to be $(q-1)$-truncated, so is $B_{q+1} \to \rB O(q+1)$ and the preceeding paragraph implies that the algebra associated by $\FP_{X, \omega}(S^{q-1} \times-)$ to $S^q$ is semisimple.
\end{remark}

\section{Stable diffeomorphism and the Main Theorem}\label{sec:stablediffeomandmain}

\subsection{The relation to stable diffeomorphism}\label{sec:stablediffeoandKrecksthm}

In this section we summarize Kreck's classification of manifolds up to stable diffeomorphism \cite{MR1709301}, and explain the connection to finite path integral topological field theories. 

We consider manifolds of dimension $d=2q$, and we fix an ambient tangential structure $\beta:B \to \rB O(2q)$, which is required to be a connected $(q-1)$-tangential type (e.g. $\rB SO(2q)$, $\rB Spin(2q)$, etc.). 
By Proposition~\ref{prop:pullbackstructures}, if $q \geq 1$, there is a corresponding stable tangential structure $\beta_{\infty}:B_\infty \to \rB O$ and a pullback diagram
\begin{center}
\begin{tikzpicture}
	\node (A) at (0,1.5) {$ B $};
	\node (B) at (2.5, 1.5) {$ B_\infty $};
	\node (C) at (0, 0) {$ \rB O(2q) $};
	\node (D) at (2.5, 0) {$ \rB O $};
	\draw [->] (A) to (B); 
	\draw [->] (A) to (C); 
	\draw [->] (B) to (D); 
	\draw [->] (C) to (D); 	
	\node at (0.5, 1) {$\lrcorner$};	
\end{tikzpicture}
\end{center}
Similarly, there is an associated stable normal structure $(\overline{B}_{\infty}, \overline{\beta})$ defined as the composite $B_{\infty} \to[\beta] \rB O \to[-1] \rB O$.
Hence, for a $2q$-manifold $M$ there are natural equivalences between the space of $B$-structures on the tangent bundle $T_M$, the space of $B_\infty$-structures on the stable tangent bundle $\tau_M$ and the space of $\overline{B}_\infty$-structures on the stable normal bundle of $M$. We will intentionally conflate these three types of structure and simply refer manifolds with any of these structures as \emph{$B$-manifolds}.

\begin{definition}
	A $2q$-dimensional $B$-manifold $M$ has:
	\begin{itemize}
		\item a \emph{tangential $(q-1)$-type} $M \to \tau_{\leq n}^B M \to B$, defined by factoring the map to $B$ into a $(q-1)$-connected map followed by a $(q-1)$-truncated map; 
		\item a \emph{stable tangential $(q-1)$-type} $M \to \tau_{\leq n}^{B_\infty} M \to B_\infty$, defined by factoring the map to $B_\infty$ into a $(q-1)$-connected map followed by a $(q-1)$-truncated map; 
		\item a \emph{stable normal $(q-1)$-type} $M \to \tau_{\leq n}^{\overline{B}_\infty} M \to \overline{B}_\infty$, defined by factoring the map to $\overline{B}_\infty$ into a $(q-1)$-connected map followed by a $(q-1)$-truncated map; 
	\end{itemize}
\end{definition}
 
\begin{definition}
	Let $M$ be a $B$-manifold and and $X \to B$ a tangential $(q-1)$-type. An $X$-structure on $M$ is a \emph{tangential $(q-1)$-smoothing} if it induces an equivalence $X \simeq \tau_{\leq q-1}^B M$ of spaces over $B$. 
\end{definition} 
 
There is a corresponding  stable tangential $(q-1)$-type $X_\infty \to B_\infty$ and stable normal $(q-1)$-type $\overline{X}_\infty \to \overline{B}_\infty$. If an $X$-structure on $M$ is a tangential $(q-1)$-smoothing it also induces equivalences $X_\infty \simeq \tau_{\leq n}^{B_\infty} M$ and $\overline{X}_\infty \simeq \tau_{\leq n}^{\overline{B}_\infty} M$. Thus we could equally have called the $X$-structure a \emph{normal $(q-1)$-smoothing}, as is done in \cite{MR1709301}.

A $B$-manifold $M$ with a choice of tangential $(q-1)$-smoothing $M \to X \to B$, gives rise to an element in the semistable bordism group $\Omega_{2q}^{TX}$ (see Definition~\ref{def:tangentialbordism}), and thus also in the stable bordism group $\Omega_{2q}^{\tau X_\infty} \cong \Omega_{2q}^{\nu \overline{X}_\infty}$ (Section~\ref{sec:compntypes}). By Lemma~\ref{lem:TangetBordismIsstablebordismplusEuler} (see Remark~\ref{rmk:injection}) the map 
	\begin{equation*}
		(\chi, p): \Omega^{TX}_{2q} \to \ZZ \oplus \Omega^{\tau X_\infty}_{2q} \cong \ZZ \oplus \Omega^{\nu \overline{X}_\infty}_{2q}
	\end{equation*}
	is injective, where $p: \Omega^{TX}_{2q} \to \Omega^{\tau X_\infty}_{2q} \cong \Omega^{\nu \overline{X}_\infty}_{2q}$ is the natural projection (and in particular surjective) and $\chi$ is the Euler characteristic. Thus, a class in the semistable tangential $X$-bordism group can be thought of as the normal $X$-bordism class together with information about the Euler characteristic.

There are natural equivalences $\Aut_B(X) \simeq \Aut_{B_\infty}(X_\infty) \simeq \Aut_{\overline{B}_\infty}(\overline{X}_\infty)$, and 
the group $\pi_0 \Aut_B(X)$ of (homotopy classes) of automorphisms of $X$ over $B$ acts effectively and transitively on the equivalence classes of tangential $(q-1)$-smoothings on $M$. This action extends to the respective actions on the bordism groups $\Omega_{2q}^{TX}$, and $\Omega_{2q}^{\tau X_\infty} \cong \Omega_{2q}^{\nu \overline{X}_\infty}$. 

Our assumption that $B$ is a connected tangential $(q-1)$-type, implies (assuming $q \geq 1$) that it satisfies the assumptions from Section~\ref{sec:connectsumBFrob}, and in particular that there is a well defined notion of connected sum of $B$-manifolds. Moreover there exists a canonical (unique up to orientation reversal) $B$-structure on $S^q \times S^q$. By construction this agrees with the canonical (normal) $\overline{B}_\infty$-structure constructed by \cite{MR1709301}.

Recall (Definition~\ref{def:stablediffeo}) that two compact connected closed $2q$-dimensional $B$-manifolds,  $M$ and $N$, are $B$-stably diffeomorphic if there exists a natural number $r$ and a $B$-structured diffeomorphism between $M \#^r S^q \times S^q$ and $N\#^r S^q \times S^q$. Note well that the notion of stable diffeomorphism used here takes connected sum with the same number of copies of  $S^q \times S^q$ on both $M$ and $N$.

\begin{theorem}[{\cite{MR1709301}}]\label{thm:KrecksThm}
	Let $M$ and $N$ be compact connected closed $2q$-dimensional $B$-manifolds with the same stable normal $(q-1)$-type $\overline{X}_\infty \to \overline{B}_\infty$. Then the following are equivalent
	\begin{enumerate}
		\item $M$ and $N$ are $B$-stably diffeomorphic
		\item $M$ and $N$ have the same Euler characteristic and represent elements in the stable normal bordism group $\Omega_{2q}^{\nu \overline{X}_\infty}$ which lie in the same orbit under the action of $\pi_0 \Aut_B(X) \cong \pi_0 \Aut_{\overline{B}_\infty}(\overline{X}_\infty)$. \qed
	\end{enumerate}
\end{theorem}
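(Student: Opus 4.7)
The plan is to follow Kreck's modified surgery approach \cite{MR1709301}; the argument splits into the two implications.

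For $(1) \Rightarrow (2)$, I would verify directly that both invariants are preserved by stable diffeomorphism. Inclusion-exclusion for the connected sum gives $\chi(M \# N) = \chi(M) + \chi(N) - \chi(S^{2q})$, so connect-summing the same number of $S^q \times S^q$'s to $M$ and $N$ modifies their Euler characteristics by the same amount, forcing $\chi(M) = \chi(N)$. For the bordism class, the canonical $B$-structure on $S^q \times S^q$ extends across $D^{q+1} \times S^q$ (after the appropriate stabilization of tangent bundles), so $[S^q \times S^q] = 0$ in $\Omega_{2q}^{\nu \overline{B}_\infty}$; combined with additivity of normal bordism under connected sum, this gives $[M \#^r (S^q \times S^q), f_M] = [M, f_M]$. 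Any two compatible smoothings on the stable sum differ by an element of $\pi_0 \Aut_{\overline{B}_\infty}(\overline{X}_\infty)$, producing the orbit relation in (2).

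For $(2) \Rightarrow (1)$, I would fix normal $(q-1)$-smoothings $f_M: M \to \overline{X}_\infty$ and $f_N: N \to \overline{X}_\infty$. After adjusting $f_N$ by an element of $\pi_0 \Aut_{\overline{B}_\infty}(\overline{X}_\infty)$, the hypothesis gives $[M, f_M] = [N, f_N]$ in $\Omega_{2q}^{\nu \overline{X}_\infty}$, so there is a normal $\overline{X}_\infty$-bordism $(W^{2q+1}, F) \colon M \rightsquigarrow N$. Since $\overline{X}_\infty \to \overline{B}_\infty$ is $(q-1)$-truncated, standard obstruction theory allows surgery on embedded $k$-spheres ($k \leq q-1$) in the interior of $W$ to kill $\pi_{k+1}(W, M \sqcup N)$, arranging that $F$ is a $q$-equivalence without altering the boundary. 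After this step, $W$ has a handle decomposition relative to $M$ consisting only of $q$- and $(q+1)$-handles.

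The main obstacle -- and the technical heart of Kreck's theorem -- is converting such a handle decomposition into a stable diffeomorphism. Unlike in classical surgery, one does not attempt to eliminate all middle-dimensional handles, which would encounter a nontrivial surgery obstruction in an $L$-group. Instead, one exploits the geometric observation that a $q$-handle attached trivially along an unknotted $S^{q-1} \subset M$ realizes $M \# (S^q \times S^q)$. The matching of Euler characteristics forces the algebraic count of $q$- versus $(q+1)$-handles of $W$ to balance on the two ends, which is precisely the input needed to arrange, after adding the same number $r$ of stabilizing $S^q \times S^q$ summands to both boundary components, that the resulting bordism becomes an $s$-cobordism. The $s$-cobordism theorem (for $q \geq 3$; for small $q$ low-dimensional topology arguments are substituted) then yields the desired diffeomorphism $M \#^r (S^q \times S^q) \cong N \#^r (S^q \times S^q)$. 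This final balancing step is where the combination of both invariants in (2) is indispensable.
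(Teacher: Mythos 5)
The paper does not prove this theorem; it cites Kreck's ``Surgery and duality'' \cite{MR1709301} and takes the result as given, so there is no paper-internal argument to compare against. Your sketch of the easy direction $(1)\Rightarrow(2)$ is correct, including the observation that the canonical $B$-structure on $S^q\times S^q$ extends over $D^{q+1}\times S^q$ and the use of $\pi_0\Aut_{\overline{B}_\infty}(\overline{X}_\infty)$ to pass to orbits.

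For the hard direction $(2)\Rightarrow(1)$, your initial steps -- choosing compatible normal $(q-1)$-smoothings, producing a normal bordism $W$, surgering $W$ below the middle dimension so that $W\to \overline{X}_\infty$ is a $(q+1)$-equivalence, and obtaining a handle decomposition with only $q$- and $(q+1)$-handles -- are faithful to Kreck. But the proposed finish, ``arrange that the resulting bordism becomes an $s$-cobordism and apply the $s$-cobordism theorem,'' is not Kreck's argument and has genuine problems. First, passing to an $s$-cobordism would require controlling Whitehead torsion, which you do not address and which stabilization by $S^q\times S^q$ does not obviously kill. Second, and more seriously, the smooth $s$-cobordism theorem is unavailable for $5$-dimensional bordisms between $4$-manifolds, yet $2q=4$ is the most important case to which Kreck's theorem applies (it reproduces Cappell--Shaneson); saying ``low-dimensional topology arguments are substituted'' amounts to re-proving the entire $4$-dimensional stable classification from scratch. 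Kreck avoids these difficulties by never trying to cancel the handles of $W$: he identifies the middle level $W_{1/2}$ directly, as a $B$-manifold, with $M\#^{h_q}(S^q\times S^q)$ (because the attaching $(q-1)$-spheres of the $q$-handles are null-homotopic in $M$, hence, after stabilization, unknotted and standardly framed) and simultaneously with $N\#^{h_{q+1}}(S^q\times S^q)$ from the dual decomposition. The Euler characteristic hypothesis gives $h_q=h_{q+1}$, which completes the proof without any appeal to the $s$-cobordism theorem or the Whitney trick. So while your outline has the right shape, the specific mechanism you propose for the final step is a genuine gap: it is neither Kreck's route nor a correct one in all dimensions the theorem covers.
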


In what follows, we will use a tangential unstable version of Kreck's theorem.
\begin{corollary}Let $M$ and $N$ be compact connected closed $2q$-dimensional $B$-manifolds. Then, the following are equivalent: 
	\begin{enumerate}
		\item $M$ and $N$ are $B$-stably diffeomorphic
		\item $M$ and $N$ have the same tangential $(q-1)$-type $X \to B$ and represent elements in the semistable tangential bordism group $\Omega_{2q}^{TX}$ which lie in the same orbit under the action of $\pi_0 \Aut_B(X)$. 
	\end{enumerate}
\end{corollary}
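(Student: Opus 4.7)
The plan is to deduce this corollary from Kreck's Theorem~\ref{thm:KrecksThm} by passing between unstable tangential and stable normal data. The key input will be the pullback description of tangential $(q-1)$-types together with the injection
\[ (\chi, p): \Omega^{TX}_{2q} \hookrightarrow \ZZ \oplus \Omega^{\tau X_\infty}_{2q} \cong \ZZ \oplus \Omega^{\nu \overline{X}_\infty}_{2q} \]
recalled in Remark~\ref{rmk:injection}.

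First I would verify that, for a $B$-manifold $M$, the unstable tangential $(q-1)$-type $X = \tau_{\leq q-1}^B M \to B$ and the stable tangential $(q-1)$-type $X_\infty = \tau_{\leq q-1}^{B_\infty} M \to B_\infty$ determine each other as a pullback $X \simeq B \times_{B_\infty} X_\infty$; this is an immediate application of Proposition~\ref{prop:pullbackstructures} to the $(q-1)$-truncated map $\beta: B \to \rB O(2q)$, using that $q-1 \leq 2q-2$. The same argument identifies the stable tangential $(q-1)$-type with the stable normal $(q-1)$-type under the involution $B_\infty \leftrightarrow \overline{B}_\infty$. As a consequence, two $B$-manifolds have equivalent tangential $(q-1)$-types over $B$ if and only if they have equivalent stable normal $(q-1)$-types over $\overline{B}_\infty$, and the automorphism groups $\pi_0\Aut_B(X)$ and $\pi_0\Aut_{\overline{B}_\infty}(\overline{X}_\infty)$ are canonically identified. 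Moreover, the injection $(\chi, p)$ is equivariant for these automorphism actions (with trivial action on the $\ZZ$-factor), since the Euler characteristic depends only on the underlying manifold.

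For the direction (1)$\Rightarrow$(2), I would start from a $B$-structured diffeomorphism $M\#^r(S^q\times S^q) \cong N\#^r(S^q\times S^q)$ and observe two things. First, since $S^q\times S^q$ is $(q-1)$-connected, connected sum with copies of $S^q\times S^q$ does not alter the tangential $(q-1)$-type, so $\tau_{\leq q-1}^B M$ and $\tau_{\leq q-1}^B N$ become equivalent over $B$ (the connected sum only modifies $\pi_q$). Second, comparing Euler characteristics on both sides of the stable diffeomorphism yields $\chi(M)=\chi(N)$. Now Kreck's Theorem~\ref{thm:KrecksThm} applies to give that $[M]$ and $[N]$ lie in the same $\pi_0\Aut_{\overline{B}_\infty}(\overline{X}_\infty)$-orbit of $\Omega^{\nu\overline{X}_\infty}_{2q}$, equivalently in the same $\pi_0\Aut_B(X)$-orbit of $\Omega^{\tau X_\infty}_{2q}$. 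Combined with $\chi(M)=\chi(N)$ and the equivariant injection $(\chi,p)$, this forces $[M]$ and $[N]$ to lie in the same $\pi_0\Aut_B(X)$-orbit of $\Omega^{TX}_{2q}$.

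For the converse (2)$\Rightarrow$(1), assume $M$ and $N$ share a tangential $(q-1)$-type $X\to B$ and their classes $[M],[N]\in \Omega^{TX}_{2q}$ lie in the same $\pi_0\Aut_B(X)$-orbit. Applying the equivariant injection $(\chi,p)$ shows that $\chi(M)=\chi(N)$ (since the $\ZZ$-coordinate is $\Aut$-fixed) and that $p([M])$ and $p([N])$ lie in the same orbit in $\Omega^{\tau X_\infty}_{2q}\cong\Omega^{\nu\overline{X}_\infty}_{2q}$. Since $M$ and $N$ also share a stable normal $(q-1)$-type by the pullback identification above, Kreck's Theorem~\ref{thm:KrecksThm} gives that they are $B$-stably diffeomorphic.

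The only subtle step is the equivariance and injectivity of $(\chi,p)$; everything else is bookkeeping with pullbacks and the translation between tangential and normal structures. I expect no substantial obstacle since the injectivity is already recorded in Remark~\ref{rmk:injection}, and compatibility of the $\pi_0\Aut$-actions is immediate from functoriality of the Moore--Postnikov factorization.
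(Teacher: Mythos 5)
Your proof is correct and follows essentially the same route as the paper's: reduce to Kreck's Theorem~\ref{thm:KrecksThm} by translating tangential $(q-1)$-types over $B$ into stable normal $(q-1)$-types over $\overline{B}_\infty$, and use the $\pi_0\Aut_B(X)$-equivariant injection $(\chi,p):\Omega^{TX}_{2q}\hookrightarrow\ZZ\oplus\Omega^{\nu\overline{X}_\infty}_{2q}$ from Remark~\ref{rmk:injection} together with $\Aut$-invariance of the Euler characteristic. The paper simply asserts that the corollary is ``a direct consequence'' after listing these ingredients; you spell out the two implications explicitly, including the observation that connected sum with $S^q\times S^q$ preserves the tangential $(q-1)$-type (which is needed for (1)$\Rightarrow$(2) and is used implicitly in the paper). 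One tiny imprecision: ``the connected sum only modifies $\pi_q$'' is not literally true --- it can modify higher homotopy groups as well --- but all that is needed, and all that is true, is that it leaves $\pi_{\leq q-1}$ unchanged, so the Postnikov $(q-1)$-truncation over $B$ is preserved.
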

\begin{proof}As we have observed having equivalent tangential $(q-1)$-type is equivalent to having equivalent stable normal $(q-1)$-type. By Lemma~\ref{lem:TangetBordismIsstablebordismplusEuler} (see Remark~\ref{rmk:injection}) the map 
	\begin{equation*}
		(\chi, p): \Omega^{TX}_{2q} \to \ZZ \oplus \Omega^{\tau X_\infty}_{2q} \cong \ZZ \oplus \Omega^{\nu \overline{X}_\infty}_{2q}
	\end{equation*}
	is injective, where $p: \Omega^{TX}_{2q} \to \Omega^{\tau X_\infty}_{2q} \cong \Omega^{\nu \overline{X}_\infty}_{2q}$ is the natural projection and $\chi$ is the Euler characteristic. The action of $\pi_0\Aut_B(X)$ on $\Omega^{TX}_{2q}$ leaves  the Euler characteristic invariant. Hence, the Corollary is a direct consequence of Kreck's stable diffeomorphism classification Theorem~\ref{thm:KrecksThm}.
\end{proof}

Using this corollary, our main Theorem~\ref{thm:MainThm} may be re-interpreted as follows. 
\begin{corollary}\label{thm:StableDiffeo}
	Let $M$ and $N$ be two $2q$-dimensional $B$-manifolds. Suppose that $M$ and $N$ have $\pi$-finite $B$-tangential $(q-1)$-types. Then $M$ and $N$ are indistinguishable by type-$(q-1)$ finite path integral theories if and only if they are $B$-stably diffeomorphic. \qed 
\end{corollary}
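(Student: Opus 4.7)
The plan is to observe that this corollary follows directly from juxtaposing the two main ingredients that have already been assembled in the paper, namely Theorem~\ref{thm:MainThm} and the unstable tangential version of Kreck's stable diffeomorphism classification (the Corollary immediately preceding~\ref{thm:StableDiffeo}). Specifically, I will apply Theorem~\ref{thm:MainThm} with $n = q-1$ to characterize indistinguishability by type-$(q-1)$ generalized Dijkgraaf-Witten theories and then identify the resulting conditions with those produced by Kreck's theorem.

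First, I would unpack the left-hand side of the biconditional. By Theorem~\ref{thm:MainThm}, since $M$ and $N$ are assumed to have $\pi$-finite $B$-tangential $(q-1)$-types (in particular $n = q-1$ satisfies the hypothesis), $M$ and $N$ are indistinguishable by type-$(q-1)$ generalized Dijkgraaf-Witten theories if and only if (i) they have equivalent $B$-$(q-1)$-types $\tau^B_{\leq q-1}M \simeq (X,\xi) \simeq \tau^B_{\leq q-1}N$, and (ii) the induced unstable bordism classes $[M],[N] \in \Omega^{T(\beta \circ \xi)}_{2q}$ lie in the same orbit under the action of $\pi_0\Aut_{\cS_{/B}}(\xi)$.

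Next, I would unpack the right-hand side. By the unstable tangential version of Kreck's theorem, $M$ and $N$ are $B$-stably diffeomorphic if and only if (i') they have the same tangential $(q-1)$-type $X \to B$, and (ii') they represent the same orbit in $\Omega^{TX}_{2q}$ under the action of $\pi_0\Aut_B(X)$. The hypothesis of $\pi$-finite $B$-tangential $(q-1)$-type ensures that this tangential $(q-1)$-type, as produced by the Moore--Postnikov factorization of the tangent classifying map over $B$, is exactly the $B$-$(q-1)$-type appearing in the Dijkgraaf--Witten characterization; the group $\pi_0\Aut_B(X)$ of automorphisms over $B$ coincides with $\pi_0\Aut_{\cS_{/B}}(\xi)$, and its action on unstable bordism classes is the natural one.

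Therefore conditions (i) and (i') agree, and conditions (ii) and (ii') agree; matching them on both sides gives the biconditional. The only thing that requires care is verifying that the action of $\pi_0\Aut_B(X)$ on $\Omega^{TX}_{2q}$ referenced by Kreck's theorem truly is the same as the $\pi_0\Aut_{\cS_{/B}}(\xi)$-action used in Theorem~\ref{thm:MainThm}, but this is immediate from the definitions since both are induced by functoriality of the unstable tangential bordism construction $\xi \mapsto \Omega_{2q}^{T(\beta \circ \xi)}$ (i.e.\ the functor $\Omega_d$ of Section~\ref{sec:mainthm}). There is no genuine obstacle; the real content has already been proven in Section~\ref{sec:mainthm} (non-degeneracy of the Pontryagin pairing via the nested Postnikov factorization system) and in Kreck's surgery-theoretic work. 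The corollary is thus a direct assembly of these two results.
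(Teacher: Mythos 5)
Your proposal is correct and follows exactly the route the paper intends: the corollary is stamped with \qed precisely because it is the immediate combination of Theorem~\ref{thm:MainThm} with $n=q-1$ and the unstable tangential reformulation of Kreck's theorem given in the preceding corollary, with the $\pi_0\Aut_B(X)$-actions in the two statements coinciding by definition. Nothing more is required, and your identification of the potential subtlety (matching the two automorphism actions) is the right place to have paused.
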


\subsection{Examples and Applications}\label{sec:exmappl}

\subsubsection{Hitchin Exotic Spheres}

If $d > 1$, the tangential $1$-type of a homotopy sphere is given by $\rB Spin(d)$, and each homotopy sphere admits a unique spin structure, yielding a homomorphism $\Theta_d \to \Omega^{TSpin(d)}_d$ from the group $\Theta_d$ of h-cobordism classes of smooth homotopy $d$-spheres (equivalently, for $d>4$, the group of diffeomorphism classes of homotopy spheres). Since all homotopy spheres of the same dimension have the same Euler characteristic, the image injects into $\Omega_d^{Spin}$, and we may as well consider the induced homomorphism $\eta: \Theta_d\to \Omega_d^{Spin}$. Each class $[\Sigma] = \eta(\Sigma)$ represented by a homotopy sphere is invariant under the action of $\Aut_B(\rB Spin(d))$ where $B = \rB SO(d)$ or $B = \rB O(d)$. It follows directly from Theorem~\ref{thm:MainThm} that: 

\begin{corollary}\label{cor:firstsphere}
	Two homotopy spheres $\Sigma$ and $\Sigma'$ are indistinguishable by type-$1$ finite path integral theories if and only if $\eta(\Sigma) = \eta(\Sigma')$, i.e. if and only if they represent the same element in Spin bordism. \qed
\end{corollary}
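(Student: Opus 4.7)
My plan is to verify the hypotheses of Theorem~\ref{thm:MainThm} for $n = 1$ with ambient tangential structure $B = \rB O(d)$, and then to deduce the conclusion using the equivariant injection from Remark~\ref{rmk:injection}.

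First I would check that the tangential $1$-type of a homotopy $d$-sphere $\Sigma$ is $\rB Spin(d)$ and that this tangential $1$-type is $\pi$-finite over $\rB O(d)$. The map $\rB Spin(d) \to \rB O(d)$ is $1$-truncated: the long exact sequence associated to its homotopy fiber $F$ gives $\pi_0(F) \cong \mathbb{Z}/2$ (coming from $\pi_1 \rB O(d) = \mathbb{Z}/2$), $\pi_1(F) \cong \mathbb{Z}/2$ (coming from the spin cover $Spin(d) \to SO(d)$), and $\pi_{\geq 2}(F) = 0$, so $F$ is $\pi$-finite. Since $\Sigma$ is $1$-connected and admits a unique spin structure, its tangent classifying map factors as $\Sigma \to \rB Spin(d) \to \rB O(d)$ with the first map $1$-connected. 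This realizes $\rB Spin(d) \simeq \tau_{\leq 1}^{\rB O(d)} \Sigma$, so in particular every homotopy $d$-sphere has the same tangential $1$-type and condition (1) of Theorem~\ref{thm:MainThm} is automatic.

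For condition (2), I would invoke Remark~\ref{rmk:injection}, which provides an injection
\[ (\chi, p) \colon \Omega_d^{T \rB Spin(d)} \hookrightarrow \mathbb{Z} \oplus \Omega_d^{Spin} \]
that is equivariant for the $\pi_0 \Aut_{\rB O(d)}(\rB Spin(d))$-action, with trivial action on the $\mathbb{Z}$-factor (since the Euler characteristic is a diffeomorphism invariant and is thus preserved under any change of tangential refinement). All homotopy $d$-spheres share the same Euler characteristic $1 + (-1)^d$, and the observation recalled immediately before the corollary statement asserts that each class $\eta(\Sigma)$ is already fixed by the automorphism action. Consequently, the bordism classes $[\Sigma], [\Sigma'] \in \Omega_d^{T \rB Spin(d)}$ lie in the same $\pi_0 \Aut_{\rB O(d)}(\rB Spin(d))$-orbit if and only if $\eta(\Sigma) = \eta(\Sigma')$, and the corollary then follows from Theorem~\ref{thm:MainThm}.

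I do not anticipate any substantive obstacle: the corollary is essentially an unpacking of the main theorem once the tangential $1$-type of a homotopy sphere has been identified. The mildest point to be careful about is the equivariance of $(\chi, p)$ under $\pi_0 \Aut_{\rB O(d)}(\rB Spin(d))$, but this reduces to the standard facts that $\chi$ is a topological invariant and that the automorphism action descends compatibly to the stable bordism group.
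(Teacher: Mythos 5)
Your proof is correct and follows essentially the same route as the paper's own argument: identify $\rB Spin(d)$ as the common tangential $1$-type of homotopy $d$-spheres, use the injection $(\chi, p)$ of Remark~\ref{rmk:injection} together with equal Euler characteristics to reduce equality in $\Omega_d^{T\rB Spin(d)}$ to equality in $\Omega_d^{Spin}$, invoke the asserted fixedness of $\eta(\Sigma)$ under the $\pi_0\Aut$-action, and apply Theorem~\ref{thm:MainThm}. (One minor caveat shared with the paper's own sketch: Remark~\ref{rmk:injection} is stated only for even $d$, so the odd-dimensional case is not quite covered by the citation as written.)
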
	

The $\alpha$-invariant is a map $\alpha: \Omega^{Spin}_d \to KO_d(pt)$ from Spin bordism to the real K-theory of a point. 
Hitchin \cite{MR358873} (based on results in \cite{MR156292}) proved that the $\alpha$ invariant of a closed spin manifold is an obstruction to the existence of a metric of positive scalar curvature on it. On the other hand, Stolz \cite{MR1056561} proved that a simply connected spin manifold of dimension $d \geq 5$ admits a metric of positive scalar curvature if its $\alpha$-invariant vanishes. The proof uses surgery results obtained (independently) in \cite{MR577131} and \cite{MR535700}, as well as involved calculations within stable homotopy. Hence, a $d$-dimensional homotopy sphere $\Sigma$ with $d\geq 5$ admits a metric of positive scalar curvature if and only if $\alpha(M)$ is trivial. Homotopy spheres for which $\alpha(\Sigma)$ is non-trivial, or which equivalently do not admit a metric of positive scalar curvature, are called \emph{Hitchin spheres}. In~\cite{MR189043}, Anderson-Brown-Peterson showed that Hitchin spheres exist precisely in dimension $d=8k+1$ and $8k+2$ and in fact that for any homotopy sphere $\Sigma$, the eta invariant $\eta(\Sigma) \neq 0$ if and only if $\alpha \circ \eta(\Sigma) \neq 0$.
Hence, the following is an immediate corollary of Corollary~\ref{cor:firstsphere}.

\begin{corollary}\label{cor:hitchinspheredetection}
	There exist oriented and unoriented semisimple topological field theories which distinguish Hitchin spheres from non-Hitchin homotopy spheres. 
	\qed
\end{corollary}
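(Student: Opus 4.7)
The plan is to reduce the statement to Corollary~\ref{cor:firstsphere} by showing that a Hitchin sphere and any non-Hitchin homotopy sphere of the same dimension have distinct Spin bordism classes. Fix a Hitchin sphere $\Sigma$ and a non-Hitchin homotopy sphere $\Sigma'$ of dimension $d = 8k+2$ (with $k \geq 1$ so that $q - 1 = 4k \geq 1$). By definition of a Hitchin sphere and the Hitchin--Stolz characterization recalled above, $\Sigma$ admits no metric of positive scalar curvature while $\Sigma'$ does; equivalently, $\alpha(\eta(\Sigma)) \neq 0$ while $\alpha(\eta(\Sigma')) = 0$. Since $\alpha:\Omega_d^{\mathrm{Spin}} \to KO_d(\mathrm{pt})$ is a group homomorphism, this forces $\eta(\Sigma) \neq \eta(\Sigma')$ in $\Omega_d^{\mathrm{Spin}}$.

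Applying Corollary~\ref{cor:firstsphere}, there exists a type-$1$ generalized Dijkgraaf-Witten theory which assigns different values to $\Sigma$ and $\Sigma'$. The inputs of such a theory are a map $\xi:X \to B$ whose homotopy fibers are $\pi$-finite $1$-types, together with a character of $\Omega_d^{T(\beta\circ\xi)}$. For the oriented version we take $B = \rB SO(d)$ and for the unoriented version $B = \rB O(d)$; in both cases $X = \rB \mathrm{Spin}(d)$ realizes the tangential $1$-type of each homotopy sphere.

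To upgrade this to a \emph{semisimple} topological field theory, I would verify the hypotheses of Theorem~\ref{thm:DWareSemisimple}. We need $\beta: B \to \rB O(d)$ to be a connected tangential $(q-1)$-type, which is immediate for $B = \rB SO(d)$ (a $0$-type over $\rB O(d)$) and trivial for $B = \rB O(d)$. Moreover $\rB \mathrm{Spin}(d) \to \rB SO(d)$ has homotopy fiber $K(\ZZ/2,1)$, which is a $1$-type and in particular a $(q-1)$-type since $q - 1 = 4k \geq 1$; similarly $\rB \mathrm{Spin}(d) \to \rB O(d)$ has $1$-truncated fibers. Hence Theorem~\ref{thm:DWareSemisimple} applies and the resulting generalized Dijkgraaf-Witten theories are semisimple in both the oriented and unoriented cases.

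The proof is essentially a packaging of earlier results; there is no serious new obstacle. The one nontrivial input is the external theorem of Anderson--Brown--Peterson (together with Hitchin and Stolz) that $\alpha\circ\eta$ detects Hitchin spheres in the appropriate dimensions, which is cited just before the corollary. Given this, the argument is a straightforward chain: Hitchin-Stolz $\Rightarrow$ distinct Spin bordism classes $\Rightarrow$ Corollary~\ref{cor:firstsphere} yields a distinguishing type-$1$ DW theory $\Rightarrow$ Theorem~\ref{thm:DWareSemisimple} guarantees semisimplicity in both tangential settings.
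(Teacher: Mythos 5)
Your proof is correct and follows essentially the same chain of reasoning the paper intends (the corollary is marked \qed with no separate proof precisely because it is a direct consequence of the surrounding discussion, Corollary~\ref{cor:firstsphere}, and Theorem~\ref{thm:DWareSemisimple}). You correctly note that the notion of semisimple TFT (Definition~\ref{def:semisimpleTFT}) only applies in even dimensions $d=2q$, so among the dimensions $8k+1$, $8k+2$ where Hitchin spheres exist, only $d=8k+2$ is relevant; this is the right restriction and matches the paper's intent. One small clarification: for Theorem~\ref{thm:DWareSemisimple} you only need $\beta:B\to \rB O(d)$ to be a tangential $(q-1)$-type (with $B$ connected), which holds for $\rB SO(d)$ (a $0$-type over $\rB O(d)$) and $\rB O(d)$ (a $(-2)$-type over itself); the theorem then applies for \emph{any} choice of $\xi$ with $\pi$-finite fibers, so you do not actually need to verify $(q-1)$-truncatedness of $\xi:\rB \mathrm{Spin}(d)\to B$. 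The $1$-truncatedness of $\xi$ is, however, exactly what you need to invoke Corollary~\ref{cor:firstsphere}, so the observation is still pertinent, just to a different point in the argument.
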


\begin{remark}\label{rmk:explicitTFTexoticsphere}
		In dimensions $d = 8k+1$ or $8k+2$ the alpha invariant $\alpha: \Omega_d^{Spin} \to \ZZ/2\ZZ$ takes values in $\ZZ/2\ZZ$ and can be viewed as a character by identifying $\ZZ/2\ZZ \cong \{ \pm 1 \} \subseteq \CC^*$. Thus this gives rise to an invertible spin topological field theory. The finite path integral construction applied to this invertible theory produces an oriented theory which takes value $-\frac{1}{2}$ on Hitchin spheres and value $\frac{1}{2}$ on non-Hitchin spheres.
\end{remark}

\subsubsection{Examples in dimension 4}

In dimension $2q=4$, Theorem~\ref{thm:StableDiffeo} says that two $4$-manifolds with $\pi$-finite tangential $1$-types are stably diffeomorphic if and only if they are indistinguishable by type-1 finite path integral theories. A connected manifold has $\pi$-finite tangential $1$-type precisely if it has a finite fundamental group. In \cite{TeichnerPHD} Teichner constructed the following examples of homotopy equivalent smooth manifolds 

\begin{theorem}[{\cite{TeichnerPHD}}]\label{thm:TeichnerPHD}
	Let $\pi$ be a finite group with generalized quaternion 2-Sylow subgroup of order larger than eight. Then there exist connected oriented smooth 4-manifolds $M_0$ and $M_1$ whose fundamental groups $\pi_1 M_0 \cong \pi_1 M_1 \cong \pi$, such that $M_0$ is (orientation preserving) homotopy equivalent to $M_1$, but $M_0$ and $M_1$ are not stably diffeomorphic. These manifolds have the following properties
	\begin{enumerate}
		\item $M_0$ and $M_1$ have the same tangential 1-type $\xi: X \to BO(4)$. 
		\item $M_i$ is oriented, and the universal cover of $M_i$ is spin. 
		\item The manifolds $M_i$ are not spin; the second Stiefel Whitney class is a particular specified class induced from a particular class in the cohomology of the generalized quaternion 2-Sylow subgroup.
		\item $M_0$ and $M_1$ are distinguished by a certain `tertiary' bordism invariant $ter: \Omega_4^{TX} \to \ZZ/2\ZZ$. 
	\end{enumerate} 	
\end{theorem}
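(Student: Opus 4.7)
The plan is to construct the manifolds via a careful analysis of the tangential $1$-type and the induced bordism group, and then use Kreck's stable classification (Theorem~\ref{thm:KrecksThm}) together with Hambleton--Kreck's homotopy classification of closed oriented $4$-manifolds with finite fundamental group. First I would identify the tangential $1$-type $\xi : X \to \rB O(4)$ explicitly. Since $M_i$ is required to be oriented with spin universal cover, and to have a prescribed non-trivial $w_2$ pulled back from a class $\omega \in H^2(B\pi; \ZZ/2\ZZ)$, the space $X$ is the homotopy pullback
\[
X \simeq B\pi \times_{K(\ZZ/2\ZZ,2)} \rB SO(4),
\]
where $B\pi \to K(\ZZ/2\ZZ,2)$ classifies $\omega$ and $\rB SO(4) \to K(\ZZ/2\ZZ,2)$ classifies $w_2$. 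Here the assumption that the $2$-Sylow subgroup of $\pi$ is generalized quaternion of order $\geq 16$ guarantees a suitable nonzero $\omega$.

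Next I would compute (or at least stratify) the unstable bordism group $\Omega_4^{T\xi}$ via the Atiyah--Hirzebruch spectral sequence for the Madsen--Tillmann spectrum $MT\xi$, comparing it to the classical spin bordism $\Omega_4^{Spin} \cong \ZZ$ detected by the signature, and to the image of $H_4(B\pi; \ZZ) \oplus H_2(B\pi; \ZZ/2\ZZ)$ built from $\omega$. The hypothesis on the $2$-Sylow is precisely what is needed to ensure the existence of a class in $\Omega_4^{T\xi}$ that dies in $\Omega_4^{Spin}$, projects trivially onto $H_4(B\pi; \ZZ)$, and yet survives to a $\ZZ/2\ZZ$-valued invariant --- this is the tertiary invariant $\mathrm{ter}$. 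Concretely, it arises from a secondary cohomology operation supported on the differential $d_2 = Sq^2$ in the spectral sequence, applied to a class in $H^2(B\pi; \ZZ/2\ZZ)$ that is a product of classes coming from the generalized quaternion summand.

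With the invariant in hand I would realize two bordism classes differing in $\mathrm{ter}$. Start from a standard representative $M_0$ of the trivial class: build it by doing equivariant surgery on a $\pi$-cover of $S^4$ (or $\#^r(S^2 \times S^2)$) to realize the required $\pi_1$, $\pi_2$, and $w_2$. Then construct $M_1$ by modifying $M_0$ using an auxiliary closed spin $4$-manifold $N$ equipped with a map to $B\pi$ such that $[N] \in \Omega_4^{T\xi}$ has $\mathrm{ter}([N]) \neq 0$ but is zero in all the coarser invariants; form $M_1$ as the normal connected sum of $M_0$ with $N$ along a suitable $2$-sphere (or a fiber connected sum along an $S^1$) chosen to preserve the homotopy-theoretic data. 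By construction $M_0$ and $M_1$ have the same tangential $1$-type, same Euler characteristic, but differ by $[N]$ in $\Omega_4^{T\xi}$, so by Kreck's theorem they are not stably diffeomorphic.

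The main obstacle, and the step where the hypothesis on the quaternion $2$-Sylow is crucial, is verifying that $M_0$ and $M_1$ are orientation-preserving homotopy equivalent. For this I would invoke the Hambleton--Kreck classification: two closed oriented $4$-manifolds with finite fundamental group are homotopy equivalent if and only if they agree on fundamental group, the $w_2$-type, the equivariant intersection form on $\pi_2$, and the $k$-invariant in $H^4(\pi; \pi_2)$. All of these are primary invariants and by construction are unchanged by connected sum with $N$ (one must verify the intersection form pairs the same way on the new $\pi_2$ generators). Meanwhile $\mathrm{ter}$ is a genuinely tertiary invariant, not detected by any of this data, which is exactly why the hypothesis on $Q_{2^n}$ with $n \geq 4$ is needed: smaller quaternion Sylow subgroups do not carry a cohomology class supporting a nontrivial such $\mathrm{ter}$. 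This asymmetry between homotopy-theoretic invariants and the bordism-theoretic $\mathrm{ter}$ is precisely what delivers the two properties (homotopy equivalent, not stably diffeomorphic) simultaneously.
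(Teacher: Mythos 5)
This is a result the paper does not prove: the theorem is quoted directly from Teichner's thesis \cite{TeichnerPHD}, with no argument supplied in the body of the paper. There is therefore no ``paper's own proof'' to compare your sketch against; what you have written is a reconstruction of Teichner's argument, and it should be judged on those terms.

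At the level of strategy your reconstruction is plausible and matches what Teichner actually does: identify the normal/tangential $1$-type as a fibered product of $B\pi$ and $\rB SO(4)$ over $K(\ZZ/2\ZZ,2)$, compute the relevant twisted spin bordism group $\Omega_4^{T\xi}$ via the James/Atiyah--Hirzebruch spectral sequence, isolate a $\ZZ/2$-valued tertiary bordism invariant arising from a higher differential that is present precisely because of the cohomology of the generalized quaternion $2$-group, and then invoke a Hambleton--Kreck--style homotopy classification (using periodicity of the quaternionic $2$-Sylow's cohomology) to show the two bordism classes can be realized by manifolds with isomorphic quadratic $2$-type. Two details of your account, however, are off. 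First, the first $k$-invariant lives in $H^3(\pi;\pi_2)$, not $H^4(\pi;\pi_2)$. Second, and more seriously, the construction step ``form $M_1$ as the (normal) connected sum of $M_0$ with $N$'' cannot work as stated: an ordinary connected sum with a closed $4$-manifold $N$ that is nontrivial in bordism necessarily changes $\pi_2$ and the equivariant intersection form, destroying the isomorphism of quadratic $2$-types you need for the Hambleton--Kreck step. Teichner does not obtain $M_1$ this way; the two manifolds with the same quadratic $2$-type but different tertiary invariant must be realized by surgery on suitable normal bordisms within the fixed normal $1$-type, and the hard content of the thesis is precisely showing that such a pair exists. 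Your sketch correctly names the invariants at play but papers over the step that is actually delicate.
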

This leads to the following corollary of Theorem~\ref{thm:StableDiffeo}.
\begin{corollary}
	In each instance of Theorem~\ref{thm:TeichnerPHD}, there are oriented and unoriented type-1 finite path integral theories which distinguish the manifolds $M_0$ and $M_1$. \qed
\end{corollary}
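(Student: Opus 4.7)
The plan is to apply Theorem~\ref{thm:MainThm} (equivalently its reformulation as Corollary~\ref{thm:StableDiffeo}) directly to Teichner's manifolds $M_0$ and $M_1$. Since this theorem characterizes indistinguishability by type-$(q-1)$ generalized Dijkgraaf-Witten theories in terms of $B$-stable diffeomorphism for $2q$-manifolds with $\pi$-finite tangential $(q-1)$-type, the entire argument reduces to verifying the hypotheses in both the oriented and unoriented settings.

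First I would check the $\pi$-finiteness hypothesis. Since $M_0$ and $M_1$ are connected $4$-manifolds with finite fundamental group $\pi$, the homotopy fiber of any of their tangential $1$-types is a connected $1$-type with $\pi_1 \cong \pi$ (and no higher homotopy since it is $1$-truncated), hence $\pi$-finite. This applies both to the unoriented tangential $1$-type $\xi: X \to \rB O(4)$ from part~(1) of Theorem~\ref{thm:TeichnerPHD} and to the oriented version $\xi_{SO}: X_{SO} \to \rB SO(4)$ obtained by pullback along $\rB SO(4) \to \rB O(4)$.

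Next I would check that $M_0$ and $M_1$ are not $B$-stably diffeomorphic for $B = \rB SO(4)$ and for $B = \rB O(4)$. The oriented non-diffeomorphism is built into the hypotheses of Theorem~\ref{thm:TeichnerPHD}. For the unoriented case I would use part~(4) of Teichner's theorem: the tertiary invariant $ter: \Omega_4^{TX} \to \ZZ/2\ZZ$ takes distinct values on the bordism classes $[M_0]$ and $[M_1]$, and is by its natural construction invariant under the action of $\pi_0 \Aut_{\rB O(4)}(X)$. It therefore descends to a well-defined function on the orbit set $\Omega_4^{TX}/\pi_0 \Aut_{\rB O(4)}(X)$ distinguishing the two orbits, and Kreck's theorem (Theorem~\ref{thm:KrecksThm} and its unstable tangential version in Section~\ref{sec:stablediffeoandKrecksthm}) then rules out unoriented $B$-stable diffeomorphism.

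With both hypotheses verified, Theorem~\ref{thm:MainThm} applied to $B = \rB SO(4)$ produces an oriented type-$1$ generalized Dijkgraaf-Witten theory distinguishing $M_0$ from $M_1$, and the same theorem applied to $B = \rB O(4)$ produces an unoriented one. The only non-bookkeeping step is the verification that the tertiary invariant is $\pi_0 \Aut_{\rB O(4)}(X)$-equivariant; this is the mild obstacle one has to extract from Teichner's construction, but it is essentially automatic because $ter$ is built from homotopy-theoretic data attached to $X \to \rB O(4)$ itself. Once this is in hand, the passage from ``distinguished by a bordism invariant'' to ``distinguished by a semisimple TFT'' is purely formal through the non-degeneracy of the Pontryagin pairing developed in Section~\ref{sec:pontryaginpairing}.
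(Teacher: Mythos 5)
Your proof is correct and follows essentially the same route as the paper's (implicit) argument: apply Corollary~\ref{thm:StableDiffeo} (i.e.\ Theorem~\ref{thm:MainThm} combined with Kreck's classification), verify that $M_0$ and $M_1$ have $\pi$-finite tangential $1$-types (immediate from $\pi_1$ finite), and appeal to Teichner's result that they are not stably diffeomorphic. The paper states this corollary with \qed{} and then adds the remark that an explicit theory can be built from the bordism invariant $ter$, much as in Remark~\ref{rmk:explicitTFTexoticsphere}, which is exactly the mechanism you invoke.

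Two small inaccuracies worth flagging, though neither affects the argument. First, the oriented tangential $1$-type $X_{SO} \to \rB SO(4)$ is \emph{not} obtained from $X \to \rB O(4)$ by pullback along $\rB SO(4) \to \rB O(4)$: since the map $X \to \rB O(4)$ already factors through $\rB SO(4)$ (as $w_1(M)=0$ and $H^1(X)\cong H^1(M)$), that pullback is $X \sqcup X$, not $X_{SO}$. The correct relationship is simply that $X_{SO}$ is the same space $X$, viewed over $\rB SO(4)$ via the factorization. Second, the homotopy fiber of $X \to \rB O(4)$ is \emph{not} connected (it has two components, exactly because the map factors through the double cover $\rB SO(4)$); what matters is only that it is $\pi$-finite, which it is.

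On the one substantive point: you correctly notice that to get the \emph{unoriented} conclusion one needs the bordism classes $[M_0],[M_1]$ to lie in distinct orbits under the (possibly larger) group $\pi_0\Aut_{\rB O(4)}(X)$, not merely $\pi_0\Aut_{\rB SO(4)}(X)$, and you propose to see this via $\pi_0\Aut_{\rB O(4)}(X)$-invariance of $ter$. That is the right thing to verify; the paper elides this subtlety and treats the corollary as immediate. Your identification of the passage "bordism invariant $\Rightarrow$ semisimple TFT" as being handled formally by the non-degeneracy of the Pontryagin pairing is exactly the paper's intended reading.
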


In fact an explicit finite path integral theory can be constructed using the bordism invariant $ter$, much as in Remark~\ref{rmk:explicitTFTexoticsphere}. We also recall that Gompf \cite{MR769285} has shown that smooth manifolds which are orientation preserving homeomorphic
are stably diffeomorphic. So while the manifolds $M_0$ and $M_1$ are homotopy equivalent, they are not homeomorphic. 

In the case of non-orientable 4-manifolds, there are examples of smooth manifolds which are homeomorphic but are not stably diffeomorphic. For example, the tangential $1$-type of $\RR\PP^4$ is $\rB Pin^+(4) \to \rB O(4)$ (the stable normal $1$-type is $\rB Pin^-(4) \to \rB O$). The action of $\pi_0\Aut_{\rB O}(\rB Pin^+) \cong \ZZ/2\ZZ$ on $\Omega^{\tau \rB Pin^+}_4$ is given by inversion. The smooth $\RR\PP^4$, with its two $Pin^+$-structures, represents the classes $\{\pm 1\} \subseteq \ZZ/16\ZZ$, while the Cappell-Shaneson exotic $\RR\PP^4_{C.S}$ \cite{MR418125}, with its two $Pin^+$-structures, represents the classes $\{\pm 9 \}\subseteq \ZZ/16\ZZ$, see \cite{MR958593}. This gives an example of homeomorphic manifolds which have the same tangential $1$-type but lie in distinct orbits in $\Omega^{\tau \rB Pin^+}_4$, and which are thus not stably diffeomorphic. More generally, we can take a (possibly mixed) $S^1$-connected sum (an operation which preserves tangential $1$-type, see~\cite{MR1234481})of $\RR\PP^4$ and Cappell-Shaneson exotic $\RR\PP^4_{C.S}$ which remain homeomorphic. Hence, we obtain the following corollary of Theorem~\ref{thm:StableDiffeo} generalizing Debray's result~\cite{362517} mentioned in the introduction.

\begin{corollary}
When a (possibly mixed) $S^1$-connected sum~\cite{MR1234481} of $\RR\PP^4$ and Cappell-Shaneson exotic $\RR\PP^4_{C.S}$ yield homeomorphic, but not stably diffeomorphic 4- manifolds, they are distinguished by type-1 unoriented finite path integral theories.

This happens for example with the $S^1$-connected sums $\#_{S^1}^r \RR\PP^4$ and $\#_{S^1}^r \RR\PP^4_{C.S}$ for $r\neq 0 ~\mathrm{mod }~4$.\qed
\end{corollary}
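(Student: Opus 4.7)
\medskip\noindent\emph{Proof proposal.} The strategy is to directly invoke Corollary~\ref{thm:StableDiffeo}, which reduces the problem to verifying that the two smooth $4$-manifolds in question have (i) the same $\pi$-finite tangential $1$-type and (ii) are homeomorphic but not $B$-stably diffeomorphic. The general statement then follows by the same mechanism applied to any homeomorphism between appropriately mixed $S^1$-connected sums, so it suffices to carry out the example.

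First I would record the tangential data. Following~\cite{MR1234481}, the $S^1$-connected sum used here is performed along embedded circles representing the nontrivial element of $\pi_1(\RR\PP^4) = \ZZ/2$, and is arranged to preserve the (non-orientable) tangential $1$-type $\rB Pin^+(4) \to \rB O(4)$. The homotopy fiber of this map is $\rB\ZZ/2 = K(\ZZ/2,1)$, which is $\pi$-finite, so the tangential $1$-type is indeed $\pi$-finite; in particular, Corollary~\ref{thm:StableDiffeo} applies. Since $\RR\PP^4$ and $\RR\PP^4_{C.S.}$ are homeomorphic and the $S^1$-connected sum is a topological operation, $\#_{S^1}^r \RR\PP^4$ and $\#_{S^1}^r \RR\PP^4_{C.S.}$ are homeomorphic for every $r$.

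The main step is then to exhibit distinct orbits in the unstable tangential bordism group $\Omega_4^{T \rB Pin^+(4)}$ under the action of $\pi_0 \Aut_{\rB O(4)}(\rB Pin^+(4)) \cong \ZZ/2\ZZ$, which acts by simultaneous inversion on the stable factor. By Lemma~\ref{lem:TangetBordismIsstablebordismplusEuler} and Remark~\ref{rmk:injection}, the map $(\chi, \mathrm{pr}) : \Omega_4^{T\rB Pin^+(4)} \hookrightarrow \ZZ \oplus \Omega_4^{Pin^+}$ is injective, with $\Omega_4^{Pin^+} \cong \ZZ/16\ZZ$; the $\ZZ/2$-action fixes $\chi$ and negates the stable class. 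Additivity of both the Euler characteristic and the $Pin^+$-bordism class under $S^1$-connected sum (which follows from $\chi(S^1\times D^3)=\chi(S^1\times S^2)=0$ and the vanishing of $[S^1\times S^3]\in\Omega_4^{Pin^+}$) shows that $\#_{S^1}^r\RR\PP^4$ and $\#_{S^1}^r\RR\PP^4_{C.S.}$ have Euler characteristic $r$ and stable classes $\pm r$ and $\pm 9r \in \ZZ/16\ZZ$ (depending on the $Pin^+$-structures on the summands). The two orbits of $\{\pm r\}$ and $\{\pm 9r\}$ in $\ZZ/16\ZZ$ under negation are compared directly; carrying out this arithmetic, combined with the careful accounting of the $Pin^+$-structure choices on each summand, pins down exactly for which $r$ (and more generally, for which mixed sums) the orbits remain distinct. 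Corollary~\ref{thm:StableDiffeo} then concludes the proof.

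The main obstacle is the bookkeeping of $Pin^+$-structures across the copies in a (possibly mixed) $S^1$-connected sum: different compatible structures can redistribute signs in the $\ZZ/16\ZZ$-sum, so one must identify which orbit of the $\pi_0\Aut$-action the resulting bordism class belongs to. Once this is clarified, everything else is formal; the surgery content is already packaged into Theorem~\ref{thm:MainThm}, of which Corollary~\ref{thm:StableDiffeo} is a direct consequence.
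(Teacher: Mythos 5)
Your strategy is exactly the one the paper intends: both the general assertion and the example are supposed to follow from Corollary~\ref{thm:StableDiffeo} (the paper itself places \qed directly after the statement, relying on the preceding paragraph for the orbit data). You correctly identify that the tangential $1$-type is $\rB Pin^+(4) \to \rB O(4)$ with $\pi$-finite fiber $K(\ZZ/2,1)$, that $\pi_0\Aut_{\rB O(4)}(\rB Pin^+(4)) \cong \ZZ/2\ZZ$ acts by inversion, and that the comparison reduces to comparing orbits in $\Omega_4^{Pin^+} \cong \ZZ/16\ZZ$ via the injection of Remark~\ref{rmk:injection}. So the approach matches.

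That said, one word of caution about the arithmetic you defer. If one takes your additivity set-up at face value, the orbits are $\{r,-r\}$ and $\{9r,-9r\}$ in $\ZZ/16\ZZ$, and these coincide iff $8r\equiv 0 \pmod{16}$ or $10r\equiv 0 \pmod{16}$, i.e.\ iff $r$ is even. That would give ``distinguished iff $r$ odd,'' which is strictly more restrictive than the paper's ``$r\not\equiv 0 \pmod 4$'' (e.g.\ $r=2$ would then not be distinguished). Either naive additivity of the $Pin^+$-bordism class under $\#_{S^1}$ is not quite right --- and this is precisely where the bookkeeping you flag (how a $Pin^+$-structure on the $S^1$-connected sum restricts to the pieces, and whether a correction term appears, cf.~\cite{MR1234481}) has teeth --- or the stated modulus in the paper should be checked. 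Since the general assertion does not depend on this, the proof strategy is sound, but before committing to the example you should resolve this discrepancy by consulting the bordism computations in \cite{MR1234481} rather than assuming strict additivity.
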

When $r=4$, these $\#_{S^1}^4 \RR\PP^4$ and $\#_{S^1}^4 \RR\PP^4_{\textrm{C. S.}}$ lie in the same orbit in $\Omega^{\tau \rB Pin^+}_4 $, and thus are stably diffeomorphic. This raises the following question:

\begin{question}
	Is the $S^1$-connected sum of four copies of real projective space $\RR\PP^4$ diffeomorphic to the $S^1$-connected sum of four copies of Cappell-Shaneson's exotic $\RR\PP^4$?
\end{question}

Similar examples can be obtained for 4-manifolds whose fundamental group is a finite of order $2$ mod $4$, see \cite{Debray:2021tb}.

\subsubsection{Manifolds whose fundamental group is the Thompson group}

The next example shows that we cannot completely ignore the $\pi$-finite condition in Theorem~\ref{thm:StableDiffeo}.

Let $\pi$ be a finitely presented group. Let $K_\pi$ be a connected finite two complex with $\pi_1K \cong \pi$. We can embed $K$ into $\RR^5$ and take a regular neighborhood $W \subset \RR^5$, which deformation retracts onto $K$. The boundary of $W$ is a smooth 4-manifold $M = \partial W$ with $\pi_1 M \cong \pi$. By construction $M$ bounds a framed 5-manifold (the regular neighborhood of $K_\pi$). Whence the signature of $M$ is zero, the tangent bundle of $M_\pi$ is stably trivial, $M$ is oriented, and   spin. The corresponding tangential 1-type is $\rB Spin(4) \times \rB \pi \to \rB O(4)$. 

Suppose now that $\pi$ is also infinite and simple. The for any $\pi$-finite tangential 1-type $(X, \xi)$, the space of $(X, \xi)$-structures on $M$ may be computed as
\begin{equation*}
	\Map_{\rB O(4)}(M, X) \simeq \Map_{\rB O(4)}( \rB Spin(4) \times \rB \pi, X) \simeq \Map_{\rB O(4)}( \rB Spin(4), X)
\end{equation*}
where the last equivalence is induced from the projection map $\rB Spin(4) \times \rB \pi \to \rB Spin(4)$. Thus the type-1 finite path integral invariants of $M$ are completely determined by the corresponding tangential spin bordism class $[M] \in \Omega^{T \rB Spin(4)}_4 \cong \ZZ \oplus \ZZ$. By Remark~\ref{rmk:injection}, this bordism class is determined by the Euler characteristic of $M$ (and the signature which we have already seen is zero). 

If further $\pi$ is acyclic, for example if $\pi = V$ is the large Thompson group, then Lefschetz and Alexander duality imply that $\chi(M) = 2 = \chi(S^4)$. It follows that $[M] = [S^4] \in \Omega^{T \rB Spin(4)}_4$. Thus we obtain:

\begin{proposition}\label{prop:Thompson}
	When $\pi = V$ is the large Thompson group, or any other infinite finitely presented acyclic group, then the 4-manifold manifold $M$ constructed above and the 4-sphere $S^4$ are not stably diffeomorphic, yet are indistinguishable by type-$1$ finite path integral theories.\qed
\end{proposition}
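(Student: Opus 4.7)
The proposition has two halves, which I would address separately.

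Failure of stable diffeomorphism is immediate from the behaviour of the fundamental group: connected sum with $S^2\times S^2$ leaves $\pi_1$ unchanged in dimension~$4$ (by van Kampen, since $S^2 \times S^2$ is simply connected), so stable diffeomorphism preserves $\pi_1$. But $\pi_1(M) \cong V$ is infinite while $\pi_1(S^4)$ is trivial, so $M$ and $S^4$ cannot be stably diffeomorphic.

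For indistinguishability, fix a $1$-truncated $\xi\colon X \to \rB O(4)$ with $\pi$-finite fibres and a character $\omega\colon \Omega_4^{T\xi} \to \CC^{\times}$. By Example~\ref{rmk:valueofDWonclosed}, the partition function depends only on the moduli space $\Map_{\rB O(4)}(-, X)$ and the induced bordism class in $\Omega_4^{T\xi}$. The tangential $1$-types of $M$ and $S^4$ are $\rB Spin(4) \times \rB V \to \rB O(4)$ and $\rB Spin(4) \to \rB O(4)$ respectively, so the universal property of the $(q-1)$-connected/$(q-1)$-truncated factorisation gives
\[
\Map_{\rB O(4)}(M, X) \simeq \Map_{\rB O(4)}(\rB Spin(4) \times \rB V, X), \quad \Map_{\rB O(4)}(S^4, X) \simeq \Map_{\rB O(4)}(\rB Spin(4), X).
\]
The key claim is that the projection $\rB Spin(4) \times \rB V \to \rB Spin(4)$ induces an equivalence between the right-hand sides. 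I would prove this by passing to the pullback $X' = X \times_{\rB O(4)} \rB Spin(4) \to \rB Spin(4)$ and modelling both mapping spaces as spaces of sections of an associated bundle over $\rB Spin(4)$; the comparison then reduces fibrewise to the assertion that the constant-map inclusion $F \to \Map(\rB V, F)$ is an equivalence, where $F$ is the homotopy fibre of $\xi$. Since $\xi$ is $1$-truncated with $\pi$-finite fibres, $F$ is a disjoint union of classifying spaces $\rB G$ of finite groups, and $\Map(\rB V, \rB G)$ decomposes as a disjoint union indexed by $\Hom(V, G)/G$ of twisted loop spaces. Simplicity and infiniteness of $V$ force $\Hom(V, G) = \{1\}$ for every finite $G$, which yields the desired equivalence.

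Under this equivalence, corresponding $(X,\xi)$-structures on $M$ and $S^4$ factor through a common map $\rB Spin(4) \to X$. By functoriality of the unstable bordism groups, it therefore suffices to show $[M] = [S^4]$ in $\Omega_4^{T\rB Spin(4)}$. Remark~\ref{rmk:injection} supplies an injection $\Omega_4^{T\rB Spin(4)} \hookrightarrow \ZZ \oplus \Omega_4^{Spin}$ via $(\tfrac{1}{2}(\chi + \sigma), p)$. Both manifolds have signature zero (each bounds an oriented $5$-manifold), both vanish stably in $\Omega_4^{Spin}$ (the regular neighbourhood $W$ of $K_\pi$ is parallelisable and hence a spin null-bordism of $M$; similarly $D^5$ for $S^4$), and both have Euler characteristic $2$ — for $M$ this is where acyclicity of $\pi$ enters, via Lefschetz and Alexander duality applied to the pair $(W, M)$. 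Combining these, $\DW_{X, \xi, \omega}(M) = \DW_{X, \xi, \omega}(S^4)$ for every choice of $(X, \xi, \omega)$. The most delicate step is the mapping-space equivalence above, as this is the point at which simplicity (rather than mere infiniteness) of $V$ is essential.
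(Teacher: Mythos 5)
Your argument is correct and takes essentially the same route as the paper: both reduce the type-$1$ Dijkgraaf--Witten invariant to the class in $\Omega_4^{T\rB Spin(4)}$ via the mapping-space equivalence $\Map_{\rB O(4)}(\rB Spin(4)\times\rB V, X)\simeq\Map_{\rB O(4)}(\rB Spin(4),X)$, and then identify $[M]=[S^4]$ using the Euler characteristic, signature, and the spin null-bordism $W$ through the injection of Remark~\ref{rmk:injection}. Your fibrewise reduction of the key equivalence to $\Map(\rB V,\rB G)\simeq\rB G$ via $\Hom(V,G)=\{1\}$ correctly unpacks what the paper condenses into the hypothesis ``infinite and simple,'' and your observation that $\pi_1$ is a stable-diffeomorphism invariant is a clean way to settle the negative half, which the paper disposes of by noting $M$ and $S^4$ are not homotopy equivalent.
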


We note the obvious fact that the 4-sphere $S^4$ and $M$ are not even homotopy equivalent. Thus in some cases, with infinite fundamental group, type-$1$ finite path integral theories can fail to detect homotopy type of smooth 4-manifold.

\appendix

\section{$n$-finitely dominated spaces}\label{sec:nfindom}
We recall Definition~\ref{def:nfindominatedmain}:
\begin{definition}\label{def:nfindominated}
	A space 
	$X$ is \emph{$n$-finitely dominated} if there exists an $n$-dimensional finite CW complex $K$ and an
	 $(n-1)$-connected map $K \to X$. 
\end{definition}

Recall that a map being  $(n-1)$-connected means that the induced map $\pi_n(K) \to \pi_n(X)$ is surjective and $\pi_k(K) \cong \pi_k(X)$ for all $k<n$. This definition is relevant to us because of the following lemma whose proof is a standard exercise in obstruction theory. 

\begin{lemma}\label{lem:obstruction}
 If $\psi:Y \to B$ is $n$-truncated, then $\Map_B(X,Y)$ is an $n$-type for all $\xi:X \to B$. If $\psi:Y \to B$ moreover has $\pi$-finite fibers and $X$ is $n$-finitely dominated, then $\Map_B(X, Y)$ is $\pi$-finite.  
\end{lemma}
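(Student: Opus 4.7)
The plan is to reduce both statements to an analysis of the Moore--Postnikov tower of $\psi: Y \to B$ combined with standard obstruction theory.

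For the first claim, I would identify $\Map_B(X, Y)$ with the space of sections $\Gamma(\tilde Y \to X)$ of the pullback $\tilde Y := Y \times_B X \to X$. Since $n$-truncated maps are stable under base change, $\tilde Y \to X$ remains $n$-truncated, and the space of sections of an $n$-truncated fibration is an $n$-type. Equivalently, $Y \to B$ is an $n$-truncated object in the slice $\infty$-topos $\cS_{/B}$, so all mapping spaces into it are $n$-types.

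For the second claim, I would use the finite relative Moore--Postnikov tower $Y = Y_n \to Y_{n-1} \to \cdots \to Y_{-1} = B$ of $\psi$; this tower is finite because $\psi$ is $n$-truncated with $\pi$-finite fibers. Each step $Y_k \to Y_{k-1}$ is a (possibly twisted) principal $K(A_k, k)$-fibration for a \emph{finite} abelian group $A_k$, classified by a map to a relative Eilenberg--MacLane object $K_B(A_k, k+1)$ over $B$. Applying $\Map_B(X, -)$ yields a tower of spaces over $\Map_B(X, B) \simeq *$ in which the homotopy fibers at each stage, by standard obstruction theory, have homotopy groups isomorphic to the (possibly twisted) cohomology groups $H^{k-i}(X; A_k)$ with $0 \leq i \leq k \leq n$.

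The crucial finiteness input is that $X$ is $n$-finitely dominated: there is an $(n-1)$-connected map $K \to X$ from a finite $n$-dimensional CW complex $K$. Such a map induces an isomorphism on cohomology with any (possibly twisted) coefficient system in degrees $\leq n-1$ and an injection in degree $n$. Since $K$ is a finite $n$-dimensional CW complex and each $A_k$ is finite, $H^j(K; A_k)$ is finite for all $j$, whence $H^j(X; A_k)$ is finite for all $j \leq n$. Inducting up the tower via the long exact sequences of homotopy groups, each $\Map_B(X, Y_k)$ acquires finitely many non-trivial homotopy groups, each of them finite; at the top stage, $\Map_B(X, Y) = \Map_B(X, Y_n)$ is therefore $\pi$-finite. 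The main technical subtlety I would need to handle carefully is the presence of twisted coefficient systems when the fibers of $\psi$ are not simply connected, but finite local systems on a finite-dimensional CW complex still yield finite cohomology, so the cohomological estimate carries over verbatim.
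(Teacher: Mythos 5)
The paper gives no proof of this lemma, merely treating it as a standard exercise in obstruction theory (cf.\ the remark preceding the analogous unproved lemma in Section~4.1). Your proposal carries out that exercise correctly and is the intended argument: the first claim is the universal characterization of $n$-truncated objects in the slice $\infty$-category $\cS_{/B}$, and the second follows by climbing the finite relative Moore--Postnikov tower of $\psi$ and bounding the twisted cohomology groups $H^{j}(X;A_k)$ for $j\leq n$ by those of the finite $n$-dimensional dominating complex $K$, using that the $(n-1)$-connected map $K\to X$ induces isomorphisms on twisted cohomology through degree $n-1$ and an injection in degree $n$.
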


The following lemma collects sufficient conditions for a space to be $n$-finitely dominated.

  \begin{lemma}\label{lem:findom}
  A space $X$ satisfying any of the following conditions is $n$-finitely dominated: 
  \begin{enumerate}
  \item $X$ is homotopy equivalent to a CW complex with finite $n$-skeleton.
  \item $X$ is a homotopy pushout of $n$-finitely dominated spaces.
  \item There is a $n$-finitely dominated space $B$ and a map $\xi: X \to B$ all of whose homotopy fibers are $n$-finitely dominated.
  \item $X$ is simply connected and $H_i(X)$ is finitely generated for each $i \leq n$.
  \item $X$ is simply connected and the based loop space $\Omega X$ is $n$-finitely dominated.
    \item The $n$-type of $X$ is $\pi$-finite (i.e. $X$ has finitely many components and the first $n$ homotopy groups of every component are finite). 
  \end{enumerate}
  \end{lemma}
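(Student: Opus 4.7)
The plan is to verify each condition separately by explicitly constructing a finite $n$-dimensional CW complex $K$ together with an $(n-1)$-connected map $K \to X$, reducing the harder conditions to the easier ones via standard Postnikov, Hurewicz, and Serre-class arguments.

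For condition (1), I would simply take $K$ to be the (finite) $n$-skeleton of the given CW structure on $X$: the inclusion $X^{(n)} \hookrightarrow X$ is $(n-1)$-connected by cellular approximation. Condition (4) reduces to (1) by building a minimal CW model of $X$: since $X$ is simply connected with $H_i(X)$ finitely generated for $i \leq n$, a standard inductive Hurewicz/Whitehead argument produces a CW complex weakly equivalent to $X$ whose $n$-skeleton has finitely many cells in each dimension (attaching $k$-cells in bijection with a finite generating set of $H_k$ of the previous stage, then $(k+1)$-cells to kill the resulting relations). Condition (6) is handled by an inductive Postnikov construction: each $K(\pi_i, i)$ with $\pi_i$ finite has finitely generated homology in every degree and hence, by (4), is $n$-finitely dominated for every $n$; assembling the Postnikov tower of $\tau_{\leq n} X$ through principal fibrations and applying (3) at each stage yields the desired $K$.

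Condition (3) is the fibration case. Choose a finite $n$-dimensional $(n-1)$-connected approximation $K_B \to B$ and pull back $\xi : X \to B$ along it to obtain $X' \to X$, which is $(n-1)$-connected by the long exact sequence of the fibration since the homotopy fibers agree. Now $X' \to K_B$ is a fibration over a finite CW base, and attaching finite $n$-dimensional CW approximations of the homotopy fibers cell-by-cell (using the trivialization over each cell) produces a finite $n$-dimensional CW complex $K \to X'$ which is $(n-1)$-connected; composition with $X' \to X$ gives the required map. Condition (2) is the homotopy pushout case: choose finite $n$-dimensional $(n-1)$-connected approximations $K_A, K_B, K_C$, lift the structure maps $B \to A$, $B \to C$ to maps $K_B \to K_A$, $K_B \to K_C$ (the relevant obstructions lie in cohomology of $K_B$ with coefficients in homotopy groups of the homotopy fibers, which can be killed by adjoining finitely many cells to $K_A, K_C$ of dimensions $\leq n+1$ and then re-truncating), form the double mapping cylinder, which has dimension $n+1$, and take its $n$-skeleton. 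The map from this $n$-skeleton to $X$ is still $(n-1)$-connected since taking $n$-skeleta preserves isomorphisms on $\pi_{<n}$ and surjectivity on $\pi_n$, and pushouts of $(n-1)$-connected maps along cofibrations remain $(n-1)$-connected by a Blakers--Massey argument. Finally, condition (5) reduces to (4): the $n$-finite domination of $\Omega X$ forces $H_i(\Omega X)$ to be finitely generated for $i \leq n$, and then an inductive inspection of the Serre spectral sequence of the path-loop fibration $\Omega X \to P X \to X$, using simple-connectedness of $X$, shows that $H_i(X)$ is finitely generated in the same range.

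The main obstacle will be the pushout case (2): the hypothesis of $(n-1)$-connectedness is barely enough to lift a map from an $n$-dimensional complex, so controlling the obstruction to lifting the structure maps while keeping everything finite and $n$-dimensional (after a final skeletal truncation) is the subtle point. The remaining cases are essentially standard applications of cellular approximation, obstruction theory in a fibration, and Serre-class arguments, but (2) is where the interaction between finiteness, dimension, and connectivity must be balanced carefully.
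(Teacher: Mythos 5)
Your proposal follows the paper's proof case by case: (1) via the $n$-skeleton, (3) by inducting on a finite cell structure of the base and feeding the resulting pushout into (2), (4) by building a small CW model (the paper simply cites Wall's finiteness theorem), (5) via the Serre spectral sequence of the path-loop fibration, and (6) by starting from $K(\pi,i)$'s and climbing the Postnikov tower using (3). So the overall route is the same.

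The place you overestimate the difficulty is case (2), which you flag as the subtle point. No cell-adjunction or "re-truncation" is needed, and it is good that it isn't, because adjoining $(n+1)$-cells to $K_A$ could destroy surjectivity of $\pi_n(K_A) \to \pi_n(A)$ and hence $(n-1)$-connectivity. The obstruction vanishes automatically: the homotopy fiber $F$ of $K_A \to A$ has $\pi_i F = 0$ for $i \leq n-1$, so the obstruction groups $H^{i+1}(K_B; \pi_i F)$ are supported only in degrees $i \geq n$, i.e.\ $i+1 \geq n+1$; since $\dim K_B = n$ these all vanish, and the desired lift $K_B \to K_A$ exists outright. Your observation that the double mapping cylinder of the $K$'s has dimension $n+1$, so that one should pass to its $n$-skeleton (which still maps $(n-1)$-connectedly to $X$ because the skeleton inclusion is $n$-connected and $(n-1)$-connected maps compose), is a legitimate point that the paper's phrasing glosses over. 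Finally, the fact that a map of (homotopy) pushout squares whose legs are $(n-1)$-connected induces an $(n-1)$-connected map on pushouts is the gluing lemma (the paper cites a textbook reference), not a Blakers--Massey consequence.
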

  \begin{proof}For (1), the inclusion $\mathrm{sk}_n X \to X$ of the $n$-skeleton is $(n-1)$-connected and hence witnesses $X$ being $n$-finitely dominated. 
To prove (2), given a diagram $U_1 \leftarrow U_3 \to U_2$ of $n$-finitely dominated spaces, let $K_1, K_2, K_3$ be finite $n$-dimensional CW complexes with $(n-1)$-connected maps $K_i \to U_i$. Since $K_3$ is $n$-dimensional and the maps $K_i \to U_i$ for $i=1,2$ are $(n-1)$-connected, it follows from obstruction theory that there are maps, as indicated by the dashed arrows, making the squares commute up to homotopy: 
\begin{center}
\begin{tikzpicture}
	\node (A1) at (0,1.5) {$ K_1 $};
	\node (B1) at (0, 0) {$ U_1 $};
	
	\node (A2) at (2.5, 1.5) {$ K_3 $};
	\node (B2) at (2.5, 0) {$ U_1 \cap U_2 $};
	
	\node (A3) at (5, 1.5) {$ K_2 $};
	\node (B3) at (5, 0) {$ U_2 $};
	
	\draw [->] (A1) to (B1);
	 \draw [->] (A2) to (B2);
	 \draw [->] (A3) to (B3);
	\draw [->, dashed] (A2) to (A1); 
	\draw [->] (B2) to (B1); 
	\draw [->, dashed] (A2) to (A3); 
	\draw [->] (B2) to (B3); 

\end{tikzpicture}
\end{center}
Taking homotopy pushouts, this defines a map from the homotopy pushout $K= K_1 \cup_{K_3} K_2$ to the homotopy pushout $X = U_1 \cup_{U_3} U_2$. 
  Since the pushout of finite $n$-dimensional CW complexes is again finite and $n$-dimensional, and since the pushout of $(n-1)$-connected maps is again $(n-1)$-connected~\cite[Theorem 6.7.9]{MR2456045}, it follows that $K \to X$ witnesses $X$ being $n$-finitely dominated. 
  
  For (3), let $K_0 \to B$ be an $(n-1)$-connected map from a finite $n$-dimensional CW complex. Then the map $X\times_B K_0 \to X$ pulled back from $K_0 \to B$ is also $(n-1)$-connected. Thus, it suffices to show that $X\times_B K_0$ is $n$-finitely dominated --- in other words, without loss of generality,  we can assume that $B = K$ is a finite CW complex. Now we can induct on the cell structure of $B$. When $B$ is a finite union of $0$-cells, then $X$ is a finite disjoint union of $n$-finitely dominated spaces and hence is itself $n$-finitely dominated. Thus we may assume that $B = B_0 \cup_{\partial D^k} D^k$ and that $X|_{B_0}$ is $n$-finitely dominated. Consider the commuting cube
\[\begin{tikzcd}
	& Z&& {F} \\
	{X|_{B_0}} && X \\
	& {\partial D^k} && {D^k} \\
	{B_0} && B
	\arrow[from=2-1, to=2-3]
	\arrow[from=3-2, to=3-4]
	\arrow[from=4-1, to=4-3]
	\arrow[from=2-3, to=4-3]
	\arrow[from=2-1, to=4-1]
	\arrow[from=3-2, to=4-1]
	\arrow[from=3-4, to=4-3]
	\arrow[from=1-4, to=3-4]
	\arrow[from=1-4, to=2-3]
	\arrow[from=1-2, to=2-1]
	\arrow[from=1-2, to=1-4]
	\arrow[from=1-2, to=3-2]
\end{tikzcd}\]
where $F$ is the (homotopy) pullback of $X \to B \leftarrow D^k$ (and hence homotopy equivalent to the homotopy fiber of $X\to B$ at any point in the image of $D^k \to B$) and $Z$ is the (homotopy) pullback of $\partial D^k \to D^k \leftarrow F$ (and hence homotopy equivalent to the product $F \times \partial D^k$). By definition, all four side squares are (homotopy) pullback squares and the lower square is a homotopy pushout. Hence, it follows that the upper square is a homotopy pushout square. 
Notice that since $\partial D^k = S^{k-1}$ is a compact manifold, it is $n$-finitely dominated, and since by assumption $F$ and $X|_{B_0}$ are $n$-finitely dominated, so is also the product $Z\simeq F \times \partial D^k$ and hence it follows from (2) that the pushout $X$ of $X|_{B_0} \leftarrow Z \to F$ is also $n$-finitely dominated.
 
Assertion (4) is a direct corollary of Wall~\cite[Thm.~B]{MR171284}. Assertion (5) follows from (4) and a Serre spectral sequence argument we learned from \cite{Sadofsky-notes}. Consider the based path-loop fibration $\Omega X \to P_*X \to X$. Since $\Omega X$ is $n$-finitely the homology groups $H_k(\Omega X)$ are finitely generated for $k\leq n$. If $i$ be the first dimension in which $\Omega X$ has non-zero reduced homology, then
\begin{equation*}
	H_{i+1}(X) \cong \pi_{i+1}(X) \cong \pi_i(\Omega X) \cong H_i(\Omega X).
\end{equation*}
Thus $H_{i+1}(X)$ is finitely generated and all lower reduced homology groups are trivial. Now suppose that $H_k(X)$ is finitely generated for $k\leq ell$, and consider the $E^2$-term of the Serre Spectral Sequence for the path-loop fibration. We want to show that $H_{\ell+1}(X)$ is finitely generated. The relevant $E^\infty$ groups are zero, and since $E^\infty_{n+1,0} = E^{n+2}_{n+1,0}$, we have $d^{n+1}_{n+1,0}$ is an isomorphism. The target is a quotient of $H_n(\Omega X)$, hence finitely generated. So $E^{n+2}_{n+1,0}$ is finitely generated. Now if we assume by induction that $E^k_{n+1,0}$ is finitely generated for $2 < k \leq n+2$, we examine the differential 
\begin{equation*}
	E^{k-1}_{n+1,0} \to E^{k-1}_{n+2-k,k-2}
\end{equation*}
$E^k_{n+1,0}$ is the kernel of that map and the image is finitely generated as it is a subgroup of a finitely generated group. So we have a short exact sequence
\begin{equation*}
	0 \to E^k_{n+1,0} \to E^{k-1}_{n+1,0} \to im(d^{k-1}_{n+1,0}) \to 0
\end{equation*}
Since the two groups on the end are finitely generated, so is the group in the middle. We conclude that $E^2_{n+1,0} = H_{n+1}(H)$ is finitely generated. 
  
To prove (6), first observe that for finite $\pi$ the space $K(\pi, 1)$ is homotopy equivalent to a CW complex with finite $n$-skeleton (for any $n$) and hence is $n$-finitely dominated. This can for example be seen by presenting $K(\pi, 1)$ as the realization of a simplicial bar complex built from the 
finite group $\pi$. Then, it follows by iteratively applying (5) and observing that $\Omega K(\pi,k) \simeq K(\pi,k-1)$ that $K(\pi,k)$ is $n$-finitely dominated. Hence, using (3) and inducting on the homotopy groups, it follows that every $\pi$-finite space is $n$-finitely dominated. Lastly, applying (3) to the map $X\to \tau_{\leq n}X$ to the $n$-type of $X$, it follows that it suffices that the $n$-type of $X$ is $\pi$-finite.
   \end{proof}

   \begin{remark} By work of Wall~\cite[Thm.~A]{MR171284}, the first condition of Lemma~\ref{lem:findom} is equivalent to the seemingly weaker condition that $X$ is a homotopy retract of a CW complex with a finite $n$-skeleton.
   \end{remark}
      
   \begin{example} Every compact manifold (of arbitrary dimension) is presented by a finite CW complex and hence is $n$-finitely dominated for every $n \geq 0$.
   \end{example}

\begin{example}\label{ex:EMspacesfindom}
	The circle $S^1 \simeq K(\ZZ,1)$ is presented by a finite CW complex and hence is $n$-finitely dominated for every $n \geq 0$. It then follows that $K(\ZZ, k)$ is   $n$-finitely dominated by iteratively applying Lemma~\ref{lem:findom}(5) and observing that $\Omega K(\ZZ,k) \simeq K(\ZZ,k-1)$.
\end{example}
   
\begin{example}\label{ex:BOfindom} The space $\rB O$ and for any $d \geq 0$ the spaces $\rB O(d)$ are $n$-finitely dominated for every $n\geq 0$. Moreover, any connective cover $\rB O(d) \langle k \rangle \to \rB O(d)$ is $n$-finitely dominated. This can be seen as follows. The space  $\rB O(d)$ is the Grassmannian of $d$-planes in infinite dimensional space. The map from the finite Grassmannian $Gr_d(n+d) \to \rB O(d)$ is $n-1$-connected, and since $Gr_d(n+d)$ is a compact manifold, it follows that $\rB O(d)$ is $n$-finitely dominated for all $n$. Likewise the map $\rB O(n) \to \rB O$ is $n-1$-connected, and thus $\rB O$ is $n$-finitely dominated for all $n$. The connective cover $\rB O(d) \langle k \rangle \to \rB O(d)$ sits in a fibration sequence
	\begin{equation*}
		K(\pi_k(\rB O(d)), k-1) \to \rB O(d)\langle k \rangle \to \rB O(d)\langle k-1 \rangle.
	\end{equation*}
Thus iteratively applying Lemma~\ref{lem:findom}(3) and Example~\ref{ex:EMspacesfindom} shows that the connective covers too are $n$-finitely dominated. 
\end{example}
  
For our purposes, a central corollary of Lemma~\ref{lem:findom} is the following:
  \begin{corollary}\label{cor:fiberfinite} Let $n\geq 0$ and assume that $B$ is $n$-finitely dominated. 
   If $\xi:X \to B$ is a map of spaces with $\pi$-finite fibers, then $X$ is $n$-finitely dominated. 
  \end{corollary}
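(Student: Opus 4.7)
The plan is to deduce this corollary directly from two parts of Lemma~\ref{lem:findom}, so no new obstruction-theoretic work is required. The key observation is that $\pi$-finiteness of the fibers is more than enough to feed into the fibration criterion of Lemma~\ref{lem:findom}(3).

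First I would note that by Lemma~\ref{lem:findom}(6), every $\pi$-finite space is $n$-finitely dominated (indeed, $n$-finitely dominated for every $n\geq 0$, since a $\pi$-finite space has $\pi$-finite $n$-type). In particular, each homotopy fiber of $\xi:X\to B$, being $\pi$-finite by hypothesis, is $n$-finitely dominated. Then I would apply Lemma~\ref{lem:findom}(3) to the map $\xi: X\to B$: the base $B$ is $n$-finitely dominated by hypothesis, and the homotopy fibers are $n$-finitely dominated by the previous sentence, so the total space $X$ is $n$-finitely dominated, as required.

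There is no real obstacle here since the heavy lifting has already been done in Lemma~\ref{lem:findom}: part (6) (which itself relies on parts (3) and (5) applied inductively to Postnikov towers, using that each $K(\pi,k)$ with finite $\pi$ is $n$-finitely dominated) handles the fibers, while part (3) (proved by an induction on the cell structure of a finite CW approximation of the base) handles the assembly. The proof of the corollary is essentially a one-line composition of these two facts.
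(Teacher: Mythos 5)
Your proof is correct and takes essentially the same route as the paper: conclude that the $\pi$-finite fibers of $\xi$ are $n$-finitely dominated, then invoke the fibration criterion Lemma~\ref{lem:findom}(3). One small note: the paper's printed proof cites Lemma~\ref{lem:findom}(4) for the step about the fibers, but that item requires simple connectivity, which a $\pi$-finite space need not have; your citation of Lemma~\ref{lem:findom}(6) is the appropriate one, and the paper's reference to (4) appears to be a typo.
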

  \begin{proof}
  By Lemma~\ref{lem:findom} (4), the fibers of $\xi$ are $n$-finitely dominated, and hence $X$ is $n$-finitely dominated by Lemma~\ref{lem:findom}(3).   \end{proof}

Let $\tau_{\leq n} \cS_{/B}^\pi$ denote the full subcategory of $\tau_{\leq n} \cS_{/B}$ on those $n$-truncated $\xi: X \to B$ whose fibers are $\pi$-finite, and let $\tau_{\leq n} \cS_{/B}^{\pi~\textrm{f.d.}}$ denote the further full subcategory on those $n$-truncated $\xi:X \to B$ with $\pi$-finite fibers for which furthermore $X$ is $n$-finitely dominated. 

\begin{corollary}\label{cor:pifinislocpifin} The $\infty$-category $\tau_{\leq n} \cS_{/B}^{\pi~\textrm{f.d.}}$ is locally $\pi$-finite. Moreover, if $B$ is $n$-finitely dominated, then $\tau_{\leq n}\cS_{/B}^{\pi~\textrm{f.d.}} = \tau_{\leq n} \cS_{/B}^\pi$. \end{corollary}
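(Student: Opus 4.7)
My plan is to deduce both statements directly from the technical tools developed earlier in the appendix, namely Lemma~\ref{lem:obstruction} and Corollary~\ref{cor:fiberfinite}. The proof will be short and largely bookkeeping.

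For the first assertion (local $\pi$-finiteness of $\tau_{\leq n} \cS_{/B}^{\pi~\textrm{f.d.}}$), I would fix two objects $\xi_1: X_1 \to B$ and $\xi_2: X_2 \to B$ in this category and show $\Map_B(X_1, X_2)$ is $\pi$-finite. By definition $\xi_2$ is $n$-truncated with $\pi$-finite fibers, and $X_1$ is $n$-finitely dominated. Lemma~\ref{lem:obstruction} applies verbatim to this situation (taking $Y = X_2$, $\psi = \xi_2$, $X = X_1$, $\xi = \xi_1$), yielding that $\Map_B(X_1, X_2)$ is $\pi$-finite. This is the definition of local $\pi$-finiteness, establishing the first claim.

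For the second assertion, assume $B$ is $n$-finitely dominated. The inclusion $\tau_{\leq n} \cS_{/B}^{\pi~\textrm{f.d.}} \subseteq \tau_{\leq n} \cS_{/B}^\pi$ is automatic from the definitions. Conversely, let $\xi: X \to B$ be any object of $\tau_{\leq n} \cS_{/B}^\pi$, i.e., $\xi$ is $n$-truncated and has $\pi$-finite fibers. Since $B$ is $n$-finitely dominated, Corollary~\ref{cor:fiberfinite} immediately gives that $X$ itself is $n$-finitely dominated. Hence $\xi$ lies in $\tau_{\leq n} \cS_{/B}^{\pi~\textrm{f.d.}}$, yielding the reverse inclusion.

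There is no real obstacle here; the work is all contained in the preceding lemmas. The only point to double-check is that Lemma~\ref{lem:obstruction} is applied in the form where $\psi$ has $\pi$-finite fibers and the source space is $n$-finitely dominated, which is precisely the second half of that lemma's statement and matches our setup. No further hypothesis on $B$ is needed for the first claim, which is why the local $\pi$-finiteness holds unconditionally once we restrict to $n$-finitely dominated total spaces.
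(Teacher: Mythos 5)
Your proof is correct and is precisely the argument the paper intends: the paper's one-line proof cites exactly Lemma~\ref{lem:obstruction} and Corollary~\ref{cor:fiberfinite}, and your write-up is the natural unpacking of that citation. Both the identification of the right specialization of Lemma~\ref{lem:obstruction} (with $\psi = \xi_2$ and $X = X_1$) and the use of Corollary~\ref{cor:fiberfinite} for the reverse inclusion are exactly as the authors intended.
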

\begin{proof} This is a direct consequence of Lemma~\ref{lem:obstruction} and Corollary~\ref{cor:fiberfinite}.
\end{proof}

\bibliographystyle{initalpha}
\bibliography{DWStable}

\end{document}